\DeclareMathAlphabet{\mathcalligra}{T1}{calligra}{m}{n}
\DeclareMathAlphabet{\mathpzc}{OT1}{pzc}{m}{it}
\numberwithin{equation}{section}
\theoremstyle{plain}
\newtheorem{thm}{Theorem}[section]
\newtheorem{lem}[thm]{Lemma}
\newtheorem{cor}[thm]{Corollary}
\newtheorem{prop}[thm]{Proposition}
\theoremstyle{definition}
\newtheorem{defn}{Definition}[section]
\newtheorem{exam}[thm]{Example}
\newtheorem{ntz}{Notation}[section]
\newtheorem{rmk}[thm]{Remark}
\DeclareMathOperator{\Sub}{\mathpzc{S}}
\DeclareMathOperator{\cD}{\mathcal{D}}
\DeclareMathOperator{\cE}{\mathcal{E}}
\DeclareMathOperator{\cDd}{{\mathcal{D}^{\prime}}}
\DeclareMathOperator{\cEd}{{\mathcal{E}^{\prime}}}
\DeclareMathOperator{\cO}{\mathcal{O}}
\DeclareMathOperator{\R}{\mathbb{R}}
\DeclareMathOperator{\CiM}{\mathcal{C}_M^\infty}
\newcommand{\dc}{{d}^c_{\lambdaup}\!}
\newcommand{\df}{d\!}
\DeclareMathOperator{\Li}{\mathfrak{L}}
\DeclareMathOperator{\supp}{\mathrm{supp}}
\DeclareMathOperator{\pt}{\mathfrak{p}}
\DeclareMathOperator{\Pt}{P_{\!\tauup}}
\DeclareMathOperator{\Ptj}{P_{\!\tauup_j}}
\DeclareMathOperator{\im}{{\mathrm{Im}}}
\DeclareMathOperator{\re}{{\mathrm{Re}}}
\DeclareMathOperator{\Ot}{{\mathscr{O}}}
\DeclareMathOperator{\C}{\mathbb{C}}
\DeclareMathOperator\Co{\mathcal{C}}
\DeclareMathOperator\Ci{\mathcal{C}^\infty}
\DeclareMathOperator\Cic{\mathcal{C}_0^\infty}
\newcommand\Zf{\mathpzc{Z}}
\newcommand\Lf{\mathpzc{L}}
\newcommand\Vf{\mathpzc{V}}
\newcommand\Ub{\mathbf{U}}
\newcommand\GL{\mathbf{GL}}
\newcommand\If{\mathcal{I}\!}
\newcommand\Ifc{\bar{\If\,\,\,\,}_{\!\!\!\!\!\!{1}}}
\newcommand\Jf{\mathcal{J}\!}
\newcommand\Jfc{\mathcal{J}^{\mathbb{C}}_{\!\!{1}}}
\newcommand\Af{\mathcal{A}\!}
\newcommand\Afc{\mathcal{A}^{\mathbb{C}}_1\!\!}
\newcommand\Pf{\mathpzc{P}}
\newcommand{\Wf}{\mathpzc{W}}
\newcommand\Hess{\mathrm{Hess}^{1,1}}
\newcommand\Yf{\mathpzc{Y}}
\newcommand\Qf{\mathpzc{Q}}
\newcommand\SU{\mathbf{SU}}
\newcommand\St{\mathpzc{St}}
\newcommand\Gr{\mathpzc{Gr}}
\newcommand\Kf{[\ker\!\Lf]}
\newcommand\Hf{\mathpzc{H}}
\newcommand\Qc{\mathcal{Q}}
\newcommand\Sob{\mathrm{W}}
\newcommand\Ff{\mathpzc{F}}
\newcommand\Xf{\mathrm{X}}
\newcommand\Qt{Q_{\tauup}}
\title{Weak {$q$}-concavity
conditions
for {$CR$}-manifolds}
\author[M.~Nacinovich]{Mauro Nacinovich}
\address{M.\ Nacinovich:
Dipartimento di Matematica\\ II Universit\`a di Roma
``Tor Ver\-ga\-ta''\\ Via della Ricerca Scientifica\\ 00133 Roma
(Italy)}
\email{nacinovi@mat.uniroma2.it}
\author[E.~Porten]{Egmont Porten}
\address{E.\ Porten: Department of Mathematics\\ 
Mid Sweden University\\ 85170 Sundsvall \\Sweden\\
and \\
Instytut Matematyki\\
Uniwersytet Jana Kochanowskiego w Kielcach\\
Poland}
\email{Egmont.Porten@miun.se}
\date\today
\subjclass[2000]{Primary: 32V20
Secondary: 32V05, 32V25, 32V30, 32V10, 32W10, 32D10, 35H10, 35H20, 35A18, 35A20,
35B65, 53C30}
\keywords{$CR$-hypoelliptic, pseudo-concavity}
\begin{document}
\begin{abstract}
We introduce various notions of q-pseudo-concavity for abstract CR manifolds and we apply these notions to the study of hyoo-ellipticity, maximum modulus principle and Cauchy problems for CR functions.
\end{abstract}
\maketitle
\tableofcontents

\section*{Introduction}
The definition of $q$-pseudo-concavity  for abstract $CR$ manifolds of arbitrary
$CR$-dimension and $CR$-codimension, given in  \cite{HN96}, required that 
all scalar Levi forms corresponding to non-characteristic codirections
have  Witt index\footnote{The Witt index of
a Hermitian form of signature $(p,q)$ is $\min\{p,q\}.$}
larger or equal to $q$. Important classes of
homogeneous examples 
(see e.g. \cite{AMN06, AMN2013,   AMN10x, AMN10,  MN00, MN97}) show that these conditions are
in fact too restrictive and that weaker notions of $q$-pseudo-concavity are needed. 
For example, the results on the non validity of the Poincar\'e lemma for
the tangential Cauchy-Riemann complex 
in 
\cite{BHN15, HN06} 
only involve scalar Levi forms of maximal rank.
In \cite{HN00} the
classical notion of 
$1$-pseudo-concavity was extended by a trace condition, that was 
further improved in \cite{AHNP08a, 33, HN03}. These notions are relevant to the behavior of $CR$
functions, being related to hypo-ellipticity, weak and strong unique continuation, 
hypo-analicity (see \cite{NaPo}) 
and 
the maximum modulus principle. \par
In this paper, we continue these investigations. A key point of this approach is the
simple observation that
the Hermitian symmetric
vector valued Levi form $\Lf$ 
of a $CR$ manifold $M$  defines a linear form on 
$T^{1,1}M=T^{1,0}M\otimes_MT^{0,1}M$. Our notion of
pseudoconcavity is the request that its kernel contains elements $\tauup$ which
are positive semidefinite. To  such a
 $\tauup$  
we can associate an invariantly defined 
degenerate-elliptic real partial differential operators $\Pt$, which 
turns out to be related to the $dd^c$-operator
of \cite{43}. By consistently keeping this perspective, we prove 
in this paper some results on $\Ci$-hypoellipticity, 
the maximum modulus principle, and undertake the study of boundary value problems for
$CR$ functions on open domains of abstract $CR$ manifolds, testing the effectiveness of a
new notion of weak 
two-pseudo-concavity by its application to the Cauchy problem for $CR$ functions. \par
The general plan of the paper is the following. In the first section we define the notion of $\Zf$-structure,
that generalizes $CR$ structures insofar that all formal integrability and rank conditions can be
dropped while our focus are $CR$ functions, only considered as  solutions of a homogeneous
overdetermined 
system of first order p.d.e.'s, and set the basic notation that will be used throughout the  paper.
In particular, we introduce the kernel $\Kf$ of the Levi form as a subsheaf of the sheaf of germs
of semipositive tensors of type $(1,1)$. 
\par In \S\ref{sec2} we show how the maximum modulus principle relates
to $\Ci$-regularity 
and
weak and strong unique continuation of $CR$ functions. 
We also make some comments on generic points of non-embeddable $CR$ manifolds, 
where, by using the results of \cite{NaPo},
we can prove,
in Proposition~\ref{prop2.5}, a result of strong unique continuation and partial hypo-analiticity
(cf. \cite{t92}).\par
In \S\ref{s3} we show how that  to each semi-positive tensor
$\tauup$ in the kernel of the Levi form we can associate
a real degenerate
elliptic scalar p.d.o. of the second order~$\Pt$. 
Real parts of $CR$ functions are $\Pt$-harmonic and the modulus of a $CR$ function is
$\Pt$-subharmonic at points where it is
different from zero.  Then, by using some techniques originally developed for the generalized
Kolmogorov equation (cf. \cite{Hor67, Hor85, K71}) we are able to 
enlarge, in comparison with  \cite{AHNP08a}, the set of vector fields \textit{enthralled by $\Zf$}.  
Thus we can improve, by Theorem~\ref{thm3.3},
some hypo-ellipticity result of \cite{AHNP08a}, and, by Theorem~\ref{thm3.7}, a  
propagation result
of \cite{HN03}, for the case in which this hypo-ellipticity fails.
\par
In \S\ref{s4} we prove the $CR$ analogue of Malgrange's theorem on the vanishing of the top degree cohomology
under some subellipticity condition. Our result slightly generalizes previous results of \cite{BHN10, CL1999, CL2000},
also yielding a Hartogs-type theorem on abstract $CR$ manifolds, 
to recover a $CR$ function on a relatively compact domain from  boundary values
satisfying  some momentum condition (Proposition~\ref{propo4.3}). \par
In \S\ref{sechopf} we use  the $dd^c$-operator of \cite{43} to show that the operators
$\Pt$ are invariantly defined in terms of 
sections of $\Kf$ (Corollary~\ref{cor4.8}).
The Hopf Lemma for $\Pt$ is used to deduce 
pseudo-convexity properties of the boundary
of a domain where a $CR$ functions has a peak point 
(Proposition~\ref{lm5.2}). This leads to a notion
of convexity/concavity for points of the boundary 
of a domain (Definition~\ref{d4.4}). Most of these notions
can be formulated in terms of the scalar Levi forms 
associated to 
the covectors of 
a half-space of the characteristic bundle. \par 
Thus in \S\ref{cv} we have found it convenient to consider properties of
convex cones of Hermitian symmetric forms satisfying conditions on their 
indices of inertia,
which are preliminary to 
the definitions of the next section. 
\par 
In \S\ref{s7} we propose various notions of weak-$q$-pseudoconcavity, give some examples, and show
in Proposition~\ref{prop7.7} that on an essentially-$2$-pseudo-concave manifold strong-$1$-convexity/concavity
at the boundary becomes an \textit{open} condition, i.e. stable under small perturbations. 
This is used in the last two sections  to discuss existence and uniqueness for the
Cauchy problem for $CR$ functions, with initial data on a hypersurface. \par
In \S\ref{s8}, 
after discussing uniqueness in 
the case of a locally embeddable $CR$ manifold, we turn to the case of an abstract
$CR$ manifold, proving, via Carleman-type estimates, that the uniqueness results of \cite{DCN, HN00, HN03}
can be extended by using some convexity condition (see Proposition~\ref{prop8.8}). 
In \S\ref{s9} an existence theorem for the Cauchy problem 
is proved for locally embeddable $CR$ manifolds, under some convexity conditions.

\section{\texorpdfstring{$CR$- and $\Zf$}{TEXT}-manifolds: preliminaries and notation} 
\label{s1}
Let $M$ be 
a real smooth  manifold of dimension $m$. 
\begin{defn}
A \emph{$\Zf$-structure}
on $M$ is the datum of a 
$\CiM$-submodule
$\Zf$ of the 
sheaf $\mathfrak{X}^{\C}_M$
of 
germs of smooth complex vector fields on $M$.
 It is called \begin{itemize}
 \item \emph{formally integrable} if $[\Zf,\Zf]\subset\Zf$;
 \item \emph{of $CR$-type} if $\Zf\cap\overline{\Zf}=\underline{0}$ (the $0$-sheaf);
 \item \emph{almost-$CR$} if  $\Zf$ is of $CR$-type and locally free  
of constant rank;
 \item \emph{quasi-$CR$} if it is of $CR$-type and formally integrable;
\item \emph{$CR$} if $\Zf$ is of $CR$-type, formally integrable
and  locally free of
constant rank.
 \end{itemize}
 A \emph{$\Zf$-manifold} 
 is a real smooth manifold $M$ endowed with a  
$\Zf$-structure.
Since $\CiM$ is a fine sheaf, $\Zf$ can be equivalently described by
the datum of the space $\Zf(M)$ of its global sections.
\end{defn}

When $M$ is a smooth
real submanifold of a complex manifold 
$\Xf$, then
\begin{equation*}
 \Zf(M)=\{Z\in\mathfrak{X}^{\C}(M)\mid Z_p\in{T}^{0,1}_p\Xf
 ,\;\forall p\in{M}\}
\end{equation*} 
is formally integrable. Hence $\Zf(M)$ defines  a quasi-$CR$ structure on $M$,
which is 
$CR$ 
if the dimension of $T^{0,1}_p\Xf\cap\C{T}_pM$ is constant
for $p\in{M}$.
This is always the case
when $M$ is a real hypersurface in $\Xf$. 
\par 
A \textit{complex embedding (immersion)} 
$\phiup:M\hookrightarrow \Xf$
of a quasi-$CR$-manifold $M$ into a complex manifold $\Xf$ 
is a smooth embedding (immersion) for which the $\Zf$-structure on $M$ is the pullback
of the complex structure of $\Xf$:
\begin{equation*}
 \Zf(M)=\{Z\in\mathfrak{X}^{\C}(M)\mid 
 d\phiup(Z_p)\in{T}^{0,1}_{\!\phiup(p)}{\Xf},\;\forall p\in{M}\}.
\end{equation*}

\begin{exam}\label{es1.1}
 Let $M=\{w=z_1\bar{z}_1+i\,z_2\bar{z}_2\}
\subset\C^3_{w,z_1,z_2}={\Xf}.$  We can take the real and
imaginary parts of
$z_1,z_2$ as 
coordinates on $M$, which therefore, as a smooth manifold, 
is diffeomorphic to $\C^2_{z_1,z_2}$. The embedding $M\hookrightarrow\C^3$ yields
the quasi-$CR$ structure 
\begin{equation*}
\Zf(M)=\Ci(M)\left[ z_2\dfrac{\partial}{\partial\bar{z}_1}
+iz_1\dfrac{\partial}{\partial\bar{z}_2}\right]
\end{equation*}
on $M$.
Then $M\setminus\{0\}$ is a $CR$-manifold
of $CR$-dimension $1$ and $CR$-codimension $2$, while all elements
of $\Zf(M)$ vanish at $0\in{M}$.
\end{exam}

A  $\Zf$-manifold $M$ of $CR$-type contains an open dense subset $\ring{M}$
whose connected components are almost-$CR$  for the restriction
of $\Zf$. Likewise, any
quasi-$CR$ manifold $M$ contains an open dense subset
$\ring{M}$ whose connected components are $CR$ manifolds.
\par 
\smallskip
We shall use $\varOmega$ and $\Af$ 
for the sheaves of germs of \textit{complex-valued} and
\textit{real-valued} alterntate forms on $M$ (subscripts  indicate degree of homogeneity).  
Starting with the case of an almost-$CR$ manifold $M$, we introduce  
the notation:
\begin{align*}
 T^{0,1}M&={\bigcup}_{p\in{M}}\big(T^{0,1}_pM=\{Z_p\mid Z\in\Zf(M)\}\big)\subset\C{TM},
 \quad T^{1,0}M=\overline{T^{0,1}M}, \\
 HM&={\bigcup}_{p\in{M}}\big(H_pM=\{\re{Z}_p\mid Z_p\in{T}^{0,1}_pM\}\big)\subset{T}M,\\
 J_{\!{M}} & :H_pM\to{H}_pM,\;\; X_p+iJ_{\!{M}}X_p\in{T}^{0,1}_pM,\;\;\forall X_p\in{H}_pM,\\
 & \qquad\qquad\qquad\qquad\qquad\qquad\qquad\qquad
 \quad\text{(partial complex structure)},\\
 \Hf&=\{\re{Z}\mid Z\in\Zf\},\\
 \piup_M& : TM \to TM/HM\quad\text{(projection onto the quotient),}\\
 \If(M)&=\{\alpha\in{\bigoplus}_{h=1}^\nu\varOmega^h(M,\C)\mid \alpha|T^{0,1}M=0\},\quad
 \text{($\If$ is the \emph{ideal sheaf})},\\
 H^0M&={\bigcup}_{p\in{M}}\big(H^0_pM=\{\xiup\in{T}^*_pM\mid \xiup(H_pM)=\{0\}\}\big)\subset{T}^*M,\\
H^{1,1}M&={\bigcup}_{p\in{M}}\big(H^{1,1}_pM=\text{Convex hull of\; }\{  (Z_p\otimes 
 \bar{Z}_p)\mid Z\in\Zf(M)\}\big),\\
H^{1,1,(r)}M&={\bigcup}_{p\in{M}}\big(H^{1,1,(r)}_pM=\{\tauup\in{H}^{1,1}_pM\mid
\mathrm{rank}\,\tauup=r\}\big).
\end{align*} 
Note that $T^{0,1}M,\; T^{1,0}M,\; HM,\; TM/HM,\; H^0M,\; H^{1,1}M,\; H^{1,1,(r)}M$
define smooth vector bundles because we assumed that the rank $n$ of 
$\Zf$ is constant. This $n$ is called  
the \emph{$CR$-dimension} 
and the difference $k=m-2n$ the \emph{$CR$-codimension} of $M$. \par 
For a general $\Zf$-manifold, we use the same symbols
\par\centerline{$T^{0,1}M, \; T^{1,0}M,\;
 HM,\; TM/HM,\;  
 H^{1,1}M,\; H^{1,1,(r)}M$}
for the closures of 
\par\centerline{
$T^{0,1}\ring{M},\; T^{1,0}\ring{M}, \; H\ring{M}, \; 
T\ring{M}/H\ring{M}, \;  H^{1,1}\ring{M}, \; 
H^{1,1,(r)}\ring{M}$}
in $T^{\C}M$, $T^{\C}M$, $TM$, $TM/HM$, $T^{\C}{M}\otimes_M{T^{\C}{M}}$,
$T^{\C}{M}\otimes_M{T^{\C}{M}}$, respectively.
\begin{exam} For the $M$ in Example~\ref{es1.1}, the fiber $T^{0,1}_pM$
has dimension $1$ at all
points $p$ of $\ring{M}=M\setminus\{0\}$, while
$T^{0,1}_0M=\C[\partial/\bar{z}_1,\partial/\partial\bar{z}_2]$ has dimension $2$. 
By contrast, as we already observed, 
all elements of $\Zf(M)$ vanish at $0$.
\end{exam}
\par\smallskip
If $\Ff$ is a subsheaf of the sheaf of germs of (complex valued) distributions on $M$, 
an element $f$ of $\Ff$ is said to be $CR$ if it satisfies the equations $Zf=0$ for all $Z\in\Zf(M)$.
The $CR$ germs of $\Ff$ are the elements of a sheaf that we denote by $\Ff\!\!\Ot_{\!{M}}$.
We will simply write $\Ot_{\!{M}}$ for $\Ci\!\!\Ot_{\!{M}}$.
\par 
 \smallskip
 We will assume in the rest of this section that $M$ is an almost-$CR$ manifold.
 \par 
The fibers of 
 $H^{1,1}M$ are  closed  convex cones, consisting of 
 the \textit{positive semi-definite} Hermitian symmetric tensors in ${T}^{0,1}M\otimes_M{T}^{1,0}M$.
 The \emph{characteristic bundle} $H^0M$ is the dual of the quotient  $TM/HM$. 
\par 
Let us describe more carefully the bundle structure of
$H^{1,1,(r)}M$.
Set $V=T^{0,1}_pM$ and consider the non-compact Stiefel 
space $\St_r(V)$ of
$r$-tuples 
of linearly independent vectors of $V$. Two different  $r$-tuples $v_1,\hdots,v_r$ and
$w_1,\hdots,w_r$ 
in $\St_r(V)$ 
 define the same $\tauup_p$, i.e. 
satisfy \begin{equation*}
\tauup_p
=v_1\otimes\bar{v}_1+\cdots+v_r\otimes\bar{v}_r=w_1\otimes\bar{w}_1+\cdots+w_r\otimes\bar{w}_r,
\end{equation*}
if and only if there is a matrix $a=(a^i_j)\in\Ub(r)$ (the unitary group of order $r$)
 such that $w_j={\sum}_ja_j^iv_i$. In fact the span of $v_1,\hdots,v_r$ is determined by
the tensor $\tauup_p$, so that $w_j={\sum}_ja_j^iv_i$ for some $a=(a^i_j)\in\GL_r(\C)$
and \begin{align*}
{\sum}_{i=1}^r{w_i\otimes\bar{w}_i}
&={\sum}_{j=1}^r {\sum}_{i,h=1}^r{a}_j^{i}\bar{a}_j^{h} v_i\otimes\bar{v}_h 
= {\sum}_{i,h=1}^r\big({\sum}_{j=1}^r{a}_j^{i}\bar{a}_j^{h}\big) v_i\otimes\bar{v}_h
\end{align*}
shows that $a\in\Ub(r)$. Hence  $H^{1,1,(r)}M$ is the quotient bundle of the non-compact 
complex Stiefel bundle of $r$-frames in $T^{0,1}M$ by the action of the unitary group $\Ub(r)$.
By using the Cartan decomposition \par
\centerline{$\Ub(r)\times \pt(r)\ni(x,X) \xrightarrow{\quad} 
 x\cdot\exp(X)\in\GL_r(\C)$, }\par\noindent
where $\pt(r)$ is
the vector space of Hermitian symmetric ${r\times{r}}$-matrices, we see that  $H^{1,1,(r)}M$
can be viewed as a rank $r^2$ real vector bundle on the Grassmannian $\Gr^r(M)$
of $r$-planes
of $T^{0,1}M$. Thus it is a smooth vector bundle when $M$ is \mbox{almost-$CR$}.
\subsection*{Scalar and vector valued Levi forms}
The map 
\begin{equation}
 Z_p\otimes\bar{Z}_p\longrightarrow
 -\piup_M(i[Z,\bar{Z}]_p),\quad\forall p\in{M},\;\;\forall Z\in\Zf(M),
\end{equation}
extends to a linear map 
\begin{equation}\label{eq1.2}
 \Lf:{H}^{1,1}M \to TM/HM,
\end{equation}
that we 
call the \emph{vector-valued Levi-form}.
\par \smallskip 
To each characteristic co-direction $\xiup\in{H}^0_pM$
we associate the Hermitian quadratic form 
\begin{equation*}
 \Lf_{\xiup}(Z_p,\bar{Z}_p)=\Lf(Z_p\otimes\bar{Z}_p)=-\langle{\xiup}|i[Z,\bar{Z}]_p\rangle, \;\; \forall Z\in\Zf(M).
\end{equation*}
It extends to a convex function on $H^{1,1}_pM$, which is the evaluation by the co-vector $\xiup$ 
of the vector-valued Levi form.  Thus the 
\emph{scalar Levi forms} are 
\begin{equation}
 \Lf_{\xiup}(\tauup)=\xiup(\Lf(\tauup)),\quad\text{for $p\in{M}$,\; $\xiup\in{H}^0_pM$,\; $\tauup\in{H}^{1,1}_pM$.}
\end{equation}
\par\smallskip
The range 
 $\Gamma_pM$ of the vector-valued Levi form 
 is a 
 convex
cone of $T_pM/H_pM$, whose dual cone is 
\begin{equation*}
 \Gamma_p^0M=\{\xiup\in{H}^0_pM\mid \Lf_{\xiup}\geq{0}\}. 
\end{equation*}
Thus we obtain
\begin{lem}\label{lem1.2}
An element $v\in{T}_pM/H_{p}M$ belongs to the 
closure or the 
range of the vector-valued Levi-form 
if and only if 
\begin{equation}  
 \langle{v}|\xiup\rangle \geq{0}, \quad \forall \xiup\in{H}^0_pM\;\;\text{such that}\;\;\Lf_{\xiup}\geq{0}.
 \vspace{-22pt}
\end{equation}
 \qed
\end{lem}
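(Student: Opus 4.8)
This is the \emph{bipolar theorem} for the convex cone $\Gamma_pM=\Lf(H^{1,1}_pM)$ inside the finite-dimensional real vector space $T_pM/H_pM$, and I would prove it in three short steps. First, I would check that $\Gamma_pM$ is a convex cone with vertex the origin. By construction $H^{1,1}_pM$ is the convex hull of the rank-one tensors $Z_p\otimes\bar Z_p$, $Z\in\Zf(M)$, hence convex; and it is stable under multiplication by nonnegative reals, since $t\,(Z_p\otimes\bar Z_p)=(\sqrt t\,Z_p)\otimes\overline{(\sqrt t\,Z_p)}$ for $t\ge 0$. Since $\Lf$ is linear, $\Gamma_pM=\Lf(H^{1,1}_pM)$ is again convex and stable under nonnegative dilations, and $0=\Lf(0)\in\Gamma_pM$.

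Second, I would identify the dual cone of $\Gamma_pM$ in $(T_pM/H_pM)^*=H^0_pM$. A covector $\xiup\in H^0_pM$ belongs to this dual cone precisely when $\langle v\mid\xiup\rangle\ge 0$ for every $v\in\Gamma_pM=\Lf(H^{1,1}_pM)$, i.e. when $\xiup(\Lf(\tauup))=\Lf_{\xiup}(\tauup)\ge 0$ for every $\tauup\in H^{1,1}_pM$. As $\Lf_{\xiup}$ is the restriction to $H^{1,1}_pM$ of a linear form and $H^{1,1}_pM$ is \emph{generated} (as a convex cone) by the tensors $Z_p\otimes\bar Z_p$, this is equivalent to $\Lf_{\xiup}(Z_p,\bar Z_p)\ge 0$ for all $Z\in\Zf(M)$, i.e. to the Hermitian form $\Lf_\xiup$ being positive semidefinite. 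Thus the dual cone of $\Gamma_pM$ is exactly $\Gamma^0_pM=\{\xiup\in H^0_pM\mid \Lf_\xiup\ge 0\}$, as already recorded in the text.

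Third, I would invoke the bipolar theorem: for a convex cone $C$ with vertex $0$ in a finite-dimensional real vector space, $(C^0)^0=\overline C$. Taking $C=\Gamma_pM$ and using the previous step, $(\Gamma^0_pM)^0=\overline{\Gamma_pM}$; unwinding the definition of $(\Gamma^0_pM)^0$ yields exactly the claimed equivalence, that $v\in\overline{\Gamma_pM}$ if and only if $\langle v\mid\xiup\rangle\ge 0$ for all $\xiup\in H^0_pM$ with $\Lf_\xiup\ge 0$. The argument is essentially formal; the only point needing a little attention is the identification of the dual cone in the second step — one must pass from non-negativity of the linear form $\Lf_\xiup$ on the whole cone $H^{1,1}_pM$ to semipositivity of the associated Hermitian form, which is legitimate precisely because $H^{1,1}_pM$ is the closed convex hull of rank-one tensors — after which the standard Hahn--Banach/separation argument in finite dimension (which is also why it is the \emph{closure} of $\Gamma_pM$, rather than $\Gamma_pM$ itself, that appears) closes the proof.
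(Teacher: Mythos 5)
Your proof is correct and follows exactly the route the paper intends: the statement is recorded there with no written proof, as an immediate consequence of the preceding identification of $\Gamma_pM$ as a convex cone with dual cone $\Gamma^0_pM=\{\xiup\in H^0_pM\mid\Lf_\xiup\geq 0\}$, combined with the bipolar theorem (which the paper later isolates as Lemma~\ref{lemconv1}). Your second step, reducing positivity of $\Lf_\xiup$ on all of $H^{1,1}_pM$ to positive semidefiniteness of the Hermitian form via the generating rank-one tensors, is the right way to fill in the one detail the paper leaves implicit.
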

\begin{rmk} Note that $\Gamma_pM$ need not be closed. An example is provided
by the quadric $M=\{\re{z}_3=z_1\bar{z}_1,\; \re{z}_4=\re(z_1\bar{z}_2)\}\subset
\C^4$: the
cone $\Gamma_0M$ is the union of the origin and of an open half-plane.
\end{rmk} 
It is convenient to introduce the notation: \label{kappa}
\begin{equation*}
 \Kf^{(q)}={H}^{1,1,(q)}M\cap\ker\Lf,\; \overline{\Kf}
 ={\bigoplus}_{q\geq{0}}\Kf^{(q)}\!\!\! _, \;\;\;
 \Kf={\bigoplus}_{q>{0}}\Kf^{(q)}\!\!\! _.
\end{equation*}
\begin{defn}
 We call $\Kf$ the \emph{kernel of the Levi form}.
\end{defn}
We note that this definition is at  variance with a notion that appears in the literature
(see e.g. \cite{Dan93}), where the kernel of the Levi form consists of the $(1,0)$-vectors
which are isotropic for all scalar Levi forms. These vectors are related to $\Kf^{(1)}$,
which is trivial in several examples of $CR$ manifolds which are not of hypersurface type
and have a non trivial $\Kf$.
\par\smallskip

Let $\mathpzc{Y}$ be a generalized distribution
of \textit{real} vector fields on $M$ and ${p\in{U}^{\mathrm{open}}\subset{M}}$.
The Sussmann leaf of $\mathpzc{Y}$ through $p$ in $U$ is the set 
$\ell(p;\mathpzc{Y},U)$
of points
$p'$ which are ends of piecewise $\Ci$ integral curves of $\mathpzc{Y}$
starting from $p$ and 
lying in $U$. We know that $\ell(p;\mathpzc{Y},U)$ is always a smooth
submanifold of $U$ (see \cite{Su73}).
\par 
Let $\mathpzc{H}=\{\re{Z}\mid Z\in\Zf\}$. 
A
$\Zf$-manifold $M$ is called 
\emph{minimal} at $p$ if 
$\ell(p; \mathpzc{H},U)$ is an open neighborhood of $p$ for all 
${U}^{\mathrm{open}}\subset{M}$ and 
${p\in{U}}$. (This notion was introduced
in~\cite{Tr90} for embedded $CR$ manifolds.) In the following, 
by a \textit{Sussmann leaf of $\Zf$} we will mean a
Sussmann leaf of $\mathpzc{H}$.

 \par\smallskip
A smooth real submanifold 
$N$ of $M$ (of arbitrary codimension $\ell$) 
is said to be \emph{non-characteristic}, or \emph{generic}, at $p_0\in{N}$, when 
\begin{equation}\label{eq:8.1}
 T_{p_0}N+H_{p_0}M=T_{p_0}M.
\end{equation}
If this holds for all $p\in{N}$, then 
$N$ is a \textit{generic} $CR$ submanifold of $M$, of
type $(n-\ell,k+\ell)$, as $T^{0,1}_pN={T}_p^{\C}N\cap{T}^{0,1}_pN$  and $H^0_pN=H^0_pM\oplus
J_{\!{M}}^*(T_pN)^0$ for all $p\in{N}$.
To distinguish from the Levi form $\Lf$ of $M$, we write
$\Lf^N$ for the Levi form of $N$.
\par 
A Sussmann leaf for $\Zf$ which is not open is \textit{characteristic} at all
points. \par 
More generally, when $\Xi(M)$ is any distribution of complex valued smooth vector fields on $M$,
we say that $N$ is 
\emph{$\Xi$-non-characteristic at $p_0\in{N}$} if
\begin{equation}\label{equnosei}
 T_{p_0}N+\{\re{Z}_{p_0}\mid Z\in\Xi(M)\}=T_{p_0}M.
\end{equation}
In this terminology \textit{non-characteristic} is equivalent to 
$\Zf$-non-characteristic.\par
We note that the  
$\Xi$-non-characteristic points make an open subset of $N$.\par
\section{Hypo-ellipticity and the maximum modulus principle} \label{sec2}
In \cite{NaPo} we proved that, for locally embedded $CR$ manifolds,  
 the hypo-ellipticity of its tangential Cauchy-Riemann
system is equivalent to 
the holomorphic extendability of its $CR$ functions. 
Thus  
hypo-ellipticity 
may be regarded 
as a weak form of pseudo-concavity. 
The regularity of $CR$ distributions
implies a
strong  maximum modulus principle for $CR$ functions (see \cite[Theorem 6.2]{HN00}). 
\begin{prop}\label{pp2.1} Let $M$ be 
a $\Zf$-manifold.
 Assume that all germs of 
 $CR$ distributions  on $M$, that
 are locally $L^2$, are smooth. 
 Then, for every open connected
 subset $\Omega$ of  $M$, we have 
\begin{equation}\label{mmp}
 |f(p)|<{\sup}_{\Omega}|f|,\;\;\forall p\in\Omega,
 \quad\text{for all non constant $f\in\Ot_{\!{M}}(\Omega)$.}
\end{equation}
 \end{prop}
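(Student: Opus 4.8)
The plan is to argue by contradiction. Suppose $f \in \Ot_{\!{M}}(\Omega)$ is non-constant but $|f(p_0)| = \sup_\Omega |f| =: m$ for some $p_0 \in \Omega$. We may assume $m > 0$, since otherwise $f \equiv 0$. First I would normalize: after multiplying $f$ by a unimodular constant we may take $f(p_0) = m$. The idea is to show that the superlevel set $\{p \in \Omega : f(p) = m\}$ (equivalently, where $\re f$ attains its maximum $m$, using $|f|\le m$ and $|f(p_0)|=m$) is both closed and open in $\Omega$; connectedness of $\Omega$ then forces $f \equiv m$, contradicting non-constancy. Closedness is immediate from continuity of $f$. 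The whole difficulty is concentrated in openness.

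For openness, the key step is to show that near any point $p_1$ where $|f| = m$, the function $f$ is locally constant. Here is where the hypothesis on $L^2$-regularity of $CR$ distributions enters. Since $f$ is a smooth $CR$ function, I would exploit that $\re f$ is annihilated by $\re Z$ and $\im f$ by $\re(iZ)$ for every $Z \in \Zf(M)$; more precisely, from $Zf = 0$ we get that for each real vector field $X = \re Z$ in $\Hf$, both $Xf = 0$ after combining with $\im Z \cdot f = 0$, so $f$ is constant along all Sussmann leaves of $\Zf$. Thus $f$ descends, along each leaf, and on a leaf $f$ is genuinely constant. The remaining directions — transverse to the leaves — are where hypo-ellipticity/regularity must do the work: one wants to show that a $CR$ function which attains the maximum of its modulus at an interior point of its domain must be constant, and the standard route (as in \cite[Theorem 6.2]{HN00}) is to produce, from the extremality, a \emph{CR distribution} supported on a proper closed subset, or to bootstrap a local representation showing the nearby values are forced to equal $f(p_1)$; the $L^2 \Rightarrow \Ci$ hypothesis guarantees the relevant distributional CR solutions are actually smooth, removing the obstruction to such an argument.

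Concretely, I would proceed as follows. Fix $p_1$ with $f(p_1) = m$ (after the unimodular normalization). Consider $g = m - f$, a $CR$ function with $\re g \ge 0$ and $g(p_1) = 0$. On a small neighborhood $U$ of $p_1$, consider $h = g / (1 + g)$ or a similar Cayley-type transform mapping the right half-plane to the disk; $h$ is again $CR$ (being a holomorphic function of the $CR$ function $g$), with $|h| \le 1$ and $h(p_1) = 0$. Now the point is to extract regularity/uniqueness: since $\re g \ge 0$ and $g$ is $CR$, one shows $1/g$ (or a regularization thereof) extends as an $L^2_{\mathrm{loc}}$ $CR$ distribution past $p_1$ — this uses that $\{\re g = 0\}$ cannot separate locally, together with a removable-singularity-type estimate — hence by hypothesis it is smooth, which is impossible unless $g \equiv 0$ near $p_1$. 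This propagates $f \equiv m$ across $U$, giving openness.

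The main obstacle, and the step I expect to be delicate, is exactly this passage from "extremality of $|f|$ at $p_1$" to "a non-smooth $L^2_{\mathrm{loc}}$ $CR$ distribution near $p_1$," since it must be done with no embeddability and no rank assumptions on $\Zf$ — only the $\Zf$-module structure. I would handle it by working first on the dense almost-$CR$ locus $\ring{M}$ (where the leaves and the bundle $HM$ are well-behaved) and arguing that $\{|f| = m\}$ meets $\ring{M}$, reducing to that case, then using density and continuity to conclude on all of $M$; the constancy along Sussmann leaves (which holds globally, even at the bad points, since all elements of $\Zf$ still annihilate $f$) is what glues the $\ring{M}$-argument to the singular points. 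Alternatively, one cites directly the mechanism of \cite[Theorem 6.2]{HN00}, whose proof is precisely designed to run under the stated abstract hypothesis.
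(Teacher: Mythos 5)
There is a genuine gap, in fact two. First, your claim that a $CR$ function $f$ is ``constant along all Sussmann leaves of $\Zf$'' is false for complex-valued $f$: writing $Z=X+iY$ and $f=u+iv$, the equation $Zf=0$ gives $Xu=Yv$ and $Xv=-Yu$, not $Xf=0$. (The constancy along leaves holds only for \emph{real-valued} $CR$ functions, which is exactly why minimality forces those to be locally constant; a holomorphic function on $\C$ is the obvious counterexample to your version.) Second, and more seriously, the entire openness step rests on the assertion that $1/g$ (with $g=m-f$, $\re g\geq 0$, $g(p_1)=0$) ``extends as an $L^2_{\mathrm{loc}}$ $CR$ distribution past $p_1$.'' Nothing controls the vanishing order of $g$ at $p_1$ in the abstract setting, so there is no reason for $1/g$ to be locally square-integrable, no argument that it is $CR$ across $\{g=0\}$, and no removable-singularity machinery available here; this is precisely the ``delicate passage'' you flag, and it is not filled. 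The hypothesis of the proposition is never actually brought to bear in a usable form.

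The paper's proof uses the hypothesis through an entirely different mechanism, which you may want to internalize: hypo-ellipticity makes $\Ot_{\!{M}}(\Omega)$ with the $L^2_{\mathrm{loc}}$ topology a Fr\'echet space, and the open mapping theorem then yields a pointwise a priori bound $|u(p_0)|^2\leq C_K\int_K|u|^2\,d\lambda$ for all $CR$ functions $u$. Applying this to the powers $f^{\nu}$ (after normalizing $f(p_0)=1$) gives $1\leq\int_K|f|^{2\nu}d\lambda$, and a normal-families argument (again from hypo-ellipticity plus Ascoli--Arzel\`a) extracts a limit $CR$ function $\phi$ with $|\phi|$ taking only the values $0$ and $1$; continuity and $\phi\not\equiv 0$ force $|f|\equiv 1$, and repeating the argument with $\tfrac12(1+f)$ forces $f\equiv 1$. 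No open-closed decomposition, no leaf-wise constancy, and no inversion of $g$ is needed.
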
 
\begin{proof} We prove that an $f\in\Ot_{\!{M}}(\Omega)$ for which $|f|$ 
attains  a maximum value at some inner
point $p_0$ of $\Omega$ is constant.
Assume that $p_0\in\Omega$ and
$|f(p_0)|=\sup_{\Omega}|f|$. If $f(p_0)=0$, then $f$ is constant and equal to zero on $\Omega$. \par 
Assume  that $f(p_0)\neq{0}$. After rescaling, we can make $f(p_0)=|f(p_0)|=1$. \par 
Let $E$
be the space $\Ot_{\!{M}}(\Omega)$ endowed with the $L^2_{loc}$ topology. 
By the  hypo-ellipticity assumption, $E$ is Fr\'echet. Then,
by Banach open mapping theorem, the identity map $E\to\Ot_{\!{M}}(\Omega)$
is an isomorpism of topological vector spaces. 
In particular, for all compact neighborhoods $K$ of $p_0$ in $\Omega$ there is a constant
$C_K>0$ such that 
\begin{equation*}
 |u(p_0)|^2\leq{C}_K\int_K |u|^2 d\lambda,\quad \forall u\in\Ot_{\!{M}}(\Omega).
\end{equation*}
Applying this inequality to $f^{\nu}$, we obtain that 
\begin{equation*}
 1\leq\int_K |f|^{2\nu}d\lambda \leq\int_K d\lambda.
\end{equation*}
The sequence $\{f^\nu\}$ is compact in $\Ot_{\!{M}}(\Omega)$, because, by the hypo-ellipticity
assumption 
and the Ascoli-Arzel\`a theorem, restriction to a relatively compact subset of $CR$ functions
is a compact map. Hence we can extract from $\{f^\nu\}$ a sequence that converges to a 
$CR$ function $\phi$, which is non-zero because 
it has a positive square-integral on every compact
neighborhood of $p_0$. We note now that $|\phi|$ is continuous, and takes only the values $1$,
at points where $|f|=1$, 
and $0$ at points where $|f|<1$. Since $\phi\neq{0}$, we have $|\phi|\equiv{1}$ on $\Omega$
and hence $|f|\equiv{1}$ on $\Omega$. 
By applying the preceding argument
to $p\to\tfrac{1}{2}(1+f(p))$, we obtain that  $|1+f(p)|=2$ on $\Omega$. Hence
$\re{f}\equiv{1}$, which yields $f\equiv{1}$, on $\Omega$.  
\end{proof}
Under the assumptions of 
Proposition~\ref{pp2.1}, a $CR$ function $f\in\Ot_{\!{M}}(\Omega)$ is constant on a neighborhood of
any point where $|f|$ attains a local maximum.\par 

Then we have 
\begin{prop}
 Assume that 
\begin{enumerate}
 \item[$(i)$] all germs of $CR$ distribution on $M$ are smooth;
 \item[$(ii)$] the weak unique-continuation principle for $CR$ functions is valid on $M$.
\end{enumerate}
Then any $CR$ function $f,$ defined on a connected open subset $\Omega$ of $M$,
for which $|f|$ attains a local maximum at some point of $\Omega$, is constant.\qed
\end{prop}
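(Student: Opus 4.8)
The plan is to reduce the global statement to two ingredients already in hand: the \emph{local} constancy furnished by the maximum modulus principle, and hypothesis $(ii)$. Suppose $f$ is a $CR$ function on the connected open set $\Omega$ and that $|f|$ has a local maximum at some $p_0\in\Omega$; fix a connected open neighborhood $V$ of $p_0$ in $\Omega$ with $|f(p)|\le|f(p_0)|$ for all $p\in V$. By hypothesis $(i)$ we are in the setting of Proposition~\ref{pp2.1}, so $f\in\Ot_{\!{M}}(V)$ and $|f|$ attains its supremum over the connected open set $V$ at the interior point $p_0$. Applying Proposition~\ref{pp2.1} on $V$ (equivalently, the remark following it) gives that $f$ is constant on $V$, say $f\equiv c$ there.

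Next I would propagate this from $V$ to all of $\Omega$. The function $g=f-c$ is again a $CR$ function on $\Omega$ — every $Z\in\Zf(M)$ annihilates constants and $\Ot_{\!{M}}(\Omega)$ is a $\Ci$-module — and $g$ vanishes on the nonempty open set $V$. By the weak unique continuation principle $(ii)$, valid on $M$ and hence on its connected open subset $\Omega$, $g$ must vanish identically on $\Omega$. Therefore $f\equiv c$ on $\Omega$, as claimed.

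I expect no serious obstacle here: the analytic content is entirely absorbed in Proposition~\ref{pp2.1}, and the present statement is essentially its combination with $(ii)$, upgrading ``constant near $p_0$'' to ``constant on $\Omega$''. The only points requiring care are bookkeeping: making sure the local maximum hypothesis genuinely produces a \emph{nonempty open} set of constancy (this is exactly why $(i)$ must be invoked, through Proposition~\ref{pp2.1}), that one may shrink to a connected neighborhood $V$ so that Proposition~\ref{pp2.1} applies, and that the weak unique continuation principle is used on the connected domain $\Omega$ rather than on $M$ itself.
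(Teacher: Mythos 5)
Your argument is correct and coincides with the proof the paper intends (the statement carries a \qed precisely because it is the combination of the local constancy furnished by Proposition~\ref{pp2.1} with hypothesis $(ii)$, exactly as you write). The only remark worth making is that hypothesis $(i)$ is even stronger than the hypothesis of Proposition~\ref{pp2.1} (which only requires smoothness of locally $L^2$ $CR$ distributions), so the reduction is immediate.
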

We recall that the weak unique-continuation property $(ii)$ means that a $CR$ function
$f\in\Ot_{\!{M}}(\Omega)$ which is zero on an open subset $U$ of $\Omega$ is zero
on the connected component of $U$ in $\Omega$. \par
\begin{defn}
 We say that $M$ has property $(H)$ if $(i)$ holds,
 and property $({W\!{U}\!{C}})$ if $(ii)$ holds.  We say that $(H)$ (or $(W\! U\! C)$) 
 holds at $p$
 if it holds when $M$ is substituted by a
 sufficiently small open neighborhood of $p$ in $M$. \end{defn}
 For a locally $CR$-embeddable $CR$ manifold $M$
the implication $(H)\Rightarrow(W\! U\! C)$ is a consequence of~\cite{NaPo}.
In fact, 
$(H)$ implies \textit{minimality}, which implies $(W\! U\! C)$ when $M$ is locally $CR$-embeddable
(see \cite{Tr90, Tu89}). In fact, in this case $(H)$ implies
 the \textit{strong unique continuation principle}  for $CR$ functions.
\begin{prop} \label{propo2.3}
 Assume that $M$ is a $CR$ submanifold of a complex manifold ${\Xf}$ and that $M$ has
 property $(H)$. Then a $CR$ function, defined on a connected open subset $\Omega$ of $M$ and
 vanishing to infinite order at a point $p_0$ of $\Omega$ 
 is identically zero in $\Omega$.
\end{prop}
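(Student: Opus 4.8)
The plan is to reduce the statement to the identity principle for holomorphic functions, exploiting that property $(H)$ turns every $CR$ function on $M$ into the restriction of a genuine holomorphic function defined on an \emph{open} subset of $\Xf$.

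First I would localize. It suffices to show that $f$ vanishes on some neighbourhood of $p_0$ in $M$: indeed $M$ has property $(H)$, hence (as recalled above) is minimal, and, being a $CR$ submanifold of $\Xf$, it satisfies the weak unique continuation property $(W\!U\!C)$ by \cite{Tr90,Tu89}, so the connectedness of $\Omega$ propagates $f\equiv 0$ to all of $\Omega$. Shrinking $M$ around $p_0$ and replacing $\Xf$ by the intrinsic complexification of $M$ at $p_0$ --- which alters neither the $CR$ structure of $M$ nor property $(H)$ --- I may assume that $M$ is generic in $\Xf$. By the main theorem of \cite{NaPo}, property $(H)$ for the embedded $M$ is equivalent to the holomorphic extendability of its $CR$ functions; hence $f$ is the restriction to $M$ of a function $F$ holomorphic on a connected open neighbourhood $V$ of $p_0$ in $\Xf$.

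Next comes the key point, where genericity is used. In holomorphic coordinates $(z,w)\in\C^{n}\times\C^{k}$ centred at $p_0$ with $M=\{\,\im w=\varphi(z,\bar z,\re w)\,\}$, $\varphi(0)=0$, $d\varphi(0)=0$, substitution of $w=u+i\varphi(z,\bar z,u)$ into the Taylor expansion $F(z,w)=\sum_{\alpha,\beta}c_{\alpha\beta}z^{\alpha}w^{\beta}$ lets one recover the coefficients $c_{\alpha\beta}$ inductively on the total degree from the derivatives of $f=F|_{M}$ at $p_0$. Since $f$ vanishes to infinite order at $p_0$, all $c_{\alpha\beta}$ vanish, so $F$ is flat at the \emph{interior} point $p_0$ of $V$ and hence, being holomorphic on the connected set $V$, $F\equiv 0$. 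Therefore $f\equiv 0$ on the neighbourhood $V\cap M$ of $p_0$, and, as observed at the outset, $(W\!U\!C)$ together with the connectedness of $\Omega$ finishes the proof.

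The one genuinely delicate ingredient --- and the reason $(H)$ rather than mere minimality is assumed --- is the appeal to \cite{NaPo}: it must supply an extension of $f$ to a \emph{full} neighbourhood of $p_0$ in $\Xf$, so that $p_0$ is an interior point of the domain of $F$ and flatness of $F$ there really does force $F\equiv 0$. An extension merely to a wedge with edge $M$, which minimality alone would provide, does not suffice: a holomorphic function on a wedge may vanish to infinite order at a point of its edge without vanishing identically. Checking that the hypotheses of the relevant theorem of \cite{NaPo} hold in this local situation is thus the crux; the jet computation and the propagation step are routine.
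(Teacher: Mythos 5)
Your proof is correct and follows essentially the same route as the paper's: reduce to a generic local embedding, invoke \cite{NaPo} to extend $f$ to a holomorphic $F$ on a full neighbourhood of $p_0$ in $\Xf$, use genericity to transfer flatness at $p_0$ from $f$ to $F$, and conclude by the identity principle. The only (immaterial) difference is the final propagation step: you invoke $(W\!U\!C)$, while the paper observes directly that the set of points of infinite-order vanishing is open (and closed) in the connected set $\Omega$.
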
 
\begin{proof} Let $f\in\Ot_{\!{M}}(\Omega)$. 
 It is sufficient to prove that the set of points where $f$ vanishes to infinite order is open in $\Omega$.
 This reduces the proof to a local statement, allowing us to assume that the embedding
 $M\hookrightarrow{\Xf}$ is generic. By \cite{NaPo}, any $CR$ function 
 $f$  extends to a holomorphic function $\tilde{f}$,
 defined on a connected open neighborhood $U$ of $p$ in ${\Xf}$. By the assumption
 that $M\hookrightarrow{\Xf}$ is generic, $\tilde{f}$ is uniquely determined 
 by the Taylor series of $f$ at $p$ in any coordinate chart, 
 and thus
 vanishes to infinite order at a point
 $p'\in{U}\cap\Omega$ if and only if $f$ does. Hence $f$ vanishes to infinite order at $p$ if and
 only if $\tilde{f}$ vanishes on $U$, and this is equivalent to the fact that $f$
vanishes identically on $U\cap\Omega$. The proof is complete.
\end{proof} 
\par\smallskip 
When $M$ is \textit{not} locally embeddable, there should be smaller local rings of $CR$ functions, so 
that in fact properties of regularity and  unique continuation should even be more likely true. 
Let us shortly discuss this issue.
Set
\begin{equation*}
 {T^*_p}^{1,0}M=\{\zetaup\in\C{T}^*_pM\mid \zetaup(Z)=0,\;\forall Z\in{T}^{0,1}_pM\}.
\end{equation*}
Note that, with 
\begin{equation*}
 {T^*_p}^{0,1}M=\overline{{T^*_p}^{1,0}M}
 =\{\zetaup\in\C{T}^*_pM\mid \zetaup(\bar{Z})=0,\;\forall Z\in{T}^{0,1}_pM\},
\end{equation*}
the intersection 
\begin{equation*}
 {T^*_p}^{1,0}M\cap{T^*_p}^{0,1}M=\C{H}^0_pM
\end{equation*}
is the complexification of the fiber of the characteristic bundle, and therefore different from zero, unless
$\Zf$ is an almost complex structure. Differentials of smooth $CR$ functions are sections of the
bundle ${T^*}^{1,0}M$. Thus,
for a fixed $p$, we can consider the map 
\begin{equation}\label{eq:2.2}
 \Ot_{\!{M},p}\ni{f}\longrightarrow df(p)\in{T^*_p}^{1,0}M.
\end{equation}
Clearly we have 
\begin{lem}
 A necessary and sufficient condition for $M$ to be locally $CR$-embed\-dable at $p$ is that
 \eqref{eq:2.2} is surjective.\qed
\end{lem}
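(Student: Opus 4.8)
The statement to prove is that $M$ is locally $CR$-embeddable at $p$ if and only if the evaluation map $\Ot_{\!{M},p}\ni f\mapsto df(p)\in T^{*\,1,0}_pM$ is surjective. The plan is to treat the two implications separately, the necessity being essentially tautological and the sufficiency requiring a genuine construction.

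For \emph{necessity}, suppose $\phiup\colon U\hookrightarrow\Xf$ is a local complex embedding of a neighborhood $U$ of $p$, with $\Xf$ of complex dimension $N$. Writing $w_1,\dots,w_N$ for holomorphic coordinates on $\Xf$ centered at $\phiup(p)$, the components $f_j=w_j\circ\phiup$ are smooth $CR$ functions on $U$, so their germs lie in $\Ot_{\!{M},p}$. Because $\phiup$ is an embedding (in particular an immersion), $d\phiup_p\colon \C T_pM\to \C T_{\phiup(p)}\Xf$ is injective, and by the definition of complex embedding it carries $T^{0,1}_pM$ into $T^{0,1}_{\phiup(p)}\Xf$ and, taking conjugates and the characteristic part, matches the $(1,0)$-cotangent spaces; concretely, the pullbacks $d\phiup^*(dw_j)=df_j(p)$ span $T^{*\,1,0}_pM$. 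Hence the map \eqref{eq:2.2} is onto. I would spell out the linear-algebra identification $T^{*\,1,0}_pM=(\C T_pM/T^{0,1}_pM)^*$ so that surjectivity of $df(p)$ amounts to the $f_j$ separating the relevant cotangent directions.

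For \emph{sufficiency}, assume \eqref{eq:2.2} is surjective. Then, since $T^{*\,1,0}_pM$ has dimension $n+k$ (the $CR$-dimension plus codimension), I can pick germs $f_1,\dots,f_{n+k}\in\Ot_{\!{M},p}$ whose differentials $df_1(p),\dots,df_{n+k}(p)$ form a basis of $T^{*\,1,0}_pM$. Choose smooth representatives on a neighborhood $U$ of $p$ and set $\phiup=(f_1,\dots,f_{n+k})\colon U\to\C^{n+k}$. By construction each $f_j$ satisfies $Zf_j=0$ for all $Z\in\Zf$, so $d\phiup(Z_q)$ lies in $T^{0,1}_{\phiup(q)}\C^{n+k}$ for $q$ near $p$; thus $\phiup$ pulls the complex structure back to (a structure containing) $\Zf$. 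The differentials $df_j$ restricted to $HM$ are the real/imaginary parts of the $(1,0)$-forms, and since the $df_j(p)$ are independent over $\C$ one checks $d\phiup_p\colon T_pM\to\R^{2(n+k)}$ is injective; shrinking $U$, $\phiup$ is a smooth embedding. It remains to verify that the induced $\Zf$-structure is exactly $\Zf$ and not larger, i.e.\ that $\dim_\C(T^{0,1}_{\phiup(q)}\C^{n+k}\cap\C T_q\phiup(U))=n$; this holds at $p$ by the dimension count ($2(n+k)-(n+k)=n+k$ real dimensions of the maximal complex subspace forces the $CR$-dimension to be $n$ once one knows $\Zf\subset$ pullback) and hence on a neighborhood by semicontinuity, using that $M$ is almost-$CR$ with $\Zf$ locally free of rank $n$.

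The \textbf{main obstacle} is the last point of the sufficiency argument: arranging that $\phiup$ is an \emph{embedding inducing precisely} the given $CR$ structure, rather than merely a map whose pullback contains $\Zf$. One must rule out that the embedded image acquires extra $CR$ directions, which is where the rank/dimension bookkeeping ($n+k$ independent $(1,0)$-differentials, total real dimension $2n+k$) is used, together with the formal integrability of $\Zf$ to guarantee that the pulled-back structure is genuinely the complex structure of an embedded submanifold near $p$. I expect the verification that $d\phiup_p$ is injective on all of $T_pM$ (not just on $HM$) to require a short separate argument, since independence of the $df_j(p)$ is a priori only a statement about $T^{*\,1,0}_pM=(\C T_pM/T^{0,1}_pM)^*$, and one needs to see that these forms also detect the characteristic directions $T_pM/H_pM$ — which they do because $\C H^0_pM=T^{*\,1,0}_pM\cap T^{*\,0,1}_pM$ sits inside $T^{*\,1,0}_pM$.
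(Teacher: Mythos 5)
The paper offers no proof of this lemma (it is stated as ``Clearly we have \dots'' and closed with a \qed), so there is nothing to compare against; your argument is the standard one and is essentially correct: pulling back coordinates gives necessity, and a spanning family $f_1,\hdots,f_{n+k}$ with $df_j(p)$ a basis of ${T^*_p}^{1,0}M$ gives the embedding, with injectivity of the differential on real tangent vectors following exactly as you say from ${T^*_p}^{1,0}M+{T^*_p}^{0,1}M=\C T^*_pM$. One repair: your parenthetical dimension count in the last step only bounds the $CR$-dimension of the image from \emph{below}; the needed upper bound is immediate instead from the spanning hypothesis, since any $v\in\C T_pM$ with $d\phiup(v)\in T^{0,1}_{\phiup(p)}\C^{n+k}$ satisfies $df_j(v)=0$ for all $j$, hence is annihilated by all of ${T^*_p}^{1,0}M$ and so lies in $T^{0,1}_pM$; constancy of the rank near $p$ then follows by the semicontinuity argument you give.
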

We can associate to the map \eqref{eq:2.2} a pair $(\nu_p,k_p)$ of nonnegative integers, with\par
\centerline{
$k_p=\dim_{\C}\{df(p)\mid{f}\in\Ot_{\!{M},p}\}\cap{\C}H^0_pM$, \;\; and\;\;
$\nu_p+k_p=\dim_{\C}\{df(p)\mid{f}\in\Ot_{\!{M},p}\}$.}
The numbers $\nu_p$ and $\nu_p+k_p$ are upper semicontinuous functions of 
$p$
and hence locally constant on a dense open subset $\ring{M}$ of $M$. 
Thus we can introduce 
\begin{defn}
 We call \emph{generic} the points of the open dense subset $\ring{M}$ of
 $M$, where $\nu_p$ and $\nu_p+k_p$ are locally constant. 
\end{defn}

\begin{prop}\label{prop2.5}
 Assume that $M$ has property $(H)$. Then the strong unique continuation
principle is valid at generic points $p_0$ of $M$.
This means that $f\in\Ot_{\!{M},p_0}$ is the zero germ if 
 and only if vanishes to infinite
 order at $p_0$. \par
 Moreover, there are finitely many germs $f_1,\hdots,f_\mu\in\Ot_{\!{M},p_0}$,
 vanishing at $p_0$,  
 such that, for 
 every $f\in\Ot_{\!{M},p_0}$ we can find $F\in\Ot_{\C^{\mu},0}$ such that $f=F(f_1,\hdots,f_\mu)$.
\end{prop}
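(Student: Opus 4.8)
The plan is to reduce the assertion, near a generic point $p_0$, to the locally embeddable case of Proposition~\ref{propo2.3} and to the holomorphic extension theorem of \cite{NaPo}, by using the differentials of $CR$ functions to realize a neighbourhood of $p_0$ as a $CR$ submersion onto a generically embedded $CR$ submanifold of some $\C^\mu$. Concretely, put $\mu=\nu_{p_0}+k_{p_0}=\dim_{\C}\{df(p_0)\mid f\in\Ot_{\!{M},p_0}\}$ and choose germs $f_1,\dots,f_\mu\in\Ot_{\!{M},p_0}$ with $f_j(p_0)=0$ whose differentials form a basis of $\{df(p_0)\mid f\in\Ot_{\!{M},p_0}\}$. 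Since $p_0$ is generic, on a small neighbourhood $U$ of $p_0$ the dimension of $\{df(p)\mid f\in\Ot_{\!{M},p}\}$ stays equal to $\mu$; hence $df_1(p),\dots,df_\mu(p)$ remain linearly independent and span that space for every $p\in U$, and a linear algebra computation shows that $\Phi:=(f_1,\dots,f_\mu)\colon U\to\C^\mu$ has locally constant rank. By the constant rank theorem, after shrinking $U$ the image $N:=\Phi(U)$ is an embedded submanifold of $\C^\mu$ and $\Phi\colon U\to N$ is a submersion with connected fibres, locally a linear projection.

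Next one transfers the structure to $N$. Because each $f_j$ is $CR$, $d\Phi(T^{0,1}M)\subseteq T^{0,1}\C^\mu\cap\C TN$, which is precisely the induced $CR$ structure $T^{0,1}N$; so, after a further shrinking, $N$ is a $CR$ submanifold of $\C^\mu$ and $\Phi$ is a $CR$ submersion with $T^{0,1}N=d\Phi(T^{0,1}M)$. For an arbitrary $f\in\Ot_{\!{M},p_0}$ one has $df(p)\in\operatorname{span}_{\C}\{df_1(p),\dots,df_\mu(p)\}$ for all $p\in U$, so $f$ is constant along the connected fibres of $\Phi$ and factors as $f=g\circ\Phi$ with $g\in\Ot_{\!{N},\Phi(p_0)}$; the germ $g$ is smooth by the local product form of $\Phi$ and is $CR$ because $d\Phi\colon T^{0,1}M\to T^{0,1}N$ is surjective. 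The injective pullback $\Phi^*$ then sends $\{dg(\Phi(p_0))\mid g\in\Ot_{\!{N},\Phi(p_0)}\}$ onto $\{df(p_0)\mid f\in\Ot_{\!{M},p_0}\}$, so this last space has complex dimension $\mu=\dim_{\C}{T^*_{\Phi(p_0)}}^{1,0}N$, which forces the embedding $N\hookrightarrow\C^\mu$ to be generic. Finally $N$ inherits property $(H)$: if $u$ is a $CR$ distribution on $N$, then $\Phi^*u$ is a $CR$ distribution on $U$, smooth by property $(H)$ of $M$, whence $u$ is smooth because $\Phi$ is a submersion.

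Now the conclusion is immediate. Since $N$ is a $CR$ submanifold of the complex manifold $\C^\mu$ enjoying property $(H)$, Proposition~\ref{propo2.3} applies to it. If $f\in\Ot_{\!{M},p_0}$ vanishes to infinite order at $p_0$, then, by the local product form of $\Phi$, its factor $g$ vanishes to infinite order at $\Phi(p_0)$; hence $g\equiv 0$ near $\Phi(p_0)$ and $f=g\circ\Phi$ is the zero germ, which is the strong unique continuation at $p_0$. For the representation statement, property $(H)$ makes $N$ minimal, so by \cite{NaPo} every $g\in\Ot_{\!{N},\Phi(p_0)}$ is the restriction to $N$ of some $\tilde g\in\Ot_{\C^\mu,0}$; since $f_1,\dots,f_\mu$ are by construction the components of $\Phi$, writing $f=g\circ\Phi$ and $F:=\tilde g$ gives $f=F(f_1,\dots,f_\mu)$ with $F\in\Ot_{\C^\mu,0}$, the germs $f_1,\dots,f_\mu$ depending only on $p_0$.

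The substantive part of this argument is the reduction carried out in the first two paragraphs: establishing that $\Phi$ has locally constant rank — which is exactly where the genericity of $p_0$ is used — that the leaf space $N$ is a genuine generically embedded $CR$ submanifold of $\C^\mu$ onto which every $CR$ germ at $p_0$ descends, and that property $(H)$ propagates from $M$ to $N$ along the submersion $\Phi$. Once this is in place, everything follows formally from the already proved embedded case (Proposition~\ref{propo2.3}) together with \cite{NaPo}; alternatively, one could invoke directly the structure theory of hypo-analytic manifolds at generic points from \cite{t92}.
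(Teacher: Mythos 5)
Your proof is correct and follows essentially the same route as the paper: choosing $CR$ germs $f_1,\dots,f_\mu$ whose differentials span $\{df(p)\mid f\in\Ot_{\!M,p}\}$ near the generic point, realizing $\Phi=(f_1,\dots,f_\mu)$ as a $CR$ submersion onto a generically embedded $CR$ submanifold $N\subset\C^\mu$, transferring property $(H)$ to $N$, and invoking \cite{NaPo} and the embedded case to write every germ as a holomorphic function of $f_1,\dots,f_\mu$. Your explicit factorization $f=g\circ\Phi$ via constancy on the connected fibres only spells out a step the paper leaves implicit.
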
 
\begin{proof} By the assumption that $p_0$ is generic,
we can fix a connected 
open neighborhood $U$ of $p_0$ in $M$ and functions $f_1,\hdots,f_\mu\in\Ot_{M}(U)$,
vanishing at $p_0$, 
such that $df_1(p)\wedge\cdots\wedge{df_\mu(p)}\neq{0}$ for all $p\in{U}$ and $df_1(p),\hdots,
df_\mu(p)$ generate the image of \eqref{eq:2.2} for all $p\in{U}$. Then,
by shrinking $U$, if needed, we can assume that
\begin{equation*}
 \phiup:U\ni{p}\longrightarrow (f_1(p),\hdots,f_\mu(p))\in{N}\subset\C^{\mu}
\end{equation*}
is a smooth real vector bundle on 
a generic
$CR$ submanifold $N$ of $\C^\mu$, of $CR$-dimension $\nu_{\!{p_0}}$
and $CR$-codimension $k_{p_0}$. \par
In fact, we can assume that $\re{df_1},\hdots,\re{df_\mu},\im(df_1),\hdots,\im(df_\nu)$
are linearly independent on $U$. We can fix local coordinates $x_1,\hdots,x_m$ 
centered at $p_0$ with $x_1,\hdots,x_{\mu+\nu}$ equal to $\re{f}_1,\hdots,\re{f_\mu},\im{f_1},
\hdots, \im{f}_\nu$. By the assumption, in these local coordinates $\im{f}_{\nu+1},\hdots,\im{f}_\mu$
are smooth functions of $x_1,\hdots,x_{\mu+\nu}$ and this yields a parametric representation of
$N$ as a graph of $\C^\nu\times\R^{\mu-\nu}$ in $\C^\mu$, which is therefore locally a 
generic $CR$-submanifold
of type $(\nu,\mu-\nu)$ of $\C^\mu$.  The map $\phiup:U\to{N}$ is $CR$ and therefore
the pullback of germs of continuous $CR$ function on $N$ define
germs of continuous $CR$ function on $M$. 
If $M$ has property $(H)$, then the $\Ci$ regularity of their pullbacks implies the $\Ci$ regularity of the germs
on $N$. Thus $N$ also has property $(H)$, and, since it is embedded in $\C^\mu$, by \cite{NaPo},
all $CR$ functions on an open neighborhood $\omega_0$ of $0$ in 
$N$ are the restriction of homomorphic functions on a full open neighborhood $\tilde{\omega}_0$ of
$0$ in $\C^\mu$, with $\omega_0=\tilde{\omega}_0\cap{N}$. Since $f_i=\phiup^*(z_i)$ for the
holomorphic coordinates $z_1,\hdots,z_\mu$ of $\C^\mu$, we obtain that all germs of $CR$ functions
at $p_0\in{M}$ are germs of holomorphic functions of $f_1,\hdots,f_\mu$. 
This clearly implies the validity at $p_0$ of the strong unique continuation principle.
The proof is complete.
\end{proof}

\section{The kernel of the Levi form and the \textsl{(H)} property} \label{s3}
To a finite set $Z_1,\hdots,Z_r$ of vector fields in $\Zf(M)$ we associate 
the  real valued
vector field 
\begin{equation}\label{e2.1}
Y_0=\frac{1}{2i}\, {\sum}_{j=1}^r[Z_j,\bar{Z}_j].
\end{equation}
\par
Any $CR$ function $u$ on $M$ satisfies
the degenerate-Schr\"odinger-type equation 
\begin{gather}
 S_{\!}u=0, \qquad \text{with}\\ \label{e2.2}
  S_{\!}= -iY_0 +\tfrac{1}{2}{\sum}_{j=1}^r(Z_j\bar{Z}_j+\bar{Z}_j{Z}_j)= -iY_0+{\sum}_{j=1}^{2r}X_j^2,
\end{gather}
where $X_j=\re{Z}_j$, 
$X_{j+r}=\im{Z}_{j}$ for $1\leq{j}\leq{r}.$
In fact, by \eqref{e2.1}, we have 
\begin{equation*}
 S_{\!}=\tfrac{1}{2}{\sum}_{i=1}^r\bar{Z}_jZ_j,
\end{equation*}
and  thus
 the operator $S$ belongs to the left ideal, in the ring of scalar linear partial differential operators
 with complex smooth coefficients,
 generated by $\Zf(M)$.
\par 
We note that $S_{\!}$ is of the second order, with a real principal part which is uniquely determined
by $\tauup=Z_1\otimes\bar{Z}_1+
\cdots+Z_r\otimes\bar{Z}_r\in\Gamma(H^{1,1}M)$, 
while a different choice of the 
$Z_j$'s would yield a new $Y_0'$, differing form $Y_0$ 
by the addition
of a linear combination of $X_1,\hdots,X_{2r}$. 
\par 

\smallskip
\par 
If we assume that $\tauup\in\ker(\Lf)$, then 
\begin{equation}\label{1.4a}
{\sum}_{i=1}^r[Z_j,\bar{Z}_j]=\bar{L}_0-{L}_0 
\end{equation}
for some $L\in\Zf(M)$, which 
is uniquely determined by $\tauup$ modulo 
a linear combination with $\Ci$ coefficients of $Z_1,\hdots,Z_r$.
Thus 
the distributions of \textit{real} vector fields \begin{equation} 
\label{3.5eq}
\left\{
\begin{aligned}
 \Qf_1(\tauup)&=\langle \re{Z}_1,\hdots,\re{Z}_r,\im{Z}_1,\hdots,\im{Z}_r\rangle,\\
 \Vf_1(\tauup)&=\Li(\Qf_1(\tauup)),\\
 \Vf_2(\tauup)&=\Li(\Qf_1(\tauup)+
 \re{L_0}
 ),
\end{aligned}\right.\end{equation}
are uniquely determined by $\tauup$ and $\Zf$.
By %
$\Li(...)$ 
we %
indicate 
the formally integrable
distribution of real vector fields, which is  
generated by the elements of the set inside the parentheses 
and their iterated commutators.
Note that $\Vf_1(\tauup)\subseteq\Vf_2(\tauup)$
and, while  $Y_0=\im{L}_0\in\Vf_1(\tauup)$, 
the vector field
$X_0=\re{L}_0$ 
may not belong to $\Vf_1(\tauup)$.
We also introduce, for further reference, the distributions of \textit{complex} vector fields 
\begin{equation}\label{theta3.6} \begin{cases}
 \Theta(\tau)=\langle Z_1,\hdots,Z_r\rangle\;\; \text{and}\;\; 
 \Theta={\bigcup}_{\tauup\in\Kf}\Theta(\tauup),\\
 \tilde{\Theta}(\tau)= \Theta(\tau)+
 \langle L_0 \rangle\;\; \text{and}\;\; \tilde{\Theta}={\bigcup}_{\tauup\in\Kf}\tilde{\Theta}(\tauup)
 \end{cases}
\end{equation}
\par 
When there is a
$\tauup\in\Kf(\Omega^{\mathrm{open}}),$ 
we utilize \eqref{1.4a} to show that the 
real and imaginary parts of $CR$ functions or distributions on 
$\Omega\subset{M}$ 
are solutions of 
a \textit{real degenerate-elliptic scalar second order differential equation}.
Indeed, 
if 
$f$ is a $CR$ function, or distribution, 
in 
$\Omega$, 
then 
\begin{equation*}
 L_0f=0,\quad Z_jf=0 \Longrightarrow (\bar{L}_0+{L}_0)f={\sum}_{i=1}^r({Z_j\bar{Z}_j+\bar{Z}_jZ_j})f.
 \end{equation*}
 This is  a consequence of the algebraic 
 identities 
\begin{equation}\label{1.4b}
 \tfrac{1}{2}\left\{{\sum}_{i=1}^r({Z_j\bar{Z}_j+\bar{Z}_jZ_j})
 -(\bar{L}_0+{L}_0)\right\}=
 {\sum}_{i=1}^r{\bar{Z}_jZ_j}-L_0={\sum}_{i=1}^rZ_j\bar{Z}_j-\bar{L}_0.
\end{equation}\par 
It terms of the \textit{real} vector fields 
$X_0=\re{L}_0$ and 
$X_j=\re{Z}_j$, ${X_{r+j}=\im{Z}_j}$, for $1\leq{j}\leq{r}$, 
the linear partial differential operator of \eqref{1.4b} is 
\begin{equation}\label{2.2} 
P_{\!\tauup}
= - X_0+{\sum}_{i=1}^{2r}X_j^2,
\end{equation}
which has real valued coefficients and
is degenerate-elliptic according to \cite{Bo69}. 
Thus \textsl{the real and imaginary parts of a $CR$ function, or distribution, both
satisfy the homogeneous equation $P_{\!\tauup}\phiup=0$.} \par 

Actually, 
$P_{\!\tauup}$ is independent of the choice of 
$Z_1,\hdots,Z_r$
in the representation of $\tauup$, 
as we will later show by
Proposition~\ref{p4.6}, 
by representing 
$P_{\!\tauup}$ in terms of the $dd^c$ operator~on~$M$. 
We also have (see \cite{HN03}):
\begin{lem} If $u\in\Ot_{\!{M}}(\Omega)$, then \begin{equation}
P_{\!\tauup}|u|\geq{0},\quad \text{on \;$\Omega\cap\{u\neq{0}\}$.}
\end{equation}
\end{lem}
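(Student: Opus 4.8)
The plan is to show $P_{\tauup}|u| \ge 0$ on the open set $\{u \neq 0\}$ by a direct computation, exploiting that on this set $|u| = \sqrt{u\bar u}$ is smooth and that $u$ is annihilated by all $Z_j$ and by $L_0$. The underlying identity driving everything is \eqref{1.4b}, which says that on $CR$ functions $P_{\tauup}$ acts as $\sum_{i=1}^r (Z_j\bar Z_j + \bar Z_j Z_j)$ (the real operator agrees with this complex one after using $L_0 f = 0$ and $\bar L_0 f = \overline{L_0 \bar f\,}$-type bookkeeping). So it suffices to understand the action of each $Z_j\bar Z_j + \bar Z_j Z_j$ on $|u|$.

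First I would reduce to a pointwise statement: fix $p$ with $u(p)\neq 0$ and, after multiplying $u$ by a unit constant, assume $u(p) > 0$; then near $p$ write $|u| = (u\bar u)^{1/2}$ and compute second derivatives along the commuting-with-conjugation pair $(Z_j, \bar Z_j)$. The key elementary fact is the following: for any complex vector field $Z$ with $Zu = 0$ (so $\bar Z \bar u = 0$) and any smooth positive function $w = u\bar u$, one has $Z\bar Z (w^{1/2}) = \tfrac{1}{2} w^{-1/2} Z\bar Z w - \tfrac{1}{4} w^{-3/2} (Zw)(\bar Z w)$, and since $Zw = u(Z\bar u) = u\,\overline{\bar Z u}$... more carefully, $Zw = (Zu)\bar u + u(Z\bar u) = u \overline{(\bar Z u)}$. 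Writing $a_j := \bar Z_j u$ (a smooth function), one gets $Z_j w = u\,\bar a_j$, $\bar Z_j w = \bar u\, a_j$, and $Z_j \bar Z_j w = |a_j|^2 + \bar u (Z_j a_j)$; combining, the first-order terms assemble into a manifestly nonnegative Cauchy–Schwarz-type expression while the $Z_j a_j$ terms cancel against the corresponding contribution in $\bar Z_j Z_j w$ after summing and using once more that $u$ is $CR$. The upshot is an inequality of the shape
\begin{equation*}
\bigl(\tfrac{1}{2}{\sum}_j (Z_j\bar Z_j + \bar Z_j Z_j)\bigr)|u| \;=\; \frac{1}{|u|^3}\Bigl( |u|^2 {\sum}_j |\bar Z_j u|^2 - {\sum}_j |\,\overline{u}\,(\bar Z_j u)\,|^2 \cdot(\text{unit})\Bigr) \;\ge\; 0 ,
\end{equation*}
the nonnegativity being exactly Cauchy–Schwarz applied to the vectors $(u)$ and $(\bar Z_j u)_j$ in the relevant inner-product, so that $|u|^2|\bar Z_j u|^2 \ge |\bar u \cdot \bar Z_j u|^2$ termwise.

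Then I would feed this back through \eqref{1.4b}: since $L_0 u = 0$ and $P_{\tauup}$ applied to a $CR$ function equals $\tfrac12\{\sum_j(Z_j\bar Z_j+\bar Z_j Z_j) - (\bar L_0 + L_0)\}$ acting on it, and the $L_0$-terms kill $u$ (and, taking real parts, kill $|u|$ through the relation between $X_0 = \re L_0$ and the $CR$ equations), the operator $P_{\tauup}$ reduces on $\{u\neq 0\}$ to the nonnegative expression just computed. Care is needed because $|u|$ is not itself $CR$, only $u$ is; so the cancellation of the $L_0$-contribution must be checked at the level of $|u|$ by writing $X_0|u| = \re(L_0 \log|u|)\cdot|u|$-type formulas and using $L_0 u = 0$, which forces $L_0|u|$ to be expressible purely through the $Z_j$-derivatives already accounted for.

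**The main obstacle** I expect is precisely this last bookkeeping: keeping the identity \eqref{1.4b} honest at the level of the non-holomorphic function $|u|$ rather than at the level of $u$, i.e. making sure the first-order field $X_0$ (which is \emph{not} annihilated by the $CR$ equations, only $L_0$ is, acting on $u$) contributes nothing harmful. Once one commits to working on $\{u\neq 0\}$ and differentiating $(u\bar u)^{1/2}$ explicitly, this is a finite computation, and the sign at the end is Cauchy–Schwarz; but organizing the cancellations so the $L_0$ and $\bar L_0$ second-order pieces exactly match the $\sum_j Z_j \bar Z_j$ pieces (as in \eqref{1.4a}) — rather than leaving a leftover first-order term of indefinite sign — is the delicate point, and is why the statement is restricted to the set where $u$ does not vanish and where the degenerate-ellipticity of $P_{\tauup}$ (in the sense of \cite{Bo69}) is available.
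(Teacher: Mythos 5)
Your overall strategy --- differentiate $|u|=(u\bar u)^{1/2}$ directly on $\{u\neq 0\}$ and invoke the structure identity --- can be completed, but as written there is a genuine gap, and it sits exactly where you park ``the main obstacle'': the sign does \emph{not} come from Cauchy--Schwarz inside $\tfrac12\sum_j(Z_j\bar Z_j+\bar Z_jZ_j)|u|$, and the $X_0$-term is not bookkeeping but the term that produces the decisive cancellation. Concretely, with $a_j=\bar Z_ju$ and $w=u\bar u$ one has $Z_jw=u\bar a_j$, $\bar Z_jw=\bar u a_j$, $(Z_jw)(\bar Z_jw)=w|a_j|^2$, and therefore
\begin{equation*}
\tfrac12{\sum}_j(Z_j\bar Z_j+\bar Z_jZ_j)\,w^{1/2}
=\tfrac14 w^{-1/2}{\sum}_j|a_j|^2+\tfrac12 w^{-1/2}\,\re\Bigl(\bar u\,{\sum}_jZ_j\bar Z_ju\Bigr).
\end{equation*}
The second summand has no sign, so this sum of second derivatives is \emph{not} nonnegative by itself; moreover your displayed ``upshot'' cannot be correct, since $|u|^2|\bar Z_ju|^2-|\bar u\,\bar Z_ju|^2$ vanishes identically, so the right-hand side you write is $0$ and no Cauchy--Schwarz is at work. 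The terms $\bar u\,Z_ja_j$ and $u\,\overline{Z_ja_j}$ coming from $Z_j\bar Z_jw$ and $\bar Z_jZ_jw$ do not cancel each other; they add up to $2\re(\bar u\,Z_ja_j)$. What closes the argument is \eqref{1.4a}: since $Z_ju=0$, applying $\sum_j[Z_j,\bar Z_j]=\bar L_0-L_0$ to $u$ gives $\sum_jZ_j\bar Z_ju=\bar L_0u$, while $L_0u=0$ yields $X_0|u|=\tfrac12(L_0+\bar L_0)w^{1/2}=\tfrac12w^{-1/2}\re(\bar u\,\bar L_0u)$. Subtracting, the indefinite terms cancel exactly and $P_{\!\tauup}|u|=\tfrac14|u|^{-1}\sum_j|\bar Z_ju|^2\ge0$. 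Until that step is actually carried out, the proposal does not establish the inequality. (Also, the restriction to $\{u\neq0\}$ is needed because $|u|$ is not smooth where $u$ vanishes, not because degenerate ellipticity fails there.)

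For comparison, the paper disposes of the lemma in two lines: on $\{u\neq0\}$ a local branch of $\log u$ is again $CR$, so $P_{\!\tauup}(\log|u|)=P_{\!\tauup}(\re\log u)=0$ by the already established fact that real parts of $CR$ functions are $P_{\!\tauup}$-harmonic, and then the chain rule gives $P_{\!\tauup}|u|=P_{\!\tauup}e^{\log|u|}=|u|\bigl(P_{\!\tauup}(\log|u|)+\sum_j|Z_j\log|u|\,|^2\bigr)=|u|\sum_j|Z_j\log|u|\,|^2\ge0$. The exponential substitution packages precisely the cancellation you are struggling with into the identity \eqref{1.4b} applied to the $CR$ function $\log u$; you may find it worthwhile to redo your computation in that form.
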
 \begin{proof} 
On a neighborhood of a point where $u\neq{0}$ we can consistently define 
a branch of $\log{u}$.
This still is a $CR$ function, and from the previous observation it follows that 
$P_{\!\tauup}(\log|u|)=P_{\!\tauup}(\re\log{u})=0$ on  $\Omega\cap\{u\neq{0}\}$. 
Hence \begin{align*}
 P_{\!\tauup}|u|&= P_{\!\tauup}\exp(\log|u|)= |u|\left(P_{\!\tauup}(\log|u|)+{\sum}_{i=1}^r |Z_j(\log{|u|})|^2\right)
\\
&=|u| {\sum}_{i=1}^r |Z_j(\log{|u|})|^2\geq{0}
\end{align*}
there.
\end{proof}


We can use the treatment of the generalized Kolmogorov equation in \cite[\S{22.2}]{Hor85}
to slightly improve  the regularity result of \cite[Corollary 1.15]{AHNP08a}. Let us set 
\begin{equation} \Vf_2=\Li\left( {\bigcup}_{\tauup\in\Kf}\Vf_2(\tauup)\right),\qquad 
 \Yf=\Li(\Vf_2;\Hf), 
\end{equation}
where we use $\Li(\Vf_2;\Hf)$ for the $\Vf_2$-Lie module generated by $\Hf$, which consists 
of the linear combinations, with smooth real coefficients, of the elements of $\Hf$ and their
iterated commutators with elements of $\Vf_2$: 
\begin{equation}
 \Li(\Vf_2;\Hf)=\Hf+[\Vf_2,\Hf]+[\Vf_2,[\Vf_2,\Hf]]+[\Vf_2,[\Vf_2,[\Vf_2,\Hf]]]+\cdots
\end{equation}
Note that $\Vf_2\subset\Li(\Vf_2;\Hf)$ and that 
both $\Vf_2$ and $\Yf$ are fine sheaves. 
\begin{thm}\label{thm3.3}
 $M$ has property $(H)$ at all points $p$ where $\{Y_p\mid Y\in\Yf(M)\}=T_pM$.
\end{thm}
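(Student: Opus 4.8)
The plan is to reduce the $(H)$-property at such a point $p$ to a known subelliptic hypoellipticity criterion of H\"ormander–Kohn type, applied to the full overdetermined system generated by $\Zf$, by showing that the bracket-generated span of $\Yf$ supplies the missing directions. Concretely, a $CR$ distribution $f$ on a neighborhood of $p$ satisfies $Zf=0$ for all $Z\in\Zf$, hence also $\bar Z f$ is controlled and, in particular, $f$ solves the degenerate-Schr\"odinger equation $S f=0$ of \eqref{e2.2} for every finite family $Z_1,\dots,Z_r$, and $\re f,\im f$ solve $P_{\!\tauup}\phiup=0$ for every $\tauup\in\Kf$ near $p$. The first step is therefore to observe that regularity of $f$ follows once we know the real vector fields entering these operators — namely $\Hf$ together with all the $X_0=\re L_0$ coming from sections $\tauup$ of $\Kf$, and their iterated commutators — span $T_pM$; this is exactly the hypothesis $\{Y_p\mid Y\in\Yf(M)\}=T_pM$, since $\Yf=\Li(\Vf_2;\Hf)$ and $\Vf_2$ already contains each $X_0=\re L_0$ via $\Vf_2(\tauup)=\Li(\Qf_1(\tauup)+\re L_0)$.

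The second step is the analytic core: I would invoke the treatment of the generalized Kolmogorov operator in \cite[\S22.2]{Hor85}. There one treats operators of the form $-X_0+\sum X_j^2$ (exactly our $P_{\!\tauup}$, and in the same spirit $S$) and proves $\Ci$-hypoellipticity under the bracket condition that $X_0,X_1,\dots,X_{2r}$ together with their iterated commutators span the tangent space at each point — the first-order "drift" term $X_0$ being allowed to contribute to the H\"ormander bracket span on equal footing with the $X_j$. The key point that makes this applicable here is that $f$ is not merely a solution of one such scalar equation but of the whole system $\Zf f=0$; so one does not need a single $\tauup$ to be bracket-generating on its own. Instead one accumulates: the $X_j=\re Z_j,\im Z_j$ for \emph{all} $Z\in\Zf$ give $\Hf$; the $X_0$'s give the extra directions $\re L_0$; commutators of these (still annihilating, in the appropriate microlocal sense, the relevant derivatives of $f$) give $\Yf$. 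So the step is to package the system $\{Zf=0\}_{Z\in\Zf}$ together with the consequences $\{P_{\!\tauup}\phiup=0\}_{\tauup\in\Kf}$ into a single subelliptic estimate, using the $L^2$ a-priori inequality of H\"ormander's sum-of-squares-plus-drift theorem applied on the sheaf-theoretic module $\Yf$, and conclude $f\in\Ci$ near $p$ by the standard elliptic-regularization/commutator bootstrap once $\Yf$ spans $T_pM$.

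The technical bridge — and the step I expect to be the main obstacle — is verifying that the bracket condition for the module $\Yf$ really does license the H\"ormander estimate for the \emph{CR system} rather than merely for one scalar $P_{\!\tauup}$. Two subtleties must be handled: (a) the vector fields $X_0=\re L_0$ are only defined modulo $\Hf$ and only locally (a different choice of representing $Z_1,\dots,Z_r$ changes $L_0$ by a section of $\Theta(\tauup)$), so one must check the module $\Vf_2$, hence $\Yf$, is well defined and its span at $p$ is independent of choices — this is precisely the content of \eqref{3.5eq}, that $\Vf_1(\tauup),\Vf_2(\tauup)$ are uniquely determined by $\tauup$ and $\Zf$, so I would cite it; (b) the commutators $[\Vf_2,\Hf]$ that generate $\Yf$ must be shown to act on a $CR$ solution $f$ in a way that is still estimated — i.e., that $\re f,\im f$, being $P_{\!\tauup}$-harmonic and annihilated by $\Hf$, acquire controlled regularity in the commutator directions. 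This is where the improvement over \cite[Corollary 1.15]{AHNP08a} lies: previously only $\Vf_2(\tauup)$ itself (not the larger $\Li(\Vf_2;\Hf)$) was exploited, and the Kolmogorov-type argument of \cite{Hor85} is exactly what upgrades $\Vf_2$-directions to the full Lie module $\Yf$. I would therefore structure the proof as: (1) recall $f$ solves $\Zf f=0$ and $P_{\!\tauup}\phiup=0$; (2) set up the H\"ormander $L^2$ estimate for $\Hf\cup\{X_0\}_{\tauup\in\Kf}$ and their commutators à la \cite[\S22.2]{Hor85}; (3) observe that the bracket-generating set at $p$ is exactly $\Yf(M)$ evaluated at $p$; (4) conclude smoothness of $f$, hence $(H)$, at $p$.
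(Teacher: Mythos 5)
Your overall strategy is the same as the paper's: reduce property $(H)$ at $p$ to a subelliptic a priori estimate for the system $\{Zf=0\}_{Z\in\Zf}$ together with the operators $P_{\!\tauup}$, and obtain that estimate from the hypothesis that $\Yf=\Li(\Vf_2;\Hf)$ spans $T_pM$, in the spirit of H\"ormander's treatment of the Kolmogorov equation. You also correctly locate where the difficulty sits. But the step you flag as ``the main obstacle'' is precisely the new content of the paper's proof, and you do not supply it: there is no ready-made theorem in \cite[\S{22.2}]{Hor85} that you can invoke for this situation, because the directions of $\Yf$ are produced by iterated commutators of $\Hf$ with the drift fields $X_0=\re{L}_0$, and one must show that such commutators still satisfy subelliptic estimates relative to the data $\|P_{\!\tauup_j}f\|_0$, $\|Z_jf\|_0$, $\|f\|_0$. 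The paper does this by introducing the classes $\Sub_{\epsilon}(M)$ of vector fields $Y$ with $\|Yf\|_{\epsilon-1}\leq C(\sum\|P_{\!\tauup_j}f\|_0+\sum\|Z_jf\|_0+\|f\|_0)$ for $f\in\Cic(U)$, quoting from \cite{AHNP08a, HN00} that $X_1,\hdots,X_{2r}\in\Sub_1$ and $[X_i,\Sub_\epsilon]\subset\Sub_{\epsilon/2}$, and then proving the key new lemma $[X_0,\Sub_{\epsilon}(M)]\subset\Sub_{\epsilon/4}(M)$. That lemma requires a genuine argument: one replaces $\Pt$ by $\Qt=\Pt+c$ with $c\gg 0$ so that $(\Qt f|f)_0\geq 0$, splits $\Qt$ into self-adjoint and skew parts, and runs a pseudodifferential commutator computation that crucially uses the generalized Cauchy--Schwarz inequality $|(\Qt'f_1|f_2)|\leq\sqrt{(\Qt'f_1|f_1)(\Qt'f_2|f_2)}$ to absorb the terms involving $A\Qt'$ and $Y^*A$. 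Incidentally, your remark that the drift $X_0$ enters the bracket condition ``on equal footing'' with the $X_j$ is not quite right even qualitatively as a guide to the proof: commutation with $X_0$ costs more (order $\epsilon\mapsto\epsilon/4$ rather than $\epsilon\mapsto\epsilon/2$), which is harmless for the span condition but is exactly why the estimate must be reproved rather than quoted.

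A second, smaller gap is the passage from the a priori estimate on $\Cic(U)$ to regularity of actual $CR$ \emph{distributions}. The paper localizes with cutoffs and a properly supported pseudodifferential operator $A$ of order $\delta$, commutes it through the $P_{\!\tauup_j}$ and $Z_i$, and then appeals to Friedrichs' identity of weak and strong extensions \cite{F44} to conclude that a $CR$ distribution in $W^{\delta,2}_{\mathrm{loc}}$ lies in $W^{\delta+\epsilon,2}_{\mathrm{loc}}$, whence $(H)$ by bootstrap. Your outline asserts the conclusion (``conclude $f\in\Ci$ near $p$ by the standard elliptic-regularization/commutator bootstrap'') without this regularization step, which is needed because the estimate is only established for test functions. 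In short: right roadmap, but the two analytic steps that make the roadmap into a proof are missing.
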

Before proving the theorem, let us introduce some notation. For $\epsilon>0$ we 
denote by $\Sub_{\epsilon}(M)$ the set of real vector fields $Y\in\mathfrak{X}(M)$
such that, for every $p\in{M}$ there is 
a neighborhood 
${U^{\mathrm{open}}\Subset{M}}$ of $p$,
a constant 
 $C\geq{0}$, $\tauup_1,\hdots,\tauup_h\in\Kf(M)$ and complex vector fields 
$Z_1,\hdots,Z_\ell\in\Zf(M)$ such that 
\begin{equation}\label{sob3.11}
\|Yf\|_{\epsilon-1}\leq C\big({\sum}_{j=1}^h\|P_{\!\tauup_j}{f}\|_0+
{\sum}_{i=1}^\ell\|Z_jf\|_0+\|f\|_0\big),\;\;\forall f\in\Cic(U).
\end{equation}
The Sobolev norms of real order (and integrability two)
in \eqref{sob3.11}
are of course computed after 
fixing a Riemannian metric on $M$.
Different choices of the metric yield equivalent norms
(see e.g. {\cite{AHNP08a, heb96}} for technical details).
Beware that
the $Z_j$ in the right hand side of 
\eqref{sob3.11}
are not required to be 
related to those entering the definition of the
$P_{\!\tauup_j}$'s.
Set 
\begin{equation}
 \Sub(M)={\bigcup}_{\epsilon>0}\Sub_{\epsilon}(M).
\end{equation}
Theorem~\ref{thm3.3} will follow from the inclusion $\Yf(M)\subset\Sub(M)$.\par 
The following
Lemmas~\ref{lem3.4} and \ref{lem3.5} were proved in \cite{AHNP08a, HN00}. 
\begin{lem}\label{lem3.4}
 If $\tauup\in\Kf(M)$ and $\Pt=-X_0+{\sum}_{i=1}^{2r}X_i^2$, then 
 $X_1,\hdots,X_{2r}\in\Sub_1(M)$ and, 
 for every $U^{\mathrm{open}}\Subset{M}$
 there are a constant $C>0$ and $Z_1,\hdots,Z_\ell\in\Zf(M)$ such that 
\begin{equation} \vspace{-18pt}
 {\sum}_{i=1}^{2r}\|X_i{f}\|_0\leq C\left(\|f\|_0+{\sum}_{j=1}^\ell\|Z_j{f}\|_0\right),\;\;\forall f\in\Cic(U).
\end{equation} \qed
\end{lem}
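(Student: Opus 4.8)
The plan is to recognize $\Pt = -X_0 + \sum_{i=1}^{2r}X_i^2$ as a degenerate-elliptic operator of Hörmander sum-of-squares type plus a drift, and to extract from the nonnegativity of its real symbol the claimed subelliptic $L^2$-bound on the individual fields $X_i$. The starting point is the classical \emph{a priori} inequality for second-order operators with nonnegative characteristic form: since the principal part of $\Pt$ is $\sum X_i^2$, which is a sum of squares of real vector fields, a routine integration-by-parts argument on $\Cic(U)$ gives
\begin{equation*}
 {\sum}_{i=1}^{2r}\|X_if\|_0^2 = -\,\mathrm{Re}\,(\Pt f,f)_0 - \mathrm{Re}\,(X_0f,f)_0 + (\text{lower order}).
\end{equation*}
The term $(X_0f,f)_0$ is controlled by $\|f\|_0^2$ after integrating by parts (it produces only the divergence of $X_0$ against $|f|^2$), and $(\Pt f,f)_0$ is absorbed using Cauchy--Schwarz and the Peter--Paul inequality. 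This already yields an estimate of the form ${\sum}_i\|X_if\|_0 \le C(\|\Pt f\|_{-1}+\|f\|_0)$; the content of the lemma is to replace $\|\Pt f\|_{-1}$ by $\|f\|_0 + \sum_j\|Z_jf\|_0$, which is where the hypothesis $\tauup\in\Kf(M)$ enters.

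Next I would use the hypothesis $\tauup \in \ker\Lf$ in the form recorded in \eqref{1.4a}--\eqref{1.4b}: for $\tauup = Z_1\otimes\bar Z_1 + \cdots + Z_r\otimes\bar Z_r$ in the kernel of the Levi form there is $L\in\Zf(M)$ with ${\sum}_j[Z_j,\bar Z_j] = \bar L_0 - L_0$, and consequently, by \eqref{1.4b},
\begin{equation*}
 \tfrac12\Pt = {\sum}_{i=1}^r \bar Z_jZ_j - L_0 = {\sum}_{i=1}^r Z_j\bar Z_j - \bar L_0
\end{equation*}
as differential operators. Hence $\Pt$ lies in the left ideal generated by $Z_1,\dots,Z_r,L_0$ in the ring of first-order operators composed with $\Zf(M)$; more precisely $\Pt f$ is an explicit $\Ci$-linear combination of $\bar Z_j(Z_jf)$ and $L_0 f$. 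Therefore, testing against $f\in\Cic(U)$ and integrating by parts once to move the outer $\bar Z_j$ onto $f$,
\begin{equation*}
 |(\Pt f,f)_0| \le C\Big({\sum}_{j=1}^r\|Z_jf\|_0^2 + \|L_0 f\|_0\|f\|_0 + \|f\|_0^2\Big).
\end{equation*}
Finally, $L_0 \in \tilde\Theta(\tauup)$ is itself a first-order operator, and one either includes it among the $Z_j$'s of the conclusion (the statement allows an arbitrary finite list $Z_1,\dots,Z_\ell\in\Zf(M)$, and $L_0$ can be taken as one of them up to relabeling, since $L\in\Zf(M)$) or absorbs $\|L_0f\|_0\|f\|_0$ into the right side. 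Combining this with the \emph{a priori} inequality of the first paragraph and absorbing the $\sum_i\|X_if\|_0^2$ and cross-terms by Cauchy--Schwarz gives the desired bound, and the same computation exhibits each $X_i$ as satisfying \eqref{sob3.11} with $\epsilon = 1$, i.e. $X_i\in\Sub_1(M)$.

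The main obstacle is bookkeeping rather than conceptual: one must keep careful track of which first-order operators ($Z_j$, $\bar Z_j$, $L_0$, $\bar L_0$, and the real fields $X_0,\dots,X_{2r}$) appear after each integration by parts, and ensure that every term with a $\bar Z_j$ or $X_0$ that cannot be listed among the allowed $Z_j\in\Zf(M)$ on the right-hand side of \eqref{sob3.11} is either paired with an $f$ and estimated by $\|f\|_0^2$ via the divergence theorem, or re-expressed through \eqref{1.4b} in terms of the permitted operators. The key identity that makes this possible is precisely \eqref{1.4b}, which trades the "bad" field $X_0 = \re L_0$ against the composition $\sum\bar Z_jZ_j$ modulo genuine elements of $\Zf(M)$; without the assumption $\tauup\in\Kf(M)$ this trade is unavailable and the estimate fails. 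Since this lemma is quoted from \cite{AHNP08a, HN00}, I would at this point simply cite those references for the detailed verification.
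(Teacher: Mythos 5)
The paper gives no proof of Lemma~\ref{lem3.4}: it is stated as a quoted result, with the preceding text deferring entirely to \cite{AHNP08a, HN00}, so there is no in-paper argument to compare against; judged on its own, your reconstruction is correct and is the standard argument from those references. The essential points are all present: the identity $P_{\!\tauup}=\sum_{j}\bar{Z}_jZ_j-L_0$ from \eqref{1.4b} (note it holds for $P_{\!\tauup}$ itself, not $\tfrac{1}{2}P_{\!\tauup}$ as in your display) gives, after one integration by parts, $|(P_{\!\tauup}f,f)_0|\le C\bigl(\sum_j\|Z_jf\|_0^2+\|L_0f\|_0\|f\|_0+\|f\|_0^2\bigr)$; combining this with $\sum_i\|X_if\|_0^2=-\re(P_{\!\tauup}f,f)_0+O\bigl(\|f\|_0^2+\sum_i\|X_if\|_0\|f\|_0\bigr)$ and absorbing the error terms yields the stated bound with the admissible list $Z_1,\hdots,Z_r,L_0\in\Zf(M)$, hence $X_i\in\Sub_1(M)$; and you correctly locate the role of the hypothesis $\tauup\in\Kf(M)$, namely that it makes $L_0$ a genuine element of $\Zf(M)$ so that $\|L_0f\|_0$ is an allowed term on the right. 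The one other inaccuracy is the intermediate claim $\sum_i\|X_if\|_0\le C(\|P_{\!\tauup}f\|_{-1}+\|f\|_0)$: pairing $P_{\!\tauup}f$ against $f$ costs $\|P_{\!\tauup}f\|_{-1}\|f\|_{1}$, which is not controlled, so only the version with $\|P_{\!\tauup}f\|_0$ is justified --- but since that claim is never used in your final argument, the proof is unaffected.
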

Set $\Vf_1=\Li\left({\bigcup}_{\tauup\in\Kf}\Vf_1(\tauup)\right)$ and 
\begin{equation*}
 \Li(\Vf_1;\Hf)=\Hf+[\Vf_1,\Hf]+[\Vf_1,[\Vf_1,\Hf]]+[\Vf_1,[\Vf_1,[\Vf_1,\Hf]]]+\cdots.
\end{equation*}

\begin{lem}\label{lem3.5} We have the inclusion
$\Li(\Vf_1;\Hf)\subset\Sub.$ \qed
\end{lem}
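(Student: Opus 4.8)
The plan is to prove Lemma~\ref{lem3.5} by induction on the ``depth'' of iterated commutators with elements of $\Vf_1$, reducing everything to two building blocks: first, that the generators of $\Hf$ and of $\Vf_1(\tauup)$ already lie in $\Sub$, and second, that the bracket of a vector field in $\Sub$ with a vector field that is ``subordinate'' in a suitable quantitative sense stays in $\Sub$. The base case $\Hf\subset\Sub_1$ is essentially Lemma~\ref{lem3.4}: if $Y=\re Z$ or $Y=\im Z$ with $Z\in\Zf(M)$ then $Y=X_i$ for one of the $X_i$ appearing in some $\Pt$ (complete $Z$ to an $r$-tuple), so the estimate \eqref{sob3.11} with $\epsilon=1$ holds. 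Likewise the generators of $\Vf_1(\tauup)$ are, by \eqref{3.5eq}, just the $X_1,\dots,X_{2r}$ associated with $\tauup$, so they too lie in $\Sub_1(M)$ by Lemma~\ref{lem3.4}. The key structural fact driving the induction is that $Y_0=\tfrac{1}{2i}\sum_j[Z_j,\bar Z_j]=\im L_0$ is ``controlled'' by $\Pt$: from \eqref{1.4b} one reads $-2iY_0 = \Pt - X_0 + (\text{terms in } Z_j,\bar Z_j)$, and more usefully $Y_0$ is obtained from $\Pt$ and first-order operators, so $\|Y_0 f\|_{-1}$ (or rather $\|Y_0f\|_{\epsilon-1}$ for a suitable loss) is bounded by the right-hand side of \eqref{sob3.11}. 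This is where the Kolmogorov-type analysis of \cite[\S22.2]{Hor85} enters.

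The inductive step is the subelliptic ``bracket gains a fraction of a derivative'' mechanism adapted to our degenerate-elliptic operators $\Pt$, exactly in the spirit of \cite[\S22.2]{Hor85}: if $W\in\Sub_{\epsilon}(M)$ and $V$ is one of the vector fields $\re L_0$, or an element of $\Hf$, or more generally any vector field already shown to be ``enthralled by $\Zf$'', then $[V,W]\in\Sub_{\epsilon'}(M)$ for some $\epsilon'=\epsilon'(\epsilon)>0$. Concretely, one localizes, inserts a pseudodifferential cutoff $\Lambda^{\epsilon-1}$ of order $\epsilon-1$, and estimates $\|[V,W]f\|_{\epsilon'-1}$ by commuting $W$ through: the commutator $[\Lambda^{\epsilon'-1}[V,W],\,\cdot\,]$ is handled by the sharp Gårding / Fefferman--Phong inequality applied to the nonnegative operator $\sum_i X_i^2$ that dominates $\Pt$ modulo first order, absorbing the cross terms with a small constant and leaving exactly the norms $\|\Pt f\|_0$, $\|Z_j f\|_0$, $\|f\|_0$ on the right. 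Iterating this from the base cases $\Hf, \Vf_1(\tauup)\subset\Sub_1$ through the expansion $\Li(\Vf_1;\Hf)=\Hf+[\Vf_1,\Hf]+[\Vf_1,[\Vf_1,\Hf]]+\cdots$ — noting that each new commutator only needs finitely many elements of $\Vf_1$ and each element of $\Vf_1$ is itself a finite iterated bracket of the $X_i$'s and the $\re L_0$'s — yields $\Li(\Vf_1;\Hf)\subset\Sub$, since $\Sub=\bigcup_{\epsilon>0}\Sub_\epsilon$ only requires some positive $\epsilon$ at each (locally finite) stage.

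The main obstacle is bookkeeping the loss of regularity through the iterated brackets while keeping it positive: each commutator step degrades $\epsilon$ (typically $\epsilon\mapsto\epsilon/2$ or similar), so one must verify that after the finitely many bracketings needed to reach a given generator of $\Li(\Vf_1;\Hf)$ the exponent is still $>0$; this is automatic locally because on a fixed relatively compact $U$ only finitely many bracket lengths occur. A secondary technical point is that the $Z_j$ appearing on the right of \eqref{sob3.11} for $W$ need not be compatible with those for $[V,W]$ — but the definition of $\Sub_\epsilon$ deliberately allows an arbitrary finite family $Z_1,\dots,Z_\ell\in\Zf(M)$ on the right-hand side, so enlarging this family at each step causes no difficulty; one only has to check the $\tauup_j$'s and $Z_j$'s remain finite in number over $U$, which they do. Finally one must be slightly careful that $X_0=\re L_0$ is a first-order operator with smooth coefficients so that commuting it against the $\Lambda^{s}$'s produces only lower-order errors — this is where the degenerate-ellipticity of $\Pt$ in the sense of \cite{Bo69}, together with the identity \eqref{1.4b}, is used to replace $X_0$ by $\Pt - \sum X_i^2$ whenever it is convenient.
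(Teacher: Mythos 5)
The paper does not actually prove this lemma: it is quoted from \cite{AHNP08a, HN00}, and the paper's own contribution begins with the next lemma, on $[X_0,\Sub_{\epsilon}(M)]$. Your architecture --- a base case plus stability of $\Sub$ under commutation with the generators, iterated through the expansion $\Li(\Vf_1;\Hf)=\Hf+[\Vf_1,\Hf]+\cdots$ and with the loss of exponent controlled because only finitely many bracket lengths occur over a fixed $U\Subset M$ --- is the right one, and matches both the cited argument and the paper's treatment of the harder $\Vf_2$ case. But your base case contains a genuine error. You claim $\Hf\subset\Sub_1(M)$ because any $Y=\re Z$ with $Z\in\Zf(M)$ ``is one of the $X_i$ appearing in some $\Pt$ (complete $Z$ to an $r$-tuple)''. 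Such a completion is impossible in general: the $X_i$ of Lemma~\ref{lem3.4} come from a tensor $\tauup=\sum Z_j\otimes\bar{Z}_j$ lying in $\Kf$, i.e.\ in the kernel of the Levi form, and a given $Z$ need not occur in the range of any element of $\Kf$ --- at a strictly pseudoconvex point $\Kf$ is empty, yet the lemma still asserts $\Hf\subset\Sub$. The cancellation ${\sum}_j[Z_j,\bar{Z}_j]=\bar{L}_0-L_0$ with $L_0\in\Zf(M)$ is precisely what allows the commutator term $({\sum}_j[Z_j,\bar{Z}_j]f|f)$ to be absorbed at order zero and yields the full gain $\epsilon=1$ in Lemma~\ref{lem3.4}; for an arbitrary $Z\in\Zf(M)$ the term $([Z,\bar{Z}]f|f)$ costs half a derivative, and integrating by parts against a $\Lambda^{-1}$ (so that $\Lambda^{-1}[Z,\bar{Z}]$ has order zero) gives only $\re Z,\,\im Z\in\Sub_{1/2}(M)$. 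That weaker, unconditional estimate is the correct base case, and it suffices, since $\Sub={\bigcup}_{\epsilon>0}\Sub_{\epsilon}$ only requires some positive exponent at each stage.

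A secondary point: you include $V=\re L_0$ among the fields you commute with, and you describe elements of $\Vf_1$ as iterated brackets of ``the $X_i$'s and the $\re L_0$'s''. By \eqref{3.5eq}, $\Vf_1(\tauup)=\Li(\Qf_1(\tauup))$ is generated by $X_1,\hdots,X_{2r}$ alone; $X_0=\re L_0$ enters only in $\Vf_2(\tauup)$, and establishing $[X_0,\Sub_{\epsilon}(M)]\subset\Sub_{\epsilon/4}(M)$ is exactly the new and substantially harder content of the lemma that follows in the paper (where the key tool is the Cauchy--Schwarz inequality for the nonnegative self-adjoint part $\Qt'$ of $\Qt=\Pt+c$, rather than a sharp G{\aa}rding or Fefferman--Phong argument). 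For the present lemma you only need $[X_i,\Sub_{\epsilon}]\subset\Sub_{\epsilon/2}$ for the $X_i$ attached to sections of $\Kf$, as quoted from \cite{AHNP08a}, together with the Jacobi identity and the Leibniz rule $[fV,W]=f[V,W]-(Wf)V$ to reduce brackets with general elements of $\Vf_1$ to iterated brackets with these generators.
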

To prove Theorem~\ref{thm3.3}, we add the following lemma. 
\begin{lem}
 Let $\tauup\in\Kf(M)$, with $P_{\!\tauup}=-X_0+{\sum}_{i=1}^{2r}X_i^2$. Then 
\begin{equation}\label{sub3.15}
 [X_0,\Sub_{\epsilon}(M)]\subset\Sub_{\epsilon/4}(M).
\end{equation}
\end{lem}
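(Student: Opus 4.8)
The plan is to exploit the structure of $\Pt = -X_0 + \sum_{i=1}^{2r}X_i^2$ together with the already–known facts $X_1,\dots,X_{2r}\in\Sub_1(M)$ (Lemma~\ref{lem3.4}) and $Y_0=\im L_0 = \tfrac{1}{2i}\sum_j[Z_j,\bar Z_j]\in\Vf_1(\tauup)\subset\Sub$ (Lemma~\ref{lem3.5}), in order to trade one application of $X_0$ against the ``good'' second–order operator $\Pt$ at the price of losing a fixed fraction of Sobolev regularity. The key algebraic point is that $X_0 = \sum_{i=1}^{2r}X_i^2 - \Pt$ and that, since $X_0=\re L_0$ while $Y_0=\im L_0$, we also have control on $X_0$ modulo the $X_i$'s and modulo a $CR$ operator ($\bar L_0 + L_0 = \sum_i(Z_j\bar Z_j+\bar Z_j Z_j)$ on $CR$ functions, cf.~\eqref{1.4b}). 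So for $Y\in\Sub_\epsilon(M)$ I want to estimate $[X_0,Y]f$ in a Sobolev norm of order $(\epsilon/4)-1$.

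First I would localize: fix $p$, shrink to a relatively compact coordinate chart $U$, and recall that by definition of $\Sub_\epsilon$ there is an estimate $\|Yf\|_{\epsilon-1}\le C(\sum_j\|P_{\tauup_j}f\|_0+\sum_i\|Z_if\|_0+\|f\|_0)$ on $\Cic(U)$. The commutator $[X_0,Y]$ is again a first–order operator; writing $[X_0,Y] = [\sum_i X_i^2, Y] - [\Pt,Y]$, the first term is $\sum_i(X_i[X_i,Y] + [X_i,Y]X_i)$, a sum of products of two first–order operators. I would then invoke the Hörmander–type interpolation/a priori machinery for the generalized Kolmogorov operator from \cite[\S22.2]{Hor85}, exactly as used in \cite{AHNP08a}: the subelliptic estimate attached to $\Pt$ gives, roughly, $\|X_i f\|_{1/2}\lesssim \|\Pt f\|_0 + \|f\|_0 + (\text{$CR$ terms})$, and one can bootstrap commutators with a loss that accumulates geometrically, the ``$1/4$'' being the worn factor obtained by applying the basic subelliptic gain twice (once to absorb $X_i$ into a half–derivative, once more through the commutator structure). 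Combining: $X_i[X_i,Y]f$ and $[X_i,Y]X_i f$ are estimated by moving $X_i$ onto the test function (at cost $1$ derivative, compensated by the $\epsilon$ gain of $Y$ on the fields $X_i$) and then using that $[X_i,Y]\in\Sub_{\epsilon'}$ for a suitable $\epsilon'$, or directly estimating $[X_i,Y]$ as a vector field with bounded coefficients. The term $[\Pt,Y]f = \Pt(Yf) - Y(\Pt f)$ is handled by: $\|\Pt(Yf)\|$ in a low norm is controlled from the $\Sub_\epsilon$ estimate for $Y$ (after commuting $\Pt$ through, producing again lower–order terms and $CR$ terms), and $Y(\Pt f)$ is fine because $\Pt f$ plays the role of one of the allowed right–hand side terms in the definition of $\Sub$.

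The careful accounting of which Sobolev order survives is the heart of it, and I expect the main obstacle to be precisely bookkeeping the loss: one must verify that the composition of the ``$Y\in\Sub_\epsilon$'' estimate with the subelliptic half–gain of $\Pt$ and with one genuine commutation (costing a full derivative) still leaves a strictly positive index, and that a uniform choice such as $\epsilon/4$ works regardless of $\epsilon\le 1$. Technically this means running the argument of \cite[\S22.2]{Hor85} with weights, using that $\Sub_\epsilon(M)\subset\Sub_{\epsilon'}(M)$ for $\epsilon'\le\epsilon$ so one may freely shrink, and checking that all commutators $[X_i,Y]$, $[X_0,Z_j]$, $[\Pt,Z_j]$ that appear have coefficients in $\CiM$ and hence are themselves $\Sub$-admissible after one further (bounded) loss. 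Once \eqref{sub3.15} is in hand for a single $\tauup$, no extra work is needed for the statement as given. The formal verification that the constant $C$ and the data $\tauup_1,\dots,\tauup_h$, $Z_1,\dots,Z_\ell$ can be chosen locally uniformly follows the same pattern as in Lemmas~\ref{lem3.4} and~\ref{lem3.5} and I would only sketch it, referring to \cite{AHNP08a, heb96} for the metric–independence of the norms.
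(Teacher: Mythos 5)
There is a genuine gap, and it sits exactly at the heart of the argument. Your decomposition $[X_0,Y]=[\sum_iX_i^2,Y]-[\Pt,Y]$ replaces the first-order commutator you want by a difference of two \emph{second-order} commutators, and neither of these is individually controllable in the $(\tfrac{\epsilon}{4}-1)$-norm by the right-hand side of \eqref{sob3.11}. Concretely: from $[X_i,Y]\in\Sub_{\epsilon/2}$ you only get a bound on $\|[X_i,Y]f\|_{\epsilon/2-1}$, which says nothing about $\|X_i[X_i,Y]f\|_{s}$ for any useful $s$, since the definition of $\Sub$ gives no control of $X_i$ applied on top of $[X_i,Y]f$ (that function is not in $\Cic(U)$ with known $\Pt$- and $Z$-data). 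Worse, your treatment of $[\Pt,Y]f=\Pt(Yf)-Y(\Pt f)$ proposes to bound $\Pt(Yf)$ ``after commuting $\Pt$ through'', but that commutation reintroduces $[\Pt,Y]$, i.e. the quantity being estimated: the step is circular. Only the \emph{difference} of the two second-order commutators is first order, and extracting that cancellation requires a duality argument, not a term-by-term estimate.

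The paper's mechanism, which is absent from your plan, is the following. One replaces $\Pt$ by $\Qt=\Pt+c$ with $c\gg1$ chosen so that $(\Qt f|f)_0=(\Qt'f|f)_0\geq0$ on $\Cic(U)$, where $\Qt=\Qt'+i\Qt''$ is the splitting into selfadjoint and skew-selfadjoint parts; since $\sum_iX_i^2$ is selfadjoint modulo first-order terms spanned by the $X_i$, and $[X_i,\Sub_\epsilon]\subset\Sub_{\epsilon/2}$ is already known from \cite{AHNP08a}, the claim reduces to $[\Qt'',\Sub_\epsilon]\subset\Sub_{\epsilon/4}$. One then estimates the pairing $i([\Qt'',Y]f|Af)_0$ against a properly supported $\Psi$DO $A$ of order $\tfrac{\epsilon}{2}-1$, rewriting it as $(\Qt'Yf|Af)_0-(Yf|\Qt Af)_0+(\Qt f|Y^*Af)_0-(\Qt'f|Y^*Af)_0$ and using the generalized Cauchy--Schwarz inequality $|(\Qt'f_1|f_2)|^2\leq(\Qt'f_1|f_1)(\Qt'f_2|f_2)$, valid precisely because of the positivity bought by $c$, to absorb the dangerous second-order terms; the final choice $A=\Lambda_{\epsilon/2-1}[X_0,Y]$ yields $\|[X_0,Y]f\|_{\epsilon/4-1}\leq C(\|f\|_0+\|Yf\|_{\epsilon-1}+\|\Pt f\|_0+\sum_i\|X_if\|_0)$. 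Your appeal to ``the subelliptic half-gain applied twice'' is a heuristic for where $\epsilon/4$ comes from, not a substitute for this positivity-plus-duality argument; without it the bookkeeping does not close.
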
 
\begin{proof} Let $\Qt=\Pt+c$, for a suitable nonnegative real constant $c$, to be precised later.
We decompose $\Qt$ into the sum $\Qt=\Qt'+i\Qt''$, where 
$\Qt'=\tfrac{1}{2}(\Qt+\Qt^*)$ and $\Qt''=\tfrac{1}{2i}(\Qt-\Qt^*)$ are selfadjoint.
In particular, $\Qt^*=\Qt'-i\Qt''$. 
We can rewrite $\Qt'$ as a sum $\Qt'=-{\sum}_{j=1}^{2r}X_j^*X_j+iT+c$, for a p.d.o. 
$T$ of order $\leq{1}$, whose principal part of order $1$ 
is a linear combination with $\Ci$ coefficients of $X_1,\hdots,X_{2r}$. 
Moreover, we note that $\Pt-\Pt^*=\Qt-\Qt^*$.
The advantage in dealing with $\Qt$ instead of $\Pt$ is that,
for
$c$ positive and sufficiently large,  
\begin{equation}\label{star}\tag{$*$} (\Qt{f}|f)_0=
 (\Qt'f|f)_0\geq{0},\;\;\forall f\in\Cic(U).
\end{equation}
This is the single requirement for
our choice of $c$. \par 
In \cite{AHNP08a} it was shown that $[X_i,\Sub_{\epsilon}]\subset\Sub_{\frac{\epsilon}{2}}$ for 
$i=1,\hdots,2r$ and 
all $\epsilon>0$. 
Then \eqref{sub3.15}
is equivalent to the inclusion ${[\Qt'',\Sub_{\epsilon}]\subset\Sub_{\frac{\epsilon}{4}}}$.\par 
 Let $Y\in\Sub_{\epsilon}(M)$ and $U^{\mathrm{open}}\Subset{M}$.
We need to estimate 
 $\|\,[\Qt'',Y]f\|_{\frac{\epsilon}{4}-1}$ for ${f\in\Cic(U)}$.
 Let $A$ be any properly supported 
 pseudodifferential operator of order $\frac{\epsilon}{2}-1$. 
 We have 
\begin{align*} i([\Qt'',Y]f|Af)&=((\Qt'-\Qt^*)Yf|Af)_0+((\Qt-\Qt')f|Y^*Af)_0\\
&=(\Qt'{Yf}|Af)_0-(Yf|\Qt{A}f)_0+ (\Qt{f}|Y^*Af)_0-(\Qt'f|Y^*Af)_0. 
\end{align*}\par
While estimating the summands in the last expression, we shall indicate by $C_1,C_2,\hdots$ 
positive constants
independent of 
the choice of  $f$ in $\Cic(U)$.
\par
Let us first consider the second and third summands. We have 
\begin{align*}
 \left|(Yf|\Qt{A}f)_0
 \right| & \leq \|Yf\|_{\epsilon-1}\|\Qt{Af}\|_{1-\epsilon}\leq \|Yf\|_{\epsilon-1}\left( 
 \|A\Qt{f}\|_{1-\epsilon}+\|\, [A,\Qt]{f}\|_{1-\epsilon}
 \right)\\
 &\leq C_1 \|Yf\|_{\epsilon-1}\left( \|\Qt{f}\|_{-\frac{\epsilon}{2}} + \left\|\, \left[A,{\sum}_{j=1}^{2r}X_j^2\right]{f}
 \right\|_{1-\epsilon}+ \|f\|_{-\frac{\epsilon}{2}}\right).
\end{align*}
We have 
\begin{align*}
 \left[A,{\sum}_{j=1}^{2r}X_j^2\right]=-
 {\sum}_{j=1}^{2r}\left(2[X_j,A]X_j+[X_j,[X_j,A]]\right).
\end{align*}
Since $[X_j,A]$ and $[X_j,[X_j,A]]$ have order $\tfrac{\epsilon}{2}-1$, 
and $\Pt$ and $\Qt$ differ by a constant,
we obtain 
\begin{equation*}
  \left|(Yf|\Qt{A}f)_0
 \right| \leq C_2\, \|{Y}f\|_{\frac{\epsilon}{2}-1}
 \left(\|\Pt{f}\|_{-\frac{\epsilon}{2}} + \|f\|_{-\frac{\epsilon}{2}}+{\sum}_{j=1}^{2r}\|X_jf\|_{-\frac{\epsilon}{2}}
 \right). 
\end{equation*}
Analogously, for the third summand we have, since $(Y+Y^*)$ has order zero,  
\begin{align*}
 |(\Qt{f}|Y^*Af)_0|&\leq \|\Qt{f}\|_0\left(\|AY^*f\|_0+\|\,[Y^*,A]f\|_0\right)\\
 &\leq C_2 \left(\|\Pt{f}\|_0+\|f\|_0\right)\left(  \|Yf\|_{\frac{\epsilon}{2}-1}+\|f\|_{\frac{\epsilon}{2}-1} \right).
\end{align*}
Next we consider 
\begin{align*}
 |(\Qt'Yf|Af)_0|=|(Yf|\Qt'Af)|\leq |(Yf|A\Qt'{f})_0|+|(Yf|[\Qt',A]f)_0|.
\end{align*}
Let us first estimate the second summand in the last expression. \par 
We have $\Qt'={\sum}_{i=1}^{2r}X_i^2+R'_0$ for a first order p.d.o. $R'_0$ 
whose principal part is a linear combination
of $X_1,\hdots,X_{2r}$. Hence 
\begin{align*}
 [\Qt',A]=[R_0',A]+{\sum}\left( 2[X_i,A]X_i+[X_i,[X_i,A]]\right),
\end{align*}
with pseudodifferential 
operators $[R'_0,A]$, $[X_i,A]$, $[X_i,[X_i,A]]$ of order $\leq(\tfrac{\epsilon}{4}-1)$.
Thus we obtain
\begin{align*}
 |(Yf|[\Qt',A]f)_0|\leq C_3 \|Yf\|_{\epsilon-1}\left( \|f\|_{-\frac{\epsilon}{4}}+{\sum}_{j=1}^{2r}\|X_jf\|_{-\tfrac{\epsilon}{4}}\right).
\end{align*}
\par 
Because of \eqref{star}, 
we have the Cauchy inequality 
\begin{equation*}
 |(\Qt'{f}_1|f_2)|\leq \sqrt{(\Qt'f_1|f_1)\,(\Qt'f_2|f_2)},\quad \text{for $f_1,f_2\in\Cic(U)$.}
\end{equation*}
Hence 
\begin{gather*}
|(Yf|A\Qt'{f})_0|^2=
  |(\Qt'{f}|A^*Yf)_0|^2\leq (\Qt'A^*Yf|A^*Yf)_0(\Qt'f|f)_0, 
 \\ 
  |(\Qt'{f}|Y^*Af)_0| \leq (\Qt'Y^*Af|Y^*Af)_0(\Qt'f|f)_0.
\end{gather*}
We have, for the second factor on the right hand sides, 
\begin{equation*}
 (\Qt'{f}|f)_0=(\Qt{f}|f)_0\leq \|\Qt{f}\|_0\|f\|_0\leq (\|\Pt{f}\|_0+|c|\|f\|_0)\|f\|_0.
\end{equation*}
Let us estimate the first factors. We get
\begin{align*}
 (\Qt'A^*Yf|A^*Yf)_0&=(\Qt{A}^*Yf|A^*Yf)\leq \|\Qt{A}^*Yf\|_{-\frac{\epsilon}{2}}\|A^*Yf\|_{\frac{\epsilon}{2}}
 \\
 &
 \leq \|A^*Yf\|_{\frac{\epsilon}{2}}\left( \| A^*Y\Qt{f}\|_{-\frac{\epsilon}{2}}+\|\,[A^*Y,\Qt]{f}\|_{-\frac{\epsilon}{2}}
 \right)\\
 & \leq C_3\|Yf\|_{\epsilon-1}\left(\|\Qt{f}\|_0+ \|\,[A^*Y,\Qt]{f}\|_{-\frac{\epsilon}{2}}
 \right).
\end{align*}
We need to estimate the second summand inside the parentheses in the last expression. We note that
\begin{align*}
 [A^*Y,\Qt]=[A^*Y,\Pt]=
 -[A^*Y,X_0]+{\sum}_{j=1}^{2r}\left(2[A^*Y,X_j]X_j+[X_j,[A^*Y,X_j]]\right).
\end{align*}
Since the operators $[A^*Y,X_0]$, $[A^*Y,X_j]$, $[X_j,[A^*Y,X_j]]$ have order $\tfrac{\epsilon}{2}$,
we obtain 
\begin{equation*}
  \|\,[A^*Y,\Qt]{f}\|_{-\frac{\epsilon}{2}}\leq C_4 \left(\|f\|_0+{\sum}_{j=1}^{2r}\|X_jf\|_0\right).
\end{equation*}
Finally, 
\begin{align*}
 (\Qt'Y^*Af|Y^*Af)_0&=(\Qt{Y}^*Af|Y^*Af)\leq \|Y^*Af\|_{\frac{\epsilon}{2}}\left(
 \|Y^*A\Qt{f}\|_{-\frac{\epsilon}{2}}+\|\,[\Qt,Y^*A]f\|_{-\frac{\epsilon}{2}}\right)\\
 &\leq C_5\|Y^*Af\|_{\frac{\epsilon}{2}}\left( \|\Qt{f}\|_0+ \|\,[\Qt,Y^*A]f\|_{-\frac{\epsilon}{2}}\right).
\end{align*}
Since 
\begin{align*}
  [Y^*A,\Qt]=[Y^*A,\Pt]=
  -[Y^*A,X_0]+{\sum}_{j=1}^{2r}\left(2[Y^*A,X_j]X_j+[X_j,[Y^*A,X_j]]\right)
\end{align*}
 and the operators $[Y^*A,X_0]$, $[Y^*A,X_j]$, $[X_j,[Y^*A,X_j]]$ have order $\frac{\epsilon}{2}$, 
 we obtain that 
\begin{equation*}
\|\,[\Qt,Y^*A]f\|_{-\frac{\epsilon}{2}}\leq C_6   \left(\|f\|_0+{\sum}_{j=1}^{2r}\|X_jf\|_0\right).
\end{equation*}
 Moreover, 
\begin{align*}
 Y^*A=-AY+(Y+Y^*)A+[A,Y],
\end{align*}
with $\{(Y+Y^*)A\! +\! [A,Y]\}$ of order 
 $\leq(\frac{\epsilon}{2}-1)$, because $Y+Y^*$ has order $0$. Hence 
\begin{equation*}
 \|Y^*Af\|_{\frac{\epsilon}{2}}\leq C_7\left(\|Yf\|_{\epsilon-1}+\|f\|_0\right).
\end{equation*}
Putting all these inequalities together, we conclude that 
\begin{equation*}
| ([X_0,Y]f|Af)_0|\leq C_8\left( \|f\|_0^2+\|Yf\|_{\epsilon-1}^2+
\|\Pt{f}\|_0^2+{\sum}_{j=1}^{2r}\|X_jf\|_0^2\right),\quad
 \forall f\in\Cic(U).
\end{equation*}
By taking $A=\Lambda_{\frac{\epsilon}{2}-1}[X_0,Y]$ for an elliptic properly supported pseudodifferential
operator $\Lambda_{\frac{\epsilon}{2}-1}$ 
of order $\frac{\epsilon}{2}-1$, we deduce that 
\begin{equation*}
 \|[X_0,Y]f\|_{\frac{\epsilon}{4}-1}\leq{C}_9\left(\|f\|_0+\|Yf\|_{{\epsilon}-1}+\|\Pt{f}\|_0+
{ \sum}_{i=1}^{2r}\|X_i f\|_0\right)
\end{equation*}
and therefore, since $X_1,\hdots,X_{2r}\in\Sub_1(M)$ and $Y\in\Sub_{\epsilon}(M)$, that
$[X_0,Y]\in\Sub_{\frac{\epsilon}{4}}$. 
\end{proof} 
\begin{cor}\label{cor3.6}
 We have 
\begin{equation}\vspace{-18pt}
 \Li(\Vf_2;\Sub)\subset\Sub.
\end{equation} \qed
\end{cor}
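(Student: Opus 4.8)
The plan is to deduce Corollary~\ref{cor3.6} as a purely formal consequence of the preceding three lemmas, together with the Jacobi identity, by an induction on the number of nested commutators with elements of $\Vf_2$. First I would recall that, by definition, $\Vf_2=\Li\big(\bigcup_{\tauup\in\Kf}\Vf_2(\tauup)\big)$ is the formally integrable distribution generated by all the $\Vf_2(\tauup)$, and that each $\Vf_2(\tauup)=\Li(\Qf_1(\tauup)+\re{L}_0)$ is itself generated by the real and imaginary parts $X_1,\hdots,X_{2r}$ of a representing tuple $Z_1,\hdots,Z_r$ of $\tauup$ together with $X_0=\re{L}_0$, where $P_{\!\tauup}=-X_0+\sum_{i=1}^{2r}X_i^2$. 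So $\Vf_2$ is spanned, as a Lie module, by vector fields of two kinds: the $X_1,\hdots,X_{2r}$ attached to the various $\tauup\in\Kf(M)$, and the corresponding $X_0$'s, and $\Li(\Vf_2;\Sub)$ is obtained from $\Sub$ by iteratively taking commutators with such generators.

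The key step is therefore to show that bracketing with either kind of generator preserves membership in $\Sub(M)=\bigcup_{\epsilon>0}\Sub_\epsilon(M)$. For the generators $X_j$ with $1\le j\le 2r$, this is exactly the inclusion $[X_j,\Sub_\epsilon]\subset\Sub_{\epsilon/2}$ recalled (from \cite{AHNP08a}) in the proof of the last lemma; for the generators $X_0$ it is precisely the content of that lemma, $[X_0,\Sub_\epsilon(M)]\subset\Sub_{\epsilon/4}(M)$. Next I would run the induction: writing a general element of $\Li(\Vf_2;\Sub)$ as a finite sum of iterated brackets $[\,T_1,[\,T_2,[\cdots,[\,T_N,Y]\cdots]]]$ with $Y\in\Sub$ and each $T_i$ a generator of $\Vf_2$ of one of the two types, one shows by induction on $N$ that such an element lies in $\Sub_{\epsilon/4^N}\subset\Sub$. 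The base case $N=0$ is $Y\in\Sub$; the inductive step applies one of the two bracket-stability inclusions above to the vector field $[\,T_2,[\cdots,[\,T_N,Y]\cdots]]\in\Sub_{\epsilon'}$ obtained from the induction hypothesis. Since the $T_i$ need not literally be the $X_j$ appearing in the definition of a single $\Pt$ but may come from different $\tauup$'s, one uses here that the estimates \eqref{sob3.11} defining $\Sub_\epsilon(M)$ allow the $\tauup_j$ and the $Z_j$ on the right-hand side to be chosen independently and depending on the point; thus there is no compatibility obstruction in mixing generators coming from different tensors.

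The one genuine subtlety — and the step I expect to be the main obstacle — is that $\Vf_2$ is a \emph{Lie} module, so among its generators one finds not only the $X_0$ and $X_j$ above but also their iterated commutators, and one must check that bracketing with \emph{those} also preserves $\Sub$. This is handled by a secondary induction using the Jacobi identity: if $[S,\Sub]\subset\Sub$ and $[T,\Sub]\subset\Sub$ for two vector fields $S,T$, then for $Y\in\Sub$ one has $[[S,T],Y]=[S,[T,Y]]-[T,[S,Y]]$, and each term on the right lies in $\Sub$ by two applications of the hypotheses; hence $[[S,T],\Sub]\subset\Sub$, and iterating this covers all iterated commutators of the generators. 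Combining the two inductions, every element of $\Li(\Vf_2;\Sub)$ lies in some $\Sub_\epsilon(M)$, which is the asserted inclusion. (The loss of Sobolev order at each bracketing — a factor of $2$ or $4$ — is harmless, since only membership in the union $\Sub(M)$ is claimed.)
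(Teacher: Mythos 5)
Your argument is correct and is exactly the intended one: the paper states Corollary~\ref{cor3.6} without proof, as an immediate consequence of Lemmas~\ref{lem3.4} and \ref{lem3.5} and the bracket inclusions $[X_j,\Sub_{\epsilon}]\subset\Sub_{\epsilon/2}$ and $[X_0,\Sub_{\epsilon}(M)]\subset\Sub_{\epsilon/4}(M)$, and your double induction (on the nesting depth of commutators, with the Jacobi identity to handle iterated commutators among the generators of $\Vf_2$) is the natural way to spell that out. The only point you leave tacit is the Leibniz term in $[g\,T,Y]=g\,[T,Y]-(Yg)\,T$ coming from smooth coefficients, which is harmless because each $\Sub_{\epsilon}(M)$ is a $\Ci$-module and the generators $X_0\in\Hf$ and $X_j\in\Sub_1(M)$ (hence all of $\Vf_2$) already lie in $\Sub$.
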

\begin{proof}[Proof of Theorem~\ref{thm3.3}] By the assumption, 
$\{Y_q\mid Y\in\Sub(M)\}=T_qM$ for all $q$ in an open neighborhood  of $p$ in $M$. 
Thus there are $p\in{U}^{\mathrm{open}}\Subset{M}$, $\tauup_1,\hdots,\tauup_h\in\Kf(M)$,
$Z_1,\hdots,Z_\ell\in\Zf(M)$ and $C>0$ such that 
\begin{equation}
 \|f\|_{\epsilon}\leq C\left (\|f\|_0+{\sum}_{j=1}^h\|P_{\tauup_j}f\|_0+{\sum}_{i=1}^\ell\|Z_if\|_0\right),\quad
 \forall f\in\Cic(U).
\end{equation}
Let $\Ptj=-X_{0,j}+{\sum}_{s=1}^{2r_j}X_{s,j}^2$, with $Z_{s,j}=X_{s,j}+iX_{s+r_j,j}\in\Zf(M)$ for $1\leq{s}\leq{r}_j$,
and let $Z_{0,j}$ be the vector field in $\Zf(M)$ with $\re{Z}_{0,j}=X_{0,j}$.  
If  $A$ is 
a properly supported pseudodifferential operator, then
\begin{equation*}
 [\Ptj,A]=-[X_{0,j},A]+{\sum}_{s=1}^{2r_j}\left(2X_{s,j},[X_{s,j},A]+[[X_{s,j},A],X_{s,j}]\right).
\end{equation*}
If $A$ has order $\delta$, and is zero outside a compact subset $K$ of $U$, and $\chiup$ is a smooth
function with compact support which equals one one a neighborhood of $K$, then 
we obtain 
\begin{align*}
 \|\Ptj{A(\chiup{f}})\|_0&\leq \|A(\chiup\Ptj{f})\|_0+\|\, [\Ptj,A](\chiup{f})\|_0\\
 &\leq 
 C'\left(\|\chiup \Ptj{f}\|_{\delta}+\|\chiup f\|_{\delta}+{\sum}_{s=1}^{2r_j}\|X_s [X_s,A](\chiup {f})\|_0\right)\\
 &\leq C'' \left(\|\chiup\Ptj{f}\|_{\delta}+\|\chiup f\|_{\delta}+{\sum}_{s=0}^{r_j}\|Z_{s,j}[X_s,A](\chiup{f})\|_0
 \right)\\
 &\leq C'''\left(\|\chiup\Ptj{f}\|_{\delta}+\|\chiup f\|_{\delta}+{\sum}_{s=0}^{r_j}\|\chiup Z_{s,j}{f}\|_\delta
 \right), \quad \forall f\in\Ci(U),
\end{align*}
for suitable positive constants $C',C'',C'''$, uniform with respect to $f$. 
By using similar argument to estimate $\|Z_iAf\|_0$, we obtain that 
\begin{equation*}
 \|A(\chiup{f})\|_{\epsilon}\leq \mathrm{const}\left (\|\chiup f\|_\delta+{\sum}_{j=1}^h\|\chiup{P}_{\tauup_j}f\|_\delta+
 {\sum}_{i=1}^\ell\|\chiup{Z}_if\|_0\right),\quad
 \forall f\in\Ci(U).
\end{equation*}
This shows that for any pair of functions $\chiup_1,\chiup_2\in\Cic(U)$ with $\supp(\chiup_1)\subset
\{\chiup_2>0\}$ we obtain the estimate 
\begin{equation*}
 \|\chiup_1{f}\|_{\epsilon+\delta}\leq 
 \mathrm{const}\left (\|\chiup_2 f\|_\delta+{\sum}_{j=1}^h\|\chiup_2{P}_{\tauup_j}f\|_\delta+
 {\sum}_{i=1}^\ell\|\chiup_2{Z}_if\|_0\right),\quad
 \forall f\in\Ci(U),
\end{equation*}
for some constant $\mathrm{const}=\mathrm{const}(\chiup_1,\chiup_2)\geq{0}$. 
By \cite{F44},
this inequality is valid for all $f 
\in
W^{\delta,2}_{\mathrm{loc}}(U)$ with $\Ptj{f}, Z_if \in 
W^{\delta,2}_{\mathrm{loc}}(U)$, 
where $W^{\delta,2}_{\mathrm{loc}}(U)$ is the space of distributions $\phiup$ in $U$
such that, for all $\chiup\in\Cic(U)$, the product $\chiup\cdot\phiup$ belongs to the
Sobolev space of order $\delta$ and integrability two.
This implies in particular that any $CR$ distribution which is in  $W^{\delta,2}_{\mathrm{loc}}(U)$
belongs in fact to $W^{\delta+\epsilon,2}_{\mathrm{loc}}(U)$, and this implies property $(H)$. 
\end{proof}
\par\smallskip
Let us consider the case where $\Li(\Vf_2;\Hf)$ does not contain all smooth real vector fields.
In this case we have a propagation phenomenon along the leaves of $\Vf_2$. 
 Let $\tauup\in\Kf(M)$, and $X_0,Y_0,X_1,\hdots,X_{2r}$
the vector fields introduced above for a given representation of 
$\tauup=Z_1\otimes\bar{Z}_1+\cdots+Z_r\otimes\bar{Z}_r$. 
As we already noticed,
while $Y_0=\im{\sum}[Z_i,\bar{Z}_i]$ belongs to the Lie subalgebra of $\mathfrak{X}(M)$ 
generated by $X_1,\hdots,X_{2r}$,
the real part $X_0$ of $L_0=X_0+iY_0\in\Zf(M)$ may not belong to $\Vf_1(\tauup)$.
Thus the following result improves \cite[Theorem\,5.2]{HN03}, where only the smaller distribution
$\Vf_1(\tauup)$ was involved.
\begin{thm} \label{thm3.7}
Let  $\Omega^{\mathrm{open}}\subset{M}$ and assume that $\Vf_2$ has constant rank in $\Omega$. 
If
 $f\in\Ot_{\!{M}}(\Omega)$ and $|f|$ attains a maximum at a point $p_0$ of $\Omega$,
 then $f$ is constant on the leaf  through $p_0$ of $\Vf_2$ in $\Omega$. 
\end{thm}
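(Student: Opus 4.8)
The plan is to establish a maximum-principle-type propagation result: the set where $|f|$ attains its maximum value is invariant under the flows generated by the vector fields in $\Vf_2$. The strategy combines the subharmonicity of $|f|$ with respect to the operators $\Pt$ (established in the Lemma above) with a Hopf-type argument, and then bootstraps from the single-$\tauup$ case to the full distribution $\Vf_2$ by iterated commutators.

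First I would fix $\tauup\in\Kf(\Omega)$ with representation $\tauup=Z_1\otimes\bar{Z}_1+\cdots+Z_r\otimes\bar{Z}_r$, and let $X_0,X_1,\hdots,X_{2r}$ be the associated real vector fields, so $\Pt=-X_0+{\sum}_{i=1}^{2r}X_i^2$. Set $M_0=\sup_\Omega|f|$ and $E=\{p\in\Omega\mid |f(p)|=M_0\}$. On a neighborhood of any point of $E$ where $M_0>0$ (the case $M_0=0$ being trivial, as in Proposition~\ref{pp2.1}), the function $u=M_0-|f|$ is nonnegative, vanishes on $E$, and satisfies $\Pt u\le 0$ by the Lemma; thus $u$ is a nonnegative $\Pt$-supersolution vanishing at an interior minimum point. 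The key classical input is a \emph{strong minimum principle / propagation result} for degenerate-elliptic operators of H\"ormander--Bony type: for an operator $\Pt=-X_0+{\sum}X_i^2$, if a nonnegative supersolution vanishes at a point $p_0$, then it vanishes along the orbit of $p_0$ under the vector fields $X_1,\hdots,X_{2r}$ together with the \emph{drift} $X_0$ (both forward and backward, since for a supersolution one propagates backwards along the drift as well). This is precisely Bony's strong maximum principle (cf.\ \cite{Bo69}) and its refinements; it shows $E$ contains the leaf through $p_0$ of the distribution generated by $X_1,\hdots,X_{2r},X_0$, hence in particular the directions $\re{Z}_1,\hdots,\re{Z}_r,\im{Z}_1,\hdots,\im{Z}_r$ and $\re{L_0}=X_0$.

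Next I would iterate. Having shown that $E$ is locally invariant under $\Qf_1(\tauup)$ and under $\re{L_0}$ for every $\tauup\in\Kf$, and since $E$ is closed in $\Omega$, a standard argument shows $E$ is invariant under any commutator of such fields: if a closed set is invariant under the flows of two complete (locally) vector fields $A,B$, it is invariant under the flow of $[A,B]$ (approximate $\exp(tA)\exp(tB)\exp(-tA)\exp(-tB)$ and pass to the limit, using closedness). Applying this repeatedly, $E$ is invariant under all vector fields in $\Li(\Qf_1(\tauup)+\re{L_0})=\Vf_2(\tauup)$, and then, taking unions over $\tauup\in\Kf$ and the Lie module generated, under all of $\Vf_2=\Li({\bigcup}_{\tauup\in\Kf}\Vf_2(\tauup))$. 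Under the constant-rank hypothesis on $\Vf_2$ in $\Omega$, the Sussmann leaf through $p_0$ is an embedded submanifold whose tangent space is spanned by these fields, so $E$ contains this leaf. To upgrade from ``$|f|$ constant on the leaf'' to ``$f$ constant on the leaf,'' I would repeat the argument with $f$ replaced by $\tfrac{1}{2}(1+e^{-i\theta}f)$ for the phase $\theta$ with $f(p_0)=M_0e^{i\theta}$, exactly as in the proof of Proposition~\ref{pp2.1}: this forces $\re(e^{-i\theta}f)\equiv M_0$ on the leaf, hence $f\equiv f(p_0)$ there.

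\textbf{The main obstacle} I anticipate is the precise formulation and invocation of the propagation/strong-minimum principle for the degenerate-elliptic operator $\Pt$ \emph{including propagation along the drift term $X_0$ in both directions}; Bony's theorem in its basic form propagates along the diffusion fields $X_i$ and forward along the drift, so one must be careful that for a \emph{super}solution (which is what $u=M_0-|f|$ is, since $\Pt u\le 0$) one gets backward drift propagation, recovering the full $\re{L_0}$-direction rather than just a half-line. A secondary technical point is checking that the ``closed invariant set under two flows is invariant under the bracket'' lemma applies uniformly on a neighborhood so that the Sussmann leaf is genuinely captured; this is where the constant-rank assumption on $\Vf_2$ is used, guaranteeing the leaf is a nice submanifold on which the reduction to Proposition~\ref{pp2.1}'s phase trick can be carried out.
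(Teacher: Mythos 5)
Your route is genuinely different from the paper's, which makes no use of Bony's propagation theorem: the paper restricts to the leaf $N$ of $\Vf_2$ through $p_0$, equips it with the structure $\Zf'$ spanned by the restrictions of the relevant complex vector fields, notes that by the very definition of the leaf the hypotheses of Theorem~\ref{thm3.3} (via Corollary~\ref{cor3.6}) are satisfied intrinsically on $N$, so that $N$ has property $(H)$, and then concludes by the functional-analytic maximum modulus principle of Proposition~\ref{pp2.1}. The detour through the subelliptic estimates of \S\ref{s3} is not cosmetic; it is there to handle exactly the point at which your argument breaks down.

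The gap is the drift direction. For the subsolution $|f|$ of $\Pt=-X_0+\sum_{i=1}^{2r}X_i^2$ (equivalently the supersolution $M_0-|f|$), propagation of the peak set $E=\{|f|=M_0\}$ holds along $\pm X_1,\hdots,\pm X_{2r}$ but only along \emph{one} orientation of $X_0$, not ``both forward and backward'' as you assert. The heat operator already refutes the two-sided claim: $v=-\max(t,0)$ satisfies $(-\partial_t+\Delta)v\geq 0$ and attains its maximum exactly on $\{t\leq 0\}$, a set invariant under $-\partial_t$ but not under $+\partial_t$. Consequently (i) $E$ need only contain the semi-orbit of $p_0$ under $\{\pm X_1,\hdots,\pm X_{2r},-X_0\}$, which can be a proper subset of the Sussmann leaf of $\Vf_2(\tauup)$ (for the Kolmogorov operator $-(\partial_t+x\partial_y)+\partial_x^2$ the attainable set from the origin misses an open half-space, while the leaf is everything), and (ii) your bracket-invariance step fails, because the identity $\exp(-tB)\exp(-tA)\exp(tB)\exp(tA)=\exp\bigl(t^2[A,B]+O(t^3)\bigr)$ requires the flow of $A=X_0$ in both time directions; so you cannot manufacture invariance under $[X_0,X_j]$ and hence cannot reach $\Li(\Qf_1(\tauup)+\re L_0)=\Vf_2(\tauup)$, let alone $\Vf_2$. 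Replacing $|f|$ by the exact solution $\re(e^{-i\thetaup}f)$ of $\Pt u=0$ does not repair this by maximum-principle means alone. What does supply the missing $X_0$-directions is the estimate $[X_0,\Sub_\epsilon(M)]\subset\Sub_{\epsilon/4}(M)$, i.e. hypoellipticity along the leaf --- which is precisely the content the paper's proof imports through Theorem~\ref{thm3.3} before applying Proposition~\ref{pp2.1}.
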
 
\begin{proof} On the integral manifold $N$ of $\Vf_2$
through $p_0$ 
 in $\Omega$ we can consider the $\Zf'$-structure defined by the span 
 of the restrictions to $N$ of the elements of $\hat{\Theta}$. 
Indeed, the $CR$
functions on $\Omega$ restrict to $CR$ functions for $\Zf'$ on 
the leaf $N$. By Corollary~\ref{cor3.6} and Theorem~\ref{thm3.3}, 
the $\Zf'$-manifold $N$ has property $(H)$ and therefore
the statement is a consequence of Proposition~\ref{pp2.1}.
\end{proof}

\section{Malgrange's theorem and some applications}\label{s4}
In this section we state the obvious generalization of Malgrange's 
vanishing theorem and its corollary on the extension of $CR$ functions
under momentum conditions,
slightly generalizing results of
\cite{BHN10, CL1999, CL2000} to the case where $M$ has property~$(S\! H)$. 
In this section we require that $M$ is a $CR$ manifold.
\par 
We recall that  the tangential Cauchy-Riemann complex can be defined as the
quotient of the de Rham complex on the powers of the ideal sheaf
(for this presentation we refer to \cite{HN95}): since
$d\If\subset\If$, we have $d\If^{\, a}\subset\If^{\,{a}}$ for all nonnegative integers $a$
and the tangential $CR$-complex 
$(\Qc^{a,*},\bar{\partial}_M)$ 
on $a$-forms is defined by the commutative
diagram \begin{equation} 
\quad
\begin{CD} 0 @>>> \If^{\, a+1} @>>> \If^{\,{a}} @>>> \Qc^{\,{a}} @>>>0\,\\
@. @V{d}VV  @V{d}VV  @V{\bar{\partial}_M}VV \\
0 @>>> \If^{\, a+1} @>>> \If^{\,{a}} @>>> \Qc^{\,{a}} @>>>0,
\end{CD} 
\end{equation} 
where  
$\Qc^{\,{a}}$ is the quotient $\If^{\,{a}}/\If^{\, a+1}.$
In turn $\bar{\partial}_M$ is a degree $1$ derivation for a $\mathbb{Z}$-grading
$\Qc^{\,{a}}={\bigoplus}_{q=0}^n\Qc^{a,q}$, where 
the elements of $\Qc^{a,q}$ are equivalence classes of forms
having representatives in $\If^{\,{a}}\cap\Af^{\C}_{a+q}$. \par 
We denote by $\cE$ the sheaf of germs of smooth complex valued functions on $M$.
The the $\Qc^{a,q}$ are all locally free sheaves of $\cE$-modules, and therefore
we can form the corresponding sheaves and co-sheaves of functions and 
distributions. We will consider the
tangential Cauchy-Riemann complexes $(\cD^{a,*},\bar{\partial}_M)$
on smooth forms with compact support, 
$(\cE^{a,*},\bar{\partial}_M)=(\Qc^{a,*},\bar{\partial}_M)$
on smooth forms with closed support, $(\cDd^{a,*},\bar{\partial}_M)$
on form-distributions, $(\cEd^{a,*},\bar{\partial}_M)$ on form-distributions
with compact support. We use the notation 
$H^q(\mathcal{F}^{a,*}(\Omega),\bar{\partial}_M)$ for the cohomology group
in degree $q$ on ${\Omega^{\mathrm{open}}\subset{M}}$, for $\mathcal{F}$
equal to either one of $\cE,\cD,\cDd,\cEd$.

\begin{prop} \label{prsh4.1} If $M$ has property $(S\!{H})$, 
and either $M$ is compact or has property $(W\! U\! C)$, 
then 
$\bar{\partial}_M:\cEd^{a,0}(M)\longrightarrow\cEd^{a,1}(M)$ 
and $\bar{\partial}_M:\cD^{a,0}(M)\longrightarrow\cD^{a,1}(M)$ 
have closed range
for all integers ${a=0,\hdots,m}$.
\end{prop}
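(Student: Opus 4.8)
The plan is to reduce the statement to a level-wise a priori estimate furnished by $(S\!{H})$ and then globalise it over an exhaustion, using either the compactness of $M$ or the unique continuation property $(W\! U\! C)$ to keep supports under control. We may assume $M$ connected ($\bar{\partial}_M$ and the complexes split over connected components, and a compactly supported form meets only finitely many of them), and, since $(S\!{H})$ entails hypoellipticity for $\bar{\partial}_M$ — that is, property $(H)$ — it suffices to argue with the distributional complex $\cEd^{a,*}$, the smooth complex $\cD^{a,*}$ being completely parallel. For $K\Subset M$ let $\cEd^{a,q}_K(M)$ be the space of $\Qc^{a,q}$-valued form-distributions supported in $K$, which the differential operator $\bar{\partial}_M$ maps into $\cEd^{a,q+1}_K(M)$. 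By $(S\!{H})$ there are a gain $\delta>0$ and, for every $s$, a constant such that
\begin{equation*}
 \|u\|_{s+\delta}\;\leq\; C_{K,s}\big(\|\bar{\partial}_Mu\|_s+\|u\|_s\big),\qquad u\in\cEd^{a,0}_K(M),
\end{equation*}
and similarly in bidegree $(a,1)$. Combined with the Rellich compactness of the inclusions $H^{s+\delta}_K\hookrightarrow H^s_K$, a Peetre-type contradiction argument (take $u_\nu$ with $\|u_\nu\|_{s+\delta}=1$ and $\|\bar{\partial}_Mu_\nu\|_s\to 0$; extract an $H^s$-limit; bootstrap by the estimate to an $H^{s+\delta}$-limit, which is a $CR$ form-distribution supported in $K$) shows that $\mathcal{K}_K:=\{u\in\cEd^{a,0}_K(M)\mid\bar{\partial}_Mu=0\}$ is finite dimensional and that
\begin{equation*}
 \|u\|_{s+\delta}\;\leq\; C'_{K,s}\,\|\bar{\partial}_Mu\|_s \qquad\text{for all }s
\end{equation*}
holds on a closed complement of $\mathcal{K}_K$.

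If $M$ is compact, take $K=M$: this inequality together with the finite dimensionality of $\mathcal{K}_M$ immediately gives that $\bar{\partial}_M\colon\cEd^{a,0}(M)\to\cEd^{a,1}(M)$ has closed range, and likewise for $\cD$. Assume instead $M$ non-compact, so that $(W\! U\! C)$ holds. Then $\mathcal{K}_K=0$ for every $K$: an element $u\in\mathcal{K}_K$ is a compactly supported $CR$ form-distribution on $M$, hence smooth by $(H)$, and it vanishes near each point of the non-empty open set $M\setminus\supp u$, so it vanishes identically on the connected manifold $M$ by $(W\! U\! C)$. Therefore $\bar{\partial}_M\colon\cEd^{a,0}_K(M)\to\cEd^{a,1}_K(M)$ is a topological embedding with closed range for every $K$.

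To globalise, let $g_\nu=\bar{\partial}_Mu_\nu\to g$ in $\cEd^{a,1}(M)$ with $u_\nu\in\cEd^{a,0}(M)$. A convergent sequence of compactly supported form-distributions is supported in a single compact set, so there is a $K$ with $\supp g_\nu,\,\supp g\subseteq K$ and $g_\nu\to g$ in $\cEd^{a,1}_K(M)$. Let $K^{\ast}$ be the union of $K$ with the relatively compact connected components of $M\setminus K$; it is again compact. Since $\bar{\partial}_Mu_\nu=g_\nu$ vanishes on $M\setminus K$, each $u_\nu$ is $CR$ there, and on any \emph{non-compact} connected component $W$ of $M\setminus K$ the globally compactly supported $u_\nu$ must vanish near some point of $W$, hence on all of $W$ by $(W\! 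U\! C)$; thus $\supp u_\nu\subseteq K^{\ast}$ for every $\nu$. The level-$K^{\ast}$ estimate then yields $\|u_\nu-u_\mu\|_{s+\delta}\leq C'_{K^{\ast},s}\|g_\nu-g_\mu\|_s\to 0$, so $u_\nu$ converges to some $u\in\cEd^{a,0}_{K^{\ast}}(M)$ with $\bar{\partial}_Mu=g$; hence the range of $\bar{\partial}_M$ is closed. The argument for $\cD^{a,*}$ is the same, $(S\!{H})$-regularity ensuring that the relevant solutions are smooth.

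The genuine obstacle is to extract from the hypothesis $(S\!{H})$ the precise subelliptic estimate, and the regularity property $(H)$, in \emph{both} bidegrees $(a,0)$ and $(a,1)$ occurring above: in bidegree $0$ the required gain is the hypoellipticity-with-gain inequality for $\bar{\partial}_M$ on $CR$ sections, which has to be read off, or derived, from the degree-one subellipticity. The support-localisation step — filling the holes of $K$ and applying $(W\! U\! C)$ on the unbounded components — is indispensable for the passage from the level-wise estimate to the global one, but routine once $(W\! U\! C)$ is available, as is the finite-dimensional-kernel bookkeeping in the compact case.
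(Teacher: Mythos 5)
Your proof is correct and follows essentially the same route as the paper: the subelliptic estimate from $(S\!H)$, removal of the lower-order term via compactness of the Sobolev inclusion (the paper states this as ``\eqref{sh4.3} holds with $C_{r,K}=0$'' once $(W\!U\!C)$ kills the kernel), support control by filling in the compact components of $M\setminus K$ and applying $(W\!U\!C)$ on the unbounded ones, and deduction of the $\cD^{a,*}$ case from the $\cEd^{a,*}$ case by hypoellipticity. Your write-up is merely more explicit about the Peetre-type contradiction argument and the finite-dimensional kernel in the compact case; the worry about needing the estimate in bidegree $(a,1)$ is unnecessary, since only the $(a,0)$-level inequality is used.
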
 \begin{proof}
We can assume that $M$ is connected.
It is convenient to fix a Riemannian metric on $M$, 
and smooth Hermitian products on the complex linear bundles 
$Q^{a,q}M$ 
corresponding
to the sheaves $\Qc^{a,q}$, 
to define $L^2$ and
Sobolev norms, by using the associated smooth regular Borel measure. \par 
By property $(S\!{H})$, we have a subelliptic estimate:
for every $K\Subset{M}$ we can find 
constants $C_K\geq{0},\; c_K>0,\; \epsilon_K>0$
such that 
 \begin{equation}\label{sh4.2}
\|\bar{\partial}_M{u}\|_0^2+C_K\|u\|_0^2\geq c_K\|u\|^2_{\epsilon_K},
\quad\forall u\in\cD^{a,0}(K).
\end{equation}
In a standard way we deduce from \eqref{sh4.2} that 
\begin{equation}\label{sh4.4}
 u\in\cDd^{a,0}(M),\;\; 
\bar{\partial}_Mu\in[\Sob_{\mathrm{loc}}^r]^{a,1}(M)\Longrightarrow
u|_{\ring{K}}\in[\Sob_{\mathrm{loc}}^{r+\epsilon_K}]^{a,1}(\ring{K}),\;\;
\forall K\Subset{M},
\end{equation}
and that for all $K\Subset{M}$ and
real $r$
there are constants $C_{r,K}\geq{0}$, $c_{r.K}>0$ such that
  \begin{align}\label{sh4.3}
\|\bar{\partial}_M{u}\|_r^2& +C_{r,K}\|u\|_r^2\geq 
c_{r,K}\|u\|^2_{r+\epsilon_K},\\
\notag
\quad &\forall u\in\{u\in\cEd^{a,0}(M)\mid 
\bar{\partial}_Mu\in[\Sob^r]^{a,1}(M),\;\supp(u)\subset{K}\}.
\end{align}
This suffices to obtain the thesis when $M$ is compact. \par 

Let us consider the case where $M$ is connected and non-compact.
Let $\{u_\nu\}$ be a sequence in $\cEd^{a,0}(M)$ such that 
all $\bar{\partial}_Mu_\nu$
have support in a fixed compact subset $K$ of $M$ and there is $r\in\R$
such that $\{\bar{\partial}_Mu_\nu\}\subset[\Sob^{r}](M)$,
 $\supp(\bar{\partial}_Mu_\nu)\subset{K}$ for all $\nu$ and
 $\bar{\partial}_Mu_\nu \to f$ in $[\Sob^r]^{a,1}(M)$.
We can assume that $M\setminus{K}$ has no compact connected component. 
Then, since $M$ has property $(W\! U\! C)$, 
it follows that $\supp(u_\nu)\subset{K}$
for all $\nu$, because the $u_\nu|_{M\setminus{K}}$ define elements of
$\Ot_M(M\setminus{K})$ which vanish on a nonempty open subset of
each connected component of 
$M\setminus{K}$, and thus on $M\setminus{K}$. Moreover, this also implies that
\eqref{sh4.3} holds with $C_{r,K}=0$. Then $\{u_\nu\}$
is uniformly bounded in $[\Sob^{r+\epsilon}]^{a,0}(M)$
and hence contains a subsequence which 
weakly converges 
to a solution 
$u\in [\Sob^{r+\epsilon}]^{a,0}(M)$ of $\bar{\partial}_Mu=f$.\par 
The closedness of the image of $\bar{\partial}_M$ in $\cD^{a,1}(M)$
follows from the already proved result for $\cEd^{a,1}(M)$ and the
hypoellipticity of $\bar{\partial}_M$ on $(a,0)$-forms.
\end{proof}
We remind that if $M$ is embedded and has property $(H)$, or is (abstract
and)
essentially pseudoconcave, then has  property $(W\! U\! C)$.\par 
As in \cite{BHN10} one obtains 
\begin{prop}
Assume that $M$ is a connected non-compact $CR$ manifold
of $CR$ dimension $n$ which has properties
$(S\!{H})$ and $(W\! U\! C)$. Then $H^{n}(\cE^{a,*}(M),\bar{\partial}_M)$
and $H^{n}(\cDd^{a,*}(M),\bar{\partial}_M)$ are~$0$ for all $a=0,\hdots,m$. 
\end{prop}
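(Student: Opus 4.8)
The plan is to adapt Malgrange's classical argument for the vanishing of top-degree cohomology (where the key inputs are ellipticity in the interior, a unique continuation principle, and Serre duality-type functional analysis) to the $CR$ setting, following the pattern of \cite{BHN10}. First I would invoke Proposition~\ref{prsh4.1}: under $(S\!H)$ together with $(W\!U\!C)$, the operator $\bar{\partial}_M:\cEd^{a,0}(M)\to\cEd^{a,1}(M)$ has closed range, and likewise $\bar{\partial}_M:\cD^{a,0}(M)\to\cD^{a,1}(M)$ has closed range. The statement to be proved concerns the \emph{other} end of the complex, degree $n$, so the idea is to pass to the dual complex: by the pairing between $\Qc^{a,q}$-forms and $\Qc^{m-2n+?,\,n-q}$-type forms (the $CR$ analogue of the pairing of $(a,q)$-forms with $(m-a,n-q)$-forms realized by integration), the cokernel of $\bar{\partial}_M$ in degree $n$ acting on $\cE^{a,*}(M)$ is identified with the kernel of the transposed operator $\bar{\partial}_M:\cEd^{a',0}(M)\to\cEd^{a',1}(M)$ in degree $0$ — up to the usual adjustment of the first index — together with the closed-range property from Proposition~\ref{prsh4.1}, which guarantees that the cokernel is Hausdorff and equals the \emph{topological} dual of that kernel.

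Next I would show that kernel is zero. An element in the kernel of $\bar{\partial}_M$ on $\cEd^{a',0}(M)$ is a $CR$ form-distribution with compact support; its coefficients are, locally, $CR$ distributions. By property $(S\!H)$ (hence $(H)$, via the subelliptic estimate \eqref{sh4.2}–\eqref{sh4.4}), these coefficients are smooth, and being compactly supported $CR$ functions on the connected non-compact manifold $M$, property $(W\!U\!C)$ forces them to vanish identically: a compactly supported $CR$ distribution vanishes on the non-compact connected component of the complement of its support, hence everywhere. Therefore $H^n(\cEd^{a',*}(M),\bar{\partial}_M)$-dual is trivial, which gives $H^n(\cE^{a,*}(M),\bar{\partial}_M)=0$. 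The same argument run with the roles of $\cE$ and $\cEd$ (equivalently $\cD$ and $\cDd$) interchanged — using the second closed-range statement of Proposition~\ref{prsh4.1} and hypoellipticity of $\bar{\partial}_M$ on $(a,0)$-forms — yields $H^n(\cDd^{a,*}(M),\bar{\partial}_M)=0$.

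The main obstacle I anticipate is not the unique-continuation step (which is essentially immediate from $(W\!U\!C)$ plus non-compactness) but the duality bookkeeping: setting up the correct $CR$ Serre duality pairing between the complexes $(\Qc^{a,*},\bar{\partial}_M)$ and their "complementary" complexes, checking that it is a perfect pairing of the relevant Fr\'echet and $(DF)$-spaces, and verifying that closed range of $\bar{\partial}_M$ in the low-degree spot is exactly what is needed to conclude Hausdorffness and hence vanishing of the top cohomology. This is where the phrase ``as in \cite{BHN10} one obtains'' does the real work, so I would cite that reference for the functional-analytic framework and restrict myself to pointing out that the hypotheses $(S\!H)$ and $(W\!U\!C)$ supply precisely the two ingredients (subelliptic/hypoelliptic regularity at degree $0$ and unique continuation) that the Malgrange–Serre scheme requires. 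A minor additional point is that one may reduce to $M$ connected, and arrange that $M\setminus K$ has no compact connected component, exactly as in the proof of Proposition~\ref{prsh4.1}.
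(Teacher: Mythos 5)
Your proposal follows essentially the same route as the paper: closed range and injectivity of $\bar{\partial}_M$ in degree zero on compactly supported forms (Proposition~\ref{prsh4.1}, resting on $(S\!H)$, $(W\!U\!C)$ and non-compactness), then a Serre-type duality pairing to convert this into vanishing of the top-degree cohomology of the dual complexes. The only detail the paper supplies that you omit is orientability: the pairing $\int_M\alpha\wedge\beta$ requires $M$ oriented, and the non-orientable case is reduced to the orientable one by passing to the oriented double cover and averaging over its fibers.
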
 \begin{proof}  By Proposition~\ref{prsh4.1},
the sequences \begin{gather*}
\begin{CD} 0 @>>> \cD^{a,0}(M) @>{\bar{\partial}_M}>> \cD^{a,1}(M),\\
0@>>>\cEd^{a,0}(M) @>{\bar{\partial}_M}>> \cEd^{a,1}(M)\,
\end{CD}
\end{gather*} 
are exact and all maps have  closed range.
\par 
Assume that $M$ is oriented. Then we can define duality pairings
between $\cD^{a,q}(M)$ and ${\cDd}^{n+k-a,n-q}(M)$ and between 
${\cEd}^{a,q}(M)$ and $\cE^{n+k-a,n-q}(M)$, extending 
\begin{equation*}
\langle [\alpha]\, ,\, [\beta]\rangle =\int_M \alpha\wedge\beta,
\end{equation*} 
where $\alpha\in\Af_{a+q}(M)\cap\If^{\,{a}}(M)$ has compact support and 
is a representative of $[\alpha]\in\cD^{a,q}(M)$ and
$\beta\in\Af_{m-a-q}(M)\cap\If^{\, {n+k-a}}(M)$ a representative of
 $[\beta]\in\cE^{n+k-a,n-q}(M)$.
Then by duality (see e.g. \cite{ses})
we obtain exact sequences \begin{equation*}
\begin{CD}
0 @<<< \cDd^{n+k-a,n}(M) @<{\bar{\partial}_M}<<\cDd^{n+k-a,n-1}(M), \\
0 @<<< \cE^{n+k-a,n}(M) @<{\bar{\partial}_M}<<\cE^{n+k-a,n-1}(M),
\end{CD}
\end{equation*}
proving the statement in the case where $M$ is orientable. \par 
If $M$ is not orientable, then we can take its oriened 
double covering
$\pi:{\tilde{M}}\to{M}$,
which is a $CR$-bundle with the total space ${\tilde{M}}$ being a
$CR$ manifold of the same
$CR$ dimension and codimension. From the exact sequences 
\begin{equation*}
\begin{CD}
0 @<<< \cDd^{n+k-a,n}({\tilde{M}}) 
@<{\bar{\partial}_{{\tilde{M}}}}<<
\cDd^{n+k-a,n-1}({\tilde{M}}), \\
0 @<<< \cE^{n+k-a,n}({\tilde{M}}) 
@<{\bar{\partial}_{{\tilde{M}}}}<<
\cE^{n+k-a,n-1}({\tilde{M}}),
\end{CD} \end{equation*}
we deduce that  
statement for the nonorientable $M$ by averaging on the fibers. 
\end{proof}
We also obtain the analogue of the Hartogs-type theorem in \cite{CL1999}.
\begin{prop}\label{propo4.3}
Let $\Omega^{\mathrm{open}}\Subset{M}$ 
be relatively compact, orientable, and 
with a piece-wise smooth boundary
$\partial\Omega$. 
If $u_0$ is the restiction to $\partial\Omega$ of an
$(a,0)$-form
$\tilde{u}_0$ of class $\Co^2$ on $M,$ with
$\bar{\partial}\tilde{u}_0$ vanishing to the second order
on $\partial\Omega$, and \begin{equation*}
\int_{\partial\Omega} u_0\wedge\phi=0,\;\;\quad \forall \phi\in\ker(
\bar{\partial}_M:\cE^{n+k-a,n-1}(M')\to \cE^{n+k-a,n}(M')),
\end{equation*}
then there is $u\in\Qc^{a,0}(\Omega)\cap\Co^1(\bar{\Omega})$ with
$\bar{\partial}_Mu=0$ on $\Omega$ and $u=u_0$ on $\partial\Omega$.
\end{prop}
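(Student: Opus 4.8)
The plan is to deduce this Hartogs-type extension result from the vanishing theorem for top-degree cohomology (the previous proposition) together with a duality/functional-analysis argument, following the scheme of \cite{CL1999, BHN10}. First I would reduce the boundary-value statement to a $\bar\partial_M$-solvability statement on the open domain $\Omega$. Since $\tilde u_0$ is a global $\Co^2$ $(a,0)$-form with $\bar\partial_M\tilde u_0$ vanishing to second order on $\partial\Omega$, the form $g:=\bar\partial_M\tilde u_0$, extended by $0$ outside $\bar\Omega$, is a $\Co^1$ $(a,1)$-form with support in $\bar\Omega$; one checks it is $\bar\partial_M$-closed in the distributional sense on $M$ (here the second-order vanishing on $\partial\Omega$ is exactly what is needed so that no boundary term appears). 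Then the sought $u$ is obtained by setting $u=\tilde u_0-v$ on $\Omega$, where $v$ solves $\bar\partial_M v=g$ on a neighborhood $M'$ of $\bar\Omega$ with $v$ supported in $\bar\Omega$ (so that $v=0$ on $\partial\Omega$ and $u=u_0$ there).

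The existence of such a compactly supported solution $v$ is where the momentum condition and the cohomology vanishing enter. Consider $g$ as an element of $\cEd^{a,1}(M')$ (a $\bar\partial_M$-closed form-distribution with compact support in $\bar\Omega\subset M'$). By Proposition~\ref{prsh4.1} applied on $M'$ (chosen connected, non-compact, still enjoying $(S\!H)$ and $(W\!U\!C)$), the operator $\bar\partial_M:\cEd^{a,0}(M')\to\cEd^{a,1}(M')$ has closed range, and by the preceding proposition its cokernel in top degree vanishes; more precisely, by the duality pairing between $\cEd^{a,\bullet}(M')$ and $\cE^{n+k-a,n-\bullet}(M')$ used in that proof, $g$ lies in the image of $\bar\partial_M$ on compactly supported form-distributions precisely when $\int_{M'} g\wedge\phi=0$ for every $\bar\partial_M$-closed $\phi\in\cE^{n+k-a,n-1}(M')$. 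Now Stokes' theorem converts $\int_{M'}g\wedge\phi=\int_{\bar\Omega}\bar\partial_M\tilde u_0\wedge\phi=\pm\int_{\partial\Omega}u_0\wedge\phi$ (using $\bar\partial_M\phi=0$ and $d(\tilde u_0\wedge\phi)=\bar\partial_M\tilde u_0\wedge\phi\pm\tilde u_0\wedge\bar\partial_M\phi$ modulo the ideal), so the hypothesis $\int_{\partial\Omega}u_0\wedge\phi=0$ is exactly the solvability condition. This yields $v\in\cEd^{a,0}(M')$ with $\bar\partial_M v=g$ and $\supp v$ compact; since $M'\setminus\bar\Omega$ can be arranged to have no compact component, $(W\!U\!C)$ forces $\supp v\subset\bar\Omega$.

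Finally I would upgrade regularity: $u=\tilde u_0-v$ satisfies $\bar\partial_M u=0$ on $\Omega$, so by property $(S\!H)$ (hence $(H)$, hypoellipticity of $\bar\partial_M$ on $(a,0)$-forms) $u$ is smooth — indeed real-analytically controlled — in the interior, giving $u\in\Qc^{a,0}(\Omega)$; and near $\partial\Omega$ one reads off $\Co^1$-regularity up to the boundary from the subelliptic estimate \eqref{sh4.3}, bootstrapping the Sobolev regularity of $v$ (which is as smooth as $g$, i.e. $\Co^1$, by \eqref{sh4.4}) and using that $\tilde u_0$ is $\Co^2$, so $u\in\Co^1(\bar\Omega)$ with $u=u_0$ on $\partial\Omega$. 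The main obstacle is the careful bookkeeping in the duality and Stokes step: making precise the pairing between distributional and smooth tangential forms on the non-compact manifold $M'$, verifying that the top-degree vanishing proposition indeed gives surjectivity onto the annihilator of $\bar\partial_M$-closed smooth forms (not merely closed range), and checking that the boundary integral produced by Stokes is exactly $\int_{\partial\Omega}u_0\wedge\phi$ with the correct orientation and that the second-order vanishing of $\bar\partial_M\tilde u_0$ on $\partial\Omega$ legitimizes treating $g$ as a genuine compactly supported $\bar\partial_M$-closed current without spurious boundary contributions.
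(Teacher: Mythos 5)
Your proposal is correct and follows essentially the same route as the paper's own proof: extend $\bar\partial_M\tilde u_0$ by zero to a compactly supported closed $(a,1)$-current on $M'$ (with $M'$ chosen so that $M'\setminus\bar\Omega$ has no compact component), use the closed-range statement of Proposition~\ref{prsh4.1} together with duality to reduce solvability with compact support to the momentum condition via Stokes, invoke $(W\!U\!C)$ to confine $\supp v$ to $\bar\Omega$, and set $u=\tilde u_0-v$. The only cosmetic difference is that you additionally invoke the top-degree vanishing proposition, whereas the paper gets by with closed range plus the duality pairing alone.
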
 \begin{proof} 
We restrain for simplicity to the case $a=0$. The general case can be discussed
in an analogous way. 
If $M$ is not orientable, then the inverse image of $\Omega$ in the double covering
$\pi:\tilde{M}\to{M}$ consists of two disjoint open subsets, both
$CR$-diffeomorphic to $\Omega$. Thus we can and will 
assume that
$M$ is orientable.
\par 
Let $E$ be a discrete set that intersects each 
relatively compact connected
component of $M\setminus\bar{\Omega}$ in a single point and $M'=M\setminus{E}$. 
Note that $M'$ has been chosen in such a way that
no connected component of
$M'\setminus\Omega$ is compact.
\par 
Extending
$\bar{\partial}_M\tilde{u}_0$ by $0$ outside of $\Omega$,
we define a $\bar{\partial}_M$-closed element $f$
of $\cEd^{0,1}(M')$, with support contained in $\bar{\Omega}$. 
The map $\bar{\partial}_M:\cEd^{0,0}(M')
\to\cEd^{0,1}(M')$ has a closed image
by Proposition~\ref{prsh4.1}. Hence to get existence of  
a solution $v\in\cEd^{0,0}(M')$
to ${\bar{\partial}_Mv=f}$ it suffices to prove that $f$ is orthogonal to the
kernel of $\bar{\partial}_M:\cE^{n+k,n-1}(M')\to \cE^{n+k,n}(M')$. 
This is the case
because \begin{equation*}
\int_{M'} f\wedge\phi=\int_{\Omega}(\bar{\partial}_M \tilde{u}_0)\wedge\phi
=\int_{\Omega}(d u_0)\wedge\phi=\int_{\partial\Omega}u_0\phi-
\int_\Omega u_0 d\phi
\end{equation*}
for all $\phi\in\cE^{n+k,n-1}(M')=\Af_{m-1}^{\C}(M')\cap\If^{\,{n+k}}(M')$,
and the last summand in the last term vanishes when $d\phi=\bar{\partial}_M\phi=0$.
A $v\in\cEd^{0,0}(M')$ satisfying $\bar{\partial}_Mv=f$ defines 
a $CR$ function on
$M'\setminus\bar{\Omega}$ that vanishes on some open subset of each connected
component of $M'\setminus\bar{\Omega}$. Thus, for $(W\!{U}\!{C})$ and the
regularity \eqref{sh4.4}, which are consequences of 
$(S\!{H})$, the solution $v$ is $\Co^1$ and has support in
$\bar{\Omega}$. In particular it vanishes on $\partial\Omega$ and therefore
$u=\tilde{u}_0-v$ satisfies the thesis.
\end{proof}
\begin{rmk}
An anaologue of this
\textit{momentum}
theorem  for 
functions on one complex variable states that a  function $u_0$,
defined and continuous
on the boundary of a rectifiable Jordan curve $\mathbf{c}$,
is the boundary value
of a holomorphic function on its enclosed domain  if and
only if ${\int_{\mathbf{c}}u_0(z) p(z)dz=0}$ for all holomorphic polynomials
$p(z)\in\C[z]$. 
\end{rmk}
\section{Hopf lemma and some consequences} \label{sechopf}
In complex analysis  properties of domains are often expressed
in terms of the indices of inertia of the complex Hessian of its exhausting function.
Trying to mimic this aproach in the case of an
 (abstract)
$CR$ manifold $M$, we are confronted with 
the fact that 
 pluri-harmonicity and pluri-sub-harmonicity 
are  well defined only
for sections of a suitable vector bundle $\mathpzc{T}$
(see \cite{AnNa80,43,Sev}),
which 
can be characterized 
in terms of 
$1$-jets when $M$ is embedded. 
We will avoid here this complication, by defining 
the complex Hessian $dd^c\rhoup$ 
as 
an affine subspace of Hermitian symmetric forms on $T^{1,0}M$. 
As we did for the Levi form, we shall consider
its  extension to $H^{1,1}M$, and note that 
it is an invariantly defined function
on 
$\Kf$.
Since 
a $CR$ function canonically determines a  section 
of $\mathpzc{T}$, 
we will succeed in making a very implicit use of the sheaf $\mathpzc{T}$ of
\textit{transversal $1$-jets}  of~\cite{43}.
\par 
In this section we shall 
consider the $P_{\!\tauup}$ 
of \S\ref{s3}, exhibit their relationship to the complex Hessian, and, by using the fact that they are
 degenerate-elliptic operators, draw,
from their 
boundary behavior at  non-characteristic points,
consequences on the properties of $CR$ functions on $M.$
\subsection*{Hopf lemma}
The classical Hopf Lemma also holds for degenerate-elliptic 
operators. 
We have, from 
\cite[Lemma 4.3]{fee2013}:
\begin{prop}\label{prop4.1}
 Let $\Omega$ be a domain in $M$ and $u\in\Co^1(\bar{\Omega},\R)$ satisfy 
 $P_{\!\tauup}u\geq{0}$ on $\Omega$,
 for the operator $P_{\!\tauup}=-X_0+{\sum}_{i=1}^{2r}X_j^2$ of \eqref{2.2}. 
 Assume that $p_0\in\partial\Omega$
 is a $\Co^2$ non-characteristic point of $\partial\Omega$ for $P_{\!\tauup}$ and that 
 there is an open neighborhood $U$ of $p_0$ in $M$ such that 
\begin{align}\label{qq5.1}
 & u(p)<u(p_0),\;\;\forall p\in\Omega\cap{U}.
\end{align}
Then 
\begin{equation} \vspace{-20pt}
du(p_0)\neq{0}. 
\end{equation}  \qed
\end{prop}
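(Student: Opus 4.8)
The plan is to reduce Proposition~\ref{prop4.1} to the classical Hopf Lemma for uniformly elliptic operators by a localization and a coordinate straightening that exploits the non-characteristic hypothesis at $p_0$. Recall that $P_{\!\tauup}=-X_0+\sum_{i=1}^{2r}X_i^2$ is degenerate-elliptic in the sense of Bony~\cite{Bo69}: its second-order part $\sum X_i^2$ has nonnegative semidefinite symbol supported on the subbundle spanned by $X_1,\hdots,X_{2r}\subset HM$. The point $p_0$ being non-characteristic for $P_{\!\tauup}$ means, unwinding Definition~\eqref{equnosei} applied to $\Xi=\langle Z_1,\hdots,Z_r\rangle$, that $T_{p_0}(\partial\Omega)+\langle X_1,\hdots,X_{2r}\rangle_{p_0}=T_{p_0}M$; equivalently the interior normal direction $\nu$ to $\partial\Omega$ at $p_0$ lies in the span of the $X_i$'s, so the operator is genuinely elliptic in the normal direction.

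First I would fix a $\Co^2$ defining function $\rhoup$ for $\partial\Omega$ near $p_0$ with $\Omega\cap U=\{\rhoup>0\}$ and $d\rhoup(p_0)\neq 0$; the goal is the standard alternative: either $du(p_0)\neq 0$, or else one derives a contradiction with $P_{\!\tauup}u\geq 0$. Following the classical barrier method, I would work in a small ball (or, since we are on $M$, a small coordinate chart) $B$ tangent to $\partial\Omega$ at $p_0$ from inside $\Omega$, and construct an auxiliary function $v=e^{-\alpha r^2}-e^{-\alpha R^2}$, where $r$ is the distance (in the fixed Riemannian metric) to the center of $B$ and $R$ its radius, and $\alpha>0$ is a large parameter. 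The key computation is that $P_{\!\tauup}v>0$ on the annular region $B\setminus \overline{B_{R/2}}$ once $\alpha$ is chosen large enough: expanding $P_{\!\tauup}(e^{-\alpha r^2})$ one gets a leading term $4\alpha^2 \big(\sum_{i=1}^{2r}(X_i r^2/2)^2\big)e^{-\alpha r^2}$ minus lower-order terms that are $O(\alpha)$, and the quadratic-in-$\alpha$ term dominates provided $\sum_{i}(X_i r)^2$ is bounded below away from zero near $p_0$. This lower bound is precisely where the non-characteristic hypothesis enters: since $\nabla r^2$ at $p_0$ points along the normal $\nu$, and $\nu\in\langle X_1,\hdots,X_{2r}\rangle$, we have $\sum_i (X_i r^2)^2\neq 0$ at $p_0$, hence by continuity on a neighborhood of the contact point; shrinking $B$ keeps this away from the degenerate locus. (One may need to recenter $B$ slightly, or replace $r$ by the distance to an interior osculating sphere, to ensure the bad set $r<R/2$ stays inside $\Omega$ where we only need $P_{\!\tauup}v$ to have a sign on the outer annulus.)

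Then the argument concludes as in the classical case: on the annulus $A=B\setminus\overline{B_{R/2}}$ we have $u-u(p_0)<0$ on the inner sphere $\partial B_{R/2}$ by \eqref{qq5.1} and compactness, and $v=0$ on $\partial B$ while $v>0$ in the interior; choosing $\varepsilon>0$ small we get $u-u(p_0)+\varepsilon v\leq 0$ on $\partial A$, while $P_{\!\tauup}(u-u(p_0)+\varepsilon v)=P_{\!\tauup}u+\varepsilon P_{\!\tauup}v\geq \varepsilon P_{\!\tauup}v>0$ on $A$. By the weak maximum principle for degenerate-elliptic operators (again Bony~\cite{Bo69}, valid because the first-order term does not affect the sign at an interior maximum), $u-u(p_0)+\varepsilon v$ attains its maximum on $\partial A$, hence $u-u(p_0)+\varepsilon v\leq 0$ throughout $A$. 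Evaluating the outward normal derivative at $p_0\in\partial B\subset\partial A$, where the left side vanishes, gives $\partial_\nu u(p_0)\geq -\varepsilon\,\partial_\nu v(p_0)>0$ (the last inequality because $\partial_\nu v(p_0)=-2\alpha R e^{-\alpha R^2}<0$), so in particular $du(p_0)\neq 0$. Since we invoke \cite[Lemma 4.3]{fee2013}, one could alternatively simply cite that result directly after checking its hypotheses are met — the only thing to verify being the ellipticity-in-the-normal-direction, i.e. the non-characteristic condition.

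The main obstacle I anticipate is the barrier construction in the \emph{degenerate} setting: unlike the uniformly elliptic case, $P_{\!\tauup}v$ need not be positive on all of $A$ — it is only controlled where $\sum_i(X_i r)^2$ is nonvanishing. One must therefore arrange the geometry of the osculating sphere $B$ so that the ``good'' region covers a full one-sided neighborhood of the contact point $p_0$ along $\partial\Omega$, using that non-characteristicity is an \emph{open} condition on $\partial\Omega$ (as remarked after \eqref{equnosei}). Keeping track of how the Riemannian distance function interacts with the non-holonomic frame $X_1,\hdots,X_{2r}$ — in particular that the Hessian of $r^2$ contributes controllably to the zeroth/first order terms — is the delicate bookkeeping; but since the statement is quoted from \cite{fee2013}, the cleanest route for the paper is to state the reduction to the non-characteristic (hence locally normally elliptic) situation and defer the quantitative barrier estimate to that reference.
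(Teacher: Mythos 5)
The paper offers no proof of this proposition: it is quoted verbatim from \cite[Lemma 4.3]{fee2013}, with the \verb|\qed| attached to the statement. Your barrier argument is therefore not a variant of the paper's proof but a reconstruction of the one hidden behind the citation, and in outline it is the right one: the whole content of the non-characteristic hypothesis is that $\sum_{i}(X_i\rhoup)^2>0$ at $p_0$, which is exactly what makes the quadratic-in-$\alpha$ term $4\alpha^2 r^2\sum_i(X_ir)^2e^{-\alpha r^2}$ dominate and turns $v=e^{-\alpha r^2}-e^{-\alpha R^2}$ into a strict subsolution near the contact point. One small slip: non-characteristicity is \emph{not} equivalent to the interior normal lying in the span of $X_1,\hdots,X_{2r}$; it only says that this span is not contained in $T_{p_0}\partial\Omega$, i.e.\ some $X_i\rhoup(p_0)\neq 0$. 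Fortunately only the latter (which you also state correctly) enters the computation of $P_{\!\tauup}v$.

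The one substantive point to repair is the choice of comparison region. You correctly observe that $P_{\!\tauup}v>0$ is only guaranteed where $\sum_i(X_ir)^2$ is bounded below, but the fix is not to ``arrange the geometry of the osculating sphere so that the good region covers a one-sided neighborhood of $p_0$'' --- that cannot be forced, since away from $p_0$ the gradient of $r$ may well be annihilated by all the $X_i$. The standard fix is to shrink the comparison domain instead: replace the annulus by $A'=B\cap B'$, where $B'$ is a small ball about $p_0$ on which $\sum_i(X_ir)^2\geq c>0$ by continuity. Its boundary splits into a piece of $\partial B$, where $v=0$ and $u\leq u(p_0)$, and the piece $B\cap\partial B'$, which is a compact subset of $\Omega\cap U$ on which \eqref{qq5.1} gives $u-u(p_0)\leq-\delta<0$; choosing $\varepsilon$ with $\varepsilon\sup v<\delta$ makes $u-u(p_0)+\varepsilon v\leq 0$ on all of $\partial A'$, and Bony's maximum principle on $A'$ finishes the argument exactly as you describe. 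Finally, note that $u$ is only assumed to be of class $\Co^1(\bar{\Omega},\R)$, so the interior comparison step needs either $u\in\Co^2(\Omega)$ or a weak formulation of $P_{\!\tauup}u\geq 0$; handling that regularity issue cleanly is presumably why the authors chose to cite \cite[Lemma 4.3]{fee2013} rather than reprove it, which is the alternative route you yourself point out at the end.
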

The condition that $\partial\Omega$ is non-characteristic at $p_0$ 
for $P_{\!\tauup}$ 
means that, if $\Omega$ is
represented by $\rhoup<0$ near $p_0$, with $\rhoup\in\Co^2$ and $d\rhoup(p_0)\neq{0}$, then
${\sum}_{i=1}^{2r}|X_j\rhoup(p_0)|^2>0$. 
\begin{rmk}
 If $M$ has property $(H)$, then \eqref{qq5.1} is automatically satisfied 
 if $u=|f|$, for $f\in\Ot_{\!{M}}(\Omega)\cap\Co^0(\bar{\Omega})$, when $u(p_0)$ is a local maximum
 and $f$
 is not constant on a half-neighborhood of $p_0$ in~$\Omega$.
\end{rmk}
\begin{cor}
 Let $\Omega$ be an open subset of $M$ and $f\in\Ot_{\!{M}}(\Omega)\cap\Co^2(\bar{\Omega})$,
 $p_0\in\partial\Omega$ 
 with 
\begin{equation}
 |f(p)|<|f(p_0)|,\quad\forall p\in\Omega.
\end{equation}
If $\partial\Omega$ is smooth and $\Theta$-non-characteristic at $p_0$, then 
$d|f|(p_0)\neq{0}$.
\end{cor}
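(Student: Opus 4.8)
The plan is to reduce the corollary to the Hopf Lemma in the form of Proposition~\ref{prop4.1}, applied to the function $u=|f|$ and to the operator $P_{\!\tauup}$ for a suitably chosen $\tauup\in\Kf$. The first step is to pick a point $\tauup\in\Kf_{p_0}$ realizing the $\Theta$-non-characteristicity at $p_0$; concretely, since $\partial\Omega$ is $\Theta$-non-characteristic at $p_0$, writing $\Omega=\{\rhoup<0\}$ near $p_0$ with $d\rhoup(p_0)\neq 0$, there exist $Z_1,\hdots,Z_r\in\Zf(M)$ representing such a $\tauup$ with ${\sum}_{i=1}^{2r}|X_j\rhoup(p_0)|^2>0$, where $X_j=\re Z_j$, $X_{j+r}=\im Z_j$. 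This is exactly the hypothesis that $\partial\Omega$ is non-characteristic at $p_0$ for $P_{\!\tauup}$ in the sense spelled out after Proposition~\ref{prop4.1}. One should check that $\partial\Omega$ being smooth gives in particular that $p_0$ is a $\Co^2$ point, so the regularity hypotheses of Proposition~\ref{prop4.1} are met.

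The second step is to verify the sign condition $P_{\!\tauup}u\geq 0$ on $\Omega$ with $u=|f|$. This is immediate from the Lemma preceding Theorem~\ref{thm3.3} (the one asserting $P_{\!\tauup}|u|\geq 0$ on $\Omega\cap\{u\neq 0\}$), \emph{provided} $f$ does not vanish on $\Omega$: but that is guaranteed by the strict inequality $|f(p)|<|f(p_0)|$ for all $p\in\Omega$ only if $f(p_0)\neq 0$, which in turn holds because otherwise $|f|\equiv 0$ on $\Omega$, contradicting the strict inequality unless $\Omega=\varnothing$. Hence $f\neq 0$ everywhere on $\Omega$, and $P_{\!\tauup}|f|\geq 0$ throughout $\Omega$. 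Also $u=|f|\in\Co^2(\bar\Omega)$ follows from $f\in\Ot_{\!{M}}(\Omega)\cap\Co^2(\bar\Omega)$ together with $f\neq 0$ near $\bar\Omega$ (shrinking if necessary, or noting $f$ extends continuously and non-vanishingly).

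The third step is to apply Proposition~\ref{prop4.1}: the hypothesis \eqref{qq5.1}, namely $u(p)<u(p_0)$ for $p$ in a neighborhood of $p_0$ intersected with $\Omega$, is a consequence of the global strict inequality $|f(p)|<|f(p_0)|$ assumed in the corollary. Proposition~\ref{prop4.1} then yields $du(p_0)\neq 0$, i.e. $d|f|(p_0)\neq 0$, which is the claim.

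The only genuine point requiring care, and the place I would expect to be the main obstacle, is the choice of $\tauup$ in the very first step: the $\Theta$-non-characteristicity condition \eqref{equnosei} for $\Xi=\Theta$ says that $T_{p_0}\partial\Omega+\{\re Z_{p_0}\mid Z\in\Theta\}=T_{p_0}M$, i.e. that the \emph{union} over all $\tauup\in\Kf$ of the spans $\Qf_1(\tauup)$ together with $T_{p_0}\partial\Omega$ fills the tangent space; one must extract from this a \emph{single} $\tauup$ (equivalently, a single finite family $Z_1,\hdots,Z_r$) whose real/imaginary parts already suffice to make ${\sum}|X_j\rhoup(p_0)|^2>0$. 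Since $d\rhoup(p_0)$ is a single nonzero covector, it is enough that $d\rhoup(p_0)$ does not annihilate $\{\re Z_{p_0}\mid Z\in\Theta\}$; as $\Theta=\bigcup_{\tauup\in\Kf}\Theta(\tauup)$, there is some $Z\in\Theta(\tauup)$ for some $\tauup\in\Kf$ with $\langle d\rhoup(p_0),\re Z_{p_0}\rangle\neq 0$, and then enlarging the representing family of that $\tauup$ to include $Z$ (and its conjugate) gives the desired $P_{\!\tauup}$. One should double-check that adjoining $Z$ to a representation $Z_1\otimes\bar Z_1+\cdots$ of $\tauup$ merely changes $\tauup$ to another element of $\Kf$ of higher rank, which is harmless, and that the resulting $P_{\!\tauup}$ still satisfies the hypotheses invoked above (it does, by the invariance discussion and the Lemma giving $P_{\!\tauup}|f|\geq 0$).
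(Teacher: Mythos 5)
Your proposal follows the paper's own route: choose $\tauup\in\Kf$ whose associated operator $P_{\!\tauup}$ is non-characteristic at $p_0$, observe that $P_{\!\tauup}|f|\geq 0$ where $f\neq 0$, and invoke the Hopf lemma (Proposition~\ref{prop4.1}). Two details need repair, though neither changes the outcome. First, the claim that ``$f\neq 0$ everywhere on $\Omega$'' does not follow from $|f(p)|<|f(p_0)|$: nothing prevents interior zeros of $f$ far from $p_0$. What is actually needed --- and what the paper uses --- is only that $f\neq 0$ on $\Omega\cap U$ for a small neighborhood $U$ of $p_0$, which is immediate from continuity and $f(p_0)\neq 0$; one then applies Proposition~\ref{prop4.1} with $\Omega$ replaced by $\Omega\cap U$. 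Second, the ``adjoin $Z$'' manoeuvre at the end is both unnecessary and, as asserted, incorrect: if $\tauup\in\Kf$ and $Z\in\Theta(\tauup)$, the tensor $\tauup+Z\otimes\bar{Z}$ lies in $H^{1,1}M$ but in general no longer in $\ker\Lf$, hence not in $\Kf$. No enlargement is needed, however: by \eqref{theta3.6}, $\Theta(\tauup)$ is the span of the $Z_1,\hdots,Z_r$ representing $\tauup$, so for any $Z\in\Theta(\tauup)$ the vector $\re{Z}_{p_0}$ is already a real linear combination of $X_1(p_0),\hdots,X_{2r}(p_0)$, and $\langle d\rhoup(p_0),\re{Z}_{p_0}\rangle\neq 0$ forces ${\sum}_{j=1}^{2r}|X_j\rhoup(p_0)|^2>0$ for the original $P_{\!\tauup}$. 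With these two local adjustments your argument coincides with the paper's proof.
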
 
\begin{proof}
 By the assumption that $\partial\Omega$ is $\Theta$-non-characteristic at $p_0$, the function
 $u=|f|$ is, for some open neighborhood $U$ of $p_0$ in $M$, a solution of $P_{\!\tauup}u\geq{0}$ 
 on $\Omega\cap{U,}$ for an operator $P_{\!\tauup}$  
of the form \eqref{2.2}, obtained from a section $\tauup$
 of $\Kf(U)$, 
 and for which $\partial\Omega$ is non-characteristic at $p_0$. 
\end{proof}
\subsection*{The complex Hessian and the operators {$dd^c$}, {$P_{\!\tauup}$}}
 Denote by $\Af_{\!{1}}$ the sheaf of germs of smooth real valued \mbox{$1$-forms} on $M$,
 by $\Jf_{\!{1}}$ its subsheaf 
 of germs of sections of $H^0M$ and
 by $\If_{\!{1}}$ the degree $1$-homogeneous elements of the ideal sheaf of $M.$
The  elements of $\If_{\!{1}}$ are the germs of  smooth complex valued 
$1$-forms vanishing on~$T^{0,1}M$.\par
 Let $\Omega$ be an open subset of $M$.
 \begin{lem}\label{l5.3}
  If $\alphaup\in\Af_1(\Omega)$, then we can find $\xiup\in\Af_1(\Omega)$
 such that $\alphaup+i\xiup\in\If_{\!{1}}(\Omega)$. 
\end{lem}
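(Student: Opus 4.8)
The plan is to construct $\xiup$ locally and then patch with a partition of unity, using the fact that $H^0M = (TM/HM)^*$ so the characteristic directions account exactly for the obstruction. First I would work in a coordinate patch $U$ on which the almost-$CR$ structure is trivialized, choosing a local frame $Z_1,\hdots,Z_n$ of $\Zf(M)$ over $U$ together with real vector fields $T_1,\hdots,T_k$ so that $\re Z_1,\im Z_1,\hdots,\re Z_n,\im Z_n,T_1,\hdots,T_k$ is a local frame for $TM$ over $U$; let $\zetaup^1,\hdots,\zetaup^n,\thetaup^1,\hdots,\thetaup^k$ be the dual coframe, with $\zetaup^j$ the $(1,0)$-forms dual to the $Z_j$ and $\thetaup^\ell$ real $1$-forms annihilating $HM$, hence sections of $H^0M$ over $U$. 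On $U$ an element $\alphaup$ of $\Af_1(\Omega)$ decomposes as $\alphaup = \sum_j (a_j\, \re\zetaup^j + b_j\, \im\zetaup^j) + \sum_\ell c_\ell\, \thetaup^\ell$ with real-valued smooth coefficients.

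The key computation is then purely linear-algebraic on each fiber: a real $1$-form $\betaup$ satisfies $\betaup + i\,\gammaup \in \If_{\!1}$ for some real $\gammaup$ iff, writing $\zetaup^j = \re\zetaup^j + i\,\im\zetaup^j$ for the $(1,0)$-generators, one can cancel the components of $\betaup$ along $T^{0,1}M$. Concretely, $\re\zetaup^j$ and $\im\zetaup^j$ are each non-trivial on $T^{0,1}M$, but $\re\zetaup^j + i\,\im\zetaup^j = \zetaup^j$ vanishes on $T^{0,1}M$, while $\thetaup^\ell$ already vanishes on $HM \supset \re(T^{0,1}M)$ and, being real, also on $T^{0,1}M$ after we check its imaginary companion is zero — so $\thetaup^\ell \in \If_{\!1}$ outright. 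Thus I would take $\xiup = \sum_j (a_j\, \im\zetaup^j - b_j\, \re\zetaup^j)$ on $U$, which gives $\alphaup + i\xiup = \sum_j (a_j + i b_j)\,\zetaup^j + \sum_\ell c_\ell\,\thetaup^\ell$, a section of $\If_{\!1}(U\cap\Omega)$ since each $\zetaup^j$ and each $\thetaup^\ell$ lies in $\If_{\!1}$. This produces the required $\xiup$ over each trivializing patch.

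The remaining step is globalization: cover $\Omega$ by trivializing patches $U_\mu$ with local solutions $\xiup_\mu$, pick a subordinate partition of unity $\{\chiup_\mu\}$, and set $\xiup = \sum_\mu \chiup_\mu\,\xiup_\mu$. Since $\If_{\!1}$ is a $\CiM$-submodule of $\varOmega^1$ (closed under multiplication by smooth functions and under locally finite sums), $\alphaup + i\xiup = \sum_\mu \chiup_\mu(\alphaup + i\xiup_\mu)$ is again a section of $\If_{\!1}(\Omega)$, and $\xiup$ is a genuine smooth real $1$-form. Minimal care is needed only to note that the construction is $\CiM$-linear in $\alphaup$, so the partition-of-unity sum lands back in the right sheaf.

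The main obstacle — really the only subtle point — is verifying that $H^0M$ sits inside $\If_{\!1}$, i.e.\ that real sections of the characteristic bundle automatically annihilate $T^{0,1}M$ and not merely $HM$; this is immediate because $T^{0,1}M \subset \C HM$ and a real form killing $HM$ kills its complexification, but it must be stated explicitly since it is what makes the $\thetaup^\ell$ contribute nothing to the obstruction. Everything else is the elementary frame-by-frame cancellation of $\re\zetaup^j$ against $\im\zetaup^j$ described above, so I would keep that calculation brief.
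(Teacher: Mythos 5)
Your proof is correct and is essentially the paper's argument in concrete form: the paper simply notes that $0 \to \Jf_{\!{1}} \to \If_{\!{1}} \xrightarrow{\;\re\;} \Af_1 \to 0$ is an exact sequence of fine sheaves and therefore splits over every open $\Omega$, which is precisely your fiberwise frame computation (stalkwise exactness) combined with the partition-of-unity patching (the splitting). The only quibble is a harmless sign slip in your display, where the coefficient should be $(a_j - i b_j)\zetaup^j$ rather than $(a_j + i b_j)\zetaup^j$; this does not affect the conclusion that $\alphaup + i\xiup$ lies in $\If_{\!{1}}(\Omega)$.
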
 
\begin{proof} The sequence 
\begin{equation*} 
\begin{CD}
 0 @>>> \Jf_{\!{1}} @>{i\,\cdot}>> \If_{\!{1}} @>{\re}>> \Af_1 @>>> 0
\end{CD}
\end{equation*}
of fine sheaves is exact, and thus splits on every open subset $\Omega$ of $M$. 
\end{proof}
If $\rhoup$ si a smooth, real valued function on 
$\Omega^{\mathrm{open}}\subset{M}$, by
Lemma~\ref{l5.3}
we can find $\xiup\in\Af_{\!{1}}(\Omega)$ such that $d\rhoup+i\xiup\in\If_{\!{1}}(\Omega)$. 
If $Z\in\Zf(M)$, then $d\rhoup(Z)=-i\xiup(Z)$, 
 $d\rhoup(\bar{Z})=i\xiup(\bar{Z})$, and we obtain 
 \begin{equation*}
 Z\bar{Z}\rhoup=Z(d\rhoup(\bar{Z}))=iZ[\xiup(\bar{Z})],\quad
 \bar{Z}Z\rhoup=\bar{Z}(d\rhoup(Z))=-i\bar{Z}[\xiup(Z)].
\end{equation*}
 Hence 
\begin{equation*}
 [Z\bar{Z}+\bar{Z}Z]\rhoup=i(Z[\xiup(\bar{Z})]-\bar{Z}[\xiup(Z)])=id\xiup(Z,\bar{Z})+i\xiup([Z,\bar{Z}]).
\end{equation*}
We note that $\xiup$ is only defined modulo the addition of a smooth section 
$\etaup\in\!\Jf_{\!{1}}(\Omega)$ 
of the characteristic bundle
$H^0M$, for which 
\begin{equation*}
 id\etaup(Z,\bar{Z})=-i\etaup([Z,\bar{Z}])=\Lf_{\etaup}(Z,\bar{Z}),\quad\forall Z\in\Zf(M).
\end{equation*}
\begin{defn}
 The \emph{complex Hessian of $\rhoup$ at $p_0$} is the affine subspace 
 \begin{equation}
 \Hess_{p_0}(\rhoup)= 
 \{id\xiup_{p_0}\mid \xiup\in\Af_1(\Omega),\; d\rhoup+i\xiup\in\If_1(\Omega)\}.
\end{equation} \end{defn}
\par 
Fix a point $p_0$ where 
$d\rhoup(p_0)\notin{H}^0_{p_0}M$, i.e. 
$\bar{\partial}_M\rhoup(p_0)\neq{0}$,
and consider the level set $N=\{p\in{U}\mid \rhoup(p)=\rhoup(p_0)\}$,
in a neighborhood $U$ of $p_0$ in $\Omega$ where $\bar{\partial}_M\rhoup(p)$
is never $0.$ 
Then $N$ is a smooth real hypersurface and a $CR$-submanifold, of type $(n\! -\! 1, k\! +\! 1)$.
\begin{lem}
 For every $p\in{N}$, we have 
\begin{equation}\label{eqq4.3}
 \{\, \xiup|_N\mid \xiup\in{T}^*_pM\mid d\rhoup(p)+i\xiup\in{T^*_p}^{1,0}M\}\subset{H}^0_pN.
\end{equation}
The left hand side of \eqref{eqq4.3} is an affine hypersurface in $H^0_pN$, with associated
vector space $H^0_pM$.
\end{lem}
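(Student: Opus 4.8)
The plan is to unravel the definitions carefully and use the fact that $N$ is a generic $CR$-submanifold of $M$, so that the formula $H^0_pN = H^0_pM \oplus J_M^*(T_pN)^0$ from Section~\ref{s1} applies. First I would fix $p \in N$ and take any $\xiup \in T^*_pM$ with $d\rhoup(p) + i\xiup \in {T^*_p}^{1,0}M$, i.e.\ $(d\rhoup(p)+i\xiup)(Z) = 0$ for all $Z \in T^{0,1}_pM$. Writing this out, $d\rhoup(p)(Z) = -i\xiup(Z)$ for all $Z \in T^{0,1}_pM$. To show $\xiup|_N \in H^0_pN$, I must check that $\xiup$ annihilates $H_pN = \re T^{0,1}_pN$. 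Since $N$ is a level set of $\rhoup$, $T_pN = \ker d\rhoup(p)$; and by genericity of $N$ in $M$, $T^{0,1}_pN = T^{\C}_pN \cap T^{0,1}_pM = \{Z \in T^{0,1}_pM \mid d\rhoup(p)(Z) = 0\}$. For such a $Z$ we then get $\xiup(Z) = i\,d\rhoup(p)(Z) = 0$, hence $\xiup(\bar Z) = \overline{\xiup(Z)} = 0$ as well (using that $\xiup$ is real-valued), so $\xiup(\re Z) = 0$. This shows $\xiup|_N(H_pN) = 0$, i.e.\ $\xiup|_N \in H^0_pN$, which is the asserted inclusion \eqref{eqq4.3}.

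Next I would identify the image set precisely. Two admissible choices $\xiup_1, \xiup_2$ differ by an element $\etaup = \xiup_1 - \xiup_2$ with $i\etaup \in \If_1$ (real part zero on the nose), i.e.\ $\etaup \in \Jf_1(\Omega)$ is a section of the characteristic bundle, so $\etaup_p \in H^0_pM$; conversely adding any such $\etaup_p$ to an admissible $\xiup$ yields another admissible one. Thus the set $\{\xiup|_N\}$ is a coset of the image of $H^0_pM$ in $H^0_pN$ under restriction. By the generic decomposition $H^0_pN = H^0_pM \oplus J_M^*(T_pN)^0$ the restriction map $H^0_pN \to (H_pN)^*$ is injective on the $H^0_pM$ summand embedded this way — more concretely, $H^0_pM \subset H^0_pN$ already as a subspace (covectors killing all of $H_pM \supset H_pN$), so the restriction is the identity on it. Hence $\{\xiup|_N\}$ is an affine subspace of $H^0_pN$ whose direction is exactly the subspace $H^0_pM \subset H^0_pN$. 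Since Lemma~\ref{l5.3} guarantees the set is nonempty and $\dim H^0_pN = \dim H^0_pM + 1$ (codimension one, as $N$ has $CR$-codimension $k+1$), this affine subspace is an affine hyperplane in $H^0_pN$, which is the second assertion.

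The only genuine subtlety — and where I would be most careful — is matching the ``$\xiup$ real'' convention against the complex condition $d\rhoup(p)+i\xiup \in {T^*_p}^{1,0}M$: one must verify that a real $\xiup$ satisfying this automatically has $\xiup|_{T^{1,0}}$ controlled too (it does, by conjugation, since $d\rhoup$ is real), and that the affine structure is taken with respect to the correct underlying vector space, namely $H^0_pM$ and not something larger. I would also want to confirm that nonemptiness genuinely comes from Lemma~\ref{l5.3} applied with $\alphaup = d\rhoup$ restricted to a neighborhood of $p$. Modulo these bookkeeping points the proof is essentially the chain of equalities above; there is no analytic difficulty, only the linear algebra of characteristic bundles under a generic hypersurface restriction.
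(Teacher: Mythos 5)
Your argument is correct and is essentially the paper's own proof: the inclusion is obtained exactly as in the paper (for $Z\in T^{0,1}_pN$ one has $d\rhoup(Z)=0$, hence $\xiup(Z)=0$, and reality of $\xiup$ gives $\xiup(\re Z)=\xiup(\im Z)=0$), and the affine-hyperplane statement is the paper's observation that $\xiup$ is determined modulo a section of $H^0M$, which you simply spell out in more detail (injectivity of restriction on $H^0_pM$ since $d\rhoup(p)\notin H^0_pM$, nonemptiness via Lemma~\ref{l5.3}, and the dimension count $\dim H^0_pN=\dim H^0_pM+1$). No gaps; the extra bookkeeping you flag is handled correctly.
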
 
\begin{proof}
 When $Z\in\Zf(U)$ is tangent to $N$, we obtain $0=d\rhoup(Z_p)=-i\xiup(Z_p)$
 and hence $\xiup(\re{Z}_p)=\xiup(\im{Z}_p)=0$ because $\xiup$ is real. This gives $\xiup|_N\in{H}^0_pN$.
 The last statement is a consequence of the previous discussion of the complex Hessian.
\end{proof} 
\begin{defn}
 If $\rhoup$ is a smooth real-valued function defined on a neighborhood $\Omega$ of a point $p_0\in{N}$
 and $\xiup\in\Af_1(\Omega)$ is such that $dr+i\xiup\in\If_1(\Omega)$, then we set 
\begin{equation}\label{E4.6}
 dd^c\rhoup_{p_0}(\tauup):=\tfrac{i}{2}d\xiup(\tauup),\quad\forall \tauup\in\Kf_{p_0}. 
\end{equation}
\end{defn}
Let $\tauup=Z_1\otimes\bar{Z}_1+\cdots+Z_r\otimes{Z}_r\in\Kf(\Omega)$, 
with $\bar{L}_0-L_0={\sum}_{i=1}^r[Z_j,\bar{Z}_j]$
and $L_0,Z_1,\hdots,Z_r\in\Zf(\Omega)$. Let $\xiup\in\Af_1(\Omega)$ 
be such that $d\rhoup+i\xiup\in\If_1(\Omega)$. Then 
\begin{gather*}
d\rhoup(Z_j)+i\xiup(Z_j) =0 \Longrightarrow d\rhoup(\bar{Z}_j)-i\xiup(\bar{Z}_j) =0 \\
\begin{aligned}
\Rightarrow id\xiup(Z_j,\bar{Z}_j)&=i\left(Z_j\xiup(\bar{Z}_j)-\bar{Z}_j\xiup(Z_j)-\xiup([Z_j,\bar{Z}_j])\right)\\
&=Z_jd\rhoup(\bar{Z}_j)+\bar{Z}_jd\rhoup(Z_j)-i\xiup([Z_j,\bar{Z}_j])\\
&=(Z_j\bar{Z}_j+\bar{Z}_jZ_j)\rhoup-i\xiup([Z_j,\bar{Z}_j]).
\end{aligned}
\end{gather*}
We recall that ${\sum}_{i=1}^r[Z_j,\bar{Z}_j]=\bar{L}_0-L_0=2i\im{L}_0$, with $L_0\in\Zf(\Omega)$.
We have
\begin{equation*}
 (d\rhoup+i\xiup)(L_0)=0\Longrightarrow d\rhoup(\re{L}_0)=\xiup(\im{L}_0),\;
 d\rhoup(\im{L}_0)=-\xiup(\re{L}_0)
\end{equation*}
and therefore 
\begin{align*}
 2d{d^c}\rhoup(\tauup)&={\sum}_{i=1}^r{id\xiup}(Z_j,\bar{Z}_j)={\sum}_{i=1}^r(Z_j\bar{Z}_j+\bar{Z}_jZ_j)\rhoup
 -i\xiup\left({\sum}_{i=1}^r[Z_j,\bar{Z}_j]\right)\\
 &={\sum}_{i=1}^r(Z_j\bar{Z}_j+\bar{Z}_jZ_j)\rhoup+2\xiup(\im{L}_0)\\
 &={\sum}_{i=1}^r(Z_j\bar{Z}_j+\bar{Z}_jZ_j)\rhoup-2d\rhoup(\re{L}_0)=2P_{\tauup}\rhoup.
\end{align*}
As a consequence, we obtain: 
\begin{prop}\label{p4.6}
 If $\rhoup$ is a real valued smooth function on the open set $\Omega$ of $M$ and $\tauup\in\Kf(\Omega)$,
 then 
\begin{equation}\label{4,7} \vspace{-22pt}
 dd^c\rhoup(\tauup)=P_{\!\tauup}\rhoup\;\;\text{on $\Omega$.}
\end{equation}
\qed
\end{prop}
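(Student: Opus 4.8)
The plan is to read the identity off directly from the definitions, in effect reproducing the chain of equalities displayed immediately before the statement; the only content beyond the bookkeeping is the observation that both sides are defined independently of the auxiliary choices. Concretely, for a given $\tauup\in\Kf(\Omega)$ I would fix a representation $\tauup={\sum}_{j=1}^{r}Z_j\otimes\bar{Z}_j$ with $Z_j\in\Zf(\Omega)$; since $\tauup\in\ker\Lf$, formula \eqref{1.4a} supplies $L_0\in\Zf(\Omega)$ with ${\sum}_{j=1}^{r}[Z_j,\bar{Z}_j]=\bar{L}_0-L_0$, and I put $X_0=\re L_0$, $X_j=\re Z_j$, $X_{r+j}=\im Z_j$, so that $P_{\!\tauup}=-X_0+{\sum}_{i=1}^{2r}X_i^2$ as in \eqref{2.2}. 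By Lemma~\ref{l5.3} I choose $\xiup\in\Af_1(\Omega)$ with $d\rhoup+i\xiup\in\If_1(\Omega)$; then $2\,dd^c\rhoup(\tauup)={\sum}_{j=1}^{r}i\,d\xiup(Z_j,\bar{Z}_j)$ by \eqref{E4.6}.

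The computation itself is short. Using the intrinsic formula $d\xiup(Z_j,\bar{Z}_j)=Z_j\xiup(\bar{Z}_j)-\bar{Z}_j\xiup(Z_j)-\xiup([Z_j,\bar{Z}_j])$ together with $\xiup(Z_j)=i\,d\rhoup(Z_j)$ and $\xiup(\bar{Z}_j)=-i\,d\rhoup(\bar{Z}_j)$ (which follow from $d\rhoup+i\xiup$ annihilating $T^{0,1}M$ and $d\rhoup-i\xiup$ annihilating $T^{1,0}M$) one gets $i\,d\xiup(Z_j,\bar{Z}_j)=(Z_j\bar{Z}_j+\bar{Z}_jZ_j)\rhoup-i\,\xiup([Z_j,\bar{Z}_j])$. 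Summing over $j$, rewriting $-i\,\xiup({\sum}_j[Z_j,\bar{Z}_j])=-i\,\xiup(\bar{L}_0-L_0)$ as $-2\,d\rhoup(X_0)$ by means of $(d\rhoup+i\xiup)(L_0)=0$, and using $Z_j\bar{Z}_j+\bar{Z}_jZ_j=2(X_j^2+X_{r+j}^2)$, I arrive at $2\,dd^c\rhoup(\tauup)=2{\sum}_{i=1}^{2r}X_i^2\rhoup-2X_0\rhoup=2P_{\!\tauup}\rhoup$, which is \eqref{4,7}. It then remains only to note that the left-hand side does not depend on the chosen $\xiup$: replacing $\xiup$ by $\xiup+\etaup$ with $\etaup\in\Jf_1(\Omega)$ alters $i\,d\xiup(Z_j,\bar{Z}_j)$ by $i\,d\etaup(Z_j,\bar{Z}_j)=\Lf_{\etaup}(Z_j,\bar{Z}_j)$, which vanishes since $\tauup\in\ker\Lf$; hence the identity in particular shows that $P_{\!\tauup}$ depends only on $\tauup$ and not on the chosen $Z_j$ and $L_0$, the independence statement anticipated in \S\ref{s3}.

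I do not anticipate a serious obstacle: once the definitions are unwound this is a bounded computation. The only step that needs genuine care is the sign bookkeeping — the conversions between $\re$, $\im$ of the complex vector fields $Z_j$ and $L_0$, the factor $2i$ produced by the brackets $[Z_j,\bar{Z}_j]$, and the precise form ${\sum}_j[Z_j,\bar{Z}_j]=\bar{L}_0-L_0$ of \eqref{1.4a} — so that the final equality lands exactly on $-X_0+{\sum}_{i=1}^{2r}X_i^2$ and not on a variant with the wrong sign on $X_0$.
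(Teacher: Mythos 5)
Your proposal is correct and follows essentially the same route as the paper: the identity is obtained by exactly the chain of equalities the paper displays before the statement, namely expanding $i\,d\xiup(Z_j,\bar Z_j)$ via the Cartan formula, using $d\rhoup+i\xiup\in\If_1(\Omega)$ to convert $\xiup(Z_j),\xiup(\bar Z_j)$ into derivatives of $\rhoup$, and eliminating $\xiup(\bar L_0-L_0)$ through $(d\rhoup+i\xiup)(L_0)=0$. Your closing remark on the independence of the choice of $\xiup$ (via $i\,d\etaup(\tauup)=\Lf_\etaup(\tauup)=0$ for $\etaup\in\Jf_1$ and $\tauup\in\ker\Lf$) is likewise the observation the paper makes just before defining the complex Hessian.
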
 
\par\smallskip
\begin{cor}
 The operator $P_{\!\tauup}$ only depends on the section $\tauup$ of $\Kf$ and is independent
 of the choice of the vector fields $Z_1,\hdots,Z_r\in\Zf$ in \eqref{1.4b}. \qed
\end{cor}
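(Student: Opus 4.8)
The plan is to obtain the corollary immediately from Proposition~\ref{p4.6}. That proposition asserts the identity $dd^c\rhoup(\tauup)=P_{\!\tauup}\rhoup$ on $\Omega$ for every smooth real-valued function $\rhoup$, and its right-hand side is manufactured from the tensor $\tauup$ and the function $\rhoup$ alone, with no reference to a presentation $\tauup=\sum_{j=1}^{r}Z_j\otimes\bar Z_j$ nor to the auxiliary field $L_0$ produced by \eqref{1.4a}. So the whole point reduces to two observations: that $dd^c\rhoup(\tauup)$ is genuinely well defined for $\tauup\in\Kf(\Omega)$, and that a second-order operator with smooth coefficients is determined by its action on functions.

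First I would recall why $dd^c\rhoup(\tauup)$ does not depend on choices. By Lemma~\ref{l5.3} there is $\xiup\in\Af_1(\Omega)$ with $d\rhoup+i\xiup\in\If_1(\Omega)$, and \eqref{E4.6} sets $dd^c\rhoup(\tauup)=\tfrac{i}{2}d\xiup(\tauup)$. Such a $\xiup$ is unique only modulo a section $\etaup\in\Jf_1(\Omega)$ of the characteristic bundle, and for any such $\etaup$ the computation recorded just before \eqref{E4.6} gives $id\etaup(Z,\bar Z)=-i\etaup([Z,\bar Z])=\Lf_{\etaup}(Z,\bar Z)$ for all $Z\in\Zf$; summing this relation over a presentation of $\tauup$ and using linearity in $\tauup$ yields $id\etaup(\tauup)=\Lf_{\etaup}(\tauup)=\etaup(\Lf(\tauup))$. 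Since $\tauup\in\Kf\subset\ker\Lf$, the right-hand side vanishes, hence $d\etaup(\tauup)=0$ and $dd^c\rhoup(\tauup)$ is independent of the choice of $\xiup$. This is the only place where the hypothesis $\tauup\in\Kf$ is used, and it is the substantive step of the argument; everything else is formal.

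To finish, let $\tauup\in\Kf(\Omega)$ be given two presentations, and let $P_{\!\tauup}$ and $P_{\!\tauup}'$ be the operators of the form $-X_0+\sum_j X_j^2$ produced from them through \eqref{1.4b} and \eqref{2.2}. By Proposition~\ref{p4.6} we have $P_{\!\tauup}\rhoup=dd^c\rhoup(\tauup)=P_{\!\tauup}'\rhoup$ for every real $\rhoup\in\Ci(\Omega)$, and splitting into real and imaginary parts extends this to all complex-valued $\rhoup\in\Ci(\Omega)$. Evaluating on the constant function, on local coordinate functions, and on their pairwise products recovers the zeroth-, first-, and symmetrised second-order coefficients of a second-order differential operator, so $P_{\!\tauup}=P_{\!\tauup}'$. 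In particular $P_{\!\tauup}$ depends only on the section $\tauup$ of $\Kf$, as claimed. I do not expect a genuine obstacle here: once Proposition~\ref{p4.6} is in hand the statement is formal, the one thing to be careful about being that the choice-ambiguity of $\xiup$ is annihilated precisely by the condition $\tauup\in\ker\Lf$.
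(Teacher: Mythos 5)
Your proof is correct and follows essentially the same route as the paper, which states the corollary as an immediate consequence of Proposition~\ref{p4.6}: since $P_{\!\tauup}\rhoup=dd^c\rhoup(\tauup)$ for every real smooth $\rhoup$ and the right-hand side makes no reference to the presentation $\tauup=\sum Z_j\otimes\bar{Z}_j$, the operator is determined. The one substantive point you isolate --- that the ambiguity of $\xiup$ modulo $\Jf_{\!{1}}$ is annihilated precisely because $\tauup\in\ker\Lf$ --- is exactly the observation the paper records in the discussion preceding \eqref{E4.6}, so you have merely made explicit what the paper leaves implicit.
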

\par\smallskip
\begin{cor}\label{cor4.8} 
Let $\Omega^{\mathrm{open}}\subset{M}$. If 
$\rhoup_1,\rhoup_2\in\Ci(\Omega)$ are real valued functions which agree to
the second order at $p_0\in\Omega$, then 
\begin{equation}
dd^c\rhoup_1(\tauup_0)=dd^c\rhoup_2(\tauup_0),\quad\forall \tauup_0\in 
\Kf_{p_0}.\vspace{-22pt}
\end{equation} 
\end{cor}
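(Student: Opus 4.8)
The plan is to reduce the statement to the vanishing of $dd^c$ on a function with vanishing $2$-jet at $p_0$, and then to re-run, at the single point $p_0$, the computation that establishes Proposition~\ref{p4.6}. For the reduction, note that for a fixed $\tauup_0\in\Kf_{p_0}$ the quantity $dd^c\rhoup(\tauup_0)$ depends additively on $\rhoup$: if $\xiup_i\in\Af_{\!{1}}(\Omega)$ are chosen with $d\rhoup_i+i\xiup_i\in\If_{\!{1}}(\Omega)$ for $i=1,2$, then $d(\rhoup_1-\rhoup_2)+i(\xiup_1-\xiup_2)\in\If_{\!{1}}(\Omega)$, so $\xiup_1-\xiup_2$ is an admissible form for $\rhoup_1-\rhoup_2$, and since $\tauup_0\in\ker\Lf$ the value $\tfrac{i}{2}d\xiup(\tauup_0)$ appearing in \eqref{E4.6} does not depend on the admissible $\xiup$ (this is exactly the independence observed just before \eqref{E4.6}: replacing $\xiup$ by $\xiup+\etaup$ with $\etaup\in\Jf_{\!{1}}(\Omega)$ changes $\tfrac{i}{2}d\xiup(\tauup)$ by $\tfrac12\Lf_{\etaup}(\tauup)$, which vanishes on $\ker\Lf$). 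Hence $dd^c(\rhoup_1-\rhoup_2)(\tauup_0)=dd^c\rhoup_1(\tauup_0)-dd^c\rhoup_2(\tauup_0)$, and it suffices to show that $dd^c\rhoup(\tauup_0)=0$ whenever $\rhoup\in\Ci(\Omega)$ is real valued and has vanishing $2$-jet at $p_0$.

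For that I would fix a representation $\tauup_0={\sum}_{j=1}^{r}(Z_j\otimes\bar{Z}_j)|_{p_0}$ with $Z_1,\hdots,Z_r\in\Zf$ defined near $p_0$, pick $\xiup\in\Af_{\!{1}}$ with $d\rhoup+i\xiup\in\If_{\!{1}}$, and repeat the chain of identities preceding Proposition~\ref{p4.6} — using $\xiup(Z_j)=i\,d\rhoup(Z_j)$ and $\xiup(\bar{Z}_j)=-i\,d\rhoup(\bar{Z}_j)$, which hold because $d\rhoup+i\xiup$ annihilates $T^{0,1}M$ and its conjugate annihilates $T^{1,0}M$ — to obtain
\begin{equation*}
 2\,dd^c\rhoup(\tauup_0)=\Big[\,{\sum}_{j=1}^{r}(Z_j\bar{Z}_j+\bar{Z}_jZ_j)\rhoup\;-\;i\,\xiup\big({\sum}_{j=1}^{r}[Z_j,\bar{Z}_j]\big)\,\Big]_{p_0}.
\end{equation*}
Both terms on the right vanish: the first is a second order linear differential operator applied to $\rhoup$, so its value at $p_0$ is $0$ because the $2$-jet of $\rhoup$ at $p_0$ is zero; and for the second, the relation $\xiup(Z)=i\,d\rhoup(Z)$ for $Z\in\Zf$ together with $d\rhoup(p_0)=0$ shows that $\xiup_{p_0}$ vanishes on $T^{0,1}_{p_0}M$, hence — $\xiup$ being real — also on $T^{1,0}_{p_0}M$, i.e. on $\C{H}_{p_0}M=T^{0,1}_{p_0}M\oplus T^{1,0}_{p_0}M$, while $\tauup_0\in\ker\Lf$ says exactly that $\piup_M\big(i{\sum}_{j}[Z_j,\bar{Z}_j]_{p_0}\big)=0$, i.e. ${\sum}_{j}[Z_j,\bar{Z}_j]_{p_0}\in\C{H}_{p_0}M$. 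Thus $\xiup\big({\sum}_{j}[Z_j,\bar{Z}_j]\big)_{p_0}=0$ and the corollary follows.

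The computation itself is mechanical; the delicate points are the well-definedness invoked in the reduction — so that $dd^c\rhoup(\tauup_0)$ really is additive in $\rhoup$ for the \emph{fixed} tensor $\tauup_0\in\Kf_{p_0}$ — and the bookkeeping that the hypothesis $\tauup_0\in\ker\Lf$ is precisely what places ${\sum}_{j}[Z_j,\bar{Z}_j]_{p_0}$ into $\C{H}_{p_0}M$, where $\xiup_{p_0}$ already vanishes once $d\rhoup(p_0)=0$. I would deliberately avoid trying to deduce the corollary from Proposition~\ref{p4.6} directly, since a given $\tauup_0\in\Kf_{p_0}$ need not extend to a local section of $\Kf$ near $p_0$, so the operator $P_{\!\tauup}$ of \S\ref{s3} need not be defined on a full neighbourhood of $p_0$; the pointwise computation above sidesteps this.
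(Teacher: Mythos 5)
Your proof is correct, and every step you use (the well-definedness of $\tfrac{i}{2}d\xiup(\tauup_0)$ for $\tauup_0\in\ker\Lf$, the identity $id\xiup(Z_j,\bar{Z}_j)=(Z_j\bar{Z}_j+\bar{Z}_jZ_j)\rhoup-i\xiup([Z_j,\bar{Z}_j])$) is already established in the text preceding Proposition~\ref{p4.6}. The paper, however, treats the corollary as an immediate consequence of Proposition~\ref{p4.6} itself: since $dd^c\rhoup(\tauup)=P_{\!\tauup}\rhoup$ and $P_{\!\tauup}=-X_0+\sum X_j^2$ is a second--order linear differential operator with no zeroth--order term, its value at $p_0$ depends only on the $2$-jet of $\rhoup$ there. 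Your route differs in two ways: you first linearize (reduce to a single function with vanishing $2$-jet) and then run the computation purely at the point $p_0$, replacing the paper's use of $L_0$ (with $\sum[Z_j,\bar{Z}_j]=\bar{L}_0-L_0$ near $p_0$) by the direct observation that $\xiup_{p_0}$ annihilates $\C H_{p_0}M$ once $d\rhoup(p_0)=0$, while $\tauup_0\in\ker\Lf$ forces $\sum[Z_j,\bar{Z}_j]_{p_0}\in\C H_{p_0}M$. What this buys is genuine: the corollary quantifies over the fiber $\Kf_{p_0}$, and a fiber element need not extend to a local section of $\Kf$, so the operator $P_{\!\tauup}$ of \S\ref{s3} (and hence Proposition~\ref{p4.6}) is not literally available for such $\tauup_0$; your pointwise argument closes that small gap, at the cost of redoing a computation the paper prefers to quote. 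Both arguments are sound and rest on the same identities.
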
 
\qed\par \bigskip
In particular, $dd^c\rhoup$ is well defined and continuous on the fibers of  
$\Kf$ for 
functions $\rhoup$ which are of class $\Co^2$.

\begin{rmk} There is a subtle distinction between $dd^c\rhoup$, which is the
$(1,1)$-part of an alternate form of degree two, and $\Hess(\rhoup)$, which is
the $(1,1)$-part of a symmetric bilinear form. In fact we 
multiplied by $(i/2)$ the differential in \eqref{E4.6}, and identified the two concepts,
as multiplication by $i$ interchanges skew-Hermitian and Hermtian-symmetric
matrices.
\end{rmk} 
We have:
\begin{lem} Let $\rhoup$ be a smooth real-valued function defined on a neighborhood of $p_0\in{M}$,
with $d\rhoup(p_0)\neq{0}$ and $N=\{p\mid \rhoup(p)=\rhoup(p_0)\}$. 
 The following statements: 
\begin{enumerate}
 \item[$(i)$] every $h\in\Hess_{p_0}(\rhoup)$ has a non-zero positive index of inertia;
 \item[$(ii)$] there exists $\tauup\in\Kf_{p_0}\cap{H}^{1,1}_{p_0}N$ 
 such that $dd^c\rhoup_{p_0}(\tauup)>0$;
 \item[$(iii)$] the restriction of every
 $h\in\Hess_{p_0}(\rhoup)$ to $T^{0,1}_{p_0}N$ 
 has a non-zero positive index of inertia; 
\end{enumerate}
are related by 
\begin{equation*}\qquad\qquad
  (ii)\Longleftrightarrow (iii) \Longrightarrow (i). \qquad\qquad \vspace{-21pt}
\end{equation*}
\end{lem}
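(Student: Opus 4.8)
\emph{Reduction to the Levi forms of $N$.}
Fix on a neighbourhood of $p_0$ a real $1$-form $\xiup_0\in\Af_1$ with $d\rhoup+i\xiup_0\in\If_{\!{1}}$, put $h_0=i\,d\xiup_{0,p_0}\in\Hess_{p_0}(\rhoup)$ and $\zetaup_0=\xiup_0|_N\in H^0_{p_0}N$, so that $\Hess_{p_0}(\rhoup)=h_0+\{i\,d\etaup_{p_0}\mid\etaup\in\Jf_{\!{1}}\}$ and $\zetaup_0\notin H^0_{p_0}M$. If $Z\in\Zf$ is tangent to $N$ then $Z\rhoup=\bar Z\rhoup\equiv 0$ along $N$ and $[Z,\bar Z]_{p_0}$ is tangent to $N$, so the computation carried out in the subsection on the complex Hessian gives $(i\,d\xiup_0)_{p_0}(Z_{p_0},\bar Z_{p_0})=-i\,\xiup_0([Z,\bar Z]_{p_0})=\Lf^N_{\zetaup_0}(Z_{p_0},\bar Z_{p_0})$, and likewise $(i\,d\etaup)_{p_0}(Z_{p_0},\bar Z_{p_0})=\Lf_{\etaup}(Z_{p_0},\bar Z_{p_0})=\Lf^N_{\etaup|_N}(Z_{p_0},\bar Z_{p_0})$ for $\etaup$ a germ of section of $H^0M$. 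Setting $V=T^{0,1}_{p_0}N$, it follows that $\{h|_V\mid h\in\Hess_{p_0}(\rhoup)\}$ is exactly the set of scalar Levi forms $\Lf^N_{\zetaup}$ with $\zetaup$ ranging over the affine hyperplane $\mathcal{A}=\zetaup_0+H^0_{p_0}M\subseteq H^0_{p_0}N$ considered above, and that $2\,dd^c\rhoup_{p_0}(\tauup)=\Lf^N_{\zetaup}(\tauup)$ for every $\tauup\in\Kf_{p_0}\cap H^{1,1}_{p_0}N$ and every $\zetaup\in\mathcal{A}$, the value being independent of $\zetaup$ because $\Lf_{\etaup}(\tauup)=\langle\etaup\,|\,\Lf(\tauup)\rangle=0$ for $\etaup\in H^0_{p_0}M$. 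Granting this dictionary, the two easy implications are immediate: $(iii)\Rightarrow(i)$, since a restriction of a Hermitian form keeps any positive eigenvalue; and $(ii)\Rightarrow(iii)$, since, writing $\tauup=\sum_jW_j\otimes\bar W_j$ with $W_j\in V$, one has $\sum_jh(W_j,\bar W_j)=2\,dd^c\rhoup_{p_0}(\tauup)>0$ for every $h\in\Hess_{p_0}(\rhoup)$, so some summand is positive and $h|_V$ has a nonzero positive index of inertia.

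\emph{The implication $(iii)\Rightarrow(ii)$.}
By the dictionary this amounts to the following statement about Hermitian forms on $V$: if the affine family $\mathcal{A}_V=h_0|_V+\mathcal{L}_V$, with $\mathcal{L}_V=\{\Lf_{\etaup}|_V\mid\etaup\in H^0_{p_0}M\}$, is disjoint from the closed convex cone $-\mathcal{P}_V$ of negative semidefinite forms on $V$ (which is exactly what $(iii)$ says), then there is a nonzero positive semidefinite tensor $\tauup$ on $V$ with $\Lf_{\etaup}(\tauup)=0$ for all $\etaup\in H^0_{p_0}M$ and $h_0(\tauup)>0$. In finite dimension two disjoint non-empty convex sets can be properly separated, and the separating functional, written through the natural pairing between forms and tensors, is a tensor $\tauup$ with $\tauup\succeq0$, $\tauup\neq0$, $\Lf_{\etaup}(\tauup)=0$ for all $\etaup$, and $h_0(\tauup)\geq0$; the only thing needing work is to replace $\geq$ by $>$, which is automatic as soon as $\mathcal{P}_V+\mathcal{L}_V$ is closed.

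\emph{Removing the degenerate directions.}
Hypothesis $(iii)$ forbids any $\Lf_{\etaup}|_V$ from being definite, since otherwise $h_0+t\,\Lf_{\etaup}\in\mathcal{A}_V$ would be definite of the opposite sign for $|t|$ large, hence without positive eigenvalue. So, if $\mathcal{L}_V$ contains a nonzero positive semidefinite form $\Lf_{\etaup_0}|_V$, then $V'=\ker(\Lf_{\etaup_0}|_V)$ is a complex subspace with $0\neq V'\subsetneq V$, and $(iii)$ is inherited by $V'$: for every $h\in\Hess_{p_0}(\rhoup)$ and every $t$ the form $(h+t\,\Lf_{\etaup_0})|_V$ has a positive eigenvalue, and letting $t\to-\infty$ and computing its inertia by a Schur complement relative to a splitting $V=V'\oplus W$ with $\Lf_{\etaup_0}|_W$ positive definite (so that the $W$-block becomes negative definite and the Schur complement tends to $h|_{V'}$) forces $h|_{V'}$ itself to have a positive eigenvalue. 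Iterating, and observing that the dimension drops at each stage, we reach a nonzero subspace $V^{\ast}$ on which $\mathcal{L}_{V^{\ast}}$ contains no nonzero positive semidefinite form; there $\mathcal{P}_{V^{\ast}}+\mathcal{L}_{V^{\ast}}$ is closed, because a sequence $P_n+L_n$ with $P_n\in\mathcal{P}_{V^{\ast}}$, $L_n\in\mathcal{L}_{V^{\ast}}$, $\|P_n\|\to\infty$ and $P_n+L_n$ bounded would produce, after normalization, a nonzero positive semidefinite form in $\mathcal{L}_{V^{\ast}}$. Hence $\mathcal{A}_{V^{\ast}}$ and $-\mathcal{P}_{V^{\ast}}$ are strictly separated, the resulting $\tauup$ satisfies $\tauup\succeq0$, $\tauup\neq0$, $\Lf_{\etaup}(\tauup)=0$ for all $\etaup\in H^0_{p_0}M$ and $h_0(\tauup)>0$; being positive semidefinite and supported in $V^{\ast}\subseteq T^{0,1}_{p_0}N$ it lies in $H^{1,1}_{p_0}N$; from $\langle\etaup\,|\,\Lf(\tauup)\rangle=\Lf_{\etaup}(\tauup)=0$ for all $\etaup\in H^0_{p_0}M$ we get $\Lf(\tauup)=0$, so $\tauup\in\Kf_{p_0}$; and $dd^c\rhoup_{p_0}(\tauup)=\tfrac12 h_0(\tauup)>0$. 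This is $(ii)$.

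\emph{Where the difficulty lies.}
The one delicate point is the separation step in $(iii)\Rightarrow(ii)$: the cone of negative semidefinite forms added to the subspace of difference Levi forms need not be closed, and the inductive device of the preceding paragraph — stripping off the semidefinite members of the family and checking, via the $t\to-\infty$ inertia computation, that $(iii)$ is preserved — is precisely what restores closedness and hence strict separation. Everything else, namely the dictionary of the first paragraph (with its harmless factor $\tfrac12$ and the conjugation conventions relating $T^{1,0}$ and $T^{0,1}$) and the two easy implications, is bookkeeping.
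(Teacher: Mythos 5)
Your reduction to Hermitian forms on $V=T^{0,1}_{p_0}N$ and the two implications $(ii)\Rightarrow(iii)\Rightarrow(i)$ are correct, and you have put your finger on exactly the right difficulty in $(iii)\Rightarrow(ii)$; but the device you use to resolve it fails. In the ``inheritance'' step you let $t\to-\infty$ in $h+t\,\Lf_{\etaup_0}$, note that the Schur complement $S_t=A-B(D+tP)^{-1}B^{*}$ tends to $h|_{V'}$, and conclude from the fact that each $S_t$ has a positive eigenvalue that $h|_{V'}$ does too. That inference is invalid: a positive eigenvalue can degenerate to $0$ in the limit, so one only learns that $h|_{V'}$ has a nonnegative eigenvalue, which is vacuous. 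Concretely, for $V=\C^2$, $\Lf_{\etaup_0}|_V=E_{11}$ and $h=\left(\begin{smallmatrix}a&1\\1&0\end{smallmatrix}\right)$, one has $V'=\C e_2$ and $S_t=-1/(a+t)>0$ for $t\ll 0$, while $h|_{V'}=0$.

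The gap cannot be repaired, because $(iii)\Rightarrow(ii)$ is false as stated. Take $M=\{\im w=|z_1|^{2}\}\subset\C^{4}_{z_1,z_2,z_3,w}$, $p_0=0$, $\rhoup=\re z_3+z_1\bar z_2+\bar z_1z_2$, and write $e_j=\partial_{\bar z_j}$, so that $T^{0,1}_0M=\langle e_1,e_2,e_3\rangle$ and the scalar Levi forms at $0$ form the line $\R E_{11}$. Here $V=T^{0,1}_0N=\langle e_1,e_2\rangle$ and the restrictions to $V$ of the elements of $\Hess_0(\rhoup)$ are, up to a positive normalization, the matrices $\left(\begin{smallmatrix}t&1\\1&0\end{smallmatrix}\right)$, $t\in\R$; each has negative determinant, so $(iii)$ (and $(i)$) holds. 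On the other hand, a nonzero positive semidefinite $\tauup$ with $\Lf(\tauup)=0$ must satisfy $E_{11}(\tauup)=0$ and hence be supported in $\langle e_2,e_3\rangle$; intersecting with $H^{1,1}_0N$ leaves only $\tauup=c\,e_2\otimes\bar e_2$ with $c>0$, for which $dd^c\rhoup_0(\tauup)=c\,h(e_2,e_2)=0$, so $(ii)$ fails. (The paper gives no proof of this lemma, and its intended source, Proposition~\ref{pp6.1} as invoked in \S\ref{s7}, suffers from the same defect: the functional separating two disjoint relatively open convex cones may vanish identically on one of them, as it does on the cone generated by this family.) The obstruction is precisely the one you flagged --- a nonzero semidefinite form among the $\Lf_{\etaup}|_V$ --- and the implication $(iii)\Rightarrow(ii)$ can only be saved by adding a hypothesis that excludes it.
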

\qed\par\bigskip
Set $U^-=\{p\in{U}\mid \rhoup(p)<\rhoup(p_0)\}$. 
\begin{defn}
\label{df4.3}
We set 
\begin{equation}
{H^0_{M,p_0}}(U^-)=
 {\bigcup}_{\lambda>0} \{\xiup|_N\mid \xiup\in{T}^*_{p_0}\partial{U}^-\mid 
 \lambda d\rhoup(p_0)+i\xiup\in{T^*}^{1,0}_pM\}.
\end{equation}
\end{defn}

This is an open half-space in $H^0_pN$. 
Note that 
${H^0_{M,p_0}}(U^-)$
does not depend on the choice of the defining function $\rhoup$.
\subsection*{Real parts of \textsl{CR} functions} 
In this subsection, we try to better explain the meaning of $dd^c$ by
defining a differential operator $\dc$ which associates to a real smoot
function a real one-form. Its definition depends on the choice of a 
\textit{$CR$ gauge} $\lambdaup$ on $M$, but 
$[\dc]$'s corresponding to different choices of
$\lambdaup$  differ by a differential operator
with values in $\Jf$, so that all the $d\dc$ agree with our $dd^c$ on
$\Kf$.
\par 

A $CR$ function (or distribution) $f$ is a solution to the equation 
$du\in\If_{\!{1}}$. 
In this subsection we study the characterization of the real parts of $CR$ functions.\par
\begin{lem} Let $\Omega$ be open in $M$.
 If $M$ is minimal, then a real valued $f\in\Ot_{\!{M}}(\Omega)$ is locally constant. 
\end{lem}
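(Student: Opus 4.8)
The plan is to show that a real-valued $CR$ function $f\in\Ot_{\!{M}}(\Omega)$ has vanishing differential on the $\Hf$-directions, and then invoke minimality to conclude it is locally constant. First I would observe that $f$ being $CR$ means $df\in\If_{\!{1}}(\Omega)$, i.e. $df$ annihilates $T^{0,1}M$: $Zf=0$ for all $Z\in\Zf(M)$. Taking the complex conjugate of the scalar equation $Zf=0$ and using that $f$ is real, $\overline{Zf}=\bar Z\bar f=\bar Z f=0$, so $\bar Zf=0$ as well. Hence for any $Z\in\Zf(M)$, writing $Z=X+iJ_{\!{M}}X$ with $X=\re Z\in\Hf$, both $Zf=0$ and $\bar Zf=0$ give $Xf=\tfrac12(Z+\bar Z)f=0$ and $(J_{\!{M}}X)f=\tfrac1{2i}(Z-\bar Z)f=0$. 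Therefore $df$ vanishes on every vector field in $\Hf$, i.e. on the generalized distribution $HM$.

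Next, since $df$ kills all of $\Hf$, it kills all iterated commutators of elements of $\Hf$ as well: indeed if $Yf=0$ and $Y'f=0$ for vector fields $Y,Y'$, then $[Y,Y']f=Y(Y'f)-Y'(Yf)=0$. Thus $df$ annihilates the whole Lie module $\Li(\Hf)$ generated by $\Hf$, hence it vanishes on the tangent spaces to the Sussmann leaves $\ell(p;\Hf,U)$. The key step is then to use the minimality hypothesis: for each $p\in\Omega$ and each sufficiently small open $U\ni p$, the Sussmann leaf $\ell(p;\Hf,U)$ is an open neighborhood of $p$. Concretely this means the values spanned by $\Hf$ and its iterated brackets exhaust $T_pM$ on a neighborhood, so $df\equiv 0$ on that neighborhood, whence $f$ is constant there.

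I would phrase the conclusion carefully: minimality at $p$ gives that the Sussmann leaf of $\Hf$ through $p$ in any small $U$ is open, so it equals a neighborhood of $p$; along any piecewise-$\Ci$ integral curve of $\Hf$, $f$ is constant (its derivative along each $\Ci$ piece is of the form $Yf$ with $Y\in\Hf$, hence zero), and since every point near $p$ is joined to $p$ by such a curve inside $U$, $f$ equals $f(p)$ on a full neighborhood of $p$. As this holds at every point of $\Omega$, $f$ is locally constant.

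The only genuine subtlety—the main obstacle—is the regularity/interpretation when $f$ is merely a $CR$ \emph{distribution} rather than a smooth function: the bracket argument and the integration-along-curves argument need $f$ to be at least locally Lipschitz or of class $\Ci$ for the statement ``derivative along integral curves is zero implies constant'' to be literally meaningful. If one restricts to $f\in\Ot_{\!{M}}(\Omega)$ in the smooth sense (as the notation $\Ot_{\!{M}}$ suggests, since $\Ot_{\!{M}}=\Ci\!\!\Ot_{\!{M}}$), this is not an issue and the argument above is complete; for the distributional case one would first note that $df\in\If_{\!{1}}$ together with its conjugate forces $df$ to be an $H^0M$-valued distribution, and then the minimality of $\Hf$ combined with a Frobenius/Sussmann-type propagation of the vanishing of $df$ still yields local constancy, but this requires a slightly more careful distributional formulation.
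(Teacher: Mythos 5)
Your argument is correct and is essentially the paper's own proof: a real-valued $CR$ function is annihilated by both $Z$ and $\bar{Z}$, hence by all of $\Hf$, so it is constant on the Sussmann leaves of $\Hf$, which by minimality are open. (The detour through iterated commutators is not needed, since constancy along piecewise integral curves of $\Hf$ already gives constancy on the leaf.)
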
 
\begin{proof}
 A real valued $f\in\Ot_{\! M}(\Omega)$ satisfies $Xf=0$ for all $X\in\Gamma(M,HM)$
 and therefore is constant on the Sussmann leaves of $\Gamma(M,HM)$.
\end{proof}
We have an exact sequence of fine sheaves (the superscript $\C$ means forms with complex valued
coefficients)
\begin{equation}\label{rp1}
 \begin{CD}
 0 @>>> \Jfc 
 @>{\alpha\to(\alpha,-\alpha)}>> 
 \If_1\oplus\Ifc @>{(\alpha,\beta)\to{\alpha+\beta}}>> 
 \Afc@>>>0.
\end{CD}
 \end{equation}\par
 In \cite[\S{2A}]{43} the notion of a 
 \textit{balanced real $CR$-gauge} was introduced. It was shown that it is possible to define
 a smooth morphism \begin{equation}
 \lambdaup:\C{T}M\longrightarrow{T^*}^{1,0}M\end{equation}  of $\C$-linear bundles which
 defines 
a special 
splitting of \eqref{rp1}: with 
\begin{equation}
 \bar{\,\lambdaup}:\C{T}M\ni\alpha\longrightarrow \overline{\lambdaup(\bar{\alpha})}\in{T^*}^{0,1}M,
\end{equation}
we have 
\begin{align}
 \alpha=\lambdaup(\alpha)+\bar{\,\lambdaup}(\alpha),\quad\forall \alpha\in
 \Af_1^{\,\C}, \\
 \lambdaup(\alpha)=\bar{\,\lambdaup}(\alpha)=\tfrac{1}{2}\alpha,\;\;\forall\alpha\in
 \Jf_{\!{1}}^{\,\C}.
\end{align}
Note that 
\begin{equation*}
 \bar{\,\lambdaup}(\If_1)\subset\Jf_1,\;\; \lambdaup(\bar{\If\,\,\,\,}_{\!\!\!\!\!{1}})\subset\Jf_1,\; \lambdaup\circ
 \bar{\,\lambdaup}=\bar{\,\lambdaup}\circ\lambdaup.
\end{equation*} 
\par
Explicitly,
the splitting of \eqref{rp1} is provided by 
\begin{equation*} 
\begin{CD}
 0 @>>>
 \Af_1^{\,\C} @>{\alpha\to(\lambdaup(\alpha),\bar{\,\lambdaup}(\alpha))}>>\If_1\oplus
 \Ifc @>{(\alpha,\beta)\to\bar{\,\lambdaup}(\alpha)-\lambdaup(\beta)}>>
 \Jf_1^{\,\C} @>>> 0.
\end{CD}
\end{equation*}
Furthermore, we get 
\begin{gather*}
 \Afc=\ker\bar{\,\lambdaup}\oplus\Jfc\oplus\ker\lambdaup,\quad
 \If_{\!{1}}=\ker\bar{\,\lambdaup}\oplus\Jfc,\quad
 \Ifc=\Jfc\oplus\ker\lambdaup,\\
 \lambdaup(\alpha)=\alpha,\;\forall \alpha\in\ker\bar{\,\lambdaup},\quad
 \bar{\,\lambdaup}(\alpha)=\alpha,\;\forall\alpha\in\ker\lambdaup,\quad
 \lambdaup(\alpha)=\bar{\,\lambdaup}(\alpha)=\tfrac{1}{2}\alpha,\;\forall\alpha\in\Jfc.
\end{gather*}
\par
Let us introduce the first order linear partial differential operator 
\begin{equation}
 \dc\,{f}=\tfrac{1}{i}(\lambdaup(df)-\bar{\,\lambdaup}(df)),\quad \forall f\in\Ci(M).
\end{equation}
We note that $\dc$ is \textit{real}: this means that $\dc\,{u}$ is a real valued form when $u$ is a
real valued function. 
Indeed,
for a real valued $u\in\Ci(M)$, we have \par
\centerline{$\dc\,{u}=2\im\lambdaup(du)=-2\im(\bar{\,\lambdaup}(du))$.}
\begin{lem}
 We have $d\dc\,{u}\in\!\!\Jf_{\!{2}}$ for every $u\in\Af_0$.
\end{lem}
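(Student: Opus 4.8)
The plan is to show that $d\dc u$ takes values in $\Jf_2$ (the degree-two part of the characteristic bundle) by checking that $d\dc u$ annihilates the appropriate pairs of $(0,1)$-vectors and $(1,0)$-vectors, i.e. that its $(0,2)$-, $(2,0)$- and $(1,1)$-components all vanish, leaving only the part pulled back from $TM/HM$. Recall that by definition $\dc u = \tfrac{1}{i}(\lambdaup(du)-\bar{\,\lambdaup}(du))$ for real $u$, and from the identities above $\lambdaup(du)\in\If_1$ while $\bar{\,\lambdaup}(du)\in\Ifc=\overline{\If_1}$, so that $\dc u$ is a real $1$-form whose $(1,0)$-part is $\tfrac{1}{i}\lambdaup(du)$ and whose $(0,1)$-part is $-\tfrac{1}{i}\bar{\,\lambdaup}(du)$.

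First I would reduce to a pointwise statement: it suffices to evaluate $d\dc u$ on $Z,\bar W$, on $Z,W$, and on $\bar Z,\bar W$ for arbitrary $Z,W\in\Zf$, and show each of these vanishes. Writing $\alpha=\lambdaup(du)$, so $\dc u = \tfrac1i(\alpha-\bar\alpha)$, we have $d\dc u = \tfrac1i(d\alpha - d\bar\alpha)$. For the $(0,2)$-component, $d\bar\alpha(Z,W)$ can be computed from $d\bar\alpha(Z,W)=Z(\bar\alpha(W))-W(\bar\alpha(Z))-\bar\alpha([Z,W])$; since $\bar\alpha\in\Ifc$ kills $T^{1,0}M$, all three terms vanish (using $[Z,W]\in\Zf$ when $\Zf$ is formally integrable — but in fact $\bar\alpha(Z)=\bar\alpha(W)=0$ identically as functions, so no integrability is needed here). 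Similarly $d\alpha(Z,W)$ involves $\alpha\in\If_1$ evaluated on $Z,W,[Z,W]$; here $\alpha$ kills $T^{0,1}M$, and since $[Z,W]$ need not lie in $\Zf$ without formal integrability one should note that $d\alpha(Z,W)=Z(\alpha(W))-W(\alpha(Z))-\alpha([Z,W])$ and the first two terms vanish because $\alpha(W)=\alpha(Z)=0$, so $d\alpha(Z,W)=-\alpha([Z,W])$; but $d(du)=0$ gives $d\alpha = d(du-\bar\alpha)=-d\bar\alpha$, and $d\bar\alpha(Z,W)=0$ as just shown, so $d\alpha(Z,W)=0$ as well. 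This disposes of the $(2,0)$- and $(0,2)$-parts.

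The heart of the matter is the $(1,1)$-part: I must show $d\dc u(Z,\bar W)=0$. Here $d\dc u(Z,\bar W)=\tfrac1i\big(d\alpha(Z,\bar W)-d\bar\alpha(Z,\bar W)\big)$. Using $d\alpha=-d\bar\alpha$ again (from $du$ closed), this is $\tfrac{2}{i}d\alpha(Z,\bar W)$, and expanding, $d\alpha(Z,\bar W)=Z(\alpha(\bar W))-\bar W(\alpha(Z))-\alpha([Z,\bar W])=Z(\alpha(\bar W))-\alpha([Z,\bar W])$ since $\alpha(Z)=0$. Now $\alpha(\bar W)=\lambdaup(du)(\bar W)$, and one uses $du=\alpha+\bar\alpha$ with $\bar\alpha(\bar W)=0$... wait, $\bar\alpha$ kills $T^{1,0}M$, not $T^{0,1}M$, so $\bar\alpha(\bar W)$ need not vanish; rather $du(\bar W)=\alpha(\bar W)+\bar\alpha(\bar W)$, and $\bar\alpha(\bar W)=\overline{\alpha(W)}$... this is getting delicate, so the cleaner route is: the computation in the paragraph preceding Proposition~\ref{p4.6} already shows that for $Z\in\Zf$ with $d\rhoup+i\xiup\in\If_1$ one has $id\xiup(Z,\bar Z)=(Z\bar Z+\bar Z Z)\rhoup - i\xiup([Z,\bar Z])$, and the real and imaginary parts of a $CR$ function satisfy $Zf=\bar Z f=0$ forcing the analogous expressions to land in the span of the characteristic directions; I would adapt exactly that bilinear identity, polarized in $Z,W$, to conclude $d\dc u(Z,\bar W)$ depends only on the $\piup_M$-projections, hence lies in $\Jf_2$.

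The main obstacle is precisely this $(1,1)$-part: unlike the holomorphic-looking $(2,0)$ and $(0,2)$ pieces which collapse by type reasons alone, the $(1,1)$-piece genuinely involves the torsion $[Z,\bar W]$ and the way $\lambdaup$ splits $du$, and I expect to need the balanced-gauge relations $\bar{\,\lambdaup}(\If_1)\subset\Jf_1$, $\lambdaup(\Ifc)\subset\Jf_1$, and $\lambdaup\circ\bar{\,\lambdaup}=\bar{\,\lambdaup}\circ\lambdaup$ in an essential way — these encode that the ambiguity in $\xiup$ (addition of a section of $\Jf_1$) only changes $d\dc u$ by something already in $\Jf_2$, while simultaneously pinning down that the $(1,1)$-component of $d\dc u$ itself vanishes on $\Kf$ and more generally is characteristic-valued. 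Once the three component computations are assembled, $d\dc u\in\Jf_2$ follows since a $2$-form whose $(2,0)$, $(1,1)$ and $(0,2)$ parts vanish must be pulled back from $\wedge^2(TM/HM)^*\subset\Jf_2$.
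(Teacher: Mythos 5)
There is a genuine gap, and it begins with the identification of the target space. In this paper $\Jf_{\!{2}}$ is not $\Lambda^2(TM/HM)^*$: it is $\If_{\!{2}}\cap\overline{\If_{2}}\cap\Af_2$, the real $2$-forms lying in the ideal generated by $\If_{\!{1}}$ and in its conjugate, i.e.\ the real $2$-forms whose restrictions to $\Lambda^2T^{0,1}M$ and $\Lambda^2T^{1,0}M$ vanish. The $(1,1)$-part is allowed to survive --- and it must survive: since $du+i\,\dc{u}=2\lambdaup(du)\in\If_{\!{1}}$, the form $\xiup=\dc{u}$ is an admissible choice in the definition of $dd^c$, so by Proposition~\ref{p4.6} the contraction of $d\dc{u}$ with $\tauup\in\Kf$ computes $P_{\!\tauup}u$, which is not identically zero. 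Hence the third component you set out to kill does not vanish, and a proof organized around killing it cannot close; your concluding sentence (``a $2$-form whose $(2,0)$, $(1,1)$ and $(0,2)$ parts vanish must be pulled back from $\wedge^2(TM/HM)^*$'') confirms that $\Jf_{\!{2}}$ has been misread.

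The type bookkeeping in the part you did carry out is also off. The form $\bar{\,\lambdaup}(du)$ lies in ${T^*}^{0,1}M$, which annihilates $T^{1,0}M$, \emph{not} $T^{0,1}M$; so for $Z,W\in\Zf$ the terms $Z(\bar{\alpha}(W))$ and $W(\bar{\alpha}(Z))$ in your expansion of $d\bar{\alpha}(Z,W)$ have no reason to vanish, and the subsequent deduction that $d\alpha(Z,W)=-\alpha([Z,W])=0$ via $d\alpha=-d\bar{\alpha}$ is circular. Moreover the step you tried to sidestep --- $\alpha([Z,W])=0$ because $[Z,W]\in\Zf$ --- is precisely what is needed: formal integrability is in force in this part of the paper and is what gives $d\If\subset\If$. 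The paper's own proof is a two-line consequence of exactly that fact: from $du=\lambdaup(du)+\bar{\,\lambdaup}(du)$ and $d(du)=0$ one gets $i\,d\dc{u}=2d\lambdaup(du)\in\If_{\!{2}}$ and simultaneously $i\,d\dc{u}=-2d\bar{\,\lambdaup}(du)\in\overline{\If_{2}}$, which together with the reality of $d\dc{u}$ is membership in $\Jf_{\!{2}}$. No component-by-component analysis, and in particular no statement about the $(1,1)$-part, is needed.
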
\begin{proof}
For any germ of 
real valued smooth function $u$ the differential $d\dc\,{u}$ is real and we have 
\begin{align*}
 i\,d\dc\, {u}=d(\lambdaup(du)-\bar{\,\lambdaup}(du))&=d(2\lambdaup(du)-du)=2d\,\lambdaup(du)\in\If_{\!{2}},\\
 &=d(du-2\bar{\,\lambdaup}(du)=-2d\bar{\,\lambdaup}(du)\in\,\bar{\!\!\!\If}_{\!{2}},
\end{align*}
so that $d\dc\, {u}\in\If_{\!{2}}\cap\,\bar{\!\!\!\If}_{\!{2}}\cap\Af_2=\Jf_{\!{2}}$. \end{proof}
\begin{prop} \label{lm:5.2}
 Let $\Omega$ be a simply connected open set in $M$. 
 A necessary and sufficient condition for 
 a real valued $u\in\Ci(\Omega)$ 
 to be the real part of an ${f\in\Ot_{\!{M}}(\Omega)}$ is that 
 there exists a section $\xiup\in\Jf_{\!{1}}(\Omega)$ such that
\begin{equation}\label{eq:5.6}
 \df\,[\dc\,{u}+\xiup]=0\;\;\text{on $\Omega$}.
\end{equation}
If $M$ is minimal, then $\xiup$ is uniquely determined.
 \end{prop}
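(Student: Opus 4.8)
The plan is to translate ``$u=\re f$ with $f$ a $CR$ function'' into the assertion that $d(u+iv)\in\If_{\!{1}}$ for a suitable real valued companion $v$, and then to apply the Poincar\'e lemma on the simply connected set $\Omega$. The crux is to show that $\dc u$ is the \emph{canonical} choice among the forms $\xiup$ with $du+i\xiup\in\If_{\!{1}}$ afforded by Lemma~\ref{l5.3}, so that the desired $v$ exists precisely when some $\xiup\in\Jf_{\!{1}}$ makes $\dc u+\xiup$ closed (and hence, on a simply connected $\Omega$, exact).

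The preliminary observation I would use is the identity
\begin{equation*}
 \lambdaup(df)=\tfrac{1}{2}\bigl(df+i\,\dc f\bigr),\qquad\forall f\in\Ci(M),
\end{equation*}
which follows at once from $df=\lambdaup(df)+\bar{\,\lambdaup}(df)$ and the definition of $\dc$; since $\lambdaup$ takes values in $\If_{\!{1}}$, this already gives $du+i\,\dc u\in\If_{\!{1}}(\Omega)$ for every real valued $u$. I would also record the algebraic fact that a \emph{real} one-form belonging to $\If_{\!{1}}$ automatically belongs to $\Jf_{\!{1}}$: if $\alphaup=\bar{\alphaup}$ and $\alphaup\in\If_{\!{1}}$, then $\alphaup\in\Ifc$ as well, whence $\alphaup\in\If_{\!{1}}\cap\Ifc\cap\Af_1=\Jf_{\!{1}}$, using ${T^*}^{1,0}M\cap{T^*}^{0,1}M=\C H^0M$.

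\textbf{Necessity} then follows by subtraction: if $f=u+iv\in\Ot_{\!{M}}(\Omega)$ with $v$ real, then $du+i\,dv=df\in\If_{\!{1}}$, and comparison with $du+i\,\dc u\in\If_{\!{1}}$ gives $i(dv-\dc u)\in\If_{\!{1}}$, whence $dv-\dc u\in\If_{\!{1}}$; being real, $\xiup:=dv-\dc u\in\Jf_{\!{1}}(\Omega)$, and $\dc u+\xiup=dv$ is exact, so \eqref{eq:5.6} holds. For \textbf{sufficiency}, given $\xiup\in\Jf_{\!{1}}(\Omega)$ with $d(\dc u+\xiup)=0$, the real closed one-form $\dc u+\xiup$ equals $dv$ for some real valued $v\in\Ci(\Omega)$ because $\Omega$ is simply connected; then
\begin{equation*}
 d(u+iv)=(du+i\,\dc u)+i\,\xiup\in\If_{\!{1}}(\Omega),
\end{equation*}
since $\xiup\in\Jf_{\!{1}}\subset\If_{\!{1}}$, so $f:=u+iv$ is a $CR$ function with $\re f=u$. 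For \textbf{uniqueness} when $M$ is minimal, take two admissible sections $\xiup_1,\xiup_2$; then $\etaup:=\xiup_1-\xiup_2\in\Jf_{\!{1}}(\Omega)$ is closed, hence $\etaup=dg$ with $g$ real valued (again by simple connectedness), and $dg\in\Jf_{\!{1}}\subset\If_{\!{1}}$ forces $g\in\Ot_{\!{M}}(\Omega)$; the lemma asserting that real valued $CR$ functions on a minimal manifold are locally constant then yields $dg=0$, i.e. $\xiup_1=\xiup_2$.

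I do not anticipate a serious obstacle: the whole argument is essentially formal once the gauge $\lambdaup$ is available. The step requiring the most care is the bookkeeping with the splitting $\Afc=\ker\bar{\,\lambdaup}\oplus\Jfc\oplus\ker\lambdaup$ needed to identify ``real one-forms in $\If_{\!{1}}$'' with $\Jf_{\!{1}}$, together with the reality of $\dc u$; one should also keep track of where the two hypotheses are consumed, namely simple connectedness of $\Omega$ in the sufficiency and uniqueness parts (through the Poincar\'e lemma) and minimality of $M$ only in the uniqueness part.
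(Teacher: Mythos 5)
Your proof is correct and follows essentially the same route as the paper: the gauge identity $2\lambdaup(du)=du+i\,\dc\,u$ (so that $du+i\,\dc\,u\in\If_{\!{1}}$), the Poincar\'e lemma on the simply connected $\Omega$ to produce the harmonic conjugate $v$, and, for uniqueness, the observation that a closed nonzero $\xiup\in\Jf_{\!{1}}$ would yield a real valued non-constant $CR$ function, which minimality forbids. The only cosmetic difference is that the paper runs the necessity computation by decomposing $df$ according to $\If_{\!{1}}=\ker\bar{\,\lambdaup}\oplus\Jfc$, whereas you subtract $du+i\,\dc\,u$ from $df$ and identify the real remainder as a section of $\Jf_{\!{1}}$; the two computations are equivalent.
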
 
\begin{proof} Assume that \eqref{eq:5.6} is satisfied by some $\xiup\in\Jf_{\!{1}}(\Omega)$.
Then $\dc\,{u}+\xiup=dv$ for some real valued $v\in\Ci(\Omega)$ and, with $f=u+iv$ we obtain
\begin{align*}
\lambdaup(du)-\bar{\,\lambdaup}(du)=i[\lambdaup(dv)+\bar{\,\lambdaup}(dv)-\xiup]
 \Longrightarrow \bar{\,\lambdaup}(df)=\lambdaup(du-idv)-i\xiup\in\Jfc(\Omega) \\
 \Longrightarrow df\in\If_{\!{1}}(\Omega) \Longleftrightarrow f\in\Ot_{\!{M}}(\Omega).\quad
\end{align*} \par 
Assume vice versa that $f=u+iv\in\Ot_{\!{M}}(\Omega)$, with $u$ and $v$ real valued smooth functions.
Write $df=du+idv=\alpha+\zetaup$, with $\alpha\in\If_{\!{1}}(\Omega)$, $\zetaup\in\Jfc(\Omega)$, and
$\bar{\,\lambdaup}(\alpha)=0$. From 
\begin{equation*}
 \bar{\,\lambdaup}(du)+i\bar{\,\lambdaup}(dv)=\tfrac{1}{2}\zetaup \Longrightarrow
 \lambdaup(du)-i\lambdaup(dv)=\tfrac{1}{2}\bar{\zetaup}\,,
\end{equation*}
we obtain 
\begin{equation*}
 i\dc\, {u}=\lambdaup(du)-\bar{\,\lambdaup}(du)=
 i \lambdaup(dv)+\tfrac{1}{2}\bar{\zetaup} +i\bar{\lambda}(dv)=
 i\, dv-\tfrac{1}{2}({\zetaup}-\bar{\zetaup})
\end{equation*}
This is \eqref{eq:5.6} with $\xiup=(i/2)({\zetaup}-\bar{\zetaup})$. 
\par To complete the proof, we note that, if $\xiup\in\Jf_{\!{1}}(\Omega)$ and $d\xiup=0$, then
$\xiup=d\phi$ for some real valued function
$\phi\in\Ci(\Omega)$. If $\xiup_{p_0}\neq{0}$ for some $p_0\in\Omega$,
then $\{\phi(p)=\phi(p_0)\}$ defines a germ of smooth hypersurface through $p_0$ which is
tangent at each point to the distribution $HM$, contradicting the minimality assumption. 
\end{proof}
The \textit{Aeppli complex} for \textit{pluri-harmonic} functions on the $CR$ manifold $M$ is 
\begin{equation*} 
\begin{CD}
 0 @>>> \Af_0\oplus\Jf_1@>{(u,\xiup)\to d\dc\,{u}+d\xiup}>> \Jf_{\!{2}} @>{d}>> \Jf_{\!{3}}@>{d}>> \\
 @.@. \cdots @>{d}>> \Jf_{\!{m-1}} @>{d}>> \Jf_{m}@>>> 0.
\end{CD}
   \end{equation*}
  \par
We note that $\Jf_{\!{1}}=0$ if $M$ is a complex manifold (we reduce to the classical case)
and $\Jf_{\!{q}}=\Af_{\!{q}}$ for $q>0$ if $M$ is totally real.
In general, the terms of degree $\geq{k\! + \! 2}$ make a sub-complex of the de Rham complex.
\subsection*{Peak points of $CR$ functions and pseudo-convexity
at the boundary} A non-characteristic point of the
boundary of  a domain, where the modulus a  $CR$ function attains a local maximum,
is \textit{pseudo-convex}, in a sense that will be explained below.   
\begin{lem}\label{lm5.7}
 Let $\Omega^{\mathrm{open}}\subset{M}$ 
 and assume there is 
 $f\in{\Ot_{\!{M}}(\Omega)\cap\Co^2(\bar{\Omega})}$
 such that $|f|$ attains a local isolated maximum value at $p_0\in\partial\Omega$.  
 If
  if $\partial\Omega$ is smooth, non-characteristic at $p_0$ and, moreover,
 $d|f(p_0)|\neq{0}$, then there is a non-zero
 $\xiup\in{H}^0_{M,p_0}(\Omega)$ with $\Lf^{\partial\Omega}_{\xiup}\geq{0}$.
\end{lem}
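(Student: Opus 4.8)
The plan is to combine the Hopf Lemma (Proposition~\ref{prop4.1}) with the explicit identification $dd^c\rhoup(\tauup)=P_{\!\tauup}\rhoup$ from Proposition~\ref{p4.6}, read off along the hypersurface $N=\partial\Omega$. First I would choose a defining function $\rhoup\in\Co^2$ for $\Omega$ near $p_0$, so that $\Omega\cap U=\{\rhoup<0\}$ and $d\rhoup(p_0)\neq 0$; since $\partial\Omega$ is non-characteristic at $p_0$, we have $\bar{\partial}_M\rhoup(p_0)\neq 0$, so the hypersurface $N$ is a $CR$-submanifold of type $(n-1,k+1)$ and its scalar Levi forms $\Lf^{N}_{\xiup}$ are defined for $\xiup\in H^0_{p_0}N$. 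The covectors $\xiup$ appearing in the definition of $H^0_{M,p_0}(\Omega)=H^0_{M,p_0}(U^-)$ are, by Definition~\ref{df4.3}, exactly the restrictions to $N$ of the $\xiup\in T^*_{p_0}M$ with $\lambda d\rhoup(p_0)+i\xiup\in T^{*1,0}_{p_0}M$ for some $\lambda>0$, and by Lemma~\ref{l5.3} at least one such $\xiup$ exists for $\lambda=1$; rescaling gives the whole open half-space. So the statement will follow once I produce \emph{one} such $\xiup$ (necessarily non-zero) with $\Lf^{N}_{\xiup}\geq 0$.

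Next I would argue by contradiction: suppose that for \emph{every} $\xiup\in H^0_{M,p_0}(\Omega)$ the form $\Lf^{N}_{\xiup}$ fails to be positive semidefinite. Equivalently, for the canonical $\xiup$ with $d\rhoup(p_0)+i\xiup\in T^{*1,0}_{p_0}M$, there is $Z\in\Zf$ tangent to $N$ at $p_0$ with $\Lf^{N}_{\xiup}(Z_{p_0},\bar Z_{p_0})<0$. Using $r$ copies (here $r=1$ suffices, or one takes a suitable $\tauup$ of rank one) I form $\tauup=Z\otimes\bar Z$; since $Z$ is tangent to $N$, $\tauup\in H^{1,1}_{p_0}N$, and since $|f|$ has a local maximum at a \emph{boundary} point I can also arrange — shrinking $U$ and adjusting $Z$ among the sections giving the same value of the Levi form — that $\tauup$ lies in $\Kf_{p_0}$ (the relevant bracket is compensated by some $L_0\in\Zf$; this is where property $(H)$, via the Remark after Proposition~\ref{prop4.1}, guarantees $|f|<|f(p_0)|$ on $\Omega\cap U$ so the Hopf Lemma applies). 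Then by the computation preceding Proposition~\ref{p4.6},
\begin{equation*}
 P_{\!\tauup}\rhoup(p_0) \;=\; dd^c\rhoup_{p_0}(\tauup) \;=\; \tfrac{i}{2}\,d\xiup(\tauup)\;+\;(\text{terms in }d\rhoup),
\end{equation*}
and restricting to vectors tangent to $N$ (so that the $d\rhoup$-terms drop), the sign of $P_{\!\tauup}\rhoup(p_0)$ is governed by $\Lf^{N}_{\xiup}(Z_{p_0},\bar Z_{p_0})$. Applying $P_{\!\tauup}$ to the subharmonic function $u=|f|$ (Lemma after \eqref{2.2}), $P_{\!\tauup}u\geq 0$ on $\Omega\cap U$, and $u(p)<u(p_0)$ there by the Remark, so the Hopf Lemma forces $du(p_0)\neq 0$ — consistent with the hypothesis $d|f(p_0)|\neq 0$ — but it also forces the normal derivative of $u$ to be strictly positive in the inward-pointing direction, which pins down the sign of $dd^c\rhoup_{p_0}(\tauup)$, contradicting $\Lf^{N}_{\xiup}<0$.

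The main obstacle, and the step I would spend the most care on, is the reduction that produces a $\tauup\in\Kf_{p_0}$ (rather than merely in $H^{1,1}_{p_0}M$) whose associated operator $P_{\!\tauup}$ both (a) has $\partial\Omega$ non-characteristic at $p_0$ in the sense of Proposition~\ref{prop4.1} and (b) realizes the desired negative Levi value along $N$. Non-characteristicity of $\partial\Omega$ for $P_{\!\tauup}$ requires $\sum_j|X_j\rhoup(p_0)|^2>0$, i.e.\ that the chosen $Z$'s are not all tangent to $\partial\Omega$; but the $Z$ witnessing $\Lf^{N}_{\xiup}<0$ \emph{is} tangent. The resolution is to add to $\tauup$ a rank-one piece $W\otimes\bar W$ coming from a $W\in\Zf$ transverse to $\partial\Omega$ and with $\Lf^{N}$-value bounded; since $\Kf$ is a convex cone in the fibers of $H^{1,1}M$ (it is $H^{1,1}M\cap\ker\Lf$), after a small perturbation one stays in $\ker\Lf$, one keeps the total $dd^c\rhoup(\tauup)$ of the sign dictated by the dominant negative term, and one gains non-characteristicity. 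Getting this convex-combination argument to land inside $\Kf_{p_0}$ exactly — using that $\Lf$ is \emph{linear} on $H^{1,1}M$, so $\ker\Lf$ is a linear subspace intersected with the cone — is the crux; everything else is the routine translation between $d\xiup$, the second-order part of $\rhoup$, and $\Lf^{N}_{\xiup}$ carried out just above Proposition~\ref{p4.6}, together with a direct invocation of Proposition~\ref{prop4.1}.
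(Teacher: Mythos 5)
Your proposal has a genuine gap at exactly the step you flag as the crux: producing a $\tauup\in\Kf_{p_0}$ supported on vectors tangent to $\partial\Omega$ that witnesses the putative negative Levi value. The lemma carries no pseudoconcavity hypothesis whatsoever, and $\Kf$ may well be trivial (e.g.\ on a strictly pseudoconvex hypersurface, or in the examples mentioned in \S\ref{s1} where even $\Kf^{(1)}=0$); a proof that requires exhibiting a nonzero element of $\Kf_{p_0}$ therefore cannot work in the stated generality. The convex-combination repair does not help: $\ker\Lf$ is the kernel of a \emph{linear} map on $H^{1,1}M$, so adding a transverse rank-one piece $W\otimes\bar W$ moves the Levi value by $\Lf(W\otimes\bar W)$, and there is no freedom to force the total back to zero while keeping the tensor positive semidefinite and keeping the sign of $dd^c\rhoup$. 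Without $\tauup\in\ker\Lf$ there is no $L_0\in\Zf$ with $\sum[Z_j,\bar Z_j]=\bar L_0-L_0$, hence no operator $P_{\!\tauup}$ of the form \eqref{2.2} for which $CR$ functions are harmonic, and the whole Hopf-lemma machinery is unavailable. Note also that the Hopf lemma is not needed for this statement at all: $d|f|(p_0)\neq 0$ is a \emph{hypothesis} of Lemma~\ref{lm5.7} (it is in Proposition~\ref{lm5.2} that the Hopf lemma is invoked, precisely to verify that hypothesis under the $\Theta$-non-characteristic assumption).

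The paper's proof is direct and elementary and avoids $\Kf$ entirely. Normalize $f(p_0)=|f(p_0)|>0$ and write $f=u+iv$. Since $|f|$ has a local maximum at $p_0$ and $f(p_0)$ is real positive, the restriction of $u=\re f$ to $\partial\Omega$ has a local maximum at $p_0$, so $d_{\partial\Omega}u(p_0)=0$ and the real Hessian of $u|_{\partial\Omega}$ at $p_0$ is negative semidefinite; in particular $(Z\bar Z+\bar Z Z)u(p_0)\leq 0$ for all $Z\in\Zf(\partial\Omega)$. The $CR$ equations give $(Zu)=-i(Zv)$ and $(\bar Zu)=i(\bar Zv)$ on $\partial\Omega$, whence $(Zv)(p_0)=0$, so $\xiup:=dv(p_0)\in H^0_{p_0}\partial\Omega$, and
\begin{equation*}
 2Z\bar Z u(p_0)=(Z\bar Z+\bar Z Z)u(p_0)=i(Z\bar Z-\bar Z Z)v(p_0)=i\,\xiup([Z,\bar Z]),
\end{equation*}
which is $-\Lf^{\partial\Omega}_{\xiup}(Z_{p_0},\bar Z_{p_0})$ up to the sign convention, giving $\Lf^{\partial\Omega}_{\xiup}\geq 0$. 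Finally $d|f|(p_0)=du(p_0)\neq 0$ (since $v$ vanishes to second order in $|f|=u\sqrt{1+v^2/u^2}$) and, by the non-characteristic hypothesis, $du(p_0)=\lambda\, d\rhoup(p_0)$ with $\lambda>0$; since $df(p_0)=du(p_0)+i\xiup$ annihilates $T^{0,1}_{p_0}M$, this places $\xiup$ in $H^0_{M,p_0}(\Omega)$ by Definition~\ref{df4.3}. You should replace the contradiction scheme by this direct Hessian computation.
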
 
\begin{proof} 
 Let $U$ be an open neighborhood of $p_0$ in $M$,
and $\rhoup\in\Ci(U,\R)$ a defining function for $\Omega$ near $p_0$, with 
$U^-=\Omega\cap{U}=\{p\in{U}\mid \rhoup(p)<0\}$,
and  $d\rhoup(p)\neq{0}$ for all $p\in{U}$.\par 
 We can assume that $f(p_0)=|f(p_0)|>0$ and exploit the fact that 
 the restriction of $u=\re{f}$ to $\partial\Omega$ takes a maximum value at $p_0$. 
 Since $d_{\partial\Omega}u(p_0)=0$, the real Hessian of $u$ on $\partial\Omega$ is
 well defined at $p_0$, with 
\begin{equation*}
 \mathrm{hess}(u)(X_{p_0},Y_{p_0})=(XYu)(p_0),\quad \forall X,Y\in\mathfrak{X}(\partial\Omega),
\end{equation*}
and $\mathrm{hess}(u)(p_0)\leq{0}$ by the assumption that 
the restriction of $u$ to $\partial\Omega$ has a local maximum at $p_0$.
In particular, it follows that 
\begin{equation*}
(Z\bar{Z}u)(p_0)= (\bar{Z}{Z}u)(p_0)\leq{0},\quad\forall Z\in\Zf(\partial\Omega).
\end{equation*}
Let $v=\im{f}$. Then $df=du+idv$, and the condition that $d_{\partial\Omega}u(p_0)=0$ 
implies that $(Zv)(p_0)=0$ for all $Z\in\Zf(\partial\Omega)$ and thus
$\xiup=dv(p_0)\in{H}^0\partial\Omega$. Moreover, 
\begin{align}\label{eq5.4}
 (Zu)(p)=-i(Zv)(p),\;\; (\bar{Z}u)(p)=i(\bar{Z}v)(p),
 \quad \forall Z\in\Zf(\partial\Omega),\quad\forall p\in\partial\Omega.
\end{align}
Hence 
\begin{align*}
 2Z\bar{Z}u(p_0)&=(Z\bar{Z}+\bar{Z}Z)u(p_0)=i(Z\bar{Z}-\bar{Z}Z)v(p_0)=
 i\xiup(p_0)([Z,\bar{Z}])
\end{align*}
and thus the condition on the \textit{real} Hessian of $u$ implies that
 $\Lf_{\xiup}^{\partial\Omega}\geq{0}$. We note that $du(p_0)$ is different from $0$ and proportional to
 $d\rhoup(p_0)$. 
 Indeed, near $p_0$ we have 
\begin{equation*}
 |f|=u\sqrt{1+(v^2/u^2)}\simeq u\big(1+\tfrac{1}{2}(v^2/u^2)\big)=u+0(2),
\end{equation*}
since $v(p_0)=0$. Thus $d|f|(p_0)=du(p_0)\neq{0}$.
\par
 By the assumption that $\partial\Omega$ is non-characteristic at $p_0$, we have
 that ${d}u(p_0)$ is non-zero and equal to $\lambda{d}\rhoup(p_0)$
 for some $\lambda>0$: therefore 
 $\xiup=dv(p_0)\in{H}^0_{M,p_0}(\Omega)$ and 
 this proves our claim. 
\end{proof}
\begin{prop}\label{lm5.2}
 Let $\Omega$ be an open subset of $M$, and assume that there is a $CR$ function 
 $f\in\Ot_{\!{M}}(\Omega)\cap\Co^2(\bar{\Omega})$ and a point $p_0\in\partial\Omega$ such that: 
\begin{align}
 \tag{$a$}  & |f(p_0)|>|f(p)|,\;\;\forall p\in\Omega,\\
 \tag{$b$} & \partial\Omega \;\;\text{is 
 $\Theta$-non-characteristic at $p_0$.  }
 \end{align}
 \par 
Then we can find $0\neq{\xiup}\in{H}^0_{M,p_0}(\Omega)$ with  
$\Lf_\xiup^{\partial\Omega}\geq{0}$.
\end{prop}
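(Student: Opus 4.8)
The plan is to derive the statement by combining the Hopf Lemma, in the form of the Corollary to Proposition~\ref{prop4.1}, with the argument used in the proof of Lemma~\ref{lm5.7}: the Corollary supplies exactly the extra input $d|f|(p_0)\neq 0$ that Lemma~\ref{lm5.7} requires as a hypothesis.

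First I would normalize. By $(a)$ the inequality is strict, so $|f(p_0)|>0$; after multiplying $f$ by a unimodular constant I may assume $f(p_0)=|f(p_0)|>0$, and then $f$ is nowhere zero on some open neighbourhood $U$ of $p_0$ in $M$, so that $u:=|f|$ is of class $\Co^2$ on $U\cap\bar{\Omega}$. Note that $(b)$ already presupposes that $\partial\Omega$ is a smooth (at least $\Co^2$) hypersurface near $p_0$, with some defining function $\rhoup$ and $U\cap\Omega=\{\rhoup<0\}$. Hypotheses $(a)$ and $(b)$ are then precisely those of the Corollary to Proposition~\ref{prop4.1}, which therefore yields $d|f|(p_0)=du(p_0)\neq 0$. (The mechanism inside that Corollary is that $\Theta$-non-characteristicity, unwound via \eqref{theta3.6}, furnishes a $\tauup=Z_1\otimes\bar{Z}_1+\cdots+Z_r\otimes\bar{Z}_r\in\Kf(U)$ for which $\partial\Omega$ is non-characteristic at $p_0$ for the operator $P_{\!\tauup}=-X_0+\sum_{i=1}^{2r}X_i^2$ of \eqref{2.2}; that $P_{\!\tauup}u\geq 0$ on $U\cap\Omega$ by the subharmonicity lemma for $|f|$; and that the strict inequality in $(a)$ supplies the one-sided condition $u(p)<u(p_0)$ on $U\cap\Omega$ needed to invoke the Hopf Lemma.)

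Next I would run the argument of Lemma~\ref{lm5.7}. Put $u_1=\re f$ and $v=\im f$. On $\bar{\Omega}$ we have $u_1\leq|f|\leq|f(p_0)|=u_1(p_0)$, so $u_1$ attains its maximum over $\bar{\Omega}$ at $p_0\in\partial\Omega$; hence $d_{\partial\Omega}u_1(p_0)=0$ and the tangential Hessian of $u_1$ on $\partial\Omega$ at $p_0$ is $\leq 0$. From $df=du_1+i\,dv$ and $d_{\partial\Omega}u_1(p_0)=0$ one gets $(Zv)(p_0)=0$ for every $Z\in\Zf(\partial\Omega)$, so $\xiup:=dv(p_0)\in H^0_{p_0}\partial\Omega$, and the identity $2(Z\bar{Z}u_1)(p_0)=i\,\xiup([Z,\bar{Z}]_{p_0})$ converts the sign of the tangential Hessian into $\Lf^{\partial\Omega}_{\xiup}\geq 0$. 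Since $v(p_0)=0$, the function $|f|-u_1$ vanishes to second order at $p_0$, so $du_1(p_0)=d|f|(p_0)\neq 0$; by non-characteristicity of $\partial\Omega$ at $p_0$ this forces $du_1(p_0)=\lambda\,d\rhoup(p_0)$ for some $\lambda>0$, and, $f$ being $CR$, $\lambda\,d\rhoup(p_0)+i\xiup=df(p_0)\in{T^*_{p_0}}^{1,0}M$, which is exactly $\xiup\in H^0_{M,p_0}(\Omega)$; finally $\xiup\neq 0$, because $\xiup=0$ would give $\lambda\,d\rhoup(p_0)\in{T^*_{p_0}}^{1,0}M$, i.e.\ $\bar{\partial}_M\rhoup(p_0)=0$, contradicting non-characteristicity.

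The one point deserving care is that Lemma~\ref{lm5.7} is stated for an \emph{isolated} local maximum of $|f|$ on $\partial\Omega$, which $(a)$ does not literally grant, as it controls $|f|$ only inside $\Omega$ and not on $\partial\Omega$; so rather than quoting Lemma~\ref{lm5.7} I would reproduce the short argument above, which uses nothing beyond the (not necessarily isolated) tangential maximum of $\re f$ recorded there. Apart from that, the whole content sits in the Hopf Lemma, and the only genuine verification — which is already carried out inside the Corollary to Proposition~\ref{prop4.1} — is that the $\Theta$-non-characteristic hypothesis really does produce an operator $P_{\!\tauup}$ that is non-characteristic at $p_0$; this is the step I would be most careful about.
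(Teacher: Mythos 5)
Your proposal is correct and follows essentially the same route as the paper: the $\Theta$-non-characteristic hypothesis yields a $\tauup\in\Kf$ for which $P_{\!\tauup}$ is non-characteristic at $p_0$, the Hopf lemma applied to $P_{\!\tauup}|f|\geq 0$ gives $d|f|(p_0)\neq 0$, and the argument of Lemma~\ref{lm5.7} then produces the desired $\xiup$. Your remark that hypothesis $(a)$ only guarantees a (not necessarily isolated) boundary maximum, so that the argument of Lemma~\ref{lm5.7} should be rerun rather than quoted verbatim, is a fair point of care but does not change the substance.
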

[For the meaning of \textit{non-characteristic} see \eqref{equnosei}.]
\begin{proof} 
To apply Lemma~\ref{lm5.7} we need to check that $d|f|(p_0)\neq{0}$. 
 By the assumption that $\partial\Omega$ is $\Theta$-non-characteristic at $p_0$,
 there is an open neighborhood $U$ of $p_0$ in $M$ and
 $\tauup\in\Kf(U)$ such that $\partial\Omega$ is non-characteristic for $P_{\!\tauup}$
 at $p_0$. 
 Since $P_{\!\tauup}|f|\geq{0}$, 
by the Hopf lemma,   $d|f|(p_0)|\neq{0}$ and therefore $du(p_0)$ 
is a positive multiple of $d\rhoup(p_0)$. Then 
$\xiup=d\im\!{f}(p_0)\in{H}^0_{M,p_0}(\Omega)$ and we obtain the 
statement. 
\end{proof} 
For characteristic peak points in the boundary of $\Omega$ we have:
\begin{lem}\label{lm5.3}
Let $\Omega$ be an open subset of $M$, and assume that there is a $CR$ function 
 $f=u+iv\in\Ot_{\!{M}}(\Omega)\cap\Co^2(\bar{\Omega})$, with $u$ and $v$ real valued,
 and  $p_0\in\partial\Omega$ such that: 
\begin{align}
 \tag{$a$}  & v(p_0)=0,\;\; du(p_0)\in{H}_{p_0}^0N,\;\; u(p_0)>u(p),\;\;\forall p\in\Omega,\\
 \tag{$b$} & 0\neq \xiup=dv(p_0). 
\end{align} \par 
Then   $\xiup\in{H}^0_{p_0}M$ and $\Lf_\xiup\geq{0}$. 
\end{lem}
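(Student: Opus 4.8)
The plan is to re-run the Hessian computation from the proof of Lemma~\ref{lm5.7}, but to use the hypothesis that $du(p_0)$ is characteristic — i.e. $du(p_0)\in{H}^0_{p_0}M$ — so that the maximum information on $u$ is read off along the directions of $H_{p_0}M$ rather than along $\partial\Omega$. First I would verify $\xiup\in{H}^0_{p_0}M$. Since $f=u+iv\in\Ot_{\!{M}}(\Omega)\cap\Co^2(\bar{\Omega})$, the relations $Zf=0$, $Z\in\Zf(M)$, hold on $\Omega$ and, $df$ being continuous up to the boundary, at $p_0$ as well; splitting into real and imaginary parts, $Zu(p_0)=-i\,Zv(p_0)$. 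The characteristic hypothesis on $du(p_0)$ says that $du(p_0)$ annihilates $H_{p_0}M$, i.e. $Zu(p_0)=0$ for every $Z\in\Zf(M)$, so $Zv(p_0)=0$ for all such $Z$; the real non-zero covector $\xiup=dv(p_0)$ therefore annihilates $H_{p_0}M$ and lies in ${H}^0_{p_0}M$.

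Next I would isolate the basic $CR$ identity and reduce the statement to a sign. Fix $Z\in\Zf(M)$ near $p_0$ and put $X=\re{Z}$, $Y=\im{Z}$. Expanding $Zf=0$ on $\Omega$ gives the real identities $Xu=Yv$ and $Xv=-Yu$ there; differentiating these on the open set $\Omega$ and adding yields $(X^2+Y^2)u=[X,Y]v$ on $\Omega$, hence on $\bar{\Omega}$ by continuity. Since $[Z,\bar{Z}]=-2i[X,Y]$, evaluation at $p_0$ gives
\begin{equation*}
\Lf_{\xiup}(Z_{p_0},\bar{Z}_{p_0})=-\langle{\xiup}| i[Z,\bar{Z}]_{p_0}\rangle=-2\,\langle{\xiup}|[X,Y]_{p_0}\rangle=-2\,\big((X^2+Y^2)u\big)(p_0),
\end{equation*}
so $\Lf_{\xiup}\ge 0$ amounts to $\big((X^2+Y^2)u\big)(p_0)\le 0$ for every $Z\in\Zf(M)$. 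Note that the value at $p_0$ is unambiguous: on $\Omega$ the function $(X^2+Y^2)u$ equals $[X,Y]v$, which is continuous on $\bar{\Omega}$, even though $u$ is defined only there.

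It remains to prove $\big((X^2+Y^2)u\big)(p_0)\le 0$, and this is the main point. Because $du(p_0)$ kills $X_{p_0},Y_{p_0}\in{H}_{p_0}M$, the point $p_0$ is critical for $u$ in the $H_{p_0}M$-directions, so $\big((X^2+Y^2)u\big)(p_0)$ is the trace over $\langle X_{p_0},Y_{p_0}\rangle$ of the restriction to $H_{p_0}M$ of the Hessian of $u$ at $p_0$. One then uses, as in Lemma~\ref{lm5.7}, that $u|_{\partial\Omega}$ attains its maximum at $p_0$, so that this Hessian is negative semi-definite on $T_{p_0}\partial\Omega$; and the characteristic hypothesis gives $H_{p_0}M\subset{T}_{p_0}\partial\Omega$, which forces the restriction to $H_{p_0}M$ to be negative semi-definite. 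Hence $\big((X^2+Y^2)u\big)(p_0)\le 0$, and with the displayed identity $\Lf_{\xiup}\ge 0$.

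The hard part is precisely the last comparison. Unlike in Lemma~\ref{lm5.7}, here $X$ and $Y$ are tangent to $\partial\Omega$ only at $p_0$, so $\big((X^2+Y^2)u\big)(p_0)$ — which is defined as the limit from $\Omega$, equal to $([X,Y]v)(p_0)$ — must be identified with the intrinsic second order behaviour of $u$ along $\partial\Omega$ at the \emph{characteristic} point $p_0$; the error terms between the two are linear in $du(p_0)$, and one must check that they have the correct sign, using that $\Omega$ lies on the negative side both of $\partial\Omega$ and of the sublevel set $\{u<u(p_0)\}$ and that $du(p_0)$ is characteristic. When $du(p_0)=0$ these error terms vanish and the comparison is immediate; the general case is the only genuine obstacle, and is handled by keeping track of these first order corrections.
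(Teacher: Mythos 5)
Your opening steps reproduce the paper's own argument. The membership $\xiup\in H^0_{p_0}M$ is obtained, as in the paper, from the vanishing of $df(p_0)=du(p_0)+i\,dv(p_0)$ on $T^{0,1}_{p_0}M$ together with the hypothesis that $du(p_0)$ annihilates $H_{p_0}M$; and the identity $\Lf_{\xiup}(Z_{p_0},\bar Z_{p_0})=-\big((Z\bar Z+\bar Z Z)u\big)(p_0)=-2\big((X^2+Y^2)u\big)(p_0)$ is exactly the computation the paper imports from Lemma~\ref{lm5.7}, now run with $Z\in\Zf(M)$ instead of $Z\in\Zf(\partial\Omega)$. Up to the reduction to the sign of $\big((X^2+Y^2)u\big)(p_0)$ you and the paper agree.

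The last step is where the proof lives, and you have not supplied it. You correctly observe that $\big((X^2+Y^2)u\big)(p_0)$ differs from the trace of the intrinsic Hessian of $u|_{\partial\Omega}$ by terms in which $du(p_0)$ is paired with second-order data of the extensions $X,Y$, you call this ``the only genuine obstacle'', and you then assert that it ``is handled by keeping track of these first order corrections''. It is not: those corrections carry no favourable sign. On the hyperquadric $M=\{\im w=|z|^2\}\subset\C^2$, with coordinates $(z,s)$, $s=\re w$, $\Zf(M)$ spanned by $Z=\partial_{\bar z}-iz\,\partial_s$ and $[Z,\bar Z]=2i\,\partial_s$, take $f=(-1-i)w|_M$, so that $u=|z|^2-s$, $v=-(s+|z|^2)$, and let $\Omega=\{s>|z|^2\}$ near $0$. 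All hypotheses of Lemma~\ref{lm5.3} hold at $p_0=0$, with $du(0)=-ds\in H^0_{0}M$ and $\xiup=dv(0)=-ds\neq 0$; yet $(X^2+Y^2)u\equiv 1>0$, hence $\Lf_{\xiup}(Z_{p_0},\bar Z_{p_0})=-\langle\xiup\,|\,i[Z,\bar Z]_{0}\rangle=-2<0$, even though $u|_{\partial\Omega}\equiv 0$, so the intrinsic Hessian you invoke is indeed $\leq 0$. The discrepancy is precisely the $+1$ produced by $du(0)$ against the curvature of $\partial\Omega=\{s=|z|^2\}$. So the comparison you postpone is not merely unfinished: it fails, and the statement needs further hypotheses (controlling $du(p_0)$ against the shape of $\Omega$ in the $H_{p_0}M$-directions) before any bookkeeping can close it. You should be aware that the paper's own proof is no more detailed at this point --- it only says that the conclusion ``follows by the argument of Lemma~\ref{lm5.7}, taking into account that all vectors in $T^{0,1}_{p_0}M$ are tangent to $\partial\Omega$'' --- and it is exposed to the same objection, since $\re Z$ and $\im Z$ are tangent to $\partial\Omega$ only at the single point $p_0$.
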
 
\begin{proof}
 Set $\etaup=du(p_0)$. Then
 $\xiup=dv(p_0)\in{H}^0_{p_0}M$, because $df(p_0)=\etaup+i\xiup$ is zero on
 $\Zf(M)$, and hence $\xiup$, vanishing  on $\Zf(M)$ and being real, belongs to $H^0_{p_0}M$.
 The conclusion follows by the argument of Lemma~\ref{lm5.7}, taking into account that this
 time all vectors in $T^{0,1}_{p_0}M$ are tangent to $\partial\Omega$ and that \eqref{eq5.4}
 is valid for $Z\in\Zf(M)$ at all points where $f$ is defined and $\Co^1$.
\end{proof} 
Proposition~\ref{lm5.2} 
suggest to introduce some notions of
convexity/concavity for  boundary points of a domain in $M$.
Let $\Omega$ be a domain in $M$, 
 $p_0\in\partial\Omega$ a smooth point of $\partial\Omega$, and
 $\rhoup$ a defining function for $\Omega$ near $p_0$. 
\begin{defn}\label{d4.4}
We say that $\Omega$ is at $p_0$ 
\begin{itemize}
 \item strongly-$1$-concave if there is  
 $\tauup\in\Kf\cap{H}^{1,1}_{p_0}\partial\Omega$
 such that 
 $dd^c\rhoup_{p_0}(\tauup)<0$;
 \item strongly-$1$-convex if there is 
  $\tauup\in\Kf\cap{H}^{1,1}_{p_0}\partial\Omega$ such that 
 $dd^c\rhoup_{p_0}(\tauup)>0$.
\end{itemize}
\end{defn}
Points where the boundary is strictly $1$-concave cannot be peak points for
the modulus of $CR$ functions. 
\begin{prop} Assume that $M$ has property $(H)$.
 Let $\Omega$ be a relatively compact open domain in $M$ and $N\subset\partial\Omega$ 
a smooth part of $\partial\Omega$ consisting of points where $\partial\Omega$ is smooth,
 $\Theta$-non-characteristic, 
 and strongly-$1$-concave. Then 
 \begin{equation}
 |u(p)|<\sup_{q\in\partial\Omega\setminus{N}}|u(q)|,\quad\forall p\in\Omega\cup{N},
 \end{equation}
for every non constant $u\in\Ot_{\!{M}}(\Omega)\cap\Co^2(\bar{\Omega})$.
\end{prop}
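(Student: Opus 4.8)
The plan is to argue by contradiction with the maximum modulus principle, showing that $|u|$ cannot attain its maximum over $\bar{\Omega}$ at any point of $\Omega\cup{N}$. Since $\Omega$ is relatively compact and $u\in\Co^{0}(\bar{\Omega})$, the modulus $|u|$ attains $M_{0}:=\max_{\bar{\Omega}}|u|$ at some $p_{0}\in\bar{\Omega}$. If $p_{0}\in\Omega$, then $|u(p_{0})|=\sup_{\Omega}|u|$, and since property $(H)$ implies the hypothesis of Proposition~\ref{pp2.1}, that proposition forces $u$ to be constant, against our assumption. Hence $p_{0}\in\partial\Omega$, and the interior case just treated shows moreover that $|u(p_{0})|>|u(p)|$ for every $p\in\Omega$ (and $|u(p_{0})|>0$, as otherwise $u\equiv{0}$). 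It remains to rule out $p_{0}\in{N}$.

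Assume $p_{0}\in{N}$. As $\partial\Omega$ is smooth and $\Theta$-non-characteristic at $p_{0}$, Proposition~\ref{lm5.2} applied to $f=u$ yields a nonzero $\xiup\in{H}^{0}_{M,p_{0}}(\Omega)$ with $\Lf^{\partial\Omega}_{\xiup}\geq{0}$; by Definition~\ref{df4.3} such a $\xiup$ comes, for a suitable $\lambda>0$, with $\lambda\,d\rhoup(p_{0})+i\xiup\in{T^{*}_{p_{0}}}^{1,0}M$, where $\rhoup$ is a smooth defining function for $\Omega$ near $p_{0}$. On the other hand, strong-$1$-concavity at $p_{0}$ provides $\tauup\in\Kf\cap{H}^{1,1}_{p_{0}}\partial\Omega$ with $dd^{c}\rhoup_{p_{0}}(\tauup)<0$. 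These two facts are incompatible: I claim that $\Lf^{\partial\Omega}_{\xiup}(\tauup)=2\lambda\,dd^{c}\rhoup_{p_{0}}(\tauup)$, which is then negative, contradicting $\Lf^{\partial\Omega}_{\xiup}\geq{0}$ and $\tauup\in{H}^{1,1}_{p_{0}}\partial\Omega$.

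To prove the claim, choose a real $\eta\in\Af_{1}$ near $p_{0}$ with $d\rhoup+i\eta\in\If_{1}$ (Lemma~\ref{l5.3}); subtracting $\lambda(d\rhoup+i\eta)(p_{0})$ from $\lambda\,d\rhoup(p_{0})+i\xiup$ shows $\zeta:=\lambda\,\eta(p_{0})-\xiup$ lies in $H^{0}_{p_{0}}M$. Write $\tauup=\sum_{j}w_{j}\otimes\bar{w}_{j}$ with $w_{j}\in{T}^{0,1}_{p_{0}}\partial\Omega$, and extend the $w_{j}$ to sections $W_{j}$ of $T^{0,1}M$ near $p_{0}$ which are tangent to $\partial\Omega$ along $\partial\Omega$. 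Since $\tauup\in\ker\Lf$ we have $\sum_{j}i[W_{j},\bar{W}_{j}]_{p_{0}}\in{H}_{p_{0}}M$, so $\Lf^{\partial\Omega}_{\zeta}(\tauup)=0$ because $\zeta$ annihilates $H_{p_{0}}M$. And because $\rhoup$ is constant on $\partial\Omega$ while the $W_{j}$ are tangent to it, the terms $(W_{j}\bar{W}_{j}+\bar{W}_{j}W_{j})\rhoup$ vanish at $p_{0}$, so the computation preceding Proposition~\ref{p4.6}, carried out with $\eta$, reduces to $\Lf^{\partial\Omega}_{\eta(p_{0})}(\tauup)=2\,dd^{c}\rhoup_{p_{0}}(\tauup)$. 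Using linearity of the scalar Levi form in the covector, $\Lf^{\partial\Omega}_{\xiup}(\tauup)=\lambda\,\Lf^{\partial\Omega}_{\eta(p_{0})}(\tauup)-\Lf^{\partial\Omega}_{\zeta}(\tauup)=2\lambda\,dd^{c}\rhoup_{p_{0}}(\tauup)<0$, the desired contradiction. Therefore $p_{0}\in\partial\Omega\setminus{N}$; since the same argument forbids $|u|$ from taking the value $M_{0}$ anywhere on $\Omega\cup{N}$, we conclude $|u(p)|<M_{0}=\sup_{q\in\partial\Omega\setminus{N}}|u(q)|$ for all $p\in\Omega\cup{N}$. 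The delicate point of the whole argument is this last identity, and especially its sign, which is what makes strong-$1$-concavity genuinely clash with $\Lf^{\partial\Omega}_{\xiup}\geq{0}$; the rest is routine bookkeeping with the maximum principle and an appeal to Proposition~\ref{lm5.2}.
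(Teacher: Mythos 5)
Your proof is correct and follows the paper's own route: the interior strict maximum principle coming from property $(H)$ via Proposition~\ref{pp2.1}, combined with Proposition~\ref{lm5.2} to exclude a peak point on $N$. The only difference is that you spell out the incompatibility between the semipositive form $\Lf^{\partial\Omega}_{\xiup}$ and strong-$1$-concavity through the identity $\Lf^{\partial\Omega}_{\xiup}(\tauup)=2\lambda\, dd^c\rhoup_{p_0}(\tauup)$ — a step the paper leaves implicit — and your derivation of that identity (tangency killing the second-order terms, $\tauup\in\ker\Lf$ killing the $H^0_{p_0}M$-ambiguity) is consistent with the paper's conventions.
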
 
\begin{proof}
 Since $M$ has property $(H)$, by Proposition~\ref{pp2.1} we have 
 $|f(p)|<{\max}_{\partial\Omega}|f|$, for all $p\in\Omega$
and all non constant $f\in\Ot_{\!{M}}(\Omega)$. The statement then follows from Proposition~\ref{lm5.2},
because $|f|$ cannot have a maximum on $N$.
\end{proof}

\subsection*{{$1$}-convexity/concavity at the boundary and the 
vector-valued Levi form} 
Let $\Omega^{\mathrm{open}}\subset{M}$ have piece-wise smooth boundary and
denote  by $N$ the $CR$ submanifold of type $(n\! -\!1, k\! + \! 1)$ 
of $M$ consisting of the smooth
non-characteristic points of $\partial\Omega$. 
The quotient $(TN\cap{HM})/HN\subset{TN}/HN$ is
a real line 
bundle on $N$.\par 
The partial complex structure $J_M:HM\to{HM}$ restricts to the partial complex structure on $HN$ and
the tangent vectors $v$ in $(HM\cap{TN})\setminus{HN}$ are characterized by the fact that
$J_M(v)\notin{TN}$. Fix a point $p_0\in{N}$ and a defining function $\rhoup$ of $\Omega$ on
a neighborhood $U$ of $p_0$ in $N$, so that $0\neq{d}\rhoup(p_0)$ is an outer conormal to $\Omega$
at $p_0$. The elements $\xiup_0\in{H}^0_{M,p_0}\Omega
$ are defined, modulo
multiplication by a positive scalar,  
by the
condition that $d\rhoup(p_0)+i\xiup_0\in{T^*}_{p_0}^{1,0}M$. Since $v+iJ_Mv\in{T}^{0,1}_{p_0}M$, 
we have 
\begin{align*}
0= \langle(d\rhoup(p_0)+i\xiup_0),(v+iJ_Mv)\rangle=i\langle d\rhoup(p_0), J_Mv\rangle +i\langle
\xiup_0,v\rangle -\langle \xiup_0,J_Mv\rangle\qquad 
\\
\Longrightarrow 
\langle \xiup_0,J_Mv\rangle=0,\quad 
\langle\xiup_0,v\rangle=-\langle{d}\rhoup(p_0),J_Mv\rangle.
\end{align*}
The restriction $\xiup_0|_N$ is an element of $H^0_{p_0}N$, with $\langle\xiup_0,v\rangle\neq{0}$
if $p_0$ is non-characteris\-tic. Therefore we have shown:
\begin{lem}\label{lem5.18}
Let $v=J_M{w}_{p_0}$  for an outer normal vector in $p_0\in{N}\subset\partial\Omega$ to $\Omega$,
with $v\in{H}_{p_0}M$. If $[v]$ belongs to the range of the vector-valued Levi form
$\Lf^N$, then $\Omega$ is strongly-$1$-convex at $p_0$.\par
Vice versa, if $\Omega$ is strongly-$1$-convex at $p_0$, then $[v]$ belongs to the range
of the vector valued Levi form.\qed
\end{lem}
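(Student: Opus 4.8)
The plan is to unwind the definitions on both sides of the claimed equivalence and show each amounts to the same positivity statement about a rank-one tensor determined by the normal direction. Concretely, fix $p_0\in N\subset\partial\Omega$, let $w_{p_0}\in H_{p_0}M$ be an outer normal vector to $\Omega$, set $v=J_Mw_{p_0}$, and note that by hypothesis $v\in H_{p_0}M$; then $v\in T_{p_0}N$ as well (since $J_Mv=-w_{p_0}\notin T_{p_0}N$ characterizes exactly such vectors, and $v$ itself lies in the complex tangent space of $N$). Write $Z=v+iJ_Mv=v-iw_{p_0}\in T^{1,0}_{p_0}M$; since $v\in T^{0,1}_{p_0}N$ after taking the appropriate conjugate convention, the rank-one tensor $\tauup_0=\bar Z\otimes Z$ (equivalently $Z_{p_0}\otimes\bar Z_{p_0}$ in the notation of the paper) lies in $H^{1,1}_{p_0}N$. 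The whole proof reduces to computing $dd^c\rhoup_{p_0}(\tauup_0)$ and comparing its sign with the condition $[v]\in\Gamma_{p_0}N$ (the range of $\Lf^N$).

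First I would recall from Proposition~\ref{p4.6} that $dd^c\rhoup_{p_0}(\tauup_0)=P_{\!\tauup_0}\rhoup(p_0)$ whenever $\tauup_0\in\Kf_{p_0}$; but here $\tauup_0$ need not be in the kernel of the Levi form of $M$, so instead I would work directly with the defining identity \eqref{E4.6}, namely $dd^c\rhoup_{p_0}(\tauup_0)=\tfrac{i}{2}d\xiup(\tauup_0)$ where $\xiup\in\Af_1$ satisfies $d\rhoup+i\xiup\in\If_1$. Expanding $\tfrac{i}{2}d\xiup(Z_{p_0},\bar Z_{p_0})$ via the Cartan formula gives the two contributions: a term involving second derivatives of $\rhoup$ along $Z,\bar Z$ (which vanishes on $N$ at the critical direction, or more precisely is absorbed since $d\rhoup$ kills $T^{0,1}N$ there) and the term $-\tfrac{i}{2}\xiup([Z,\bar Z]_{p_0})$. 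The key computation already carried out just above the statement of the lemma shows $\langle\xiup_0,v\rangle=-\langle d\rhoup(p_0),J_Mv\rangle=\langle d\rhoup(p_0),w_{p_0}\rangle\neq0$ at a non-characteristic point; this sign is what links the $dd^c$ value to $\langle[v]\,|\,\xiup_0\rangle$, i.e.\ to whether $\xiup_0\in\Gamma^0_{p_0}N$.

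So the two directions go as follows. If $[v]=[J_Mw_{p_0}]$ lies in the range of $\Lf^N$, then by Lemma~\ref{lem1.2} we have $\langle[v]\,|\,\xiup\rangle\geq0$ for every $\xiup\in H^0_{p_0}N$ with $\Lf^N_\xiup\geq0$; choosing the particular $\xiup_0$ determined by the outer conormal and using the sign computation $\langle\xiup_0,v\rangle\neq0$, one reads off that $dd^c\rhoup_{p_0}(\tauup_0)$ has the correct (positive) sign, which is exactly the definition of strong $1$-convexity at $p_0$ (Definition~\ref{d4.4}), since $\tauup_0\in\Kf\cap H^{1,1}_{p_0}\partial\Omega$. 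Conversely, if $\Omega$ is strongly $1$-convex at $p_0$, there is $\tauup\in\Kf\cap H^{1,1}_{p_0}N$ with $dd^c\rhoup_{p_0}(\tauup)>0$; expanding $\tauup$ as a sum of rank-one tensors $\bar Z_j\otimes Z_j$ and running the identity \eqref{E4.6} backwards, the positivity of $\sum_j\tfrac i2 d\xiup(Z_j,\bar Z_j)$ forces, after identifying the boundary contribution via the normal direction, that $[v]$ pairs non-negatively with every $\xiup\in\Gamma^0_{p_0}N$; invoking Lemma~\ref{lem1.2} again gives $[v]\in\overline{\Gamma_{p_0}N}$, and a finer look (using that $\tauup$ can be taken in $H^{1,1}_{p_0}N$, hence its Levi image is actually attained) places $[v]$ in the range itself.

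The main obstacle I expect is the bookkeeping at the boundary: $\tauup_0$ is built from a $(1,0)$-vector of $M$ that happens to be tangent to $N$, and one must be careful that the scalar Levi form $\Lf^N_\xiup$ appearing in Lemma~\ref{lem1.2} is the Levi form of $N$ (type $(n-1,k+1)$), not of $M$, while the $dd^c\rhoup$ computation naturally produces terms in the ambient $M$. Reconciling these — i.e.\ checking that the $\xiup_0$ singled out by $d\rhoup(p_0)+i\xiup_0\in T^{*1,0}_{p_0}M$ restricts to exactly the element of $H^0_{p_0}N$ whose $N$-Levi form controls $[v]$ — is the delicate point, but it is precisely what the displayed calculation preceding the lemma establishes, so the proof is essentially an assembly of that calculation with \eqref{E4.6} and Lemma~\ref{lem1.2}.
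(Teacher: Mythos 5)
Your proposal is built around the wrong tensor, and this is a genuine gap rather than a bookkeeping issue. You take $Z=v+iJ_Mv$ with $v=J_Mw_{p_0}$ and claim that $\tauup_0=Z_{p_0}\otimes\bar Z_{p_0}$ lies in $H^{1,1}_{p_0}N$; but $J_Mv=J_M^2w_{p_0}=-w_{p_0}$ is (minus) the outer normal, hence transverse to $N$, so $Z_{p_0}\notin\C T_{p_0}N$ and $\tauup_0\notin H^{1,1}_{p_0}N$ — indeed the paragraph before the lemma stresses that the vectors of $(HM\cap TN)\setminus HN$ are exactly those with $J_Mv\notin TN$. Nothing places $\tauup_0$ in $\Kf$ either, which Definition~\ref{d4.4} requires. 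In the intended argument $v$ is the \emph{target} of the Levi form, not a constituent of the witnessing tensor: one takes $\tauup=\sum_jZ_j\otimes\bar Z_j\in H^{1,1}_{p_0}N$ with $\Lf^N(\tauup)=[v]$, built from vectors of $T^{0,1}_{p_0}N$. Two observations, absent from your proposal, then close the argument. First, since $[v]$ lies in the line $(H_{p_0}M\cap T_{p_0}N)/H_{p_0}N$, any such preimage automatically satisfies $\piup_M\bigl(i\sum_j[Z_j,\bar Z_j]_{p_0}\bigr)=0$, i.e.\ $\tauup\in\Kf$: the kernel condition in Definition~\ref{d4.4} comes for free, and conversely a $\tauup\in\Kf\cap H^{1,1}_{p_0}N$ has $\Lf^N(\tauup)$ in the line $\R[v]$. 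Second, for such $\tauup$ represented by fields tangent to $N$ the terms $(Z_j\bar Z_j+\bar Z_jZ_j)\rhoup$ vanish at $p_0$ (derivatives along $N$ of functions vanishing on $N$), so the computation following \eqref{E4.6} collapses to $2\,dd^c\rhoup_{p_0}(\tauup)=\langle\xiup_0|_N,\Lf^N(\tauup)\rangle$; combined with the displayed identity $\langle\xiup_0,v\rangle=-\langle d\rhoup(p_0),J_Mv\rangle=\langle d\rhoup(p_0),w_{p_0}\rangle>0$, this shows $dd^c\rhoup_{p_0}(\tauup)>0$ exactly when $\Lf^N(\tauup)$ is a positive multiple of $[v]$, which gives both implications at once.

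Your appeal to Lemma~\ref{lem1.2} is also misplaced: it characterizes only the \emph{closure} of the range by duality with the cone $\Gamma^0_{p_0}N$, whereas the lemma asserts membership in the range itself; the ``finer look'' you defer to at the end of your third paragraph is precisely the content that is missing. The correct route never invokes the dual cone: it uses that the relevant part of the range is the one-dimensional line $\R[v]$ and that the sign of $dd^c\rhoup_{p_0}$ reads off the sign of the coefficient of $[v]$ directly through the pairing with $\xiup_0$.
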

As usual, we used $[v]$ to denote the image of $v$ in the quotient $TN/HN$.\par
%
A similar statement holds for strong-$1$-concavity.
\section{Convex cones of Hermitian forms}\label{cv}
In a $CR$ manifold of arbitrary $CR$-codimension, the \textit{scalar} Levi forms associate
to each point a linear space of Hermitian symmetric quadratic forms. 
Different notions of pseudo-concavity in \cite{AHNP08a,HN00,HN03} originate from
the observation 
 that the polar of a subspace of forms with
 positive Witt index contains positive definite 
tensors. As we 
showed in \S\ref{sechopf},
the 
analogue on a $CR$ manifold $M$ of the 
complex Hessian of a smooth real function
yields
an \textit{affine} subspace of Hermitian symmetric forms.
Therefore 
it was 
natural to
associate to a non-characteristic point of the boundary of a domain 
in $M$ 
an open half-space of Hermitian symmetric forms.
In this section we describe
some properties of duals of convex cones 
of Hermitian symmetric forms, to better understand 
the notions of pseudo-concavity 
that are relevant to discuss the extensions of some facts of  analysis 
in several complex variables
to
the case of $CR$ manifolds.
\subsection*{Convexity in Euclidean spaces} (cf. \cite{Klee,Rk})
Let us recall some notions of convex analysis. Let $V$ be an $n$-dimensional 
Euclidean real
vector space. A nonempty subset $C$ of $V$ is a convex cone (with vertex $0$) if 
\begin{equation*}
 v_1,v_2\in{C}, \; t_1>0,\; t_2\geq{0} \Longrightarrow t_1v_1+t_2v_2\in{C}.
\end{equation*}
The \textit{dual cone} of $C$ is 
\begin{equation*}
 C^*=\{\xiup\in{V}\mid ({v}|\xiup)\geq{0},\;\forall v\in{C}\}.
\end{equation*}
By the Hahn-Banach theorem, one easily obtains: 
\begin{lem}\label{lemconv1}
 For any nonempty convex cone $C$ in $V$ we have $C^{**}=\bar{C}$.
\end{lem}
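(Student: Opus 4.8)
The plan is to prove the two inclusions $\bar{C}\subseteq C^{**}$ and $C^{**}\subseteq\bar{C}$ separately, the first being elementary and the second resting on the Hahn--Banach separation theorem, exactly as the statement announces.

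First I would observe that $C\subseteq C^{**}$ directly from the definitions: if $v\in C$ then $(v|\xiup)\geq 0$ for every $\xiup\in C^*$, which is precisely the membership condition $v\in C^{**}$. Since $C^{**}=\bigcap_{\xiup\in C^*}\{v\in V\mid (v|\xiup)\geq 0\}$ is an intersection of closed half-spaces, it is closed, whence $\bar{C}\subseteq C^{**}$.

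For the converse I would argue by contraposition. Let $w\in V\setminus\bar{C}$. Note first that $0\in\bar{C}$: picking any $v\in C$ (possible since $C\neq\varnothing$), the cone axiom with $t_1=t>0$, $t_2=0$ gives $tv\in C$ for all $t>0$, and $tv\to 0$ as $t\to 0^+$. Since $\bar{C}$ is a nonempty closed convex set (closures of convex sets are convex) disjoint from the compact set $\{w\}$, Hahn--Banach yields $\xiup\in V$ and $\alpha\in\R$ with $(w|\xiup)<\alpha\leq (v|\xiup)$ for all $v\in\bar{C}$. Evaluating at $0\in\bar{C}$ gives $\alpha\leq 0$, hence $(w|\xiup)<0$. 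It remains to check $\xiup\in C^*$: if $(v_0|\xiup)<0$ for some $v_0\in C$, then $tv_0\in C\subseteq\bar{C}$ for all $t>0$ would force $\alpha\leq (tv_0|\xiup)=t(v_0|\xiup)\to-\infty$, absurd; so $(v|\xiup)\geq 0$ for all $v\in C$, i.e. $\xiup\in C^*$. Consequently $w\notin C^{**}$, since $\xiup\in C^*$ yet $(w|\xiup)<0$. This establishes $C^{**}\subseteq\bar{C}$, and with the first inclusion, equality.

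The argument is entirely standard and I anticipate no genuine obstacle; the only point needing a little attention is that the cone structure is used twice over — once to see that $0\in\bar{C}$, so that the separating affine functional may be taken to pass through the origin, delivering strict negativity of $(w|\xiup)$; and once to promote that functional to an element of the dual cone $C^*$ via the unboundedness of $t\mapsto (tv_0|\xiup)$ along the ray $\R_{>0}v_0\subseteq C$.
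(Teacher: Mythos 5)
Your proof is correct and follows essentially the same route as the paper's: the trivial inclusion $\bar{C}\subseteq C^{**}$ from closedness of the intersection of half-spaces, and the reverse inclusion by Hahn--Banach separation of $w\notin\bar{C}$, using the cone structure both to force the separating functional into $C^*$ and to get $(w|\xiup)<0$. Your version merely spells out the two uses of the ray $tv$ ($t\to 0^+$ and $t\to\infty$) that the paper leaves implicit.
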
 
\begin{proof}
 If $w\notin\bar{C}$, then, by the Hahn-Banach separation theorem we can find $\xiup\in{V}$ 
 such that ${\inf}_{v\in{C}}(v|\xiup)>(w|\xiup)$. Since $C$ is a cone, this implies that
 $(v|\xiup)\geq{0}$ for all $v\in{C}$, i.e. $\xiup\in{C}^*,$ 
 and then $(w|\xiup)<0$ shows that
$w\notin{C}^{**}$. 
 This proves 
 that $C^{**}\subset\bar{C}$. 
 The opposite inclusion trivially follows from the defintion. 
\end{proof}
We call \emph{salient} a convex cone which does not contain any real line:
this means that if $0\neq{v}\in{C}$,
then ${-v}\notin{C}$. By Lemma~\ref{lemconv1} we have 
\begin{lem}\label{lemconv2}
 A nonempty closed convex cone $C$ is 
 salient %
 if and only if $C^*$ has a nonempty interior.
\end{lem}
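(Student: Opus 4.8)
The plan is to prove both implications through the linear span of $C^*$, using Lemma~\ref{lemconv1} to convert statements about $C^{**}$ back into statements about $C$; this is the $CR$-free analogue of the classical duality ``pointed $\Leftrightarrow$ solid'' for closed convex cones.

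First I would record two elementary facts. A nonempty closed convex cone contains its vertex $0$: picking $v$ in the cone, the cone property gives $tv$ in the cone for all $t>0$, and letting $t\to 0^+$ and using closedness yields $0$ in the cone. This applies both to $C$ and to $C^*$ (which is closed, being an intersection of half-spaces, as well as convex and a cone). Consequently the affine hull of $C^*$ coincides with its linear span, and since a convex set whose affine hull is all of $V$ has nonempty interior (choose affinely independent points in it; their convex hull is a full-dimensional simplex contained in the set), it is \emph{equivalent} to say that $C^*$ has nonempty interior and that $\operatorname{span}(C^*)=V$, i.e.\ that $\operatorname{span}(C^*)^{\perp}=\{0\}$.

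For the ``only if'' direction, suppose $C$ is salient. A vector $v$ lies in $\operatorname{span}(C^*)^{\perp}$ exactly when $(v\,|\,\xiup)=0$ for all $\xiup\in C^*$, i.e.\ when $(v\,|\,\xiup)\ge 0$ and $(-v\,|\,\xiup)\ge 0$ for all $\xiup\in C^*$, i.e.\ when $v\in C^{**}\cap(-C^{**})$. By Lemma~\ref{lemconv1}, $C^{**}=\bar C=C$, so $\operatorname{span}(C^*)^{\perp}=C\cap(-C)$. If $0\neq v\in C\cap(-C)$, then the whole line $\R v$ lies in $C$ (apply the cone property to $v$ for $t>0$, to $-v$ for $t<0$, and use $0\in C$), contradicting salience; hence $\operatorname{span}(C^*)^{\perp}=\{0\}$ and $C^*$ has nonempty interior.

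For the ``if'' direction, suppose $\xiup_0$ is an interior point of $C^*$, so the closed ball of some radius $\epsilon>0$ about $\xiup_0$ lies in $C^*$. For $0\neq v\in C$, the point $\xiup_0-\epsilon v/\|v\|$ belongs to $C^*$, and the defining inequality of $C^*$ gives $(v\,|\,\xiup_0)\ge\epsilon\|v\|>0$. Thus $(v\,|\,\xiup_0)>0$ for every nonzero $v\in C$, which precludes $v$ and $-v$ both lying in $C$; hence $C$ is salient. I do not expect a genuine obstacle here: the only points needing care are the two elementary facts above (a closed convex cone contains its vertex, and a convex set with full affine hull is solid), and these are standard.
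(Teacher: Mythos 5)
Your proof is correct and follows essentially the same route as the paper's: both directions come down to comparing the linear span of $C^*$ with $V$ and invoking Lemma~\ref{lemconv1} to pass from $C^{**}$ back to $C$. The only cosmetic differences are that you argue the two implications directly rather than by contraposition, and in the ``if'' direction you use a small ball around an interior point of $C^*$ to get strict positivity of $(v\,|\,\xiup_0)$ where the paper simply notes that a subspace $W\subset C$ forces $C^*\subset W^\perp$; both are sound.
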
 
\begin{proof}
 If $C$ contains a vector subspace $W$, then $C^{*}$ is contained in the orthogonal
 $W^*=W^\perp$, 
 which is a proper linear
 subspace of $V$ and therefore $C^*$ has an empty interior.
 Vice versa, if $C^*$ has an empty interior, 
 then its linear span $U$ is a proper linear subspace of
 $V$ and $W=U^*=U^\perp$ is a linear subspace of $V$ 
 of positive dimension contained in $\bar{C}=C$.
\end{proof}
\begin{lem}\label{lemconv3}
Let $C$ be a 
salient
closed convex cone and $W$ a linear subspace of $V$ 
with $W\cap{C}=\{0\}$.
Then we can find a hyperplane $W'$ with $W\subset{W}'$ and $W'\cap{C}=\{0\}$.  
\end{lem}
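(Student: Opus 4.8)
The plan is to produce the desired hyperplane in one stroke, without any induction on the codimension of $W$, by exhibiting a single linear functional. A hyperplane containing $W$ is exactly $\ker\xiup$ for a nonzero $\xiup\in W^\perp$, and the condition $W'\cap C=\{0\}$ will be automatic as soon as $\xiup$ lies in the interior of the dual cone $C^*$, since an interior point of $C^*$ is strictly positive on $C\setminus\{0\}$. So the whole statement reduces to the claim
\[
 W\cap C=\{0\}\ \Longrightarrow\ \mathrm{int}(C^*)\cap W^\perp\neq\emptyset.
\]
I would dispose of the degenerate case $C=\{0\}$ at the start (any hyperplane through $W$ works, $W$ being then necessarily a proper subspace if the statement is to have content), so that one may assume $C\neq\{0\}$; note that this also gives $0\notin\mathrm{int}(C^*)$, since a ball around $0$ inside $C^*$ would force $C=\{0\}$.

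To prove the displayed implication, I would first invoke Lemma~\ref{lemconv2}: as $C$ is closed and salient, $\mathrm{int}(C^*)$ is nonempty. Then I would argue by contradiction: assuming $\mathrm{int}(C^*)\cap W^\perp=\emptyset$, the Hahn--Banach separation theorem applied to the open convex set $\mathrm{int}(C^*)$ and the disjoint linear subspace $W^\perp$ produces a nonzero $v\in V$ and a scalar $\alpha$ with $(v|\etaup)\geq\alpha$ for all $\etaup\in W^\perp$ and $(v|\xiup)\leq\alpha$ for all $\xiup\in\mathrm{int}(C^*)$. Since $W^\perp$ is a subspace, the first inequality forces $(v|\etaup)=0$ on $W^\perp$, hence $v\in(W^\perp)^\perp=W$ and $\alpha\leq 0$; the second then gives, by passing to the closure $\overline{\mathrm{int}(C^*)}=C^*$, that $(-v|\xiup)\geq 0$ for all $\xiup\in C^*$, i.e. $-v\in C^{**}=\bar C=C$ by Lemma~\ref{lemconv1}. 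Thus $-v$ is a nonzero element of $W\cap C$, contradicting the hypothesis and establishing the implication.

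Finally I would choose any $\xiup\in\mathrm{int}(C^*)\cap W^\perp$; it is nonzero by the opening remark, so $W'=\ker\xiup$ is a genuine hyperplane, and $W\subset W'$ because $\xiup$ annihilates $W$. To check $W'\cap C=\{0\}$, observe that for $0\neq v\in C$ one has $\xiup-\epsilon v\in C^*$ for small $\epsilon>0$, whence $0\leq(v|\xiup-\epsilon v)=(v|\xiup)-\epsilon|v|^2$ and so $(v|\xiup)>0$, i.e. $v\notin\ker\xiup$. The only step really needing care is the handling of the degenerate case $C=\{0\}$ together with the implicit requirement $W\neq V$; apart from that, the argument is a direct combination of Lemmas~\ref{lemconv1} and \ref{lemconv2} with the one separation argument and the two elementary facts recalled above (a linear functional bounded below on a subspace annihilates it; an interior point of $C^*$ is strictly positive on $C\setminus\{0\}$).
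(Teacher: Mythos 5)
Your proof is correct, but it runs along a genuinely different track from the paper's. The paper projects $C$ orthogonally onto $W^\perp$, checks that the image $C''$ is again a closed salient cone (closedness being the delicate point, since projections of closed convex sets need not be closed in general; here it rests on an inequality $\|v'\|\leq c\,\|v''\|$ valid on $C$ precisely because $W\cap C=\{0\}$ and $C$ is closed), and then applies Lemma~\ref{lemconv2} to $C''$ \emph{inside} $W^\perp$ to produce a functional $\xi\in W^\perp$ strictly positive on $C''\setminus\{0\}$, hence on $C\setminus\{0\}$. You instead stay in the ambient space: you apply Lemma~\ref{lemconv2} to $C$ itself to get $\mathrm{int}(C^*)\neq\emptyset$, and then show $\mathrm{int}(C^*)\cap W^\perp\neq\emptyset$ by separating the open convex set $\mathrm{int}(C^*)$ from the subspace $W^\perp$ and feeding the separating vector through the biduality $C^{**}=\bar C=C$ of Lemma~\ref{lemconv1} to contradict $W\cap C=\{0\}$. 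The two routes are dual to one another (the dual cone of $C''$ computed in $W^\perp$ is exactly $C^*\cap W^\perp$), but yours trades the closedness-of-the-projection argument for one extra separation plus Lemma~\ref{lemconv1}; this is arguably the more robust choice, since it never needs the projected cone to be closed, and it also makes explicit the degenerate cases ($C=\{0\}$, $W=V$) that the paper passes over. All the individual steps you lean on --- a linear functional bounded below on a subspace must annihilate it, $\overline{\mathrm{int}(C^*)}=C^*$ for the closed convex set $C^*$ once its interior is nonempty, and strict positivity of interior points of $C^*$ on $C\setminus\{0\}$ --- check out.
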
 
\begin{proof} For each  
$v\in{V}$ we write $v=v'+v''$ for its decomposition into the sum of its component
$v'\in{W}$
and its component $v''\in{W}^\perp$. 
We claim that
the orthogonal projection $C''$ of $C$ into $W^\perp$ is still a closed salient cone.  
Closedness follows by the fact that $\|v'\|\leq{C}\|v''\|$ for some $C>0$ for all $v\in{C}$. 
To prove that $C''$ is salient,
we argue by contradiction. Assume that $C''$ contains
two opposite nonzero vectors $\pm{w}''$. Then there are $w'_+,w'_-\in{W}$ such that 
$w'_++w'',w'_--w''\in{C}$. The sum of these two nonzero vectors is nonzero by the assumption that
$C$ is salient, but $$0\neq (w'_++w'')+(w'_--w'')=(w'_++w'_-)\in{C}\cap{W}$$ 
yields a contradiction.
%
%
%
\par 
By Lemma~\ref{lemconv2}, the interior of the dual cone of 
$C''$ in $W^\perp$ is nonempty.
This means that there is a $\xiup\in{W}^\perp$ with $(v''|\xiup)
>0$ 
for all $v''\in{C}''$ and hence $(\xiup|v)>0$ for all $v\in{C}$, since $C\subset{C}''+W$.
\end{proof}
A closed convex cone $C$ with $\ring{C}^*=\emptyset$ contains a linear subspace $E_C$
of $V$ and is called a \textit{wedge} with \textit{edge} $E_C$. Lemma~\ref{lemconv3} 
generalizes to the case of closed wedges.
\begin{lem}\label{lemconv4}
 If $C$ is a closed wedge with edge $E_C$ and $W$ a linear subspace of $V$ with
 $W\cap{C}\subset{E}_C$, then there is a hyperplane $W'$ with $W\subset{W}'$ and
 $W'\cap{C}=E_C$.
\end{lem}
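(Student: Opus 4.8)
The plan is to reduce the wedge case to the salient-cone case already treated in Lemma~\ref{lemconv3} by passing to the quotient $V/E_C$. First I would observe that, since $C$ is a closed wedge with edge $E_C$ (the maximal linear subspace contained in $C$), the image $\bar C$ of $C$ under the quotient projection $\piup: V\to V/E_C$ is a closed \emph{salient} convex cone: it is a convex cone because $\piup$ is linear, it is salient because any line in $\bar C$ would lift to a linear subspace of $V$ strictly larger than $E_C$ contained in $C$, contradicting maximality of the edge, and closedness follows because $E_C$ is a linear subspace (one can pick a complementary subspace $E_C^\perp$ and use that $C\cap E_C^\perp$ is a closed salient cone mapping isomorphically onto $\bar C$, exactly as in the closedness argument of Lemma~\ref{lemconv3}). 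Equip $V/E_C$ with the Euclidean structure making $\piup|_{E_C^\perp}$ an isometry.

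Next I would push the hypothesis on $W$ through $\piup$. The assumption $W\cap C\subseteq E_C$ says precisely that $\piup(W)\cap\bar C=\{0\}$ in $V/E_C$: indeed if $\piup(w)\in\bar C$ with $w\in W$, then $w+e\in C$ for some $e\in E_C$, hence $w+e\in W+E_C$ lies in $C$, and intersecting with a suitable representative shows the class $\piup(w)$ comes from a vector of $W\cap C\subseteq E_C$, i.e. $\piup(w)=0$. So $\bar W:=\piup(W)$ is a linear subspace of the Euclidean space $V/E_C$ meeting the closed salient cone $\bar C$ only in the origin.

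Now I would apply Lemma~\ref{lemconv3} directly: there is a hyperplane $\bar W'$ of $V/E_C$ with $\bar W\subseteq \bar W'$ and $\bar W'\cap\bar C=\{0\}$. Set $W':=\piup^{-1}(\bar W')$. Then $W'$ is a hyperplane of $V$ (its codimension equals that of $\bar W'$ in $V/E_C$, namely one), it contains $W$ since $\piup(W)=\bar W\subseteq\bar W'$, and it contains $E_C=\piup^{-1}(0)$. Finally $W'\cap C=E_C$: the inclusion $\supseteq$ holds because $E_C\subseteq W'$ and $E_C\subseteq C$; for $\subseteq$, if $v\in W'\cap C$ then $\piup(v)\in \bar W'\cap\bar C=\{0\}$, so $v\in E_C$. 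This completes the argument.

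The only real subtlety — and the step I would be most careful about — is verifying that $\bar C$ is closed, since quotients of closed sets need not be closed in general; but this is rescued by the fact that we are quotienting by the \emph{linear} edge $E_C$, so the splitting $V=E_C\oplus E_C^\perp$ reduces it to the closedness assertion already established inside the proof of Lemma~\ref{lemconv3} (namely $\|v'\|\le C\|v''\|$ on a salient closed cone). Everything else is a routine transport of Lemma~\ref{lemconv3} across the projection $\piup$.
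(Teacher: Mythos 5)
Your proof is correct and follows essentially the same route as the paper's: project to the quotient $V/E_C$, check that the image of $C$ is a closed salient cone and that the image of $W$ meets it only at the origin, apply Lemma~\ref{lemconv3} there, and pull the resulting hyperplane back. The extra care you take in verifying closedness of $\piup(C)$ (via the section $C\cap E_C^{\perp}$) is a detail the paper glosses over, but it does not change the argument.
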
 
\begin{proof}
 $C$ contains all affine subspaces $v+E_C$, for $v\in{C}$. If $\pi:V\to{V}/E_C$ is the projection
 into the quotient, then $\pi(C)$ is a pointed cone and $\pi(W)\cap\pi(C)=\{0\}$. 
 By Lemma~\ref{lemconv3} there is
 a hyperplane $H$ in $V/W$ with $\pi(W)\subset{H}$
 and $H\cap\pi(C)=\{0\}$. Then $W'=\pi^{-1}(H)$ is a hyperplane in $V$ which contains $W$
 and has ${C\cap{W}'=E_C}$.
\end{proof}
\subsection*{Convex cones in the space of Hermitian symmetric forms}
Let us denote by $\Pf_n$ the $n^2$-dimensional real vector space of $n\!\times\!{n}$ 
Hermitian symmetric forms on $\C^n$. It is a Euclidean space with the scalar
product $(h_1|h_2)={\sum}_{i,j=1}^n h_1(e_i,e_j)h_2(e_j,e_i)$, where
$e_1,\hdots,e_n$ is any basis of $\C^n$. It will be convenient however
to avoid fixing any specific scalar product on $\Pf_n$, and formulate
our statements in 
a more invariant way, involving 
the
dual $\Pf_n'$ of $\Pf_n$. 
It consists of the
Hermitian symmetric covariant tensors, that we write as sums $\pm{v}_1\otimes\bar{v}_1\pm
\cdots \pm  v_r\otimes\bar{v}_r$, for $v_1,\hdots,v_r\in\C^n$.
The identification of $\Pf_n$ with $\Pf_n'$ provided by the choice of
a scalar product on $\Pf_n$ allows us to apply the 
previous
results of convex analysis
in this slightly different formulation. \par
A matrix corresponding to a
Hermitian symmetric form $h$ has real eigenvalues. The number of  positive (resp. negative)
eigenvalues is called its \textit{positive} (resp. \textit{negative}) \textit{index of inertia}, the 
smallest of the two its \emph{Witt index}, the sum of the two its \emph{rank}. \par 
Set $\bar{\Pf}_n^+=\{h\geq{0}\}$ and $\Pf_n^+=\bar{\Pf}_n^+\setminus\{0\}$, $\ring{\Pf}_n^+=\{h>0\}$,
and, likewise, $\bar{\Pf}_n^-=\{h\leq{0}\}$ and $\Pf_n^-=\bar{\Pf}_n^-\setminus\{0\}$,
$\ring{\Pf}_n^-=\{h<0\}$. 
We shall use
the simple 
\begin{lem}\label{lem6.5}
\begin{gather*}
 [\bar{\Pf}_n^+]^*=[\ring{\Pf}_n^+]^*={\bigcup}_r\{v_1\otimes\bar{v}_1+\cdots+v_r\otimes\bar{v}_r\mid
 v_1,\hdots,v_r\in\C^n\},\\
 \{\psiup\in\Pf_n'\mid \psiup(h)>0,\;\forall h\in\Pf_n^+\}=\{v_1\otimes\bar{v}_1+\cdots+v_n\otimes\bar{v}_n\mid
\langle v_1,\hdots,v_n\rangle=\C^n\},\\
\{\psiup\in\Pf_n'\mid \psiup(h)>0,\;\forall h\in\ring{\Pf}_n^+\}=
\{v_1\otimes\bar{v}_1+\cdots+v_r\otimes\bar{v}_r\mid r>0,\;
\langle v_1,\hdots,v_n
\rangle
=\C^n\}.
\end{gather*}
\end{lem}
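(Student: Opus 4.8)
The three identities all compute duals (or strict duals) of the cones $\bar{\Pf}_n^+$, $\Pf_n^+$, $\ring{\Pf}_n^+$ inside $\Pf_n'$, and the unifying observation is that every positive semidefinite Hermitian tensor is a sum of rank-one tensors $v\otimes\bar v$, together with the elementary pairing formula $\psiup(h)=\sum_j h(v_j,\bar v_j)$ when $\psiup=\sum_j v_j\otimes\bar v_j$. First I would establish this pairing: if $\psiup=v_1\otimes\bar v_1+\cdots+v_r\otimes\bar v_r$ and $h\in\Pf_n$, then under the canonical pairing $\psiup(h)=\sum_{j=1}^r h(v_j,v_j)$ (up to the convention fixed in the paper via the scalar product on $\Pf_n$). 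Hence $\psiup(h)\geq 0$ for all $h\geq 0$ is immediate for any such $\psiup$, and conversely, writing an arbitrary element of $[\bar{\Pf}_n^+]^*$ as a Hermitian tensor, one sees it must itself be positive semidefinite, which by spectral decomposition is exactly a sum $\sum v_j\otimes\bar v_j$. This handles the first line, and the equality $[\bar{\Pf}_n^+]^*=[\ring{\Pf}_n^+]^*$ follows since $\ring{\Pf}_n^+$ is dense in $\bar{\Pf}_n^+$ (closed cones and their relative interiors have the same dual), which one may also quote from Lemma~\ref{lemconv1}.

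For the second identity, the set $\{\psiup\mid\psiup(h)>0\ \forall h\in\Pf_n^+\}$ consists of the $\psiup$ in the dual that are \emph{strictly} positive on every nonzero $h\geq 0$, in particular on every rank-one $e\otimes\bar e$. Writing $\psiup=\sum_{j=1}^r v_j\otimes\bar v_j$, strict positivity on $e\otimes\bar e$ reads $\sum_j|\langle e,v_j\rangle|^2>0$ for every $0\neq e\in\C^n$, which says precisely that no nonzero vector is orthogonal to all $v_j$, i.e. $\langle v_1,\dots,v_r\rangle=\C^n$; and once the span is all of $\C^n$ one checks conversely that $\psiup(h)=\sum_j h(v_j,v_j)>0$ for all $h\in\Pf_n^+$ (a nonzero $h\geq 0$ is positive on some vector, hence on some $v_j$ unless all $v_j$ lie in $\ker h$, impossible since they span). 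That forces $r\geq n$; after discarding dependent vectors one may take $r=n$, giving the stated form $v_1\otimes\bar v_1+\cdots+v_n\otimes\bar v_n$ with $\langle v_1,\dots,v_n\rangle=\C^n$.

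The third identity is the mildest: $\ring{\Pf}_n^+=\{h>0\}$, and $\psiup(h)>0$ for all $h>0$ is equivalent to $\psiup\geq 0$ as a tensor and $\psiup\neq 0$ — indeed if $\psiup=\sum_{j=1}^r v_j\otimes\bar v_j$ is nonzero then some $v_j\neq 0$, and $\psiup(h)=\sum_j h(v_j,v_j)>0$ for any $h>0$; conversely $\psiup(h)>0$ for all positive definite $h$ forces, by taking $h\to$ a rank-one projection in the closure, $\psiup$ to be positive semidefinite, and nonzero. So the set is $\{v_1\otimes\bar v_1+\cdots+v_r\otimes\bar v_r\mid r>0\}$, with no span condition — the paper writes $\langle v_1,\dots,v_n\rangle=\C^n$ but this should read simply that the tensor is nonzero; I would phrase it as $r>0$ (the span need not be full here). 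The main obstacle, such as it is, is purely bookkeeping: keeping the conventions straight between $\Pf_n$ and $\Pf_n'$ (which identification, which sign, semidefinite vs.\ definite) so that ``strict positivity on all of $\Pf_n^+$'' correctly translates into the spanning condition, and distinguishing carefully the three cases $\{h\geq 0\}$, $\{h\geq 0,h\neq 0\}$, $\{h>0\}$, whose duals differ exactly by whether a span condition or merely nonvanishing is imposed.
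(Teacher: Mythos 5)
Your proof is correct; the paper states this lemma as ``simple'' and gives no proof at all, and your argument --- reducing everything to the pairing $\psiup(h)=\sum_j h(v_j,v_j)$, testing against rank-one forms, and invoking the spectral decomposition to get self-duality of the positive semidefinite cone --- is exactly the standard argument the authors evidently had in mind. Your observation that the span condition in the third displayed identity must be a typo is also right: $v_1\otimes\bar{v}_1$ with $v_1\neq 0$ already satisfies $\psiup(h)>0$ for all $h\in\ring{\Pf}_n^+$, so the correct right-hand side is the set of nonzero positive semidefinite tensors with no spanning requirement, and this is precisely the form in which the lemma is later applied in Proposition~\ref{pp6.1}, where the resulting vectors $e_1,\hdots,e_r$ span only the kernel $U$ and not all of $\C^n$.
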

\begin{prop}\label{prc1}
 Let $\Wf$ be a 
 convex closed cone, with vertex in $0$, in $\Pf_n$. 
 Assume that  every nonzero
 element of $\Wf$ has a non-zero positive index of inertia. Then there is a  basis
 $e_1,\hdots,e_n$ of $\C^n$ such that 
\begin{equation}
 {\sum}_{i=1}^n h(e_i,e_i)\geq{0},\quad\forall h\in\Wf.
\end{equation}
\end{prop}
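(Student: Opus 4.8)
The plan is to recast the statement as an assertion in convex duality and settle it with the Hahn--Banach separation theorem. First I would record two elementary translations. On the hypothesis side: an $h\in\Pf_n$ has positive index of inertia equal to $0$ exactly when it is negative semidefinite, so the assumption that every nonzero element of $\Wf$ has nonzero positive index of inertia says precisely that $\Wf\cap\bar{\Pf}_n^-=\{0\}$. On the conclusion side: by the second displayed identity of Lemma~\ref{lem6.5}, the tensors $\psiup\in\Pf_n'$ that can be written as $e_1\otimes\bar{e}_1+\dots+e_n\otimes\bar{e}_n$ for some basis $e_1,\dots,e_n$ of $\C^n$ are exactly those with $\psiup(h)>0$ for all $h\in\Pf_n^+$; denote this convex cone by $\mathcal{D}$. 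Since for such a $\psiup$ one has $\psiup(h)=\sum_{i=1}^n h(e_i,e_i)$, the statement to be proved is exactly that $\Wf^*\cap\mathcal{D}\neq\emptyset$, where $\Wf^*\subset\Pf_n'$ is the dual cone of $\Wf$.

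To prove $\Wf^*\cap\mathcal{D}\neq\emptyset$ I would argue by contradiction. First note that $\mathcal{D}$ is nonempty — it contains $e_1\otimes\bar{e}_1+\dots+e_n\otimes\bar{e}_n$ for any basis — and that it is open: with $K=\{h\in\bar{\Pf}_n^+\mid \mathrm{tr}\,h=1\}$, which is compact, one has $\mathcal{D}=\{\psiup\mid\psiup(h)>0\ \forall h\in K\}$, and since a given $\psiup_0\in\mathcal{D}$ is bounded below on $K$ by a positive constant, a whole neighbourhood of $\psiup_0$ remains in $\mathcal{D}$. Also $\Wf^*$ is a closed convex cone. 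If $\Wf^*$ and $\mathcal{D}$ were disjoint, the geometric Hahn--Banach theorem would produce a nonzero $h\in\Pf_n$, regarded as a linear functional on $\Pf_n'$ through the pairing, and a real number $c$ such that, after possibly replacing $h$ by $-h$, $\psiup(h)\geq c$ for all $\psiup\in\Wf^*$ and $\psiup(h)\leq c$ for all $\psiup\in\mathcal{D}$. Because $0\in\Wf^*$ and $\Wf^*$ is a cone, the first inequality forces $\psiup(h)\geq 0$ on $\Wf^*$ (and $c\leq 0$), hence $h\in\Wf^{**}=\Wf$ by Lemma~\ref{lemconv1}. Because $\mathcal{D}$ is a cone, the second inequality forces $\psiup(h)\leq 0$ on $\mathcal{D}$; evaluating this on $v\otimes\bar{v}+\epsilon\,(e_1\otimes\bar{e}_1+\dots+e_n\otimes\bar{e}_n)\in\mathcal{D}$ and letting $\epsilon\to 0^{+}$ gives $h(v,v)\leq 0$ for every $v\in\C^n$, i.e.\ $h\in\bar{\Pf}_n^-$. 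Thus $h$ is a nonzero element of $\Wf\cap\bar{\Pf}_n^-$, contradicting the hypothesis.

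Hence $\Wf^*\cap\mathcal{D}$ is nonempty; picking $\psiup$ in it and writing $\psiup=e_1\otimes\bar{e}_1+\dots+e_n\otimes\bar{e}_n$ with $e_1,\dots,e_n$ a basis (Lemma~\ref{lem6.5} once more) gives $\sum_{i=1}^n h(e_i,e_i)=\psiup(h)\geq 0$ for all $h\in\Wf$, which is the assertion. I do not anticipate a real obstacle: the argument is essentially bookkeeping in convex duality, with Lemma~\ref{lem6.5} furnishing the only substantive input, namely the description of the polars of $\bar{\Pf}_n^{\pm}$ as cones of rank-one tensors. The one step that calls for some care is the last line of the contradiction — converting ``$\psiup(h)\leq 0$ on $\mathcal{D}$'' into ``$h\leq 0$'' — but the explicit perturbation by $\epsilon\,(e_1\otimes\bar{e}_1+\dots+e_n\otimes\bar{e}_n)$ handles it directly, avoiding any appeal to the closure of $\mathcal{D}$.
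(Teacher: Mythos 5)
Your proof is correct. It reaches the conclusion by a convex-duality argument that is close in spirit to the paper's but organized differently. The paper works in the primal space: it enlarges $\Wf$ to the wedge $\Wf^+=\{h_1+h_2\mid h_1\in\Wf,\ h_2\geq 0\}$, observes that the hypothesis prevents $\Wf^+$ from containing nonzero negative semidefinite forms (so its edge misses $\Pf_n^+$), and invokes the edge lemma (Lemma~\ref{lemconv4}) to produce a functional $\psiup$ that is nonnegative on $\Wf^+$ and vanishes there only on the edge, hence is strictly positive on $\Pf_n^+$ and therefore of the desired form by Lemma~\ref{lem6.5}. You instead dualize first: you identify the target set of functionals as the open convex cone $\mathcal{D}=\{\psiup\mid \psiup(h)>0\ \forall h\in\Pf_n^+\}$, separate the closed cone $\Wf^*$ from $\mathcal{D}$ if they were disjoint, and use the bipolar theorem (Lemma~\ref{lemconv1}) plus your $\epsilon$-perturbation to exhibit a nonzero element of $\Wf\cap\bar{\Pf}_n^-$, contradicting the hypothesis in its equivalent form $\Wf\cap\bar{\Pf}_n^-=\{0\}$. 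What your route buys: you never need to know that $\Wf+\bar{\Pf}_n^+$ is a \emph{closed} cone (the paper asserts this without comment; it does hold here because $\Wf\cap\bar{\Pf}_n^-=\{0\}$, but it is not automatic for sums of closed cones), and you replace the edge machinery by a single separation of an open cone from a closed one, with the openness of $\mathcal{D}$ verified directly via the compact slice $K$. What the paper's route buys: the supporting functional it constructs vanishes on $\Wf^+$ exactly on the edge, a slightly stronger conclusion that matches the flavour of the subsequent propositions in that section. Both arguments ultimately rest on Hahn--Banach separation and on Lemma~\ref{lem6.5}'s identification of the polar of $\bar{\Pf}_n^+$.
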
 
\begin{proof} Both $\Wf$ and 
${\Wf}^+=\{h_1+h_2\mid h_1\in\Wf,\; h_2\geq{0}\}$ are
 proper closed
convex 
cones 
in $\Pf_n$. Since $\Wf^+$ does not contain any negative semidefinite nonzero form,
its edge has empty intersection with
$\Pf_n^+=\{h\geq{0},\; h\neq{0}\}$. By Lemma~\ref{lemconv4}
we can find a $\psiup\in\Pf_n'$ such that 
\begin{equation*}
 \psiup(h)\geq{0},\;\forall h\in{\Wf}^+ \quad\text{and}\quad \Wf^+\cap\{\psiup=0\}=E_{\Wf^+}.
\end{equation*}
In particular, $\psiup(h)>0$ for $h\in\Pf_n^+$ and hence,
by Lemma~\ref{lem6.5},
 $\psiup$ is of the form
$\psiup(h)={\sum}_{i=1}^n{h}(e_i,e_i)$ for a basis $e_1,\hdots,e_n$ of $V$.
\end{proof}
We obtain, as a corollary, the result of \cite[Lemma 2.4]{HN00}, which motivated the definition
of \textit{essential pseudo-concavity}.
\begin{cor}
 If $\Wf$ is a linear subspace of $\Pf_n$ such that each nonzero element of $\Wf$ has a positive
 Witt index, then there exists a basis $e_1,\hdots,e_n$ of $\C^n$ such that 
\begin{equation*} \qquad\qquad\qquad\qquad\qquad\qquad
 {\sum}_{i=1}^n h(e_i,e_i)=0. \qquad\qquad\qquad\qquad\qquad\qquad\qed
\end{equation*}
\end{cor}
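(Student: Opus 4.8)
The plan is to deduce the statement directly from Proposition~\ref{prc1}, using the fact that a linear subspace is symmetric under $h\mapsto -h$.

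First I would check that $\Wf$ meets the hypotheses of Proposition~\ref{prc1}. A linear subspace of $\Pf_n$ is in particular a closed convex cone with vertex $0$: it is closed (being a finite-dimensional subspace), and for $v_1,v_2\in\Wf$, $t_1>0$, $t_2\geq 0$ one has $t_1v_1+t_2v_2\in\Wf$. Next I would unwind the hypothesis on Witt indices: for a Hermitian symmetric form $h$ of signature $(p,q)$ the Witt index is $\min\{p,q\}$, so the assumption that every nonzero $h\in\Wf$ has positive Witt index means $\min\{p,q\}\geq 1$, and in particular $p\geq 1$; that is, $h$ has a nonzero positive index of inertia. Hence $\Wf$ satisfies the assumptions of Proposition~\ref{prc1}.

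Applying Proposition~\ref{prc1}, I obtain a \emph{single} basis $e_1,\dots,e_n$ of $\C^n$, valid for the whole cone $\Wf$, such that $\sum_{i=1}^n h(e_i,e_i)\geq 0$ for all $h\in\Wf$. Here comes the only genuine step: because $\Wf$ is a linear subspace, $h\in\Wf$ implies $-h\in\Wf$, and applying the same inequality to $-h$ gives $\sum_{i=1}^n h(e_i,e_i)\leq 0$. Combining the two inequalities yields $\sum_{i=1}^n h(e_i,e_i)=0$ for every $h\in\Wf$, which is the assertion.

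I do not expect any serious obstacle: essentially all the content is carried by Proposition~\ref{prc1} (hence, through Lemma~\ref{lemconv4}, by the wedge form of Hahn--Banach separation), and the corollary only adds the elementary remark that a linear functional which is nonnegative on a subspace must vanish on it. The single point worth stating explicitly is that the \emph{same} basis furnished by Proposition~\ref{prc1} serves simultaneously for $h$ and for $-h$; this is immediate, since that proposition produces one basis working for all elements of $\Wf$ at once.
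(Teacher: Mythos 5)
Your proposal is correct and is exactly the argument the paper intends: the corollary is stated with no written proof precisely because it follows from Proposition~\ref{prc1} by the observation that a linear subspace is a closed convex cone stable under $h\mapsto -h$, so the inequality $\sum_{i=1}^n h(e_i,e_i)\geq 0$ applied to both $h$ and $-h$ forces equality. Your additional remark that positive Witt index implies nonzero positive index of inertia (so the hypotheses of Proposition~\ref{prc1} are met) is the right bookkeeping and matches the paper's conventions.
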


\begin{prop}\label{pp6.1}
 Let $\Wf$ be a relatively open convex cone
with vertex at $0$ 
 of $\Pf_n$, and such that every element $h$ of $\Wf$ has a non-zero positive index of inertia. Then
 the elements of $\bar{\Pf}^-_n$ which are contained in $\overline{\Wf}$ are all degenerate. 
 \par
 All the elements of maximal rank in $\overline{\Wf}\cap\bar{\Pf}_n^-$
 have the same kernel, which has a positive dimension $r$ and a basis
 $e_1,\hdots,e_r$ such that 
\begin{equation}\label{q5.1}
 {\sum}_{i=1}^rh(e_i,e_i)>0,\;\;\forall h\in\Wf.
\end{equation}
\end{prop}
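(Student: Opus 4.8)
The plan is to argue in three steps: degeneracy of the negative semidefinite elements of $\overline{\Wf}$; the existence of a common kernel $K$ for the maximal-rank elements of $C:=\overline{\Wf}\cap\bar{\Pf}_n^-$; and the construction of the required basis of $K$, obtained by restricting forms to $K$ and invoking the convex-cone lemmas of this section. For the first step, suppose some $h_0\in C$ were non-degenerate; then $h_0\ne 0$ and $h_0<0$. Fixing any $h_1\in\Wf$, relative openness of $\Wf$ forces $(1-s)h_0+sh_1\in\Wf$ for $0<s\le1$, and for $s$ small this form is still negative definite, since negative definiteness is an open condition; this contradicts the hypothesis that every element of $\Wf$ has a non-zero positive index of inertia. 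Hence every element of $C$ is degenerate, and (with $K$ as below) $r:=\dim K\ge1$.

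For the second step, note that $C$ is a closed convex cone and that $\ker(h+h')=\ker h\cap\ker h'$ whenever $h,h'\le0$: if $h(v,v)\le0$, $h'(v,v)\le0$ and their sum vanishes then both vanish, and for a semidefinite form $h(v,v)=0$ is equivalent to $v\in\ker h$. Choose $h_0\in C$ of maximal rank and put $K:=\ker h_0$, $r:=\dim K$. For every $h\in C$ we have $h_0+h\in C$ and $\ker(h_0+h)=K\cap\ker h\subseteq K$, so maximality of $\rank h_0$ forces $\dim(K\cap\ker h)\ge r$, i.e. $K\subseteq\ker h$. Thus $K\subseteq\ker h$ for every $h\in C$; applied to any other maximal-rank $h'\in C$ this gives $\ker h'\supseteq K$ with $\dim\ker h'=r$, hence $\ker h'=K$. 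This is the common-kernel assertion, and $r\ge1$ by the first step.

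For the third step, pick a basis of $K$, identifying $K$ with $\C^r$ and the space of Hermitian forms on $K$ with $\Pf_r$, and let $\rho\colon\Pf_n\to\Pf_r$ be restriction of forms to $K$; it is linear and surjective, so $\Wf_K:=\rho(\Wf)$ is again a relatively open convex cone, and $\rho(C)=0$ since $K\subseteq\ker h$ for $h\in C$. It suffices to produce a positive definite Hermitian tensor $\psi\in\Pf_r'$ with $\psi>0$ on $\Wf_K$: writing $\psi$ in diagonalized form $\sum_{i=1}^r\lambda_i\,\varepsilon_i\otimes\bar\varepsilon_i$ with $\lambda_i>0$ and $\varepsilon_1,\dots,\varepsilon_r$ orthonormal (possible because $\psi$ is positive definite, cf. Lemma~\ref{lem6.5}), the vectors $e_i:=\sqrt{\lambda_i}\,\varepsilon_i$ form a basis of $K$ with $\sum_{i=1}^r h(e_i,e_i)=\psi(\rho(h))>0$ for every $h\in\Wf$, which is \eqref{q5.1}. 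To obtain $\psi$ I would proceed as in the proof of Proposition~\ref{prc1}: form the closed convex cone $D:=\overline{\Wf_K+\bar{\Pf}_r^+}$, which spans $\Pf_r$; if $D$ is salient and $0\notin\Wf_K$, then by Lemma~\ref{lemconv2} the interior of $D^*$ is nonempty, and any $\psi$ in it satisfies $\psi(g)>0$ for all nonzero $g\in D$ — in particular for all nonzero $g\ge0$ (so $\psi$ is positive definite, by Lemma~\ref{lem6.5}) and for all $g\in\Wf_K$.

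The main obstacle is precisely this last point: the salience of $D$, equivalently that $\overline{\Wf_K}$ contains no non-zero negative semidefinite form on $K$ and that $0\notin\Wf_K$. Establishing it is where the hypothesis on $\Wf$ must be used decisively: one has to turn the assumption that $h|_K$ is negative semidefinite, or zero, for some $h\in\Wf$ into a contradiction with the first step, by splitting $\C^n=K\oplus K^\perp$, using that $h_0$ is negative definite on $K^\perp$, and estimating through a Schur complement how far the form $h+th_0\in\Wf$ can be pushed towards $\bar{\Pf}_n^-$ as $t\to+\infty$; a limiting version of the estimate then handles $\overline{\Wf_K}$. I expect the bookkeeping of the off-diagonal (cross) terms in this Schur-complement computation to be the only genuinely delicate point.
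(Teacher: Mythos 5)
Your first two steps are correct and essentially coincide with the paper's argument. For degeneracy the paper uses a functional separating $\Wf$ from the open cone of negative definite forms, while you use the standard fact that $(1-s)h_0+sh_1\in\Wf$ for $h_0\in\overline{\Wf}$ and $h_1$ in the relatively open $\Wf$; your version is cleaner. Your common-kernel argument via $\ker(h+h')=\ker h\cap\ker h'$ for negative semidefinite forms is exactly the paper's. For the third step the paper also restricts to $K$, but instead of separating $\Wf_K$ from $\bar{\Pf}_r^-$ it applies Proposition~\ref{prc1} to $\overline{\Wf}|_K$, asserting that every nonzero element of this restricted cone has a positive eigenvalue because otherwise $h+Ch_0$ would be a negative semidefinite element of $\overline{\Wf}\cap\bar{\Pf}_n^-$ of rank larger than the maximal one.

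The gap you flag at the end is genuine and, worse, cannot be closed: the claim that $0\notin\Wf_K$ (equivalently, that no $h\in\Wf$ restricts to a seminegative form on $K$) is false in general, and with it the strict inequality \eqref{q5.1}. Take $n=2$ and
\begin{equation*}
\Wf=\left\{\begin{pmatrix}0&b\\ \bar b&c\end{pmatrix}\;:\;\re b>0,\ c\in\R\right\}.
\end{equation*}
This is a convex cone with vertex at $0$, relatively open in the hyperplane $\{h_{11}=0\}$ of $\Pf_2$, and every element has determinant $-|b|^2<0$, hence positive index of inertia equal to $1$. Since a negative semidefinite $h$ with $h(e_1,e_1)=0$ must have $e_1$ in its kernel, one finds $\overline{\Wf}\cap\bar{\Pf}_2^-=\{\mathrm{diag}(0,c)\mid c\le 0\}$, so the common kernel is $K=\C e_1$ and $r=1$; but $h(e_1,e_1)=0$ for every $h\in\Wf$, so no basis of $K$ satisfies \eqref{q5.1} --- only the non-strict inequality survives. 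The cross terms you anticipate in the Schur-complement computation are exactly what break down: for $h=\left(\begin{smallmatrix}0&1\\1&0\end{smallmatrix}\right)\in\Wf$ and $h_0=\mathrm{diag}(0,-1)$, the form $h+Ch_0$ is indefinite for every $C>0$, so the paper's own key step (that $h+Ch_0$ becomes negative semidefinite of larger rank) fails on the same example, and Proposition~\ref{prc1} in any case only delivers $\sum h(e_i,e_i)\ge 0$, not $>0$. So your instinct about where the difficulty sits is correct; what is missing is not bookkeeping but a further hypothesis on $\Wf$ (ruling out elements of the linear span of $\Wf$ that annihilate $K$), or else a weakening of \eqref{q5.1} to a non-strict inequality.
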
 
\begin{proof} Let $\ring{\Pf}_n^-=\{h\in\Pf_n\mid h<0\}$. Then $\Wf$ and $\ring{\Pf}_n^-$ are disjoint
relatively open convex cones  of $\Pf_n$ with vertex in $0$ and therefore 
(see e.g. \cite[Thorem 2.7]{Solt})
are separated by 
a hyperplane, defined by a linear functional $\psiup$, which is positive on $\Wf$
and negative on $\ring{\Pf}_n^-$. 
Being negative on $\ring{\Pf}_n^-$, by Lemma~\ref{lem6.5}, 
$\psiup$  has the form~\eqref{q5.1}. 
This implies that all elements of $\overline{\Wf}\cap\bar{\Pf}^-_n$ are degenerate.
Since $\overline{\Wf}\cap\bar{\Pf}^-_n$ is a cone, all its elements of 
maximal rank belong to its relative interior and
have the same kernel, say $U\subset\C^n$, whose positive dimension we denote by $r$. 
In fact, for a pair of negative semidefinite forms $h_1,h_2$, we have $\ker{(h_1+h_2)}=\ker{h}_1\cap\ker{h}_2$.
The statement follows by applying Proposition~\ref{prc1} to $\overline{\Wf}|_U=\{h|_U\mid h\in\overline{\Wf}\}$,
which is a closed cone in $\Pf_r$ in which all nonzero elements have a nonzero positive index of inertia.
In fact, if there is a nonzero $h\in\overline{\Wf}$ whose restriction to $U$ is seminegative, and $h_0$ 
is an element of maximal rank in the cone $\overline{\Wf}\cap\bar{\Pf}^-_n$, then, for $C>0$ and large,
$h+Ch_0$ would be a negative definite element in $\overline{\Wf}\cap\bar{\Pf}^-_n$. 
\end{proof}
\begin{prop}
 Let $\Wf$ be a cone in $\Pf_n,$ with the property
 that all its elements of maximal rank  have a nonzero positive index of inertia. 
 Then all forms in $\overline{\Wf}\cap\bar{\Pf}_n^-$ are degenerate; those of maximal rank have all
 the same kernel, of dimension $r>0$, which contains a basis $e_1,\hdots,e_r$ such that
\begin{equation}\label{4.3}
 {\sum}_{i=1}^r{h}(e_i,e_i)\geq{0}, \;\; \forall h\in\Wf.
\end{equation}
\end{prop}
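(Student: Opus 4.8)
The plan is to follow the scheme of the proof of Proposition~\ref{pp6.1}, reducing the assertion to Proposition~\ref{prc1} by passing to the common kernel of the negative semidefinite forms of maximal rank; as in Propositions~\ref{prc1} and~\ref{pp6.1} we take $\Wf$ convex. First I would reduce to the case $\Wf$ closed by replacing it with $\overline{\Wf}$: this changes neither the conclusion nor the hypothesis, since the maximal rank attained on $\overline{\Wf}$ equals the one attained on $\Wf$ (lower semicontinuity of the rank), and a maximal-rank element $h\in\overline{\Wf}$ is a limit of $h_k\in\Wf$ which eventually have that same rank; each such $h_k$ has positive index $\geq 1$, and — the rank of $h$ being maximal — the positive eigenvalues of the $h_k$ cannot all tend to $0$, so $h$ has a positive eigenvalue as well.

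Next I would note that $\Wf$ contains no negative definite form: such a form has rank $n$, hence is of maximal rank, hence has a nonzero positive index. Consequently every element of $\overline{\Wf}\cap\bar{\Pf}_n^-$ is degenerate, a nondegenerate negative semidefinite form being negative definite. Then, using that $\overline{\Wf}\cap\bar{\Pf}_n^-$ is a convex cone and that $\ker(h_1+h_2)=\ker h_1\cap\ker h_2$ for negative semidefinite $h_1,h_2$, the elements of $\overline{\Wf}\cap\bar{\Pf}_n^-$ of maximal rank all share one kernel $U$, whose dimension $r=n-\max\{\rank h\mid h\in\overline{\Wf}\cap\bar{\Pf}_n^-\}$ is positive by the degeneracy just established (with $U=\C^n$ if $\overline{\Wf}\cap\bar{\Pf}_n^-=\{0\}$).

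It then remains to apply Proposition~\ref{prc1} to the closed cone $\overline{\Wf}|_U=\{h|_U\mid h\in\overline{\Wf}\}$ inside the space $\Pf_r$ of Hermitian forms on $U$ (closedness can be checked directly, or one replaces $\overline{\Wf}|_U$ by its closure, which again inherits the property below by the eigenvalue argument of the first step). Granting that every nonzero element of $\overline{\Wf}|_U$ has a nonzero positive index of inertia, Proposition~\ref{prc1} produces a basis $e_1,\dots,e_r$ of $U$ with $\sum_{i=1}^r h(e_i,e_i)=\sum_{i=1}^r h|_U(e_i,e_i)\geq 0$ for every $h\in\overline{\Wf}$, in particular for every $h\in\Wf$; this is the assertion.

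The hard part is this last granted property: that no nonzero restriction $h|_U$, $h\in\overline{\Wf}$, is negative semidefinite. I would argue by contradiction, fixing $h_0\in\overline{\Wf}\cap\bar{\Pf}_n^-$ of maximal rank with $\ker h_0=U$ and examining $h+t\,h_0$ for large $t>0$. Writing $\C^n=U\oplus C'$ for a complement $C'$ and forming, once $t$ is large enough that the $C'$-block $h|_{C'}+t\,h_0|_{C'}$ is negative definite, the Schur complement of that block, one sees it equals $h|_U$ modulo a positive semidefinite term that tends to $0$ like $1/t$. Hence for large $t$ the form $h+t\,h_0$ would lie in $\overline{\Wf}\cap\bar{\Pf}_n^-$ and have rank strictly larger than $\rank h_0$ — contradicting the maximality built into the definition of $U$. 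The delicate point, where the convexity of $\Wf$ and the maximality of $U$ must both enter, is to control that positive semidefinite error on $\ker(h|_U)$ so that it produces no positive eigenvalues, i.e., that the off-diagonal block of $h$ relative to $U\oplus C'$ does not push the Schur complement out of $\bar{\Pf}_r^-$.
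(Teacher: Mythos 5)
Your argument follows the internal logic of the paper's Proposition~\ref{pp6.1} (restrict to the common kernel $U$ and invoke Proposition~\ref{prc1}), whereas the paper's own proof of this statement takes a different route: it passes to the auxiliary open convex cone $\Wf+\ring{\Pf}_n^{+}$, all of whose elements have a positive eigenvalue because the hypothesis excludes negative definite elements from $\Wf$, applies Proposition~\ref{pp6.1} to that cone, and then lets the positive definite perturbation tend to zero to turn the strict inequality into \eqref{4.3}. The difference of route is not the problem. The problem is exactly the step you flag as ``the hard part'': that every nonzero element of $\overline{\Wf}|_U$ has a nonzero positive index of inertia. This is a genuine gap, not merely a delicate point: the property does not follow from the stated hypotheses, and your Schur-complement argument cannot be completed, because when $h|_U$ is negative semidefinite but degenerate the positive semidefinite error $-B^{*}(h|_{C'}+t\,h_0|_{C'})^{-1}B$ really can create a positive eigenvalue on $\ker(h|_U)$, and nothing forbids it.

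Concretely, take $n=3$, $h_0=\mathrm{diag}(0,0,-1)$, $h_1=\begin{pmatrix}-1&0&0\\ 0&0&1\\ 0&1&0\end{pmatrix}$, and let $\Wf=\{sh_0+th_1\mid s,t\geq 0\}$, a closed convex cone. For $t>0$ the form $sh_0+th_1$ has rank $3$ and inertia $(1,2)$, so every element of maximal rank has a nonzero positive index of inertia; moreover $\overline{\Wf}\cap\bar{\Pf}_3^-=\{sh_0\mid s\geq 0\}$, whose elements of maximal rank have common kernel $U=\langle e_1,e_2\rangle$, so $r=2$. But $h_1|_U=\mathrm{diag}(-1,0)$ is a nonzero negative semidefinite element of $\overline{\Wf}|_U$, and for \emph{every} basis $v_1,v_2$ of $U$ one gets $h_1(v_1,v_1)+h_1(v_2,v_2)=-\big(|v_1^{(1)}|^2+|v_2^{(1)}|^2\big)<0$, since two vectors with vanishing first coordinate cannot span $U$. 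So not only does your granted property fail: the conclusion \eqref{4.3} itself fails for this $\Wf$, with the kernel $U$ as defined in the statement. (The same example breaks the paper's proof: $\mathrm{diag}(-1,0,0)$ lies in $\overline{\Wf+\ring{\Pf}_3^{+}}\cap\bar{\Pf}_3^-$ --- it is the limit of $s_kh_0+t_kh_1+g_k$ with $t_k=k$, $s_k=k^3+k$ and a suitable $g_k>0$ --- but not in $\overline{\Wf}\cap\bar{\Pf}_3^-$, so the identity $\overline{\Wf+\ring{\Pf}_n^{+}}\cap\bar{\Pf}_n^-=\overline{\Wf}\cap\bar{\Pf}_n^-$ invoked there is false.) The statement survives only after strengthening the hypothesis (e.g.\ requiring every nonzero element of $\Wf$ to have a positive eigenvalue, which is Proposition~\ref{pp6.1}) or after replacing $U$ by the common kernel of the maximal rank elements of $\overline{\Wf+\ring{\Pf}_n^{+}}\cap\bar{\Pf}_n^-$; as stated, no argument can close the gap you left open.
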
 
\begin{proof}
 Let $\ring{\Pf}_n^+=\{h\in\Pf_n\mid h>0\}$. Then $\Wf+\ring{\Pf}_n^{+}$ is an open cone in $\Pf_n$
 such that all its elements have a nonzero positive index of inertia.
 \par
 Since $\overline{\Wf+\ring{\Pf}_n^{+}}\cap\bar{\Pf}_n^-=(\overline{\Wf}+\bar{\Pf}_n^+)\cap\bar{\Pf}_n^-
 =\overline{\Wf}\cap\bar{\Pf}_n^-$, we know from  
 Proposition~\ref{pp6.1} that all elements of maximal rank in $\overline{\Wf}\cap\bar{\Pf}_n^-$
 have the same kernel $U$, which is a subspace of $\C^n$ of positive dimension $r$ and contains
 a basis $e_1,\hdots,e_r$ for which
\begin{equation*}
 {\sum}_{i=1}^r h(v_i,v_i)>0,\quad\forall h\in \Wf+\ring{\Pf}_n^{+}.
\end{equation*}
This implies \eqref{4.3}.
\end{proof}
Analogous results can be given to characterize cones of Hermitian forms having some given amount 
of positive (or negative) eigenvalues. In this case we need to consider the behavior of the restriction
of forms to subspaces of $\C^n$. We use the notation $\Gr_{\!{h}}(\C^n)$ for the Grassmannian of complex
linear $h$-planes of $\C^n$. 
\begin{prop}
 Let $\Wf$ be a proper closed convex 
 cone in $\Pf_n$, with vertex in $0$ and $q$ an integer with $0<q\leq{n}$.
 Assume that every nonzero form in $\Wf$ has a positive index of inertia $\geq{q}$. Then,
for every $V\in\Gr_{\! n-q+1}(\C^n)$, we can find a basis
 $v_1,\hdots,v_{n-q+1}$ of $V$ such that 
\begin{equation}
 {\sum}_{i=1}^{n-q+1}h(v_i,v_i)\geq{0}.
\end{equation}
\end{prop}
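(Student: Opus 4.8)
The plan is to reduce to Proposition~\ref{prc1}, applied in dimension $d:=n-q+1$. Fix $V\in\Gr_{n-q+1}(\C^n)$ and let $r\colon\Pf_n\to\Pf(V)$ be the restriction map $h\mapsto h|_V$, where $\Pf(V)$ denotes the space of Hermitian symmetric forms on $V$; choosing a basis of $V$ identifies $\Pf(V)$ with $\Pf_d$, and Proposition~\ref{prc1} applies verbatim to convex cones in $\Pf(V)$. Since $r$ is linear, $\Wf':=r(\Wf)$ is a convex cone with vertex $0$ in $\Pf(V)$. First I would check that $\Wf'$ is closed and that every nonzero element of $\Wf'$ has a nonzero positive index of inertia; Proposition~\ref{prc1} then yields a basis $v_1,\dots,v_d$ of $V$ with $\sum_{i=1}^d g(v_i,v_i)\ge0$ for every $g\in\Wf'$, i.e. $\sum_{i=1}^{n-q+1}h(v_i,v_i)\ge0$ for every $h\in\Wf$, which is exactly the assertion.

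The two properties of $\Wf'$ rest on elementary linear algebra. If $h$ is a Hermitian form on $\C^n$ with positive index of inertia $p$ and $W\subseteq\C^n$ has codimension $c$, then $h|_W$ has positive index of inertia at least $p-c$: if $U$ is a $p$-dimensional subspace on which $h$ is positive definite, then $\dim(U\cap W)\ge p-c$ and $h$ remains positive definite on $U\cap W$. Applying this with $W=V$, so that $c=n-d=q-1$, the hypothesis gives $p\ge q$ for every $h\in\Wf\setminus\{0\}$, hence $r(h)$ has positive index of inertia $\ge q-(q-1)=1$; in particular $r(h)\ne0$. Thus every nonzero element of $\Wf'$ has a nonzero positive index of inertia. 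In the same spirit, $\ker r\cap\Wf=\{0\}$: if $h\in\Wf$ has $h|_V=0$ then $V$ is totally isotropic for $h$, which forces the positive index of $h$ to be at most $n-\dim V=q-1<q$, so $h=0$ by hypothesis.

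The one delicate point — and the step I expect to be the main obstacle — is the closedness of $\Wf'=r(\Wf)$, since the image of a closed cone under a linear map need not be closed in general; this is exactly where the condition $\ker r\cap\Wf=\{0\}$ (which uses the full strength of the ``index $\ge q$'' hypothesis, not merely nondegeneracy along $V$) comes in. The argument is the standard one: given $h_k\in\Wf$ with $r(h_k)\to g$, if $\{h_k\}$ were unbounded one may assume $\|h_k\|\to\infty$, and then $h_k/\|h_k\|\in\Wf$ has unit norm, so a subsequence converges to some $e\in\Wf$ with $\|e\|=1$, while $r(h_k/\|h_k\|)=r(h_k)/\|h_k\|\to0$ gives $e\in\ker r\cap\Wf=\{0\}$, a contradiction; hence $\{h_k\}$ is bounded, a subsequence converges to some $h\in\Wf$, and $g=r(h)\in\Wf'$. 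Now $\Wf'$ satisfies all the hypotheses of Proposition~\ref{prc1} in dimension $d=n-q+1$, and applying it finishes the proof.
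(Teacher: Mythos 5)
Your proof is correct and follows essentially the same route as the paper: restrict the cone to $V$ and apply Proposition~\ref{prc1} in dimension $n-q+1$. You are in fact more careful than the paper's two-line proof, which silently assumes the restricted cone is closed; your verification that $\ker r\cap\Wf=\{0\}$ and the ensuing compactness argument fill exactly that gap.
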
 
\begin{proof}
It suffices to apply Proposition~\ref{prc1} to the restrictions to $V\in\Gr_{\! n-q+1}(\C^n)$ of  
the forms in $\Wf$. By the assumption, $h|_V$ has a nonzero positive index of inertia for
all $h\in\Wf\setminus\{0\}$.
\end{proof}
An analogous statement to Proposition~\ref{pp6.1} can be formulated for relatively open convex
cones of Hermitian forms with positive index of inertia $\geq{q}$.
\begin{prop}\label{pr4.11}
 Let $\Wf$ be a relatively open convex cone in $\Pf_n$ and assume that each $h$ in $\Wf$ 
 has a positive index of inertia $\geq{q}$, for an integer  $0<q\leq{n}$. Then 
 for every $V\in\Gr_{\! n-q+1}(\C^n)$
 we can find an integer 
 $r_V>0$ and linearly independent $v_1,\hdots,v_{r_V}\in{V}$ such that 
 \begin{equation}\label{4.4}
 {\sum}_{i=1}^{r_V}h(v_i,v_i)>0,\quad\forall h\in\Wf.
\end{equation}
\end{prop}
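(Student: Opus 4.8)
The plan is to reduce Proposition~\ref{pr4.11} to Proposition~\ref{pp6.1} by restricting everything to the plane $V$. Fix $V\in\Gr_{\!n-q+1}(\C^n)$ and, after choosing an isomorphism $V\cong\C^{n-q+1}$, consider the restriction map $\rho_V\colon\Pf_n\to\Pf_{n-q+1}$, $h\mapsto h|_V$, which is linear and surjective. I would set $\Wf|_V:=\rho_V(\Wf)$ and check that it satisfies the hypotheses of Proposition~\ref{pp6.1} in $\Pf_{n-q+1}$: it is a convex cone, being the linear image of one, and it is relatively open because a linear map is open onto the linear span of its image, so it carries the relatively open cone $\Wf$ to a cone that is relatively open in its own span.

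The one substantive step is to verify that every element of $\Wf|_V$ still has a non-zero positive index of inertia. Given $h\in\Wf$, let $P\subset\C^n$ be a maximal subspace on which $h$ is positive definite, so $\dim_{\C}P$ is the positive index of inertia of $h$, hence $\geq q$ by hypothesis. The Grassmann formula gives
\begin{equation*}
 \dim_{\C}(P\cap V)\geq\dim_{\C}P+\dim_{\C}V-n\geq q+(n-q+1)-n=1,
\end{equation*}
and $h$ is positive definite on the non-zero subspace $P\cap V\subseteq V$; so $h|_V$ has positive index of inertia $\geq1$, and in particular $h|_V\neq0$, so $0\notin\Wf|_V$. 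Hence Proposition~\ref{pp6.1} applies to $\Wf|_V$.

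Finally I would invoke Proposition~\ref{pp6.1}: the forms of maximal rank in $\overline{\Wf|_V}\cap\bar{\Pf}^-_{n-q+1}$ share a common kernel $U'\subseteq V$ of some positive dimension $r_V$, and $U'$ has a basis $v_1,\hdots,v_{r_V}$ with ${\sum}_{i=1}^{r_V}(h|_V)(v_i,v_i)>0$ for all $h|_V\in\Wf|_V$. Since $(h|_V)(v_i,v_i)=h(v_i,v_i)$, every $h\in\Wf$ restricts into $\Wf|_V$, and $v_1,\hdots,v_{r_V}\in V$ are linearly independent, this is precisely \eqref{4.4}. The main obstacle, such as it is, lies in the second paragraph: one must be sure that restriction to an $(n-q+1)$-plane cannot annihilate all the positive directions of a form of positive index $\geq q$, together with the (elementary) fact that a linear image of a relatively open cone is relatively open; beyond that, the statement is just Proposition~\ref{pp6.1} transported to $V$, exactly parallel to how Proposition~\ref{prc1} was upgraded in the preceding propositions. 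As a sanity check, when $q=n$ the plane $V$ is a complex line, $\Wf|_V\subset(0,\infty)\subset\Pf_1$, and one takes $r_V=1$ with $v_1$ any non-zero vector of $V$.
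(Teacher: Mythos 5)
Your proposal is correct and follows exactly the paper's own route: restrict the cone to $V$, observe that the restricted cone is relatively open, convex, and consists of forms with non-zero positive index of inertia, and apply Proposition~\ref{pp6.1} in $\Pf_{n-q+1}$. The only difference is that you spell out (via the Grassmann dimension count) why a positive index $\geq q$ survives restriction to an $(n-q+1)$-plane, a detail the paper leaves implicit.
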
 
\begin{proof} For every  $V\in\Gr_{\! n-q+1}(\C^n)$, the set $\Wf_V=\{h|_V\mid h\in\Wf\}$ is a
relatively open convex cone of $\Pf_{n-q+1}$ such that all of its elements $h|_V$ have a
nonzero positive index of inertia. The thesis follows by applying 
Proposition~\ref{pp6.1} to $\Wf|_V$.
\end{proof}
\begin{prop}\label{pr4.12}
 Let $W$ be a convex cone in $\Pf_n$ such that the elements of maximal rank of $W$
 have a positive index of inertia $\geq{q}$ ($q$ is an integer with $0<{q}\leq{n}$). 
 Then for every $V\in\Gr_{\! n-q+1}(\C^n)$ we can find an integer 
 $r_V>0$ and linearly independent $v_1,\hdots,v_{r_V}\in{V}$ such that 
 \begin{equation}\label{4.6}
 {\sum}_{i=1}^{r_V}h(v_i,v_i)\geq {0},\quad\forall h\in\Wf.
\end{equation}
\end{prop}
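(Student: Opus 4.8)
The plan is to reduce the statement to the structural results on cones of Hermitian forms already established in this section, exactly in the way Proposition~\ref{pr4.11} is reduced to Proposition~\ref{pp6.1}. Fix $V\in\Gr_{\! n-q+1}(\C^n)$ and pass to the trace cone $\Wf_V=\{h|_V\mid h\in\Wf\}$, a convex cone with vertex $0$ in $\Pf_{n-q+1}$. If I can show that every element of maximal rank of $\Wf_V$ has a nonzero positive index of inertia, then I may apply to $\Wf_V$ the maximal-rank variant of Proposition~\ref{pp6.1} --- the (unlabelled) proposition stated immediately after it, whose conclusion is \eqref{4.3}. That proposition produces a subspace $U\subseteq V$ of some positive dimension $r_V$ with a basis $v_1,\dots,v_{r_V}$ such that $\sum_{i=1}^{r_V}g(v_i,v_i)\ge 0$ for every $g\in\Wf_V$; since $g=h|_V$ runs over $\Wf_V$ as $h$ runs over $\Wf$ and $h(v_i,v_i)=g(v_i,v_i)$ for $v_i\in V$, this is precisely \eqref{4.6}. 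So the entire proof rests on the signature statement for the maximal-rank elements of $\Wf_V$.

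That signature statement is where the real work is, and it is not formal: a maximal-rank element $g_0=h_0|_V$ of $\Wf_V$ need \emph{not} be the restriction of a maximal-rank element of $\Wf$, so I cannot simply invoke for $h_0$ itself the min-max inequality ``restricting a Hermitian form of positive index $\ge p$ to a subspace of codimension $c$ leaves positive index $\ge p-c$''. Instead I would argue by approximation. Suppose $g_0\le 0$, and set $\rho_V=\rank g_0$, so that $g_0$ has exactly $\rho_V$ nonzero eigenvalues, all negative, and $\rho_V$ is the maximal rank attained in $\Wf_V$. Pick $h_1\in\Wf$ of maximal rank $\rho_{\max}$ in $\Wf$ and put $h_t=(1-t)h_1+t h_0$, which lies in $\Wf$ for $t\in[0,1]$. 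The set of $t$ with $\rank h_t=\rho_{\max}$ has finite complement in $[0,1]$, since the relevant minors of $h_t$ are polynomials in $t$ and one of them does not vanish at $t=0$; so I can choose $t_k\to 1$, $t_k<1$, with $h_{t_k}$ of maximal rank in $\Wf$. By hypothesis each $h_{t_k}$ then has positive index $\ge q$, whence $h_{t_k}|_V$ has positive index $\ge q-(q-1)=1$, the codimension of $V$ being $q-1$. On the other hand $h_{t_k}|_V\in\Wf_V$, so it has at most $\rho_V$ nonzero eigenvalues; and $h_{t_k}|_V=(1-t_k)h_1|_V+t_k g_0\to g_0$, so by continuity of eigenvalues the $\rho_V$ smallest eigenvalues of $g_0$, namely its negative ones, are limits of the $\rho_V$ smallest eigenvalues of $h_{t_k}|_V$; hence for $k$ large $h_{t_k}|_V$ has at least $\rho_V$ negative eigenvalues, therefore exactly $\rho_V$ nonzero eigenvalues and all of them negative. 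This contradicts positive index $\ge 1$, so $g_0$ must have a positive eigenvalue, as required. (When $\rho_V=0$ the same argument shows $h_{t_k}|_V=0$, contradicting positive index $\ge 1$, and in particular rules out $\Wf_V=\{0\}$.)

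I expect the main obstacle to be precisely this signature statement of the second paragraph --- the careful bookkeeping with ranks and eigenvalues along the segment $h_t$, needed exactly because maximal-rank elements of the trace cone $\Wf_V$ need not come from maximal-rank elements of $\Wf$. Once it is in place, the remainder is a routine transcription of the proof of Proposition~\ref{pr4.11}, with the maximal-rank variant of Proposition~\ref{pp6.1} replacing that proposition itself.
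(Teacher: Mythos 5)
Your proof is correct, but it inverts the order of the two moves the paper makes. The paper's own proof is a one-line reduction: it perturbs first, applying Proposition~\ref{pr4.11} to the open convex cone $\Wf+\ring{\Pf}_n^{+}$ --- every element of which has positive index of inertia $\geq q$, a fact left implicit and which itself requires approximating a general $h\in\Wf$ by maximal-rank elements $h+\epsilon h_1$ along a segment, i.e.\ exactly the minor/continuity-of-eigenvalues argument you carry out --- and then lets the positive-definite summand tend to $0$ to pass from the strict inequality \eqref{4.4} to \eqref{4.6}. You instead restrict first, forming the trace cone $\Wf_V$, and prove as a genuine lemma that every maximal-rank element of $\Wf_V$ has a nonzero positive index of inertia, which lets you quote the unlabelled maximal-rank variant of Proposition~\ref{pp6.1} (the one with conclusion \eqref{4.3}) directly in $\Pf_{n-q+1}$. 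Your segment argument $h_t=(1-t)h_1+th_0$, with the rank controlled by a nonvanishing minor and the signature controlled by continuity of eigenvalues, is sound, and your observation that a maximal-rank element of $\Wf_V$ need not be the restriction of a maximal-rank element of $\Wf$ correctly identifies why this step is not formal. Since both routes ultimately feed essentially the same cone $\Wf_V+\ring{\Pf}_{n-q+1}^{+}$ into Proposition~\ref{pp6.1}, the difference is one of bookkeeping rather than substance; what your version buys is that the nontrivial approximation step, which the paper's two-line proof leaves entirely to the reader, is made explicit and verified.
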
 
\begin{proof}
 It suffices to apply Proposition~\ref{pr4.11} to $\Wf+\ring{\Pf}_n^{+}$ and note that \eqref{4.4}
 for all $h\in\Wf+\ring{\Pf}_n^{+}$ implies \eqref{4.6} for all $h\in\Wf$.
\end{proof}
\begin{rmk}
 The positive integer $r_V$ of Propositions~\ref{pr4.11},\ref{pr4.12} 
 is the dimension of the kernel of any form of maximal rank in 
 $\overline{\Wf}_V\cap\bar{\Pf}_{n-q+1}^-$.
\end{rmk}
\section{Notions of pseudo-concavity} \label{s7}
In \cite{HN06} it was proved that the Poincar\'e lemma
for the tangential Cauchy-Riemann complex
of locally $CR$-embeddable $CR$ manifolds
fails in the degrees corresponding to the indices of inertia of its scalar Levi forms of maximal rank. 
On the other hand,  in \cite{33}   
it was shown that the Lefschetz hyperplane section theorem for $q$-dimensional complex submanifolds
generalizes to 
weakly-$q$-pseudo-concave  $CR$ submanifolds
of complex projective spaces.\par
This suggests to seek for suitable weakening of the pseudoconcavity conditions 
to allow  degeneracies of the Levi form. 
A natural condition 
of  weak $1$-pseudo-concavity is to require that 
no semi-definite
scalar Levi form has maximal rank.  Under some genericity assumption, 
by using Proposition~\ref{pr4.12}, this translates into the fact that 
$\Kf$ 
is non-trivial. Indeed, this hypothesis 
implies 
maximum modulus and unique continuation results 
analogous to those for holomorphic functions of 
one complex variable.
We 
expect that properties that are peculiar to holomorphic functions of 
 \textit{several} complex variables 
 would generalize to $CR$ functions 
under 
 suitable \textit{(weak) $2$-pseudo-concavity conditions.} 
 This motivates us to give below a tentative list of 
 conditions, motivated  partly by the discussion in
\S\ref{cv} and partly by the results of the next sections.
\par \smallskip
\begin{ntz}\label{ntz7.1}
If $\Vf\subset\Zf$ is a distribution of complex vector fields on $\Omega^{\mathrm{open}}\subset{M}$,
we use the notation $\Kf_{\Vf}$ for the semi-positive tensors ${\sum}_{i=1}^rZ_i\otimes\bar{Z}_i$
of $\Kf$
with $Z_i\in\Vf$.
\end{ntz}
\begin{defn} Let $p_0\in{M}$. We say that $M$ is 
\begin{enumerate}
\item[$(\Psi_{p_0}^s(q))$:] \emph{strongly-$q$-pseudo-concave} at $p_0$ if all $\Lf_{\xiup}$, with 
$\xiup\in{H}_{p_0}^0M\setminus\{0\}$,  
are nonzero and have 
Witt index $\geq{q}$; 
 \item[$(\Psi_{p_0}^w(q))$:] \emph{weakly-$q$-pseudo-concave} at $p_0$ 
 if its scalar Levi forms of maximum rank
 at $p_0$  have  
 Witt 
 index 
 $\geq{q}$;
 \item[$(\Psi_{p_0}^e(q))$:] \emph{essentially-$q$-pseudo-concave} at $p_0\in{M}$ if, for every distribution
 of smooth complex vector fields $\Vf\subset\Zf$, 
 of rank $n\! -\! q\! +\! 1$, defined on an open neighborhood $U$ of $p_0$, 
 we can find an open neighborhood $U'$ of $p_0$ in $U$ and a $\tauup\in\Kf_{\Vf}^{n-q+1}(U')$.
 \item[$(\Psi_{p_0}^{e^*}(q))$:] \emph{essentially$^*$-$q$-pseudo-concave} 
 at $p_0\in{M}$ if, for every distribution
 of smooth complex vector fields $\Vf\subset\Zf$,
 of rank $n\! -\! q\! +\! 1$, defined on an open neighborhood $U$ of $p_0$, 
 we can find an open neighborhood $U'$ of $p_0$ in $U$ and a 
 $\tauup\in\Kf_\Vf(U')$.
\end{enumerate}
We drop the reference to the point $p_0$ when the property is valid at all points of $M$. \par
We also consider the (global) condition
\begin{enumerate}
 \item[$(\Psi^{we}(q))$] For all $p\in{M}$ and $\Vf\subset\Zf$ of rank $n-q+1$ on a neighborhood  
 $U$ of $p$,  ${\bigcup}_{p'\in{U}}\Kf_{\Vf,p'}$ is a bundle with nonempty fibers
and such that for every sequence $\{p_\nu\}\subset{M}$, converging to  $p\in{M}$,
every  
$\tauup\in\Kf_{\Vf,\,p}$ is a cluster point of $\cup_{\nu}\Kf_{\Vf,\,p_\nu}$.
\end{enumerate}
\end{defn}
Recall that, according to the notation introduced on page \pageref{kappa},
the elements of $\Kf(U')$ are different from zero at each point of $U'$.
\begin{rmk}
 If $q>1$, then $\Psi_{p_0}^{\star}(q)\Rightarrow \Psi_{p_0}^{\star}(q-1)$ for
 $\star=s,w,e,e^*$,
and (cf.~Proposition~\ref{prc1} and \cite[\S{2}]{HN00})
\begin{equation*}
 \Psi^w(q)\Leftarrow \Psi^s(q)\Rightarrow \Psi^e(q)\Rightarrow\Psi^{e^*}(q),\quad
 \text{for $q\geq{1}.$}
\end{equation*}
\end{rmk}
 
 \begin{lem}
 Assume that $M$ 
 is essentially-$q$-pseudo-concave. Then, for every rank  $n\! -\! q\! +\! 1$
distribution  $\Vf\subset\Zf$ on an $\Omega^{\mathrm{open}}\subset{M}$,
we can find a global section $\tauup\in\Kf^{(n-q+1)}_\Vf(\Omega)$.
\end{lem}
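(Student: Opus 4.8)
The plan is a globalization by partition of unity: essential-$q$-pseudo-concavity is a \emph{local} statement producing, near each point, a section of $\Kf^{(n-q+1)}_\Vf$, and the decisive feature that lets one glue these local sections is that the fibres of $\Kf^{(n-q+1)}_\Vf$ are \emph{convex}.

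First I would apply $(\Psi^e_p(q))$ at every point $p\in\Omega$ to the given distribution $\Vf\subset\Zf$, which has rank $n\!-\!q\!+\!1$ by hypothesis: this yields an open neighbourhood $U_p\subset\Omega$ of $p$ and a section $\tauup_p\in\Kf^{(n-q+1)}_\Vf(U_p)$. Since $\Omega$ is paracompact, I would pass to a locally finite refinement $\{U_\alpha\}_{\alpha\in A}$ with associated sections $\tauup_\alpha\in\Kf^{(n-q+1)}_\Vf(U_\alpha)$, choose a smooth partition of unity $\{\chi_\alpha\}$ with $\supp\chi_\alpha\subset U_\alpha$ and $\sum_{\alpha}\chi_\alpha\equiv 1$ on $\Omega$, and set
\begin{equation*}
 \tauup:={\sum}_{\alpha\in A}\chi_\alpha\,\tauup_\alpha ,
\end{equation*}
each summand being extended by $0$ off $\supp\chi_\alpha$. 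Local finiteness makes $\tauup$ a well-defined smooth section of $T^{0,1}M\otimes_M T^{1,0}M$ over $\Omega$.

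The heart of the argument is the pointwise verification that $\tauup_p\in\Kf^{(n-q+1)}_{\Vf,p}$ for every $p\in\Omega$. Writing $A(p)=\{\alpha:\chi_\alpha(p)>0\}$, which is finite and nonempty, for $\alpha\in A(p)$ one has $p\in U_\alpha$, so $\tauup_{\alpha,p}$ is positive semidefinite Hermitian of rank $n\!-\!q\!+\!1$ with range contained in (hence equal to) $\Vf_p$, and $\Lf(\tauup_{\alpha,p})=0$. Then $\tauup_p=\sum_{\alpha\in A(p)}\chi_\alpha(p)\,\tauup_{\alpha,p}$ is a convex combination of such tensors, and I would check the three required properties in turn: it is again positive semidefinite, hence lies in $H^{1,1}_pM$; since $\ker(A+B)=\ker A\cap\ker B$ for positive semidefinite $A,B$, its range is still $\Vf_p$, so $\tauup_p\in H^{1,1,(n-q+1)}_pM$ and is representable as $\sum_i Z_i\otimes\bar Z_i$ with $Z_i\in\Vf$; and, by linearity of $\Lf$ on $H^{1,1}M$ (see \eqref{eq1.2}), $\Lf(\tauup_p)=\sum_{\alpha\in A(p)}\chi_\alpha(p)\,\Lf(\tauup_{\alpha,p})=0$. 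Finally, in a local smooth frame $W_1,\dots,W_{n-q+1}$ of $\Vf$ one writes $\tauup=\sum_{j,k}h_{jk}\,W_j\otimes\bar W_k$ with $(h_{jk})$ smooth and pointwise positive definite, which confirms that $\tauup$ is a genuine smooth section of $\Kf^{(n-q+1)}_\Vf$ over $\Omega$.

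I do not expect a real obstacle: this is a standard gluing argument, and the only point with content — where I would be careful — is the convexity of the fibres $\Kf^{(n-q+1)}_{\Vf,p}$, i.e.\ that positive semidefiniteness, the exact-rank/range condition, and membership in $\ker\Lf$ all survive convex combinations. This is precisely what makes the local data furnished by $(\Psi^e(q))$ amalgamate, and it would fail for the bare rank condition in the absence of the positivity, which is why it is the semidefinite structure of $\Kf$ that is being used here.
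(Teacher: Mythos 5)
Your proposal is correct and follows essentially the same route as the paper: apply the local condition $(\Psi^e_p(q))$ to $\Vf$ at each point to get local sections of $\Kf^{(n-q+1)}_\Vf$, then glue them with a nonnegative smooth partition of unity, the point being that the fibres are convex. The paper leaves the convexity verification (positive semidefiniteness, preservation of the rank $n\!-\!q\!+\!1$ via $\ker(A+B)=\ker A\cap\ker B$ for semidefinite tensors, and linearity of $\Lf$) implicit, whereas you spell it out; the substance is identical.
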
 
\begin{proof}
By the assumption, for each $p\in\Omega$, there is an $U^{\mathrm{open}}\subset\Omega$
with $p\in{U}_p$ and $\tauup_p={\sum}_{i=1}^{n-q+1}Z_i\otimes\bar{Z}_i\in\Kf^{(n-q+1)}(U_p)$ with 
$Z_i\in\Vf(U_p)$. The global $\tauup$ can be obtained by gluing together the $\tauup_p$'s
by a nonnegative smooth partition of unity on $\Omega$ subordinate to the covering
$\{U_p\}$.
\end{proof}
In the same way we can prove 
 \begin{lem}
 Assume that $M$ 
 is essentially$^*$-$q$-pseudo-concave. Then, for every rank  $n\! -\! q\! +\! 1$
distribution  $\Vf\subset\Zf$ on an $\Omega^{\mathrm{open}}\subset{M}$,
we can find a global section $\tauup\in\Kf_\Vf(\Omega)$. \qed
\end{lem}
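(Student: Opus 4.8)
The plan is to imitate, almost verbatim, the proof of the preceding lemma: construct the desired global section of $\Kf_\Vf$ over $\Omega$ by gluing local sections with a partition of unity, the only two points that need a moment's care being that the glued tensor still lies in $\Kf_\Vf$ and that it is nowhere zero.

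First I would fix the rank $n\!-\!q\!+\!1$ distribution $\Vf\subset\Zf$ on $\Omega$ and invoke essential$^*$-$q$-pseudo-concavity at each point: for every $p\in\Omega$ there are an open neighborhood $U_p\subset\Omega$ of $p$ and a section $\tauup_p\in\Kf_\Vf(U_p)$; by the convention fixing the meaning of $\Kf$ (its elements have positive rank, hence are nonzero, at every point), $\tauup_p$ is nowhere zero on $U_p$. Passing to a locally finite refinement $\{U_{p_j}\}$, I would choose a subordinate partition of unity realized as a family of \emph{squares}, i.e.\ written as $\{\phiup_j^2\}_j$ with $\phiup_j\in\Ci(\Omega)$, $\phiup_j\geq 0$, $\supp\phiup_j\subset U_{p_j}$, and ${\sum}_j\phiup_j^2\equiv 1$ on $\Omega$. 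Writing $\tauup_{p_j}={\sum}_i Z^{(j)}_i\otimes\overline{Z^{(j)}_i}$ with $Z^{(j)}_i\in\Vf(U_{p_j})$, I would set
\begin{equation*}
 \tauup\;=\;{\sum}_j\phiup_j^2\,\tauup_{p_j}\;=\;{\sum}_{j,i}\big(\phiup_j Z^{(j)}_i\big)\otimes\overline{\phiup_j Z^{(j)}_i},
\end{equation*}
a locally finite sum in which each $\phiup_j Z^{(j)}_i$, having support in $U_{p_j}$, extends by zero to a global section of $\Vf$.

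It then remains to check the three defining properties of a section of $\Kf_\Vf(\Omega)$. That $\tauup$ is a smooth section of $H^{1,1}M$, and that near any point it is a finite sum ${\sum}_\ell W_\ell\otimes\overline{W_\ell}$ with $W_\ell\in\Vf$, are both evident from the displayed formula together with local finiteness; that $\tauup\in\ker\Lf$ follows from linearity of the vector-valued Levi form $\Lf$ and $\Lf(\tauup_{p_j})=0$. Finally, $\tauup$ is nowhere zero: given $x\in\Omega$, some $\phiup_j(x)\neq 0$, so $\tauup(x)$ is a sum of positive semidefinite Hermitian $(1,1)$-tensors containing the nonzero summand $\phiup_j(x)^2\tauup_{p_j}(x)$, and in the (salient) cone of positive semidefinite tensors a sum vanishes only if every summand does. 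Hence $\tauup\in\Kf_\Vf(\Omega)$, as claimed. The argument is entirely soft, and there is no real obstacle; the only substantive point is this last one — the nonvanishing condition built into $\Kf$ survives the patching precisely because a nontrivial nonnegative combination of nonzero positive semidefinite tensors is again nonzero.
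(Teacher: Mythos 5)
Your proof is correct and follows exactly the route the paper takes: the paper proves this lemma ``in the same way'' as the preceding one, namely by gluing the local sections $\tauup_p$ furnished by essential$^*$-$q$-pseudo-concavity via a nonnegative smooth partition of unity subordinate to the covering $\{U_p\}$. Your use of squares $\phiup_j^2$ (so that each summand is manifestly of the form $W\otimes\overline{W}$ with $W\in\Vf$) and your explicit verification that the glued tensor is nowhere zero are welcome refinements of details the paper leaves implicit, but the argument is the same.
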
 
\begin{exam} Let $F_{h_1,\hdots,h_r}(\C^m)\subset\Gr_{\! h_1}(\C^m)\times
\cdots\times\Gr_{\! h_r}(\C^m)$ denote the complex flag manifold consisting
of the $r$-tuples  $(\ell_{h_1},\hdots,\ell_{h_r})$ with $\ell_{h_1}\subsetneqq \cdots
\subsetneqq \ell_{h_r}$, for an increasing sequence $1\leq{h}_1<\cdots<h_r<m$. 
Here, as usual, $\ell_h$ is a generic $\C$-linear subspace of dimension $h$ of $\C^m$.
\par
For an 
increasing sequence of integers $1\leq{i}_1<i_{2}<\cdots{i}_\nu<m$, of length $\nu\geq{2}$, 
we define the $CR$-sub-manifold
$M$ of $F_{i_1,i_3,\hdots}(\C^m)\times{F}_{i_2,i_4,\hdots}(\C^m)$ 
consisting of pairs $(({\ell}_{i_1},\ell_{i_3},\cdots),(\ell_{i_2},\ell_{i_4},\cdots))$ with 
$\overline{\ell}_{i_h}\subset\ell_{i_{h+1}}$ for $0<h<\nu$. 
Set
\begin{equation*}
 d_0=i_1,\; d_1=i_2-i_1,\; \cdots,\; d_h=i_{h+1}-i_h,\;\cdots d_{\nu-1}=i_\nu-i_{\nu-1},\; d_{\nu}=m-i_{\nu}.
\end{equation*}
This $M$ is a minimal 
(i.e. 
$\Zf(M)+\overline{\Zf}(M)$ and their iterated commutators yield
all complex vector fields on $M$),
compact 
$CR$ manifold of $CR$-dimension $n$  and $CR$-codimension $k$, with 
\begin{equation*}
n={\sum}_{i=0}^{\nu-1} d_id_{i+1},\quad k=2{\sum}_{ 
\begin{smallmatrix}
 1\leq{i}<j\leq{\nu}\\
 j-i\geq{2}
\end{smallmatrix}
 }d_id_j,
\end{equation*}
as was explained in 
 \cite[\S{3.1}]{MN98}. Then, with $q={\min}_{1<i<\nu}d_i$, our $M$ is essentially,
 but not strongly, $q$-pseudo-concave when $\nu\geq{3}$, because the non-vanishing 
 scalar Levi forms generate at each point a subspace of dimension
 $2{\sum}_{i=1}^{\nu-2}d_id_{i+2}<k$.
 \par
 In \cite{MN98} several classes of homogeneous compact $CR$ manifolds are discussed, from which
 more examples of essentially, but not strongly, $q$-pseudo-concave manifolds can be extracted. 
\end{exam}
\begin{exam} 
Let us consider the $11$-dimensional real vector space 
$\Wf$ consisting of $4\times{4}$ Hermitian symmetric matrices of the form
\begin{equation*}
h=\begin{pmatrix}
 A & B\\
 B^*&-A
\end{pmatrix}
 \quad 
\begin{gathered}
 \text{with $A,B\in\C^{2\times{2}}$, $A=A^*,$ 
 $\mathrm{trace}(A)=0$.}
\end{gathered}
\end{equation*}
We claim that all nonsingular
 elements of $\Wf$ have Witt index two.
In fact, for an element $h$ of $\Wf$, either $A=0$, or $A$ is nondegenerate. If $A=0$, the matrix $A$
is nondegenerate iff $\det(B)\neq{0}$, and in this case the Witt index is two as the two-plane of
the first two vectors of the canonical basis of $\C^4$ is  totally isotropic. If $A\neq{0}$, a permutation
of the vectors of the canonical basis of $\C^4$ transforms $h$ into a Hermitian symmetric matrix $h'$
with 
\begin{equation*}
 h'= 
\begin{pmatrix}
 C & D \\
 D^* & -C
\end{pmatrix},
\end{equation*}
for a positive definite Hermitian symmetric
$C\in\C^{2\times{2}}$. By a linear change of coordinates in  $\C^2$, the positive definite
$C$ reduces to the $2\times{2}$ identity matrix $I_2$. This yields a change
of coordinates in $\C^4$ by which $h'$ transforms into 
\begin{equation*}
 h''= 
\begin{pmatrix}
 I_2 & E\\
 E^* & -I_2
\end{pmatrix},\quad \text{with $E\in\C^{2\times{2}}$.}
\end{equation*}
For a matrix of this form, we have,
for  $v,w\in\C^2$, 
\begin{align*}
 h'' 
\begin{pmatrix}
 v\\ w
\end{pmatrix}=0 \Leftrightarrow 
\begin{cases}
 v+Ew=0,\\
 E^*v-w=0
\end{cases} \Leftrightarrow 
\begin{cases}
 v+EE^*v=0,\\
 w=E^*v
\end{cases} \Leftrightarrow 
\begin{cases}
 v=0,\\
 w=0.
\end{cases}
\end{align*}
Therefore, all $h''$ of this form are nonsingular and their Witt is independent of $E$
and  equal to two. 
This shows that all $h\in\Wf$ with $A\neq{0}$ are nonsingular with Witt index two. Thus
the set of singular matrices of $\Wf$ is
\begin{equation*}
 \left.\left\{ 
\begin{pmatrix}
 0 & B\\
 B^*&0
\end{pmatrix}\right| \det(B)=0\right\},
\end{equation*}
which is the cone of the nonsingular quadric of the $3$-dimensional projective space.\par 
If we take a basis
$h_1,\hdots,h_{11}$ of $\Wf$, the quadric $M$ of $\C^{14}=\C^4_z\times\C^{11}_w$, defined by 
the equations 
\begin{equation*}
 \re(w_i)=h_i(z,z),\; 1\leq{i}\leq{11},
\end{equation*}
is a $CR$ manifold of type $(4,11)$ which is weakly and weakly$^*$-$2$-pseudo-concave,
but not strongly or essentially-$2$-pseudo-concave.\par
We obtain examples of   $CR$ manifolds 
$M=\{(z,w)\in\C^4\times\C^7\mid \re(w_i)=h_i(z,z),\; 1\leq{i}\leq{7}\}$, of type $(4,7)$ 
and \textit{strongly} $2$-pseudoconcave
by
requiring that $h_1,\hdots,h_7$ be a basis 
either of the subspace $\Wf'$ of $\Wf$ in which $B$ is traceless and symmetric, or of the
$\Wf''$ in which $B$ is  quaternionic.
\end{exam}
\begin{exam}
 Let $M$ be the minimal orbit of $\SU(p,p)$ in the complex flag manifold $F_{1,2p-2}(\C^{2p})$,
 for $p\geq{3}$. 
 Its points are the pairs $(\ell_1,\ell_{2p-2})$ consisting of an isotropic line $\ell_1$
 and a $(2p\!-\!{2})$-plane $\ell_{2p-2}$ with $\ell_1\subset\ell_{2p-2}\subset\ell_1^\perp$, where
 perpendicularity is taken with respect to a 
 fixed Hermitian symmetric form of Witt index $p$ on $\C^{2p}$. \par
 Then
 $M$ is a compact $CR$ submanifold
 of $F_{1,2p-2}(\C^{2p})$, 
 of $CR$ dimension $(2p\! -\! 3)$ and $CR$ codimension $(4p\! -\! 4)$, which is essentially $1$-pseudo-concave
 and, when ${p>3}$, 
 weakly and weakly$^*$-$(p\!-\! 2)$-pseudo-concave, but not essentially-$2$-pseudo\-concave.
\end{exam}
\subsection{Convexity/concavity at the boundary and weak pseudoconcavity}
Let us comment on 
the notion of $1$-convexity/concavity at a boundary point of a domain
$\Omega$ of \S\ref{sechopf}  in the light
of the discussion on Hermitian forms of \S\ref{cv}. \par
Let $\rhoup$ be a real valued smooth function on $\Omega^{\mathrm{open}}\subset{M}$ and 
$p_0$ a point of $\Omega$ with the property 
that, for each $id\xiup_{p_0}$ in ${H}^{1,1}_{p_0}(\rhoup)$,  
the restriction of $id\xiup_{p_0}$ to the space
\mbox{$\{Z_{p_0}\in{T}^{0,1}_{p_0}M\mid
Z_{p_0}\rhoup=0\}$} 
has a nonzero positive index of inertia. The positive multiples of these
Hermitian symmetric forms make a relatively open convex cone 
$\Wf$ in the space $\Pf_{n-1}$ of Hermitian symmetric forms on $T^{0,1}_{p_0}M\cap\ker{d}\rhoup(p_0).$
By Proposition~\ref{pp6.1},
 we can find an $r>0$ and $\tauup_0\in{H}^{1,1,(r)}_{p_0}M$ such that 
\begin{equation*}
 id\xiup(\tauup_0)>0,\quad \forall \xiup\in\Af_1(\Omega),\;\;\text{s.t.}\;\; d\rhoup(p_0)+i\xiup_{p_0}
 \in{{T}^*}^{1,0}_{\!{p_0}}M.
\end{equation*}
Since $H^{1,1}_{p_0}(\rhoup)$ is affine with underlying vector space $\{\Lf_\eta\mid\eta\in{H}^0_{p_0}M\}$,
it follows that actually $\tauup_0\in\Kf^{(r)}_{p_0}$. 
The same argument applies to the case of a nonzero negative index of inertia. \par 
Thus, by Lemma~\ref{lem5.18}, the condition
for
$\Omega_{\rhoup(p_0)}=\{p\in\Omega\mid \rhoup(p)<\rhoup(p_0)\}$ to be 
strongly-$(1)$-convex, or strongly-$(1)$-concave 
at $p_0$ is that
\begin{equation} \exists \tauup_0\in\Kf_{\ker{d\rhoup},\,p_0}\;\;\text{such that}\;\;
\begin{cases}
 dd^c\rhoup(\tauup_0)>0, &\text{(strongly-$1$-convex)},\\
  dd^c\rhoup(\tauup_0)<0, &\text{(strongly-$1$-concave).}
   \end{cases}
\end{equation}
A glitch of the notion of strong-$1$-convexity (resp.\,-concavity)
is that it is not, in general, stable under small perturbations. 
This can be ridden out by adding the global 
assumption of essential-$2$-pseudo-concavity of $M$. 
Set, for simplicity of
notation, $\rhoup(p_0)=0$ and $d\rhoup(p_0)\neq{0}$. 
\begin{prop}\label{prop7.7}
Suppose that $M$ is   essentially-$2$-pseudo-concave and that  $\Omega_0=\{p\in\Omega\mid
\rhoup(p)<0\}$ is strongly-$1$-concave at $p_0\in\partial\Omega_0$. Then 
\begin{enumerate}
 \item We can find $\tauup_0\in\Kf_{\ker{d\rhoup},\,p_0}^{(n-1)}$ 
 such that $dd^c\rhoup(\tauup_0)<0$;
 \item We can find an open neighborhood $U$ of $p_0$ in $\Omega$ such that at every
 $p'\in{U}$ the open set $\Omega_{\rhoup(p')}=\{p\in\Omega\mid  \rhoup(p)<\rhoup(p')\}$
 is smooth and strongly-$1$-concave at $p'$. \qed
\end{enumerate}
\end{prop}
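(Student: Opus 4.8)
The plan is to build a suitable element of $\Kf_{\ker\! d\rhoup,\,p_0}$ out of the tensor witnessing strong‑$1$‑concavity at $p_0$ together with a maximal‑rank section furnished by essential‑$2$‑pseudo‑concavity, and then to transport it to nearby points and nearby level sets. First note that strong‑$1$‑concavity of $\Omega_0$ at $p_0$ is phrased through the $CR$ hypersurface $\partial\Omega_0$, so $\bar\partial_M\rhoup(p_0)\neq0$; hence, after shrinking $\Omega$, the level sets $\{\rhoup=\rhoup(p')\}$ with $p'$ near $p_0$ are smooth $CR$ submanifolds of type $(n\!-\!1,k\!+\!1)$, the complex distribution $\Vf:=\ker d\rhoup\cap\Zf$ has constant rank $n-1$, and $H^{1,1}_{p'}(\partial\Omega_{\rhoup(p')})$ is the closed convex cone of positive semidefinite Hermitian tensors supported on $E_{p'}:=\Vf(p')=T^{0,1}_{p'}M\cap\ker d\rhoup(p')$, a space of complex dimension $n-1$. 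For part (1): by Definition~\ref{d4.4} there is $\tauup_1\in\Kf\cap H^{1,1}_{p_0}(\partial\Omega_0)=\Kf_{\ker\! d\rhoup,\,p_0}$ with $dd^c\rhoup_{p_0}(\tauup_1)<0$, and applying $(\Psi^e_{p_0}(2))$ to the rank‑$(n-1)$ distribution $\Vf$ (cf. Notation~\ref{ntz7.1}) yields a neighbourhood $U'$ of $p_0$ and a section $\tauup^\sharp\in\Kf^{(n-1)}_{\Vf}(U')$, so that $\tauup^\sharp(p_0)$ is positive definite on $E_{p_0}^*$ (maximal rank $n-1=\dim_{\C}E_{p_0}$). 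Since $\ker\Lf\cap H^{1,1}_{p_0}M$ is a convex cone (the fibres of $H^{1,1}M$ being the cones of semipositive tensors and $\Lf$ being linear on $H^{1,1}M$) and $\tauup^\sharp(p_0)$ already spans $E_{p_0}$, the tensor $\tauup_0:=\tauup_1+\epsilon\,\tauup^\sharp(p_0)$ lies in $\Kf^{(n-1)}_{\ker\! d\rhoup,\,p_0}$ for every $\epsilon>0$; and $dd^c\rhoup_{p_0}$ being $\R$‑linear on $\Kf_{p_0}$ (evident from \eqref{E4.6}), $dd^c\rhoup_{p_0}(\tauup_0)=dd^c\rhoup_{p_0}(\tauup_1)+\epsilon\,dd^c\rhoup_{p_0}(\tauup^\sharp(p_0))<0$ once $\epsilon>0$ is small, which is (1).

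For part (2) the role of the global hypothesis is to force a semicontinuous behaviour of the fibres $\Kf_{\ker\! d\rhoup,\,p'}$. For each $p'\in U'$ the tensor $\tauup^\sharp(p')$ is a positive \emph{definite} form on $E_{p'}^*$, hence an interior point of the cone of semipositive tensors supported on $E_{p'}$, and it is killed by the linear map $\Lf_{p'}$; since a linear map carries relative interiors to relative interiors, $0$ lies in the relative interior of the convex cone $\Gamma_{\!E,p'}:=\Lf_{p'}\bigl(H^{1,1}_{p'}(E_{p'})\bigr)\subset T_{p'}M/H_{p'}M$, and therefore $\Gamma_{\!E,p'}$ is a linear subspace, equal to the $\Lf_{p'}$‑image of \emph{all} Hermitian tensors supported on $E_{p'}$.

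Now take $\tauup_0$ as in (1), write $\tauup_0=\sum_{i=1}^{n-1}e_i\otimes\bar e_i$ for a basis $(e_i)$ of $E_{p_0}$, extend $(e_i)$ to a frame $(E_i)$ of $\Vf$ near $p_0$, and set $\hat\tauup=\sum_iE_i\otimes\bar E_i$; this is a rank‑$(n-1)$ positive section with vectors in $\Vf$ and $\hat\tauup(p_0)=\tauup_0$, but in general $\Lf(\hat\tauup)\neq0$ away from $p_0$. Because $\Lf_{p'}(\hat\tauup(p'))\in\Gamma_{\!E,p'}$ and $\Gamma_{\!E,p'}$ is a linear subspace lying in the image of $\Lf_{p'}$, one may choose a Hermitian correction $c(p')$ supported on $E_{p'}$, depending continuously on $p'$ and vanishing at $p_0$, with $\Lf_{p'}(c(p'))=-\Lf_{p'}(\hat\tauup(p'))$; for $p'$ near $p_0$ this correction is small, so $\tauup(p'):=\hat\tauup(p')+c(p')$ is again positive of rank $n-1$, supported on $E_{p'}$, and annihilated by $\Lf_{p'}$, i.e. $\tauup(p')\in\Kf^{(n-1)}_{\ker\! d\rhoup,\,p'}$, with $\tauup(p_0)=\tauup_0$. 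Finally $p'\mapsto dd^c\rhoup_{p'}(\tauup(p'))=(\Pt\rhoup)(p')$ (Proposition~\ref{p4.6}) is continuous and negative at $p_0$, hence negative on a neighbourhood $U$ of $p_0$; for $p'\in U$ the tensor $\tauup(p')$ witnesses that $\Omega_{\rhoup(p')}$ is strongly‑$1$‑concave at $p'$, which is moreover smooth there since $d\rhoup(p')\neq0$.

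The main difficulty is precisely this transport step in (2): essential‑$2$‑pseudo‑concavity does not by itself provide a section of $\Kf^{(n-1)}_{\Vf}$ taking the prescribed value $\tauup_0$ at $p_0$, and the naive extension $\hat\tauup$ of $\tauup_0$ leaves $\ker\Lf$ off $p_0$; it is exactly the linear‑subspace property of $\Gamma_{\!E,p'}$ — itself a consequence of $(\Psi^e(2))$ — that allows one to correct $\hat\tauup$ back into $\Kf$ while keeping its value close to $\tauup_0$ near $p_0$. The continuous (or smooth) choice of $c(p')$ is a routine selection matter: one may shrink $U$ so that $\Lf_{p'}$ has locally constant rank, or simply work with the pointwise tensors $\tauup(p')$, since strong‑$1$‑concavity at $p'$ is itself a pointwise existence statement.
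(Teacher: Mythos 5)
The paper states Proposition~\ref{prop7.7} with the \textit{q.e.d.}\ symbol inside the statement and supplies no proof, so your argument can only be judged on its own terms. Part (1) is correct and is surely the intended argument: applying essential-$2$-pseudo-concavity to the rank-$(n\!-\!1)$ distribution $\Vf=\Zf\cap\ker d\rhoup$ gives a section $\tauup^{\sharp}\in\Kf^{(n-1)}_{\Vf}(U')$, and $\tauup_0=\tauup_1+\epsilon\,\tauup^{\sharp}(p_0)$ stays in the convex cone $\Kf_{\ker d\rhoup,\,p_0}$, has rank $n-1$, and by linearity of $dd^c\rhoup_{p_0}$ on $\Kf_{p_0}$ still satisfies $dd^c\rhoup_{p_0}(\tauup_0)<0$ for $\epsilon$ small.

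Part (2) has a genuine gap at the correction step. Write $A_{p'}$ for the restriction of $\Lf_{p'}$ to Hermitian tensors supported on $E_{p'}=T^{0,1}_{p'}M\cap\ker d\rhoup(p')$. What your construction requires is that $\operatorname{dist}\bigl(\hat{\tauup}(p'),\ker A_{p'}\bigr)\to 0$ as $p'\to p_0$, i.e.\ \emph{lower} semicontinuity of the fibres $p'\mapsto\ker A_{p'}$ at $p_0$. Kernels of a continuous family of linear maps are only \emph{upper} semicontinuous: the rank of $A_{p'}$ is lower semicontinuous, so it may be strictly larger at every $p'\neq p_0$ near $p_0$, in which case $\ker A_{p_\nu}$ converges to a proper subspace $W\subsetneq\ker A_{p_0}$; if $\tauup_0\notin W$, then $\operatorname{dist}(\hat{\tauup}(p_\nu),\ker A_{p_\nu})$ stays bounded away from zero and no small correction $c(p_\nu)$ exists. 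Your proposed remedy --- ``shrink $U$ so that $\Lf_{p'}$ has locally constant rank'' --- is not available: the set where the rank is locally constant is open and dense but need not contain the given point $p_0$. Your observation that $\Gamma_{E,p'}$ is a linear subspace (a nice consequence of the positive definite element $\tauup^{\sharp}(p')$ of $\ker A_{p'}$) guarantees \emph{solvability} of $A_{p'}c=-A_{p'}\hat{\tauup}(p')$ but gives no control on the \emph{size} of the solution, which is the whole point. The missing ingredient is precisely the cluster-point property that the paper isolates as the separate hypothesis $(\Psi^{we}(2))$ and invokes for exactly this purpose in the proof of Proposition~\ref{pr8.1}; granting that property for $\Vf=\Zf\cap\ker d\rhoup$, one can pick $\tauup(p')\in\Kf_{\ker d\rhoup,\,p'}$ converging to $\tauup_0$ and conclude by continuity of $dd^c\rhoup$, and your argument also closes. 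As written, you must either add $(\Psi^{we}(2))$ to the hypotheses or prove that essential-$2$-pseudo-concavity alone forces the required semicontinuity of $p'\mapsto\Kf_{\ker d\rhoup,\,p'}$; neither is done.
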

\section{Cauchy problem for \textsl{CR} functions - Uniqueness} \label{s8}
In this section we 
discuss uniqueness for 
the initial value problem for $CR$ functions, with
data on a non-characteristic
smooth initial hypersurface $N\subset{M}$. \par

\smallskip
Uniqueness is well understood when $M$ is a $CR$ submanifold of a complex manifold (see
e.g. \cite{Sm}). Let $\Omega\subset{M}$ be an open neighborhood 
of a non-characteristic point $p_0$ of
$N$, such that $\Omega\setminus{N}$ 
is the union of two disjoint connected 
components~$\Omega^\pm$.
\begin{prop} 
Assume that $M$ is a minimal $CR$ submanifold of a complex manifold ${\Xf}$.
If $f\in\Ot_{\!{M}}(\Omega^+)\cap\Co^0(\bar{\Omega}^+)$ 
and $f|_N$ vanishes on an open neighborhood
of a non-characteristic point
$p_0$ of $N$, then $f\equiv{0}$ on $\Omega^+$. \qed
\end{prop}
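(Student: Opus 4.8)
The plan is to use the minimality of $M$ to propagate the vanishing of $f$ from the boundary piece $N$ into $\Omega^+$. Since $M$ is a $CR$ submanifold of the complex manifold $\Xf$, we may assume, after shrinking $\Omega$, that $N$ is generic and that $\Omega$ is chosen so small that the local geometry near $p_0$ is standard. The key tool is the edge-of-the-wedge / Baouendi–Tr\`eves type approximation for $CR$ functions on one side of a hypersurface, together with the fact that, for a minimal $CR$ manifold, $CR$ functions extend holomorphically to ambient wedges (as in \cite{Tr90, Tu89, NaPo}). First I would observe that, because $p_0$ is non-characteristic, $N$ is a generic $CR$ submanifold of $M$ of $CR$-codimension one more than that of $M$, and $T^{0,1}_pN = T^{0,1}_pM\cap\C T_pN$ for $p$ near $p_0$. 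The datum $f|_N$, vanishing near $p_0$, is then a $CR$ function on $N$ vanishing on an open set.

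Next I would set up the reflection argument. Extend $f$ from $\Omega^+$ and the zero function from $\Omega^-$; since $f$ is continuous up to $\bar\Omega^+$ and agrees with the zero function on the common boundary piece $N$ near $p_0$, the combined function $\tilde f$ is continuous on $\Omega$ and is $CR$ in the distributional sense across $N$ near $p_0$ (the jump of the relevant conormal derivative vanishes because $f|_N=0$). Thus $\tilde f\in\Ot_M(\Omega')$ for a smaller neighborhood $\Omega'$ of $p_0$. Now minimality of $M$ at $p_0$ enters: by Tr\'epreau's theorem (for hypersurface-type) or, in the general embedded codimension case, by the Tumanov extension theorem, every $CR$ function on a neighborhood of $p_0$ in a minimal embedded $CR$ manifold extends holomorphically to a full wedge with edge $M$, hence in particular $\tilde f$ is the restriction of a function $F$ holomorphic on a connected open neighborhood of $p_0$ in $\Xf$ (this last step uses that $M$ is generic, or passes to a generic submanifold, exactly as in the proof of Proposition~\ref{propo2.3}). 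Since $F$ vanishes on the nonempty open subset $\Omega^-$ of $M$ — in fact it vanishes on an open subset of $\Xf$ after further analytic continuation, or one argues that $F$ vanishes to infinite order at points of $\Omega^-$ — the identity principle for holomorphic functions forces $F\equiv 0$ near $p_0$, whence $f\equiv 0$ on a neighborhood of $p_0$ in $\Omega^+$.

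Finally I would promote this local conclusion to all of $\Omega^+$ by a connectedness argument: the set of points of $\Omega^+$ near which $f$ vanishes is open by the above, and closed in $\Omega^+$ by continuity, and nonempty; since $\Omega^+$ is connected, $f\equiv 0$ on $\Omega^+$. (Alternatively, once $f$ is known to extend holomorphically to an ambient domain and to vanish there, one invokes the strong unique continuation of Proposition~\ref{propo2.3} directly.)

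\textbf{Main obstacle.} The delicate point is the reflection/gluing step across $N$: one must justify that the function obtained by pasting $f$ on $\Omega^+$ with $0$ on $\Omega^-$ is genuinely a $CR$ distribution on the two-sided neighborhood, i.e. that no singular conormal term survives on $N$. This requires that $f|_N$ vanish on an \emph{open} subset of $N$ (not merely at $p_0$), which is exactly the hypothesis, and a careful check that the tangential Cauchy–Riemann equations, tested against forms supported across $N$, are satisfied — a standard but somewhat technical computation using that $N$ is non-characteristic so that the conormal direction to $N$ is transverse to $HM$. Once this is in place, the holomorphic extension and the identity principle do the rest essentially formally.
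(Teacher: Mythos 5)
The paper does not actually prove this proposition: it is stated with a reference to the literature (\cite{Sm}), and the argument you sketch is essentially the one the authors spell out for the companion statement on $CR$ \emph{distributions} immediately afterwards (extension of $f$ by zero across $N$, wedge extension via minimality, gluing of the holomorphic extensions, and unique continuation for holomorphic functions). So your overall route is the intended one, and the gluing step — including your observation that the boundary term in the integration by parts dies because $f|_N=0$ on an open piece of $N$ and $f$ is continuous up to $\bar\Omega^+$ — is correct.

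Two points in your writeup are inaccurate as stated, though both are repairable. First, minimality at $p_0$ does \emph{not} give you a holomorphic extension of $\tilde f$ to ``a connected open neighborhood of $p_0$ in $\Xf$''; Tumanov's theorem \cite{Tu89} and its propagation \cite{Tr90} only produce an extension $F$ to a \emph{wedge} $W$ with edge (a neighborhood of $p_0$ in) $M$. A full-neighborhood extension is what property $(H)$ buys in \cite{NaPo}, and $(H)$ is not a hypothesis here. The argument still closes because $F$ has boundary value $\tilde f$, which vanishes on the open set $\Omega^-$ of the edge, and boundary uniqueness for holomorphic functions on wedges then forces $F\equiv 0$ on the connected wedge, hence $f=\mathrm{bv}(F)=0$ on $\Omega^+$ near $p_0$. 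Second, your globalization is not justified: the set of points of $\Omega^+$ near which $f$ vanishes is open, but it is \emph{not} ``closed by continuity'' — continuity only gives vanishing of $f$ on the closure, not vanishing on a neighborhood of each closure point. What you need is the weak unique continuation property on $\Omega^+$, which does hold because $M$ (hence every open subset of it) is a minimal locally embeddable $CR$ manifold — exactly the implication recorded in \S\ref{sec2} of the paper with references to \cite{Tr90,Tu89}. Your alternative fallback to Proposition~\ref{propo2.3} is not available, since that proposition assumes property $(H)$, which is not part of the hypotheses here. With these two corrections your proof is complete and coincides in substance with the paper's treatment of the distributional analogue.
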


We have a similar statement for $CR$ distributions.
\begin{prop} Assume that $M$ is either
a real-analytic $CR$ manifold, 
or a 
$CR$ submanifold of a complex manifold ${\Xf}$
that is minimal at every point. 
Let
$N$ be a $\Zf$-non-characteristic hypersurface of $M$, 
such that $M\setminus{N}$ is the
union of two disjoint connected open subsets $M_\pm$. 
Then there
is an open neighborhood $U$ of $N$ in $M$ such that 
any $CR$
distribution on $M_+$ having vanishing boundary values on $N$, 
vanishes on $U\cap{M}_+$. 
\end{prop}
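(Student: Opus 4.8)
The plan is to localize the assertion at a non-characteristic point, turn the vanishing Cauchy datum into a jump condition, and conclude by a unique continuation principle. Since the conclusion does not involve $f$, it suffices to find, for every $p_0\in{N}$, a connected neighborhood $\Omega_0\Subset{M}$ of $p_0$ such that $\Omega_0\setminus{N}$ has two connected components $\Omega_0^{\pm}\subset{M}_{\pm}$ and every $CR$ distribution on $\Omega_0^{+}$ with vanishing boundary values on $\Omega_0\cap{N}$ is zero; one then takes $U=\bigcup_{p_0\in{N}}\Omega_0$.

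First I would fix $p_0\in{N}$, a defining function $\rhoup$ for $N$ near $p_0$ with $M_-=\{\rhoup<0\}$, $d\rhoup(p_0)\neq{0}$, and note that $N$ being $\Zf$-non-characteristic at $p_0$ is the same as $d\rhoup(p_0)\notin{H}^0_{p_0}M$. For $f$ as above, extend it by $0$ across $N$ to a distribution $\tilde{f}$ on $\Omega_0$. The first key point is the Plemelj-type jump identity: for every $Z\in\Zf$, $Z\tilde{f}$ is a distribution carried by $N$, equal to the boundary value of $f$ times a coefficient proportional to $d\rhoup(Z)$; since that boundary value is zero, $Z\tilde{f}=0$ for all $Z\in\Zf$. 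Hence $\tilde{f}$ is a $CR$ distribution on the full neighborhood $\Omega_0$ of $p_0$ in $M$, vanishing identically on the nonempty open set $\Omega_0^{-}$.

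It then remains to show that, after shrinking $\Omega_0$, a $CR$ distribution on $\Omega_0$ that vanishes on $\Omega_0^{-}$ vanishes near $p_0$. If $M$ is a $CR$ submanifold of $\Xf$ which is minimal at every point, then near $p_0$ the distribution $\tilde{f}$ is the boundary value of a function holomorphic on a wedge with edge $M$ (Tumanov's theorem, valid for $CR$ distributions), and this boundary value is zero on the nonempty open subset $\Omega_0^{-}$ of the edge, so that holomorphic function, hence $\tilde{f}$, vanishes near $p_0$; equivalently, this is the weak unique continuation property $(W\!U\!C)$ of minimal locally $CR$-embeddable manifolds recalled in \S\ref{s1} (\cite{Tr90,Tu89}). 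If instead $M$ is real-analytic, then $M$ is locally $CR$-embeddable, and $d\rhoup(p_0)\notin{H}^0_{p_0}M$ says precisely that the conormal of $N$ is non-characteristic for the overdetermined real-analytic operator $\bar{\partial}_M$ on functions, whose characteristic set is $H^0M$; thus $\tilde{f}$ solves a real-analytic overdetermined system near $p_0$, is non-characteristic across $N$, and vanishes on one side of $N$, and one concludes by the Holmgren-type uniqueness theorem for overdetermined analytic systems, in the spirit of the uniqueness arguments of \cite{DCN,HN00,HN03}. An alternative, which I find cleaner, is to foliate $M$ near $p_0$ by its $CR$ orbits (Nagano's theorem): these are minimal real-analytic $CR$ submanifolds, $N$ cuts each of them along a still non-characteristic hypersurface, $\Zf$ is tangent to the orbits so that $\tilde{f}$ induces on each orbit a $CR$ distribution vanishing on one side of that hypersurface, and the minimal case applied leaf by leaf gives $\tilde{f}=0$ near $N$.

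The hard part is the real-analytic non-minimal case: carrying out the reduction to the $CR$ orbits at the level of distributions — the leafwise restriction and the reassembly of leafwise vanishing into vanishing on a neighborhood of $N$ — or, alternatively, pinning down the exact form of Holmgren uniqueness applicable to $\bar{\partial}_M$, is the only step that is not routine; the localization, the jump identity, and the minimal embedded case are standard.
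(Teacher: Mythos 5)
Your overall route is the paper's. Note first that the paper takes your jump identity essentially as the \emph{definition} of vanishing boundary values: for each $p\in N$ there is a CR distribution $\tilde f$ on a full neighborhood of $p$ extending $f$ and vanishing off $\bar M_+$. So your Plemelj step is either tautological or, if you derive it from a genuine trace, needs the (true, and noted in the paper) remark that CR distributions admit restrictions to $N$ because $N$ is non-characteristic. In the minimal embedded case your appeal to Tumanov wedge extension plus $(W\! U\! C)$ is exactly the paper's argument, which merely unwinds it: deform $N$ to a hypersurface $N_d\subset\overline{M^+}$ agreeing with $N$ near $p$, use that wedge extendability holds at points of $N_d\cap M^+$ and propagates along CR orbits of $N_d$ back to $p$, glue the resulting holomorphic function with the holomorphic extension of $\tilde f$ (which is $0$), and finish by unique continuation of holomorphic functions.

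The step you flag as the hard one --- the real-analytic, possibly non-minimal case --- is in fact the easy one, and the paper dispatches it in a single sentence. You need neither a Holmgren theorem for overdetermined systems nor the Nagano/orbit foliation (which, as you note, would force you through leafwise restriction of distributions). The hypothesis that $N$ is $\Zf$-non-characteristic at $p_0$ says precisely that some single $Z\in\Zf$ satisfies $\langle d\rhoup(p_0),Z\rangle\neq 0$, i.e. $N$ is non-characteristic at $p_0$ for the \emph{scalar} first-order operator $Z$, whose coefficients are real-analytic. The classical Holmgren uniqueness theorem in its distribution form, applied to the single equation $Z\tilde f=0$ with $\tilde f$ vanishing on one side of $N$, already gives $\tilde f=0$ near $p_0$; the remaining equations of the system play no role. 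With that observation your argument closes and coincides with the paper's proof.
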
 
\begin{proof} An $f\in\cD'(M_+)$ is $CR$ if $Zf=0$ in $M_+,$ 
in the sense of distributions,
for all $Z\in\Zf(M)$. We say that $f$ has
\textit{zero boundary value} on $N$ 
if for each $p\in{N}$ we can find an open neighborhood $U_p$
of $p$ in $M$ and a $CR$-distribution $\tilde{f}\in\cD'({U}_p)$ which extends 
$f|_{M_+\cap{U}_p}$ 
and is zero on $U_p\setminus\bar{M}_+$. 
Note that, since $N$ is non-characteristic, 
all $CR$ distributions defined on a neighborhood of $N$, admit a \textit{restriction} to $N$.\par
The case where $M$ is a real-analytic $CR$ manifold reduces to the classical Holmgren 
uniqueness theorem. 
\par
In the other case, where
$M$ is 
$\Ci$ smooth, but is assumed to be
minimal, 
we first choose a slight deformation $N_d$ of $N$
such that $N_d$ is contained in $\overline{M^+}$
and coincides with $N$ near $p$.
Moreover we can achieve that 
the CR orbit $\cO(p,N_d)$ of $p$
in $N_d$ intersects $N_d\cap M^+$.
Since $M^+$ is minimal at every point,
CR distributions holomorphically extend 
to open wedges attached to $M^+$.
In particular this holds for the boundary
value of $f|_{M_d^+}$
($M_d^+$ being the side of $N_d$
containing $M^+$) at any point
of $N_d\cap M^+$.

Using that wedge extension propagates 
along CR orbits we get wedge extension 
from $N_d$ at $p$. 
Examining how the wedges are constructed
by analytic disc techniques, one more precisely 
obtains a neighborhood V of $p$ in $\overline{M^+}$ 
and an open truncated cone
$C\subset\C^n$ such that
$\tilde{f}$ holomorphically extends to  
$W_N=\bigcup_{z\in V\cap N} (z+C)$, 
and $f$ to $W^+=\bigcup_{z\in V\cap M^+} (z+C)$.
The idea is to work with analytic discs attached
to (deformations of) $N_d$ and 
to nearby hypersurfaces of $M$.

Since $\tilde{f}$ is the boundary value of $f$,
the two extensions glue to a single function 
$F\in\cO({W_N\cup W^+})$. 
On the other hand, $F$ is zero on $W_N$
(since $\tilde{f}$ vanishes near $p$) and
thus on $W^+$,
by the unique continuation of holomorphic functions. 
Finally $f$, being the boundary value of $F$,
has to vanish on $N\cap M^+$.
%
\end{proof}
\begin{rmk}\rm
Thanks to the extension result proved in \cite{Jo1,Mk1}, see also \cite{MePo},
it suffices to assume that $M^+$ is {\it globally minimal},
i.e.~that $M^+$ consists of only one CR orbit.
\end{rmk}

For an embedded $CR$ manifold with property $(H)$,
uniqueness results
can be derived from 
Proposition~\ref{propo2.3}.
Indeed, in this case, a $CR$ function defined on a neighborhood in $M$ 
of a point $p_0\in{N}$ and whose restriction to $N$ has a zero of infinite order at $p_0$,
also has a zero of infinite order at $p_0$ as a function on $M$ and then is zero on the connected
component of $p_0$ in its domain of definition by the strong unique continuation principle.
\par\smallskip
The situation is quite different for 
abstract $CR$ manifolds: there are examples 
of pseudo-convex
$M$ 
on which there are nonzero 
smooth $CR$ functions vanishing on an open subset (see e.g. \cite{Ros}).
Here, for the pseudo-concave case, we give a uniqueness result 
which is similar to those of 
\cite{DCN, HN00, HN03}, 
but more general, because 
we do not require the existence 
of sections $\tauup$ of 
$\Kf^{(n)}$,
i.e. we drop the 
rank requirement, but we  assume that 
the initial hypersurface
$N$ is non-characteristic with respect to
the sub-distribution $\Theta$ of $\Zf$, which was defined in  \S\ref{s3}.
\par
In this context we can slightly generalize $CR$ functions by considering,
for a given $\tauup\in\Kf(M)$,
functions $f$ on $M$ satisfying 
\begin{equation} \label{eq:3.1}
\begin{cases}
 f\in{L}^2_{\mathrm{loc}}(M),\;\; \forall Z\in\tilde{\Theta}, 
 Zf\in{L}^2_{\mathrm{loc}}(M)\;\;
\text{and} \;\exists \kappaup_Z\in{L}^{\infty}_{\mathrm{loc}}(M,\R)\;\; \\ \; \text{such that}
 \quad \;\; |(Zf)(p)|\leq
 \kappaup_Z(p)|f(p)| \;\; \text{a.e. on $M$}.
\end{cases}
 \end{equation}
 \par 
 Condition \eqref{eq:3.1}, with $\Zf(M)$ instead of $\tilde{\Theta}(\tauup)$,  
 naturally arises when we consider 
 $CR$ sections of a complex $CR$ line
 bundle (see \cite[\S{7}]{HN00}). \par
 We note that the hypersurface $N$ is non-characteristic at a point $p_0$ with respect to the
 distribution $\Theta$ if it is non-characteristic at $p_0$ for $\Theta(\tauup)$ for some
 $\tauup\in\Kf(M)$. 
\begin{prop}\label{prp6.1} 
Let $\Omega^{\mathrm{open}}\subset{M}$ 
and $N\subset\partial\Omega$
a smooth $\Theta$-non-characteristic hypersurface in $M$.
Then 
there is a neighborhood $U$ of $N$ in $M$ such that 
any solution $f$ of 
 \eqref{eq:3.1}, which is continuous on $\bar{\Omega}$
 and vanishes on $N$, is zero on $U\cap\Omega$.
\end{prop}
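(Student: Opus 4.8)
The plan is to reduce the assertion to a local unique-continuation property across $N$ for the degenerate-elliptic operator $\Pt$ of \S\ref{s3}, and to establish that property by a Carleman estimate whose weight is dictated by the non-characteristic hypothesis. First I would localize: it suffices to produce, for each $p_0\in N$, an open neighborhood $U_{p_0}$ of $p_0$ in $M$, \emph{independent of $f$}, with $f\equiv 0$ on $U_{p_0}\cap\Omega$, and then set $U=\bigcup_{p_0\in N}U_{p_0}$. Fix $p_0$. By the description of $\Theta$-non-characteristic recalled just before the statement, choose $\tauup\in\Kf(M)$, a representation $\tauup=\sum_{j=1}^rZ_j\otimes\bar{Z}_j$, and $L_0\in\Zf$ as in \eqref{1.4a}, so that $N$ is non-characteristic at $p_0$ for $\Theta(\tauup)=\langle Z_1,\hdots,Z_r\rangle$. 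If $\rhoup\in\Ci$ is a defining function of $N$ near $p_0$ with $\Omega=\{\rhoup<0\}$ there, this means exactly that $\sum_{j=1}^{2r}|X_j\rhoup(p_0)|^2>0$, where $X_j=\re{Z}_j$, $X_{r+j}=\im{Z}_j$; equivalently, $d\rhoup(p_0)$ lies off the characteristic set of $\Pt=-X_0+\sum_{j=1}^{2r}X_j^2$, $X_0=\re{L}_0$. Since $f$ is continuous up to $N$, vanishes there, and has $Zf\in L^2_{\mathrm{loc}}$ near $N$ for $Z\in\tilde{\Theta}$, I would extend $f$ by $0$ across $N$: because $f|_N=0$, no single-layer term appears on $N$, the extension $\tilde{f}$ solves \eqref{eq:3.1} on a full two-sided neighborhood of $p_0$, and $\tilde{f}\equiv 0$ on $\{\rhoup>0\}$. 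It remains to prove $\tilde{f}\equiv 0$ near $p_0$.

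The next step is to record the second-order consequence of \eqref{eq:3.1}. By \eqref{1.4b} one has the operator identity $\Pt=\sum_{j=1}^r\bar{Z}_jZ_j-L_0$, all of whose constituents belong to $\tilde{\Theta}$; writing $Z_j\tilde{f}=\kappaup_j\tilde{f}$ and $L_0\tilde{f}=\kappaup_0\tilde{f}$ with $\kappaup_j,\kappaup_0\in L^\infty_{\mathrm{loc}}$, we get, in the distributional sense,
\begin{equation*}
\Pt\tilde{f}=\sum_{j=1}^r\bar{Z}_j(\kappaup_j\tilde{f})-\kappaup_0\tilde{f}=\sum_{j=1}^r\bar{Z}_jg_j-g_0,\qquad |g_j|,\ |g_0|\le C|\tilde{f}|\ \text{a.e.\ near }p_0.
\end{equation*}
Thus $\tilde{f}$ is a weak solution of a ``conditional'' Carleman inequality relative to $\Pt$, with the first-order data $g_j$ kept in divergence form — crucially, this uses only the derivatives controlled by \eqref{eq:3.1}.

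The core of the proof is a Carleman estimate for $\Pt$. One builds a weight $\phiup\in\Ci$ near $p_0$ with $d\phiup(p_0)$ a positive multiple of $d\rhoup(p_0)$ — a sufficiently convex increasing function of $-\rhoup$ will do — such that, for a small neighborhood $V\ni p_0$ and all $s\ge s_0$,
\begin{equation*}
\int_V e^{2s\phiup}\Big(s^{3}|u|^2+s\sum_{j=1}^{2r}|X_ju|^2\Big)\,d\lambda\ \le\ C\int_V e^{2s\phiup}\,|\Pt u|^2\,d\lambda,\qquad\forall\,u\in\Cic(V),
\end{equation*}
together with its dual form controlling the left-hand side when $\Pt u$ is presented in divergence form $\sum_j\bar{Z}_jh_j-h_0$. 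Here the non-characteristic hypothesis is decisive: it forces $d\phiup$ into the region where the symbol of the sub-Laplacian $\sum X_j^2$ is non-degenerate, so that Hörmander's pseudoconvexity condition can be met there, while the drift $-X_0$ and all lower-order errors are absorbed thanks to the gain $s\sum\|e^{s\phiup}X_ju\|_0^2$ on the left. I expect this estimate to be the main obstacle, since $\Pt$ is only degenerate-elliptic and one must produce a subelliptic Carleman inequality rather than the classical elliptic one; this is exactly where the techniques used in \S\ref{s3} for the generalized Kolmogorov equation, together with the convexity analysis of \S\ref{cv}, come into play.

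Granting the estimate, I would apply it to $u=\chiup\tilde{f}$ for a cut-off $\chiup\in\Cic(V)$ with $\chiup\equiv 1$ near $p_0$. Writing $\Pt u=\chiup\big(\sum_j\bar{Z}_jg_j-g_0\big)+[\Pt,\chiup]\tilde{f}$ and moving each $\bar{Z}_j$ off $g_j$, the dual Carleman inequality bounds the $s$-weighted left-hand side by a constant times $\int e^{2s\phiup}\big(|\tilde{f}|^2+\sum_j|X_j(\chiup\tilde{f})|^2\big)d\lambda$ plus terms supported in $\{d\chiup\neq 0\}$. For $s$ large (depending on $\sup_V|\kappaup_j|$, not on the region) the first- and zeroth-order terms are absorbed on the left; choosing $\phiup$ and $\supp\chiup$ by the standard convexification of $N$ at $p_0$ — so that $\{\phiup\ge\phiup(p_0)\}\cap V$ is a small relatively compact lens and $\{d\chiup\neq 0\}\subset\{\tilde{f}\equiv 0\}\cup\{\phiup<\phiup(p_0)\}$ — the remaining terms are $O(e^{2s\theta})$ for some $\theta<\phiup(p_0)$, and letting $s\to\infty$ forces $\tilde{f}\equiv 0$ on $\{\phiup>\phiup(p_0)\}\cap V$, an $f$-independent one-sided neighborhood of $p_0$ in $\Omega$. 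Two routine points close the argument: the Carleman inequality, proved for smooth $u$, extends to $\tilde{f}$ by a Friedrichs-mollifier argument that invokes only the controlled derivatives $Z\tilde{f}$, $Z\in\tilde{\Theta}$; and running the construction at every $p_0\in N$ produces the neighborhood $U$.
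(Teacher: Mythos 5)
Your localization and the extension of $f$ by zero across $N$ (justified via Friedrichs' identity of weak and strong extensions) match the paper's first step. The gap is in the core lemma: the Carleman estimate
\begin{equation*}
\int_V e^{2s\phiup}\Big(s^{3}|u|^2+s{\sum}_{j=1}^{2r}|X_ju|^2\Big)\,d\lambda\ \le\ C\int_V e^{2s\phiup}\,|\Pt u|^2\,d\lambda,\qquad\forall\,u\in\Cic(V),
\end{equation*}
is false for the degenerate-elliptic operator $\Pt=-X_0+{\sum}_jX_j^2$ unless the $X_j$ span $T_pM$ (i.e.\ unless $\Pt$ is elliptic, which it is not here since $2r\le 2n<m$). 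At any covector $\xiup$ annihilating all the $X_j(x)$ the principal symbol $p(x,\xiup)=-{\sum}_j\langle X_j,\xiup\rangle^2$ vanishes \emph{together with its full Hamilton field}, so H\"ormander's strong pseudoconvexity condition $H_p^2\phiup>0$ fails identically on that part of the characteristic set, whatever weight you choose; the non-characteristic hypothesis only places $d\rhoup(p_0)$ off $\mathrm{Char}(\Pt)$ and says nothing about the rest of the characteristic variety over points of $V$. This is not a technical obstacle one can expect to remove: unique continuation across non-characteristic hypersurfaces genuinely fails for sums of squares plus bounded potential (Bahouri-type counterexamples), so no estimate of this strength valid for all $u\in\Cic(V)$ can exist. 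Your reduction to the single divergence-form equation $\Pt\tilde f={\sum}_j\bar Z_jg_j-g_0$ discards precisely the information that saves the day.

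The paper closes the argument with a \emph{first-order} Carleman estimate instead (the Lemma giving \eqref{eq8.3} in \S\ref{s8}, after \cite[Theorem 5.2]{HN00}): one integration by parts, using ${\sum}_i[Z_i,\bar Z_i]=\bar L_0-L_0$, yields
\begin{equation*}
\|e^{t\psiup}L_0f\|_0^2+{\sum}_{i=1}^r\|e^{t\psiup}Z_if\|_0^2\ \ge\ \int\big(2t\, dd^c\psiup(\tauup)+\kappa\big)|f|^2e^{2t\psiup}\,d\mu .
\end{equation*}
Here every term on the left is bounded by $\kappaup_Z^2|f|^2e^{2t\psiup}$ directly from \eqref{eq:3.1} — this uses all $r+1$ separate first-order bounds, not just their combination into $\Pt$ — so the left side is $O(1)\cdot\int|f|^2e^{2t\psiup}$ in $t$, and it suffices to make $dd^c\psiup(\tauup)=P_{\!\tauup}\psiup>0$ and let $t\to\infty$. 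The $\Theta$-non-characteristic hypothesis enters only at this last point: taking $\psiup=e^{s\rhoup'}$ for a suitable defining function gives $e^{-s\rhoup'}P_{\!\tauup}(e^{s\rhoup'})=s\,dd^c\rhoup'(\tauup)+s^2{\sum}_i|Z_i\rhoup'|^2$, and ${\sum}_i|Z_i\rhoup'|^2>0$ lets $s\gg1$ force positivity (this is exactly Propositions~\ref{p8.7} and \ref{prop8.8}). The conclusion is then propagated along the Sussmann leaves of $\tilde\Theta$ transversal to $N$, which fill an $f$-independent neighborhood $U$ of $N$. If you reorganize your argument around this first-order inequality — keeping $Z_1f,\hdots,Z_rf,L_0f$ as $r+1$ individually controlled quantities rather than assembling them into $\Pt\tilde f$ — the rest of your scheme (cut-off, support of $d\chiup$ in the region already known to be zero, $s\to\infty$) goes through.
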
 
\begin{proof}
We note that the assumption of constancy of rank in unessential and
never used in the proof of \cite[Theorem 4.1]{HN03}.
We reduce to that situation by considering the 
$\tilde{\Theta}$-strucute on $M,$
defined by the distribution of \eqref{theta3.6}, 
after we make the following observation. Since the statement is local,
we can assume that $N$ 
splits  $M$ into two closed half-manifolds $M_\pm$, with $\Omega=M_-$
and $\partial\Omega=N$. A continuous solution
$f$ of \eqref{eq:3.1} in $M_-$ vanishing on $N$, when
extended by $0$ on $M_+$, defines 
a continuous solution $\tilde{f}$ of \eqref{eq:3.1} in $M$ 
whith $\supp\tilde{f}\subset\bar{M}_-$.
In fact, since $L\in\tilde{\Theta}(M)$ is first order, $L\tilde{f}$ 
equals $Lf$ 
on $M_-$ and  $0$
on $M_+$, as one can easily check by integrating by parts and using 
the identity of weak and strong extensions of \cite{F44}. Hence 
$\tilde{f}$ still satisfies \eqref{eq:3.1}
and vanishes on  an open subset of $M$. By proving Carleman estimates, similar to those  
in \cite[Theorem~5.2]{HN00},
we obtain that $\tilde{f}$ vanishes along the Sussmann leaves of $\tilde{\Theta} 
$ 
transversal
to $N$ 
(see \cite{HN03, Su73}).
These leaves 
fill a neighborhood of $N$ in $M$, 
where
$\tilde{f}$ vanishes. This proves our contention.
 \end{proof}

\begin{rmk}
 Note that $\C^n\times\R^r$ is weakly pseudo-concave (but not essentially pseudo-concave).
 Thus we need the genericity assumption \eqref{eq:8.1} to get uniqueness in this case. 
 The uniqueness for the non-characteristic
 Cauchy problem in the case of 
 a single partial differential operator of \cite{CH79, ST74} 
 may be considered 
 a 
 special case of this proposition, when the $CR$ dimension is one. 
 \end{rmk}
Uniqueness in the case where $N$ can be 
 characteristic for $\Theta 
 $, but not for 
 $\Zf 
 $,
 will be obtained by adding a pseudo-convexity hypothesis. 

First we prove a Carleman-type estimate.

\begin{lem}
 Let $\tauup$ 
 be a section of $\Kf$ and 
 $\psiup$ 
 a real valued smooth function on~$M$. Then there is a smooth real valued
 function $\kappa$ on $M$ such that 
 \begin{align}\label{eq8.3}
 \|\exp(t\psiup)L_0f\|_0^2\! +\! {\sum}_{i=1}^r\|\exp(t\psiup)Z_if\|_0^2
\geq\int(2t\cdot 
dd^c\psiup(\tauup)+\kappa)
\, |f|^2{e}^{2t\psiup}d\mu ,\;\\
 \notag \forall f\in\Cic(M),\; \forall t>0.
\end{align}
\end{lem}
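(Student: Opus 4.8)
The plan is to expand the two $L^2$-norms on the left-hand side by integration by parts, using the representation $\sum_{i=1}^r[Z_j,\bar Z_j]=\bar L_0-L_0$ that holds because $\tauup\in\Kf$, and then to recognize the principal quadratic expression that emerges as $2t\,dd^c\psiup(\tauup)$ via Proposition~\ref{p4.6}. First I would set $g=e^{t\psiup}f$ and write $e^{t\psiup}Z_if=Z_ig-t(Z_i\psiup)g$, so that
\begin{equation*}
 \|e^{t\psiup}Z_if\|_0^2=\|Z_ig\|_0^2-2t\,\re(Z_ig\,|\,(Z_i\psiup)g)_0+t^2\|(Z_i\psiup)g\|_0^2 .
\end{equation*}
The cross term is where the Hessian is born: integrating by parts, $-2t\,\re(Z_ig\,|\,(Z_i\psiup)g)_0=2t\,\re\big(g\,|\,\bar Z_i((Z_i\psiup)g)\big)_0+\text{(boundary-free, since $\Cic$)}$, and expanding $\bar Z_i((Z_i\psiup)g)=(\bar Z_iZ_i\psiup)g+(Z_i\psiup)(\bar Z_ig)$ produces, after symmetrizing over the analogous computation coming from $\|e^{t\psiup}L_0f\|_0^2$, exactly a term $\int t\big(\sum_i(Z_i\bar Z_i+\bar Z_iZ_i)\psiup-2\re L_0\psiup\big)|g|^2\,d\mu=\int 2t\,P_{\!\tauup}\psiup\,|g|^2\,d\mu=\int 2t\,dd^c\psiup(\tauup)|g|^2\,d\mu$, using \eqref{1.4b} and Proposition~\ref{p4.6}.

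The remaining terms must be collected into the harmless quantity $\int\kappa|f|^2e^{2t\psiup}d\mu$. The genuinely nonnegative pieces $\|Z_ig\|_0^2$, $\|L_0g\|_0^2$ and $t^2\sum_i\|(Z_i\psiup)g\|_0^2$ can simply be discarded (they are $\geq 0$). What remains are lower-order commutator terms: contributions of the form $\int(\re L_0\psiup)^2|g|^2$, terms linear in $t$ that arise from writing $L_0$ in the symmetrized form and from the discrepancy between $2\re(\bar Z_iZ_i)$ and $Z_i\bar Z_i+\bar Z_iZ_i$ (i.e. another occurrence of $[Z_i,\bar Z_i]$), and genuine zeroth/first-order terms coming from integrating by parts the coefficients. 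The point is that all of these are bounded, pointwise, by $C(1+t)|g|^2$ with $C$ depending only on $\psiup$, $\tauup$ and a fixed metric — but we need them absorbed into a $t$-independent $\kappa$. The way to handle the $t$-linear leftover is to recognize that it, too, is a first-order-in-$\psiup$ differential expression applied to $\psiup$ with smooth coefficients, hence can be merged into the coefficient of $dd^c\psiup(\tauup)$ only up to the stated inequality — so in fact one should be slightly more careful and note that the statement asks for $\geq$, which lets us absorb any term of the form $+\,t\cdot(\text{smooth})\cdot|f|^2e^{2t\psiup}$ by \emph{subtracting} it, i.e. by allowing $\kappa$ to be large negative where needed; since $t>0$, a lower bound of the shape $2t\,dd^c\psiup(\tauup)+\kappa$ with $\kappa$ smooth (but possibly very negative) is exactly what the discarded-nonnegative-squares bookkeeping yields.

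The main obstacle will be the careful tracking of the first-order terms produced by integration by parts so that the coefficient of $|g|^2$ is genuinely $2t\,dd^c\psiup(\tauup)+\kappa$ with $\kappa$ \emph{independent of $t$}, rather than $O(1+t)$. The resolution is that the only $t$-linear terms besides the Hessian are themselves, after one more integration by parts, either absorbed into the positive squares $\|Z_ig\|_0^2$, $\|L_0g\|_0^2$ (completing the square: $\|Z_ig-t(Z_i\psiup)g\|_0^2\geq 0$ reorganizes the cross and quadratic terms), or they vanish because $\sum_i[Z_i,\bar Z_i]$ was already used to pass from $\sum_i(Z_i\bar Z_i+\bar Z_iZ_i)\psiup$ to $P_{\!\tauup}\psiup$. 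Once this cancellation is in place, $\kappa$ is just the finite collection of smooth coefficients left over from commuting vector fields past cutoffs and from the zeroth-order remainder of the p.d.o.\ $P_{\!\tauup}$, all manifestly $t$-free, and the inequality \eqref{eq8.3} follows. I would present the computation for a single $Z_i$ in detail and indicate that the $L_0$-term and the sum over $i$ are handled by the identical manipulation combined with \eqref{1.4b}.
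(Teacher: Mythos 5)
Your skeleton is the right one --- conjugate by $e^{t\psiup}$, integrate by parts, and identify the resulting zeroth-order coefficient with $2t\,dd^c\psiup(\tauup)$ through \eqref{1.4b} and Proposition~\ref{p4.6} --- but the central analytic step has a genuine gap. If you expand $\|Z_ig-t(Z_i\psiup)g\|_0^2$ and discard the squares $\|Z_ig\|_0^2$ and $t^2\|(Z_i\psiup)g\|_0^2$, all the information sits in the cross term, and integrating it by parts produces, besides the zeroth-order piece $2t\re\int(Z_i\bar Z_i\psiup)|g|^2d\mu$, a genuinely first-order term in $g$ of the shape $2t\,\re\!\int(\bar{Z}_i\psiup)\,g\,\overline{(\bar Z_i g)}\,d\mu$. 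This term carries a factor of $t$, involves the conjugate field $\bar Z_i$, and is not a total derivative of $|g|^2$; estimating it costs either $t\|\bar Z_ig\|_0^2$ (which cannot be absorbed into the discarded $\|Z_ig\|_0^2$, whose coefficient is $1$, once $t$ is large) or $Ct^2\|g\|_0^2$ (which destroys the linear-in-$t$ lower bound). None of your proposed remedies --- ``completing the square'', ``symmetrizing with the $L_0$ computation'', or invoking $\sum_i[Z_i,\bar Z_i]=\bar L_0-L_0$ as an identity of functions applied to $\psiup$ --- removes it.

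The paper avoids this by never splitting the square: writing $A_i=Z_i-t(Z_i\psiup)$ it uses $\|A_iv\|_0^2=\|A_i^*v\|_0^2+([A_i^*,A_i]v\,|\,v)_0$, so that every first-order-in-$v$ contribution is packaged either into the nonnegative term $\|A_i^*v\|_0^2$ (discarded) or into the commutator, which is $[Z_i^*,Z_i]$ plus $t$ times multiplication operators; the only uncontrolled first-order term is then $\sum_i([Z_i,\bar Z_i]v\,|\,v)_0$, with no factor of $t$. It is precisely here that $\tauup\in\Kf$ and the $L_0$-term on the left-hand side enter: $\sum_i[Z_i,\bar Z_i]=\bar L_0-L_0$ turns that term, after one more integration by parts, into $-2\re(L_0v\,|\,v)_0+O(\|v\|_0^2)$, which is absorbed by Cauchy--Schwarz against $\|e^{t\psiup}L_0f\|_0=\|L_0v-t(L_0\psiup)v\|_0$; this absorption is also what generates the $-2t\re(L_0\psiup)=-2tX_0\psiup$ part of $2tP_{\!\tauup}\psiup$. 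Your attribution of that term to ``the analogous computation coming from $\|e^{t\psiup}L_0f\|_0^2$'' (i.e.\ expanding that square as well) is not how the estimate closes and would introduce further uncontrolled terms such as $([L_0,\bar L_0]v\,|\,v)_0$. Reorganize the argument around the adjoint--commutator identity and the rest of your bookkeeping can be made to work.
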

Here the $L^2$-norms and the integral are defined
 by utilizing  the smooth measure $d\mu$
associated to  a fixed Riemannian metric on $M$.
\begin{proof}
Let $\tauup={\sum}_{i=1}^r{Z}_i\otimes\bar{Z}_i$, 
${\sum}_{i=1}^r[Z_i,\bar{Z}_i]=\bar{L}_0-L_0$,
with $Z_i,L_0\in\Zf(M)$. 
We will indicate by $\kappa_{\!{1}},\kappa$ smooth functions on $M$ which only depend on
$Z_1,\hdots,Z_r$. 
For $f\in\Cic(M)$, and a fixed $t>0$, set $v=f\!\cdot\!\exp(t\psiup)$. 
Integration by parts yields
\begin{align*}
{\sum}_{i=1}^r \|Z_iv-{t}{v}{Z}_i\psiup\|_0^2&= {\sum}_{i=1}^r\|Z_i^*v-{t}{v}\bar{Z}_i\psiup\|_0^2
+\int{\sum}_{i=1}^r[{Z}_i,\bar{Z}_i]v\cdot\bar{v}\,d\mu \\
& \qquad\qquad\qquad\qquad
+  \re\int\left(\kappa_0+{\sum}_{i=1}^r
2{t}(Z_i\bar{Z}_i\psiup)\right)|v|^2d\mu ,
\end{align*} 
where 
the superscript star stands for formal adjoint 
with respect to the Hermitian scalar product of $L^2(d\mu)$.
For the second summand in the right hand side
we have 
\begin{align*}
\int{\sum}_{i=1}^r[{Z}_i,\bar{Z}_i]v\cdot\bar{v}\,d\mu &=\int \bar{L}_0v\cdot\bar{v}\,d\mu
- \int{L}_0v\cdot\bar{v}\,d\mu  \\
 &=-\int{L}_0v\cdot\bar{v}\,d\mu-\int{v}\cdot\overline{L_0v}\,d\mu-\int\kappa_1|v|^2d\mu\\
 &= -2\re\int{L}_0v\cdot\bar{v}\,d\mu-\int\kappa_1|v|^2d\mu\\
& \geq - {2} \|L_0v-{t}{v}L_0\psiup\|_0\|v\|_0-\int(\kappa_1+{2{t}\re{L}_0\psiup})|v|^2d\mu\\
&\geq -\|L_0v-{t}{v}L_0\psiup\|_0^2-\int(1+\kappa_1+{2{t}\re{L}_0\psiup})|v|^2d\mu.
\end{align*}
Therefore we obtain the estimate 
\begin{align*}
& \|L_0v-{t}{v}L_0\psiup\|_0^2+{\sum}_{i=1}^r\|Z_iv-{t}{v}Z_i\psiup\|_0^2\\ &\qquad\quad \geq 
 \int \big({t}[Z_i\bar{Z}_i+\bar{Z}_i{Z}_i]\psiup-2{t}(\re{L}_0)\psiup-\kappa_2 \big)\,|v|^2\,d\mu
 =\int (2{t}{P}_{\tauup}\psiup+\kappa)|v|^2d\mu.
\end{align*}
By Proposition~\ref{p4.6},
this yields \eqref{eq8.3}.
\end{proof}
From the Carleman estimate \eqref{eq8.3} we obtain a uniqueness result 
\textit{under convexity conditions},
akin to the one of
\cite[\S{28.3}]{Horm85} for a scalar p.d.o.
\begin{prop} \label{p8.7}
Assume there is a section $\tauup\in\Kf$
 and  $\psiup\in\Ci(M,\R)$ such that 
\begin{equation} d\psiup(p_0)\neq{0},\quad 
dd^c\psiup(\tauup)>0.
\end{equation}
Then there is an open neighborhood $U$ of $p_0$ in $M$ with the property that
any solution $f$  of \eqref{eq:3.1}
 which vanishes a.e. on $U\cap\{p\mid\psiup(p)>\psiup({p_0})\}$ also
 vanishes a.e. on~$U$. \qed
\end{prop}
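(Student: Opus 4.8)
The plan is to derive the statement from the Carleman estimate \eqref{eq8.3}, fed with the weight $\psiup$ itself, suitably perturbed so that the \emph{strict} inequality $dd^c\psiup(\tauup)>0$ enters through the geometry of a cut-off function. We may assume $\psiup(p_0)=0$. Write $\tauup={\sum}_{i=1}^rZ_i\otimes\bar{Z}_i$ with ${\sum}_{i=1}^r[Z_i,\bar{Z}_i]=\bar{L}_0-L_0$ and $Z_i,L_0\in\Zf$. By Proposition~\ref{p4.6} the function $dd^c\psiup(\tauup)=P_{\!\tauup}\psiup$ is continuous, so there is a coordinate ball $B_0=\{|x-p_0|<r_0\}$ centred at $p_0$ on which $dd^c\psiup(\tauup)\geq c_0>0$. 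I would then fix $\epsilon>0$ so small that the perturbed weight $\phi=\psiup-\epsilon\,|x-p_0|^2$ (taken in the chart and extended smoothly to $M$) still satisfies $dd^c\phi(\tauup)=P_{\!\tauup}\psiup-\epsilon\,P_{\!\tauup}(|x-p_0|^2)\geq c_0/2$ on $B_0$, which is possible since $P_{\!\tauup}(|x-p_0|^2)$ is bounded on $B_0$. Note that $\phi(p_0)=0$; set $\beta=\epsilon\,r_0^2/4>0$.

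First I would apply \eqref{eq8.3}, with $\psiup$ replaced by $\phi$, to $g=\chiup\,u$, where $u$ is a solution of \eqref{eq:3.1} vanishing a.e.\ on $B_0\cap\{\psiup>0\}$ and $\chiup\in\Cic(B_0)$ equals $1$ on $\{|x-p_0|<r_0/2\}$. Since the integration by parts behind \eqref{eq8.3} only involves first order operators, the estimate passes from $\Cic(M)$ to compactly supported $g$ with $g,\,L_0g,\,Z_ig\in L^2$ by Friedrichs' lemma \cite{F44}. Using $L_0(\chiup\,u)=\chiup\,L_0u+(L_0\chiup)u$ and $Z_i(\chiup\,u)=\chiup\,Z_iu+(Z_i\chiup)u$ together with the pointwise bounds $|L_0u|\leq\kappaup_{L_0}|u|$, $|Z_iu|\leq\kappaup_{Z_i}|u|$ of \eqref{eq:3.1} (legitimate since $L_0,Z_i\in\tilde{\Theta}$), the left-hand side of \eqref{eq8.3} is at most $C_0\,\|e^{t\phi}\chiup\,u\|_0^2+C_1\int_{\supp d\chiup}|u|^2e^{2t\phi}d\mu$, whereas for $t$ large its right-hand side is at least $\tfrac12 t c_0\,\|e^{t\phi}\chiup\,u\|_0^2$. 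Absorbing the first term, one gets, for $t$ large,
\begin{equation*}
 \tfrac14\,t c_0\,\|e^{t\phi}\chiup\,u\|_0^2\;\leq\;C_1\int_{\supp d\chiup}|u|^2\,e^{2t\phi}\,d\mu .
\end{equation*}

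The decisive observation concerns $\phi$ on $\supp d\chiup\subset\{r_0/2\leq|x-p_0|\leq r_0\}$: there $\epsilon\,|x-p_0|^2\geq\beta$, so at a point of $\supp d\chiup$ where $u\neq 0$ one necessarily has $\psiup\leq 0$ (because $u$ vanishes where $\psiup>0$), hence $\phi=\psiup-\epsilon\,|x-p_0|^2\leq-\beta$. Thus the right-hand side above is $\leq e^{-2t\beta}\,\|u\|_{L^2(\supp d\chiup)}^2$. Picking $\sigma\in(0,r_0/2)$ with $\phi\geq-\beta/2$ on $\{|x-p_0|<\sigma\}$ (possible since $\phi$ is continuous and $\phi(p_0)=0$), the left-hand side is $\geq\tfrac14 t c_0\,e^{-t\beta}\,\|u\|_{L^2(\{|x-p_0|<\sigma\})}^2$; combining and letting $t\to+\infty$ forces $u\equiv 0$ a.e.\ on $\{|x-p_0|<\sigma\}$. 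This gives the assertion with $U=\{|x-p_0|<\sigma\}$; the discrepancy that the vanishing of $f$ has been used on the slightly larger set $B_0\cap\{\psiup>0\}$ is removed in the customary way, noting that the set of points near $p_0$ having a neighbourhood on which $f$ vanishes a.e.\ is open and re-running the argument at its boundary points.

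I expect the main obstacle to be exactly the device of the third paragraph: turning the strict convexity $dd^c\psiup(\tauup)>0$ into the perturbation $\phi=\psiup-\epsilon\,|x-p_0|^2$, which lowers $\phi$ below $\psiup$ by the definite amount $\beta$ once one is away from $p_0$, so that the part of $\supp d\chiup$ on which $u$ may fail to vanish falls inside $\{\phi\leq-\beta\}$, where the Carleman weight is exponentially small, while $\phi\geq-\beta/2$ persists on a whole neighbourhood of $p_0$ where the conclusion is drawn. By contrast, verifying that $dd^c\phi(\tauup)$ remains positive, absorbing the lower order terms through \eqref{eq:3.1}, and extending \eqref{eq8.3} from smooth compactly supported functions to the class \eqref{eq:3.1} via Friedrichs' lemma should all be routine.
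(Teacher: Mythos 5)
Your proposal is correct and is essentially the proof the paper intends: Proposition~\ref{p8.7} is stated without proof beyond the pointer to the Carleman estimate \eqref{eq8.3} and to H\"ormander's \S 28.3, and your argument --- convexifying the weight to $\phiup=\psiup-\epsilon|x-p_0|^2$ so that $dd^c\phiup(\tauup)$ stays positive while $\phiup$ drops by a definite amount $\beta$ on the annulus carrying $d\chiup$, applying \eqref{eq8.3} to $\chiup f$, absorbing the zeroth-order terms through the pointwise bounds of \eqref{eq:3.1} and Friedrichs' lemma, and letting $t\to\infty$ --- is exactly that standard deduction. The only loose thread is the one you already flag, namely reconciling the neighbourhood on which vanishing is assumed with the one on which it is concluded, which is handled in the customary way and does not affect the substance of the argument.
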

\begin{rmk}
 In fact, it suffices to require that \eqref{eq:3.1} is satisfied by the operators $Z_1,\hdots,Z_r,L_0$.
\end{rmk}
Let $\Omega$ be an open domain in $M$, and $p_0\in\partial\Omega$ a smooth point of
the boundary. 

\begin{prop}\label{prop8.8}
 If $\Omega$ is either $\Theta$-non-characteristic or strictly $1$-convex at $p_0$
 (according to Definition~\ref{d4.4}), 
 then any $f$ satisfying
 \eqref{eq:3.1} in $\Omega$,
 and having zero boundary values on a neighborhood of $p_0$ in $\partial\Omega$,
 is $0$ a.e. on the intersection of $\Omega$ with a neighborhood of $p_0$ in $M$.\qed
\end{prop}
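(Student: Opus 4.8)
The plan is to separate the two hypotheses and, after one common reduction, to deduce each case from a result already at our disposal: Proposition~\ref{prp6.1} when $\partial\Omega$ is $\Theta$-non-characteristic at $p_0$, and the Carleman estimate \eqref{eq8.3} (in the packaged form of Proposition~\ref{p8.7}) when $\Omega$ is strictly $1$-convex at $p_0$. The \emph{common reduction} is the following. Since the assertion is local at $p_0$, fix a defining function $\rhoup\in\Ci(U,\R)$ on a neighborhood $U$ of $p_0$ with $d\rhoup(p_0)\neq{0}$, $\Omega\cap{U}=\{\rhoup<0\}$, $N:=\partial\Omega\cap{U}=\{\rhoup=0\}$, and put $M_+:=\{\rhoup>0\}$. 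Let $\tilde{f}$ be the extension of $f$ by $0$ across $N$, so $\supp\tilde{f}\subset\{\rhoup\leq 0\}$. For every first order $Z\in\tilde{\Theta}$ the distributional jump of $Z\tilde{f}$ along $N$ is a multiple of the boundary value of $f$ on $N$, which vanishes; hence $Z\tilde{f}$ is the extension by $0$ of $Zf$, so $\tilde{f}\in{L}^2_{\mathrm{loc}}(U)$, $Z\tilde{f}\in{L}^2_{\mathrm{loc}}(U)$, and the pointwise bound $|Z\tilde{f}|\leq\kappaup_Z|\tilde{f}|$ persists a.e. (trivially where $\rhoup>0$, from $f$ where $\rhoup<0$). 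Thus $\tilde{f}$ satisfies \eqref{eq:3.1} near $p_0$; this is exactly the reduction carried out in the proof of Proposition~\ref{prp6.1}, and it uses only the coincidence of weak and strong extensions of \cite{F44}.

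In the case where $\partial\Omega$ is $\Theta$-non-characteristic at $p_0$, after shrinking $U$ the set $N$ is a smooth $\Theta$-non-characteristic hypersurface with $\Omega\cap{U}$ as one of its two sides, and $f$ is a solution of \eqref{eq:3.1} there vanishing on $N$. This is precisely the situation of Proposition~\ref{prp6.1}, whose proof relies only on the extension $\tilde{f}$ just constructed, and it yields $f\equiv 0$ on a neighborhood of $N$, hence on $\Omega$ near $p_0$.

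In the case where $\Omega$ is strictly $1$-convex at $p_0$, by Definition~\ref{d4.4} we may fix a section $\tauup\in\Kf(U)$ with $dd^c\rhoup_{p_0}(\tauup)>0$; let $Z_1,\hdots,Z_r,L_0\in\Zf(U)$ be the vector fields attached to $\tauup$ as in \eqref{1.4a}. Since $P_{\!\tauup}\rhoup=dd^c\rhoup(\tauup)$ by Proposition~\ref{p4.6} and is continuous (Corollary~\ref{cor4.8}), after shrinking $U$ we have $dd^c\rhoup(\tauup)>0$ on all of $U$. Now apply Proposition~\ref{p8.7} with the Carleman weight $\psiup:=\rhoup$: because $d\rhoup(p_0)\neq 0$ and $dd^c\rhoup(\tauup)>0$, there is a neighborhood $U'\subset U$ of $p_0$ such that every solution of \eqref{eq:3.1} (with respect to $Z_1,\hdots,Z_r,L_0$) vanishing a.e. on $U'\cap\{\rhoup>\rhoup(p_0)\}=U'\cap{M_+}$ vanishes a.e. on $U'$. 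Applied to $\tilde{f}$, which is supported in $\{\rhoup\leq 0\}$ and hence vanishes on $M_+$, this gives $\tilde{f}=0$ a.e. on $U'$, i.e. $f=0$ a.e. on $U'\cap\Omega$.

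The genuine analytic content — the Carleman estimate \eqref{eq8.3}, Proposition~\ref{p8.7}, and Proposition~\ref{prp6.1} — is already in hand, so the statement is essentially an assembly; the two delicate points are the choice of Carleman weight and the availability of a section of $\Kf$. The weight must be chosen so that the side of $N$ on which the zero-extended datum vanishes automatically coincides with the region $\{\psiup>\psiup(p_0)\}$ of Proposition~\ref{p8.7}: taking $\psiup=\rhoup$ is precisely what makes the side $\{\rhoup>0\}$ play that role, and this is the single place where the geometry of the two sides of $\partial\Omega$ enters. The second point — the one I expect to be the real obstacle — is passing from the pointwise inequality $dd^c\rhoup_{p_0}(\tauup)>0$ of Definition~\ref{d4.4} to a bona fide section $\tauup$ of $\Kf$ over a full neighborhood of $p_0$ with $dd^c\rhoup(\tauup)>0$ throughout, which forces one to invoke the local structure of $\Kf$ near $p_0$ discussed in \S\ref{s7}.
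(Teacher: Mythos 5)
Your argument is correct, but it is assembled differently from the paper's. The paper treats both hypotheses with a single mechanism: it takes the Carleman weight $e^{s\rhoup}$ for a defining function $\rhoup$ of $\Omega$ near $p_0$ and computes
$e^{-s\rhoup}P_{\!\tauup}(e^{s\rhoup})=s\,dd^c\rhoup(\tauup)+s^2{\sum}_{i=1}^r|Z_i\rhoup|^2$,
so that the positivity hypothesis of Proposition~\ref{p8.7} is achieved by taking $s\gg 1$ when $\partial\Omega$ is $\Theta$-non-characteristic (the $s^2$-term dominates because some $|Z_i\rhoup(p_0)|>0$), and by the sign of $dd^c\rhoup(\tauup)(p_0)$ alone when the boundary is $\Theta(\tauup)$-characteristic but strictly $1$-convex. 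You instead split the two cases: the non-characteristic one you delegate to Proposition~\ref{prp6.1}, which is legitimate since that result is already established, although its proof runs through Carleman estimates plus propagation along the Sussmann leaves of $\tilde{\Theta}$ rather than through Proposition~\ref{p8.7}; the convex one you handle with the unconvexified weight $\psiup=\rhoup$, which indeed suffices because $dd^c\rhoup(\tauup)(p_0)>0$ is already strict, so no exponential convexification is needed. What the paper's version buys is uniformity --- one weight, one inequality, with the non-characteristic case appearing as the ``order $s^2$'' regime --- while yours buys modularity and makes the role of each hypothesis more transparent. The two loose ends you flag (justifying the extension of $f$ by zero across the boundary, and promoting the pointwise tensor $\tauup_0\in\Kf_{p_0}$ of Definition~\ref{d4.4} to a local section of $\Kf$ with $dd^c\rhoup(\tauup)>0$ on a full neighborhood) are left at exactly the same level of detail in the paper, which simply asserts that ``the condition of Proposition~\ref{p8.7} is satisfied for a suitable $\tauup\in\Kf$ near $p_0$,'' so neither constitutes a gap relative to the paper's own standard of rigor.
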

\begin{proof}
With $P_{\!\tauup}$
defined by \eqref{1.4b}, \eqref{2.2}, and a real parameter $s$, we have 
\begin{align*}
e^{-s\psiup} P_{\!\tauup}({e}^{s\psiup})=s\left(\tfrac{1}{2}{\sum}_{i=1}^r(Z_i\bar{Z}_i+\bar{Z}_iZ_i)\psiup-X_0\psiup
\right)
 +s^2{\sum}_{i=1}^r|Z_i\psiup|^2\\
 =s\,dd^c\psiup(\tauup)+s^2{\sum}_{i=1}^r|Z_i\psiup|^2.
\end{align*}
Thus the condition of Proposition~\ref{p8.7} is satisfied for a suitable $\tauup\in\Kf$ near $p_0$ 
either
when $\partial\Omega$ is $\Theta$-non-characteristic at $p_0$, 
by taking $s\gg{1}$, or, in  case
$\partial\Omega$ is $\Theta(\tauup)$-characteristic at $p_0$, if 
$dd^c\psiup(\tauup)(p_0)>0$.
\end{proof}
\begin{rmk}
 We observe that strict $1$-convexity at $p_0$ 
 implies that $\partial\Omega$ is $\tilde{\Theta}$-non-characteristic at $p_0$.
\end{rmk}
\section{Cauchy problem for \textsl{CR} functions - Existence} \label{s9}
In this section we will investigate properties of $CR$ functions on 
$CR$ manifolds satisfying weak $2$-pseudo-concavity assumptions.
\begin{prop} \label{pr8.1}
Let $\Omega$ be an open subset of a $CR$ manifold $M$ enjoying
property  $\Psi^{we}(2)$. Assume that 
$p_0$ is a smooth, strongly-$1$-convex,
  $\Theta$-non-characteristic point of $\partial\Omega$.  Then, for every 
  relatively compact open neighborhood $U$ 
 of $p_0$ in $M$, we can find an open neighborhood $U'$ of $p_0$ in $U$ such that 
 \begin{equation}
 |f(p)|\leq\sup_{U\cap\partial\Omega}|f|,\;\;\forall p\in{U}'\cap\Omega,\;\;\forall 
 \;f\in\Ot_{\!{M}}(\Omega)\cap
 \Co^2(\bar{\Omega}),
 \end{equation}
 and strict inequality holds if $f$ is not a constant on $U'\cap\Omega$.
\end{prop}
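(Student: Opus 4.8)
The plan is to localize a maximum–modulus principle near $p_0$ by comparing $|f|$ with a strict supersolution of a degenerate–elliptic operator $P_{\!\tauup}$ adapted to the strong‑$1$‑convexity, and to use $\Psi^{we}(2)$ to make $\tauup$ vary smoothly so that this comparison can be run over a whole neighborhood rather than at $p_0$ alone.

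First I would reduce to a local statement: fix a defining function $\rhoup\in\Ci$ for $\Omega$ near $p_0$, with $\Omega=\{\rhoup<0\}$ and $d\rhoup(p_0)\neq 0$. Since $\partial\Omega$ is $\Theta$‑non‑characteristic at $p_0$ and $\Theta\subset\Zf$, one has $\bar{\partial}_M\rhoup(p_0)\neq 0$, so the distribution $\Vf=\{Z\in\Zf\mid Z\rhoup=0\}$ has constant rank $n-1$ on a neighborhood of $p_0$. Strong‑$1$‑convexity at $p_0$ gives $\tauup_0\in\Kf\cap H^{1,1}_{p_0}\partial\Omega$ with $dd^c\rhoup_{p_0}(\tauup_0)>0$, and $\tauup_0\in\Kf_{\Vf,p_0}$ because $(0,1)$‑vectors tangent to $\partial\Omega$ annihilate $\rhoup$ at $p_0$. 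Next, invoking property $\Psi^{we}(2)$ for this rank‑$(n-1)$ distribution $\Vf$, together with the continuity of $dd^c\rhoup$ on the fibres of $\Kf$ (Corollary~\ref{cor4.8}), I would argue exactly as in Proposition~\ref{prop7.7} that $\tauup_0$ extends to a smooth section $\tauup$ of $\Kf_{\Vf}$ (even of $\Kf^{(n-1)}_{\Vf}$) on a neighborhood $U_0$ of $p_0$ with $dd^c\rhoup(\tauup)>0$ throughout $U_0$. By Proposition~\ref{p4.6} this means $P_{\!\tauup}\rhoup>0$ on $U_0$; hence, after shrinking $U_0$, the sub‑level sets $\{\rhoup<c\}$, $c\in(-\delta,0]$, are \emph{all} strongly‑$1$‑convex at each of their boundary points in $U_0$, and each hypersurface $\{\rhoup=c\}$ is non‑characteristic for $P_{\!\tauup}$ there (being $\Theta$‑non‑characteristic).

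Then I would run the maximum–modulus argument. Fix $f\in\Ot_{\!{M}}(\Omega)\cap\Co^2(\bar{\Omega})$ and set $A=\sup_{U\cap\partial\Omega}|f|$. On $\Omega\cap\{f\neq 0\}$ a local branch of $\log f$ is $CR$, so by the lemma of \S\ref{s3} one has $P_{\!\tauup}|f|\ge 0$ there, and $|f|$ obeys the weak and strong maximum principles for the degenerate–elliptic operator $P_{\!\tauup}$ (which has no zeroth–order term); the Hopf lemma (Proposition~\ref{prop4.1}) is moreover available on $\partial\Omega$ near $p_0$. Using that $-\rhoup$ is a \emph{strict} $P_{\!\tauup}$‑supersolution on $U_0$, since $P_{\!\tauup}(-\rhoup)=-dd^c\rhoup(\tauup)<0$, vanishing on $\partial\Omega$ and positive on $\Omega$, and using the weighting identity $e^{-s\psiup}P_{\!\tauup}(e^{s\psiup})=s\,dd^c\psiup(\tauup)+s^2\sum_i|Z_i\psiup|^2$ from \S\ref{s8} with $\psiup$ built from $\rhoup$, I would produce, for a fixed smaller neighborhood $U'\Subset U_0$, a family of $P_{\!\tauup}$‑supersolutions on $U'\cap\Omega$ that dominate $|f|$ on the ``free'' boundary $\partial U'\cap\Omega$, vanish on $\partial\Omega$, and tend to $0$ near $p_0$. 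Sweeping these across the foliation of $U_0$ by the strongly‑$1$‑convex hypersurfaces $\{\rhoup=c\}$ (a continuity argument, which is where uniform strong‑$1$‑convexity is essential) confines the supremum of $|f|$ over $U'\cap\Omega$ to the part of the boundary lying in $U\cap\partial\Omega$, giving $|f|\le A$ on $U'\cap\Omega$. The strong maximum principle / propagation along the leaves of $\Vf_2(\tauup)$ (as in Theorem~\ref{thm3.7}) then upgrades this to strict inequality unless $f$ is constant on $U'\cap\Omega$; the choice of $U'$ depends only on $p_0,\Omega,U$, not on $f$.

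The main obstacle is precisely the ``free'' part $\partial U'\cap\Omega$ of the comparison domain, where a priori only the (possibly useless) global bound $\sup_{\bar{\Omega}}|f|$ is available: because $P_{\!\tauup}$ is Laplacian–like, no radial barrier can be small at $p_0$ while large on $\partial U'\cap\Omega$, so one cannot argue by a purely local barrier and must instead exploit the geometry — the uniform strong‑$1$‑convexity furnished by $\Psi^{we}(2)$ — to carry the supersolution $-\rhoup$ through the level sets $\{\rhoup=c\}$. Making this sweeping rigorous, keeping $U'$ uniform in $f$, and treating the locus $\{f=0\}$ (where $|f|$ is only a $P_{\!\tauup}$‑subsolution in the barrier sense) is the technical heart of the argument.
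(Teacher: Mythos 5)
Your strategy---comparison of $|f|$ against a strict $P_{\!\tauup}$-supersolution built from $\rhoup$---is genuinely different from the paper's, and the step you yourself flag as ``the technical heart'' is a real gap, not a deferred technicality. The level sets $\{\rhoup=c\}$ are translates of $\partial\Omega$ \emph{into} $\Omega$; they never close up onto $\partial\Omega$, so sweeping supersolutions across them cannot eliminate the free boundary $\partial U'\cap\Omega$. More fundamentally, a comparison $|f|\leq A+w_\delta$ with $w_\delta\geq 0$ a $P_{\!\tauup}$-supersolution that majorizes $\sup_{\partial U'\cap\Omega}|f|$ on the free boundary and vanishes on $\partial\Omega$ can only yield the exact bound $|f(p)|\leq A$ on an open set if the $P_{\!\tauup}$-harmonic measure of $\partial U'\cap\Omega$ vanishes identically near $p_0$ --- which is essentially the statement to be proved and, as you note yourself, fails for any Laplacian-like operator. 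The mechanism that makes the proposition true is not a scalar barrier but the CR-geometric impossibility of peak points on a hypersurface that is strongly $1$-convex from the $\Omega$ side. A secondary issue: $\Psi^{we}(2)$ is a cluster-point/semicontinuity condition on the fibres $\Kf_{\Vf,p}$, not the essential-$2$-pseudo-concavity of Proposition~\ref{prop7.7}, so your extension of $\tauup_0$ to a smooth section with $dd^c\rhoup(\tauup)>0$ nearby is not justified as stated (though this is not where the argument breaks).

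The paper removes the free boundary altogether by a bump construction. With $\phiup_{\epsilon}=e^{-1/\epsilon}\chiup(|x|/\epsilon)$ compactly supported near $p_0$, the lens $U^-_\epsilon=\{p\mid -\phiup_\epsilon(p)<\rhoup(p)<0\}$ has boundary contained in $(\partial\Omega\cap U)\cup N''_\epsilon$, where $N''_\epsilon=\partial U^-_\epsilon\cap\Omega$ is smooth and $\Theta$-non-characteristic for small $\epsilon$. The maximum modulus principle (a consequence of $\Psi^{we}(2)$) forces $\sup_{U^-_\epsilon}|f|$ onto $\partial U^-_\epsilon$; if for every small $\epsilon$ some $f_\epsilon$ peaked at a point $p_\epsilon\in N''_\epsilon$, Proposition~\ref{lm5.2} would produce $\xiup_\epsilon\in H^0_{M,p_\epsilon}(U^-_\epsilon)$ with $\Lf^{N''_\epsilon}_{\xiup_\epsilon}\geq 0$, hence $dd^c(\rhoup+\phiup_\epsilon)(\tauup)\leq 0$ for $\tauup\in\Kf$ tangent to $N''_\epsilon$ at $p_\epsilon$. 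The cluster-point clause of $\Psi^{we}(2)$ --- used in the direction opposite to yours, to approximate $\tauup_0$ at $p_0$ by elements $\tauup_{\epsilon_\nu}$ at the peak points $p_{\epsilon_\nu}\to p_0$ --- together with Corollary~\ref{cor4.8} then contradicts $dd^c\rhoup_{p_0}(\tauup_0)>0$. If you wish to recast this as a sweeping argument, the family to sweep is the bumped hypersurfaces $\{\rhoup=-\phiup_\epsilon\}$, which are anchored to $\partial\Omega$, not the level sets of $\rhoup$.
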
 
\begin{proof}
 We can assume that $\Omega$ is locally defined near $p_0$ by a  real valued 
 ${\rhoup\in\Ci(U)}$: 
\begin{equation*}
 U\cap\Omega=\{p\in{U}\mid \rhoup(p)<0\},\quad
 \text{and}\;\; \exists\;{Z}\in\Theta(U)\;
 \; \text{s.t.}\;\; (Z\rhoup)(p_0)\neq{0}.
\end{equation*}
To make local bumps of $\partial\Omega$ near $p_0$, we fix smooth coordinates
$x$ centered at $p_0$, that we can take for simplicity defined on $U,$ and, for a 
nonnegative
real valued smooth
function $\chiup(t)\in\Cic(\R)$,  equal to $1$ on
a neighborhood of $0$, set $\phiup_{\epsilon}(p)=e^{-1/\epsilon}\chiup(|x|/\epsilon)$.
Then we consider the domains 
\begin{equation*}
 U_{\epsilon}^-=\{p\in{U}\mid - \phiup_{\epsilon}(p) <\rhoup(p)<0\}.
\end{equation*}
There is $\epsilon_0>0$ such that $U_{\epsilon}^-\Subset{U}$ 
and 
the points of
${N_{\epsilon}''=\partial{U}^-_\epsilon\cap\Omega}$ 
are smooth and
$\Theta$-non-characteristic for all $0<\epsilon\leq\epsilon_0.$ 
In fact $N_{\epsilon}''$ is a small deformation of 
$N_{\epsilon}'=\{\phiup_{\epsilon}>0\}\cap\partial\Omega$, which is smooth and
$\Theta$-non-characteristic for $0<\epsilon\ll{1}$.
\par
We claim that, for  sufficiently small $\epsilon>0,$  the modulus
$|f|$ of
any 
function
$f\in\Ot_{\!{M}}(U^-_\epsilon)\cap\Co^2(\bar{U}^-_\epsilon)$ 
attains its maximum on $N$. We argue by contradiction. \par 
If our claim is false, then for all $0<\epsilon\leq\epsilon_0$ we can find 
$p_{\epsilon}\in{N}_{\epsilon}''$ and 
${f_{\epsilon}\in\Ot_{\!{M}}(U^-_\epsilon)\cap\Co^2(\bar{U}_\epsilon^-)}$ with 
$|f(p_{\epsilon})|>|f(p)|$ for all $p\in{U}_{\epsilon}^-$. 
In fact $\Psi^{we}(2)$ implies the maximum modulus principle and therefore
the maximum of $|f_{\epsilon}|$ is attained on the boundary of $U^-_\epsilon.$
By Proposition~\ref{lm5.2}, 
this implies that there is $\xiup_{\epsilon}\in{H}^0_{M,p_{\epsilon}}(U^-_{\epsilon})$ 
such that $\Lf^{N''_{\epsilon}}_{\xiup_\epsilon}\geq{0}$. 
By the strong-$1$-convexity assumption, there is 
$\tauup_0\in\Kf_{d\rhoup^\perp,p_0}$ 
(see Notation~\ref{ntz7.1})
such that ${dd^c\rhoup(\tauup_0)>0}$.
For $\epsilon_\nu\searrow{0}$, the sequence $\{p_{\epsilon_\nu}\}$ converges to
$p_0$. We can take a function $\tilde{\rhoup}\in\Ci(U)$ 
such that $\tilde{\rhoup}$ agrees to the second order
with $(\rhoup+\phiup_{\epsilon_\nu})$ at $p_{\epsilon_\nu}$,
for all $\nu$, and  with $\rho$ 
at~$p_0$.
\par 
We obtain a contradiction, because $\tauup_0$
belongs to $\Kf_{d\tilde{\rhoup}^\perp,p_0}=\Kf_{d\rhoup^\perp,p_0}$
 and therefore,
by $\Psi^{we}(2)$, is a cluster point of a sequence of 
elements $\tauup_{\epsilon_\nu}\in\Kf_{d\tilde\rhoup^\perp,p_{\epsilon_\nu}}=
\Kf_{d(\rhoup+\phiup_{\epsilon_\nu})^\perp,p_{\epsilon_\nu}}$, 
and $dd^c\tilde{\rhoup}(\tauup_{\epsilon_\nu})=
dd^c(\rhoup+\phiup_{\epsilon_\nu})(\tauup_{\epsilon_\nu})\leq{0}$ by Proposition~\ref{lm5.2} and Corollary~\ref{cor4.8}.
In fact, $dd^c(\rhoup+\phiup_{\epsilon_{\nu'}})(\tauup_{\epsilon_{\nu'}})
\longrightarrow dd^c\rhoup(p_0)(\tauup_0)$ when $\tauup_{\epsilonup_{\nu'}}
\longrightarrow\tauup_0$.
\end{proof} 
\begin{thm}
Let $\Omega$ be an open subset of a $CR$ manifold $M$ enjoying
property  $\Psi^{we}(2)$ and $N$ a relatively open subset of $\partial\Omega$, consisting
of smooth, strongly-$1$-convex, $\Theta$-non-characteristic points. If $M$ is locally
$CR$-embeddable at all points of $N$, then we can find an open neighborhood $U$ of $N$ in $M$
such that for every $f_0\in\Ot_{\!{N}}(N)$ there is a unique 
$f\in\Ot_{\!{M}}(U\cap\Omega)\cap\Ci(\overline{ U\cap\Omega})$ 
with $f=f_0$ on $N$.
\end{thm}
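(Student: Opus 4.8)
The plan is to treat uniqueness and the local part of existence first, and to globalise afterwards by a patching argument.

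\emph{Uniqueness, and reduction to a local statement.} If $f_1,f_2$ both solve the problem on $U\cap\Omega$, their difference is a $CR$ function there, continuous on $\overline{U\cap\Omega}$ and vanishing on $N$; in particular it satisfies \eqref{eq:3.1}. Since every point of $N$ is $\Theta$-non-characteristic, Proposition~\ref{prp6.1} (applied with $U\cap\Omega$ in place of $\Omega$ and $N\cap U$ in place of $N$) forces $f_1-f_2\equiv0$ on a neighbourhood of $N$ in $\Omega$, and after shrinking $U$ so that $U\cap\Omega$ lies inside that neighbourhood we get $f_1\equiv f_2$. The very same remark shows that two solutions defined near distinct points of $N$ automatically agree on the overlap of their domains, so it suffices to construct, for each $p_0\in N$, a one-sided neighbourhood of $p_0$ in $\overline\Omega$ carrying a solution.

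\emph{Local existence.} Fix $p_0\in N$. By the embeddability hypothesis we may assume, after shrinking, that $M$ is realised as a generic $CR$ submanifold of $\C^{\mu}$, $\mu=n+k$, and then $N$ is a generic $CR$ submanifold of $\C^{\mu}$ as well. By the Baouendi--Tr\`eves approximation theorem there are a neighbourhood $V$ of $p_0$ in $M$ and holomorphic polynomials $p_\nu$ on $\C^{\mu}$ with $p_\nu|_{N\cap V}\to f_0$ uniformly on compact subsets of $N\cap V$; each $p_\nu|_{\Omega\cap V}$ belongs to $\Ot_{\!{M}}(\Omega\cap V)\cap\Ci(\overline{\Omega\cap V})$. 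Choose a relatively compact neighbourhood $U$ of $p_0$ with $\overline U\subset V$ and $\overline U\cap\partial\Omega\subset N$ (possible since $N$ is relatively open in $\partial\Omega$). Because $M$ has property $\Psi^{we}(2)$ and $p_0$ is a smooth, strongly-$1$-convex, $\Theta$-non-characteristic point of $\partial\Omega$, Proposition~\ref{pr8.1}, applied to $\Omega\cap V$ near $p_0$, produces a smaller neighbourhood $U'$ of $p_0$ such that
\[
 |p_\nu(p)-p_{\nu'}(p)|\le\sup_{\overline U\cap\partial\Omega}|p_\nu-p_{\nu'}|,\qquad\forall\,p\in U'\cap\Omega .
\]
Since $\overline U\cap\partial\Omega$ is a compact subset of $N$, the right-hand side tends to $0$, so $\{p_\nu\}$ converges uniformly on $U'\cap\Omega$ to some $f\in\Co^0(\overline{U'\cap\Omega})$; being a locally uniform limit of $CR$ functions, $f$ is $CR$ on $U'\cap\Omega$, and $f=f_0$ on $N$. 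Interior smoothness of $f$ comes from the embeddability together with the fact that strong-$1$-convexity of $\Omega$ along $N$ is, by Lemma~\ref{lem5.18}, exactly the positivity of the Levi form of $N$ in the conormal direction pointing into $\Omega$, so that $f$ extends holomorphically across $M$ into $\C^{\mu}$ on that side (a Lewy-type one-sided extension) and hence $f\in\Ci(U'\cap\Omega)$; the smooth version of the same one-sided extension, which uses that $N$ is non-characteristic and $f_0\in\Ci$, gives $\Ci$ regularity up to $N$. Shrinking $U'$ slightly we obtain $f\in\Ot_{\!{M}}(U'\cap\Omega)\cap\Ci(\overline{U'\cap\Omega})$.

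\emph{Globalisation and the main obstacle.} Carrying out this construction at every $p_0\in N$ and gluing the local solutions via the uniqueness statement yields $f\in\Ot_{\!{M}}(U\cap\Omega)\cap\Ci(\overline{U\cap\Omega})$ with $f=f_0$ on $N$, where $U=\bigcup_{p_0\in N}U'_{p_0}$. The delicate point, and the place where the \emph{global} hypothesis $\Psi^{we}(2)$ is indispensable, is to guarantee that the neighbourhood $U'$ supplied by Proposition~\ref{pr8.1} can be chosen locally uniformly as $p_0$ varies over $N$, so that $\bigcup_{p_0}\big(U'_{p_0}\cap\Omega\big)$ is a genuine one-sided neighbourhood of $N$ rather than an exhaustion of $\Omega$ by ever thinner slices accumulating onto $N$; this uniformity is governed by the openness (stability under perturbation) of strong-$1$-convexity on essentially-$2$-pseudo-concave manifolds, i.e. by Proposition~\ref{prop7.7}. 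A secondary, more technical, hurdle is the smooth regularity of $f$ up to the non-characteristic piece $N$: this is the one step that genuinely uses $f_0\in\Ot_{\!{N}}(N)$ rather than merely $f_0\in\Co^{0}$.
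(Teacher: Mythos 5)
Your argument is essentially the paper's own proof, which consists of the single remark that the theorem ``easily follows from the approximation theorem in \cite{ba-tr} and the estimate of Proposition~\ref{pr8.1}'': you approximate $f_0$ by polynomials on the embedded $N$ via Baouendi--Tr\`eves and use the one-sided maximum-modulus estimate of Proposition~\ref{pr8.1} to propagate the uniform Cauchy property of the approximating sequence into $U'\cap\Omega$, exactly as intended. The additional points you address --- uniqueness via Proposition~\ref{prp6.1} (which the authors could equally get from the estimate itself), the gluing over $N$, and the $\Ci$-regularity of $f$ up to $N$ (the one step where your sketch of a Lewy-type one-sided extension is looser than the rest) --- are all left implicit in the paper, so your write-up in fact supplies detail the authors omitted rather than diverging from their route.
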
 
\begin{proof}
The result easily follows from the approximation theorem in \cite{ba-tr} and the estimate of
Proposition~\ref{pr8.1}
\end{proof}

\providecommand{\bysame}{\leavevmode\hbox to3em{\hrulefill}\thinspace}
\providecommand{\MR}{\relax\ifhmode\unskip\space\fi MR }
\providecommand{\MRhref}[2]{%
  \href{http://www.ams.org/mathscinet-getitem?mr=#1}{#2}
}
\providecommand{\href}[2]{#2}


\end{document}